\definecolor{tocolor}{rgb}{.1,.1,.5}
\definecolor{urlcolor}{rgb}{.2,.2,.6}
\definecolor{linkcolor}{rgb}{.1,.1,.6}
\definecolor{citecolor}{rgb}{.6,.2,.1}
\definecolor{darkgreen}{rgb}{0.0, 0.5, 0.0}
\providecommand{\U}[1]{\protect\rule{.1in}{.1in}}
\newtheorem{theorem}{Theorem}[section]
\newtheorem*{theorem*}{Theorem}
\newtheorem*{claim*}{Claim}
\newtheorem{theoremM}{Theorem}
\newtheorem{corollary}[theorem]{Corollary}
\newtheorem{proposition}[theorem]{Proposition}
\newtheorem*{proposition*}{Proposition}
\newtheorem{lemma}[theorem]{Lemma}
\theoremstyle{definition}
\newtheorem{definition}[theorem]{Definition}
\newtheorem{example}[theorem]{Example}
\newtheorem{remark}[theorem]{Remark}
\numberwithin{equation}{section}
\newcommand{\mf}[1]{\mathfrak{#1}}
\newcommand{\del}{\partial}
\newcommand{\toto}{\rightrightarrows}
\newcommand{\RR}{\mathbb{R}}                    
\newcommand{\NN}{\mathbb{N}}                    
\renewcommand{\SS}{\mathbb{S}}                    
\newcommand{\LL}{\mathcal{L}}                   
\newcommand{\GG}{\mathcal{G}}                   
\newcommand{\tto}{\rightrightarrows}            
\renewcommand{\:}{\colon}                       
\renewcommand{\d}{{\mathrm{d}}}                   
\newcommand{\D}{{\mathrm{D}}}                   
\newcommand{\DD}{\mathcal{D}}                   
\newcommand{\TT}{\mathcal{T}}                   
\newcommand{\FF}{\mathcal{F}}                   
\newcommand{\bas}{\text{\textnormal{bas}}}      
\newcommand{\para}{\parallel}
\newcommand{\Hom}{\operatorname{Hom}}       
\newcommand{\End}{\operatorname{End}}       
\newcommand{\MC}{\operatorname{MC}}         
\newcommand{\Jac}{\operatorname{Jac}}       
\newcommand{\id}{\operatorname{id}}         
\newcommand{\im}{\operatorname{im}}         
\newcommand{\Der}{\operatorname{Der}}       
\renewcommand{\MC}{\operatorname{MC}}         
\newcommand{\UB}{\Breve{B}}                        
\newcommand{\UD}{\Breve{\mathrm{D}}}                        
\newcommand{\Tor}{\Theta}              
\newcommand{\Curv}{\Omega}                  
\newcommand{\Cc}{\mathcal{C}}              
\newcommand{\rk}{\operatorname{rank}}         
\newcommand{\dom}{\operatorname{dom}}       
\DeclareMathOperator{\pr}{pr}               
\DeclareMathOperator{\germ}{germ}           
\newcommand{\red}{\mathrm{red}}             
\newcommand{\pullback}[2]{{}_{#1}\kern-\scriptspace{\times}_{#2}}                   
\newcommand{\sys}{\mathrm{sys}}             
\newcommand{\onabla}{{\overline{\nabla}}}   
\DeclareMathOperator{\GL}{GL}               
\newcommand{\al}{\alpha}
\newcommand{\be}{\beta}
\begin{document}
\title{Relative algebroids and Cartan realization problems}

\author{Rui Loja Fernandes}
\address{Department of Mathematics, University of Illinois at Urbana-Champaign, 1409 W. Green Street, Urbana, IL 61801 USA}
\email{ruiloja@illinois.edu}

\author{Wilmer Smilde}
\address{Department of Mathematics, University of Illinois at Urbana-Champaign, 1409 W. Green Street, Urbana, IL 61801 USA}
\email{wsmilde2@illinois.edu}

\date{\today}

\begin{abstract}
    We develop a new framework of relative algebroids to address existence and classification problems of geometric structures subject to partial differential equations. 
\end{abstract}

\thanks{This work was partially supported by NSF grant DMS-2303586 and UIUC Campus Research Board Award RB25014.}

\maketitle

\setcounter{tocdepth}{1}
\tableofcontents

\section*{Introduction}

The role of algebroids in certain classification problems was first explicitly recognized by Bryant in his work on the classification of Bochner-Kähler metrics \cite{Bryant2001}. Struchiner and the first author built on this insight, establishing precise connections between existence and classification problems and the integrability of the underlying Lie algebroid \cite{FernandesStruchiner2014, FernandesStruchiner2019, FernandesStruchiner2021}. However, their work is restricted to cases where the local classification is finite-dimensional in nature, which excludes most classification problems. In this paper, we initiate the study of classifications problems without this restriction. We introduce here the concept of relative Lie algebroid, which unifies the theories of algebroids and (formal) PDEs. This yields a powerful tool for understanding generic existence and classification problems for geometric structures. 

The new notion of a relative algebroid has roots in the work of Bryant, particularly in his notes on Lie theory and Exterior Differential Systems \cite{Bryant2014}. There, he observed through various examples that many existence problems can be recast in a particular form—referred to here as Bryant's equations (\ref{eq:BryantsEquations})—which resembles an algebroid but is not quite one \cite{Bryant1998-1999, Bryant2014}. Although Bryant did not identify this notion explicitly, he showed that  techniques going back to Cartan can be applied to solve these equations in many interesting examples.

In the rest of this introduction, we will first explain what Bryant's equations are, where they come from and how they are used in existence and classification problems. Then, we will briefly describe the contents of this paper.

\subsection*{Bryant's equations and two examples}
Let $(M,g)$ be a Riemannian manifold. Recall that its orthogonal frame bundle 
    \[
        \pi\: P \to M.
    \]
carries a coframe 
    \[
        (\theta, \omega)\: TP \to \RR^n\oplus \mf{o}(n).
    \]
whose components are the tautological form (also called the solder form) and the Levi-Civita connection 1-form. Together they satisfy Cartan's structure equations \begin{equation}\label{eq:IntroductionCartanStructureEquationsRiemannian}
            \begin{cases} 
                \d \theta = - \omega \wedge \theta,\\
                \d \omega = R(\theta\wedge\theta) - \omega\wedge \omega,
            \end{cases} 
    \end{equation}
where $R\: P\to \Hom(\wedge^2\RR^n, \mf{o}(n))$ is a map inducing the Riemann curvature tensor when passing to the associate bundles.

The structure equations (\ref{eq:IntroductionCartanStructureEquationsRiemannian}) completely characterize the orthonormal frame bundles of Riemannian manifolds, and allows to relate existence and classification problems for Riemannian structures to those of coframes, as observed by Cartan himself. In a classification problem one typically has restrictions on the curvature $R$ in the form of an algebraic or a differential equation (or both). Let us recall one simple example where one already sees the appearance of a Lie algebroid.

\begin{example}[Space forms]
For the classification of Riemannian manifolds $(M, g)$ with constant sectional curvature, a.k.a.~\textbf{space forms}, the Riemann curvature $R\: P \to \Hom(\wedge^2\RR^n, \mf{o}(n))$ takes the special form:
    \[
        R(K)(u, v)w = K\left( \langle w, u\rangle v - \langle w, v\rangle u \right),
    \]
where $K$ is a constant (the scalar curvature), $u, v, w\in \RR^n$ and $\langle\cdot, \cdot\rangle$ is the Euclidean inner product on $\RR^n$. So one considers Cartan's structure equations with this specific shape of the tensor together with the equation that $K$ is constant: 
    \begin{equation}\label{eq:SpaceFormStructureEquations}
        \begin{cases}
            \d \theta = -\omega \wedge \theta\\
            \d \omega = R(K)(\theta\wedge\theta) - \omega \wedge \omega\\
            \d K = 0.
        \end{cases}
    \end{equation}
These are Bryant's equations for this classification problem. 

Equations \eqref{eq:SpaceFormStructureEquations} already show the appearance of a Lie algebroid behind this classification problem of space forms. We "forget" about the underlying manifold these equations were derived from and look at the system ``\emph{as is}''. More precisely, we consider the trivial vector bundle $A = \underline{\RR^n\oplus \mf{o}(n)}\to \RR$, we interpret $K$ has a coordinate function the base, and $\theta$ and $\omega$ as generating sections of the dual vector bundle. Equations \eqref{eq:SpaceFormStructureEquations} then define a linear operator 
\[ 
\D\:\Omega^\bullet(A)\to \Omega^{\bullet+1}(A), \quad \text{where }\Omega^k(A):=\Gamma(\wedge^kA^*).
\]
This is a graded derivation satisfying $\D^2=0$, hence it defines a Lie algebroid structure on $A$. In \cite{FernandesStruchiner2019} it is explained how integration of this Lie algebroid leads to the well-known classification of space forms.
\end{example}

In the previous example the ``moduli space'' of solutions is one dimensional: there is only one invariant, the scalar curvature. For such classification problems of finite-type a complete theory was developed in \cite{FernandesStruchiner2014, FernandesStruchiner2019, FernandesStruchiner2021}. However, in practice, such problems are rarely encountered. Our next example illustrates a more typical occurrence.

\begin{example}[Surfaces with $|\nabla K | = 1$, {\cite[\S 5.1]{Bryant2014}}]\label{ex:introductionSurfaces|nablaK|=1} 
    Consider the problem of classifying Riemann surfaces $(M, g)$ whose scalar curvature $K$ satisfies the differential equation $|\nabla K| =1$.
    
    Let $(P, \theta, \omega)$ be the orthonormal frame bundle of such a surface and identify $\RR^2 \oplus \mf{o}(1)\cong \RR^3$. Writing $(\theta, \omega) = (\theta^1, \theta^2, \theta^3)$. The structure equations become
    \[
    \begin{cases}
        \d \theta^1 = - \theta^3 \wedge \theta^1,\\
        \d \theta^2 = \theta^3 \wedge\theta^1, \\
        \d \theta^3 = K \theta^1 \wedge\theta^2,
    \end{cases}
    \]
    where $K$ is the Gauss curvature. Since $K$ is an $O(2)$-invariant function on $P$ it's derivative can be written in the form
    \[
        \d K = K_1 \theta^1 + K_2 \theta^2.
    \]
    The equation $|\nabla K|=1$ becomes $K_1^2 + K_2^2 = 1$, so the components $K_i$ are related through a single coordinate $\varphi$ on the circle. The structure equations augmented by these conditions on the curvature take the form:
    \begin{equation}\label{eq:introSurfacesStructureEquations}
        \begin{cases}
            \d \theta^1 = - \theta^3 \wedge \theta^1,\\
            \d \theta^2 = \theta^3 \wedge\theta^1, \\
            \d \theta^3 = K \theta^1 \wedge\theta^2,\\
            \d K = \cos (\varphi) \theta^1 + \sin (\varphi) \theta^2.
        \end{cases}
    \end{equation}
    These are Bryant's equations for this classification problem.
    
    As in the previous example, the next step is to "forget" about the manifold $P$ and look at the equations ``\emph{as they are}''. So, as in that example, we consider the trivial vector bundle $A = \underline{\RR^3} \to \RR$, where $\RR$ has coordinate $K$, and think of the $\theta^i$'s as generating sections of the dual vector bundle. But now we run into a problem: equations \eqref{eq:introSurfacesStructureEquations} define a degree 1 linear operator $\D$ whose values depend on the extra variable $\varphi$. To solve this issue we consider the vector bundle $B=\underline{\RR^3} \to \SS^1 \times \RR$, where $\SS^1\times \RR$ has coordinates $(\varphi, K)$, so that now
    \[
        \D\:\Omega^\bullet(A)\to \Omega^{\bullet+1}(B)
    \]
    
    If $p\: \SS^1\times \RR\to \RR$ is the projection, so that $B = p^*A$, it makes sense to call $\D$ a \emph{derivation relative to $p$}. This does not define a Lie algebroid anymore since, for instance, the equation $\D^2=0$ doesn't make sense at this point. So we cannot directly use integration techniques for Lie algebroids to obtain solutions.

    One could try to look for a derivation $\D_B$ on $B$ that extends $\D$ and does square to zero by adding the extra equation $\D_B^2 K =0$. This implies
    \begin{align*}
        0 = \D_B^2 K &= \D_B(\cos (\varphi)\theta^1 + \sin (\varphi) \theta^2)\\
         &= -\sin(\varphi) \D_B \varphi\wedge\theta^1 -\cos(\varphi)\theta^3\wedge\theta^2 + \cos(\varphi)\D_B \varphi \wedge \theta^2 + \sin(\varphi)\theta^3\wedge\theta^1.
    \end{align*}
    It follows that, for this equation to hold, we must have
    \begin{equation}
        \D_B \varphi = \theta^3 + c_1( - \sin(\varphi)\theta^1 + \cos(\varphi)\theta^2),
    \end{equation}
    where $c_1$ is a new independent variable. So, again $\D_B$ is not an actual derivation but a derivation relative to the projection $\RR\times \SS^1 \times \RR\to \SS^1\times \RR$, $(c_1, \varphi, K)\mapsto (\varphi, K)$. 

    This process never stops, and for this reason there is no finite dimensional Lie algebroid governing this classification problem. We will return to this example in Section \ref{sec:postlude}.
\end{example}

These, and many other examples, led Bryant to observe that many classification problems amount to solve a problem involving data consisting of a principal bundle $P$ with a coframe $\theta\:TP\to \RR^n$, together with functions $(a^\mu,b^\varrho)\:P\to\RR^s\times\RR^r$, satisfying \textbf{Bryant's equations}:
\begin{equation}\label{eq:BryantsEquations}
    \begin{cases}
        \d \theta^i = -\frac{1}{2}c^i_{jk}(a)\, \theta^j\wedge\theta^k,\\
        \d a^\mu = F^\mu_i(a, b)\, \theta^i,
    \end{cases}
\end{equation}
for some given functions  $c^i_{jk}\:\RR^s\to \RR$ and $F^\mu_i\:\RR^s\times \RR^r\to\RR$.

As in the two problems above, ``forgetting'' about the underlying bundle $P$, treating $(a,b)$ as independent coordinates on $\RR^s\times \RR^r$ and $\theta$ as sections of the bundle dual to the trivial vector bundle $A:=\underline{\RR^n}\to \RR^r\times \RR^s$, Bryant's equations define a degree 1, linear operator
   \[
        \D\:\Omega^\bullet(A)\to \Omega^{\bullet+1}(B),\quad \text{with } B=p^*A,
    \]
where $p\:\RR^s\times \RR^r\to \RR^s$ is the projection.

\subsection*{PDEs and derivations} Degree 1 differential operators similar to the ones in the previous example, are well-known in the formal theory of PDEs. For example, there one considers the so-called \emph{total exterior differential}. In its simplest form, one has a function in jet space, say $f=f(x,y,u)$, where $x,y$ are independent variables and $u$ is a dependent variable, and then its total differential is given by
\[ \D f:=(\D_xf)\, \d x+(\D_y f)\,\d y=(f_x+f_u u_x) \d x+(f_y+f_u u_y)\d y, \]
Similarly, for a 1-form $\alpha=a(x,y,u)\d x+b(x,y,u)\d y$ its total differential is defined by
\[
    \D\alpha:=\big(\D_x b-\D_y a\big)\d x\wedge \d y=\big(b_x+b_u u_x-a_y-a_uu_y\big)\d x\wedge \d y.
\]
If $q\:\RR^3\to \RR^2$ is the submersion $(x,y,u)\mapsto (x,y)$ and we denote by $J^1q$ its 1st jet bundle with coordinates $(x,y,u,u_x,u_y)$, then one can think of $\D$ as a degree 1, linear operator
    \[
        \D\:\Omega^\bullet(A)\to \Omega^{\bullet+1}(B),\quad \text{where } A:=q^*T\RR^2,\ B:=p^*A.
    \]

\subsection*{Relative derivations, relative algebroids and the contents of this paper}

The common geometric structure underlying the previous examples is captured by a \textbf{relative derivation}. More precisely, given a vector bundle $A\to N$ and a map $p\: M\to N$, a degree 1 derivation relative to $p$ is a linear operator
\[
    \D\: \Omega^\bullet(A)\to \Omega^{\bullet+1}(p^*A),
\]  
satisfying
\[
    \D(\alpha\wedge\beta) = (\D\alpha)\wedge p^*\beta + (-1)^{|\alpha|}p^*\alpha\wedge \D\beta,  
\]
for homogeneous elements $\alpha, \beta\in \Omega^\bullet(A)$. The resemblance with Lie algebroids now becomes clear: a Lie algebroid is a degree 1 derivation $\D$ on a vector bundle $A\to N$ relative to the identity map, which additionally satisfies $\D^2 = 0$. For a general relative derivation $\D$ cannot be squared since domain and codomain are distinct. Hence, the equation $\D^2=0$ does not seem to make sense. In this paper, we will develop the necessary theory to make sense of this in a natural way. 

In Section \ref{sec:relativeDerivations}, we develop the theory of \textbf{relative degree $k$ derivations}, for arbitrary $k$. Just like ordinary degree $k$ derivations which can be viewed, dually, as $k$-nary brackets, relative derivations can be viewed, dually, as \textbf{relative $k$-brackets}. Besides analyzing the structure of relative multiderivarions and relative multibrackets, we will extend several notions from the theory of (formal) PDEs to this setting. These include
\begin{itemize}
    \item a notion of a \emph{tableau (bundle) of derivations},
    \item a \emph{Spencer complex} for tableaux of derivations,
    \item \emph{involutivity} and \emph{Cartan characters} for relative derivations.
\end{itemize}
The computations and examples in \cite{Bryant2014} suggest that Bryant may have already been aware of such notions. We intend to place his computations within a natural and robust framework.

In Section \ref{sec:RelativeAlgebroids}, we introduce and study the central concept in this paper, namely the notion of \textbf{relative algebroid}. This is a triple $(A, p, \D)$, where $A\to N$ is a vector bundle and $\D$ is a degree 1 derivation relative to a map $p\:M\to N$. As we already pointed out, these are the geometric objects underlying Bryant's equations. Similar to the theory developed in \cite{Bryant2001,FernandesStruchiner2014,FernandesStruchiner2019}, solutions to Bryant's equations are translated to the notion of \textbf{realization} of a relative algebroid, so one speaks of \emph{Cartan's realization problem}. In the language of derivations, a realization allows to realize a relative derivation as a manifold with the de Rham differential. More precisely, given a relative algebroid $(A, p, \D)$ a realization is a manifold $P$ together with a bundle map $\theta\: TP \to p^*A$ that is fiberwise an isomorphism and is also a morphism of relative algebroids, i.e., satisfies
\[ \d \circ \theta^* \circ p^* = \theta^* \circ \D. \]

In the case of finite-type problems, realizations naturally appear as fibers of an integrating Lie groupoid, making it possible to apply the power of Lie theory to solve classification problems (see \cite{FernandesStruchiner2014, FernandesStruchiner2019}). In the general case, however, the situation is much more complex. As we will see, the theory of relative derivations unifies Lie algebroids and PDEs. 

We will show first that the formal theory of the existence of realizations parallels that of partial differential equations, as developed by Spencer and his school, and later formalized by Goldschmidt in \cite{Goldschmidt1967}. Specifically, we will show that first-order and second order obstructions to the existence of realizations are captured by Spencer cohomology classes, called the \textbf{torsion class} and the \textbf{curvature class}. 

In Section \ref{sec:prolongation}, we will see that the vanishing of the torsion class leads to the notion of prolongation, which seeks to complete the derivation of a relative algebroid to an operator that squares to zero. More concretely, a \textbf{prolongation} of a relative algebroid $(A, p, \D)$ is another relative algebroid $(B, p_1, \D^{(1)})$, with projection $p_1\:M^{(1)}\to M$. Here, $B=p^*A$ and the derivation is a map $\D^{(1)}\: \Omega^\bullet(B)\to \Omega^{\bullet + 1} (B^{(1)})$, where $B^{(1)}:= p_1^*B$, such that $\D^{(1)}\circ \D = 0$. Schematically, this is described by the diagram
\[
\xymatrix@C=40pt{
    B^{(1)}=p_1^*B \ar[r] \ar[d] & B=p^*A \ar[d] \ar[r] \ar@{-->}@/^1pc/[l]^{\D^{(1)}} & A \ar[d] \ar@{-->}@/^1pc/[l]^{\D} \\
    M^{(1)} \ar[r]_{p_1} & M  \ar[r]_{p} & N}
\]

The existence of the prolongation is contingent on the vanishing of the torsion class. Higher prolongations are obtained by iterating the first prolongations and are indexed by $k$. A relative algebroid is called \textbf{$k$-integrable} if all prolongations up to order $k$ exist, and is called \textbf{formally integrable} if it is $k$-integrable for all $k\ge 1$. Foundational results from the theory of formal PDEs, such as Goldschmidt's formal integrability theorem \cite[Thm. 8.1]{Goldschmidt1967}, have natural generalizations to the theory of relative algebroids, namely we will prove the following analogue (or rather, extension) of that result for relative algebroids.

\begin{theoremM}[Theorem \ref{thm:GoldschmidtsFormalIntegrabilityCriterion}]
    Let $(A,p,\D)$ be a relative algebroid with tableau of derivations $\tau$. Suppose that: 
    \begin{enumerate}[(i)]
        \item $(A,p,\D)$ is 1-integrable;
        \item the Spencer cohomology groups $H^{k, 2}(\tau)$ vanish for all $k\geq 0$.
    \end{enumerate}
    Then $(A,p,\D)$ is formally integrable.
\end{theoremM}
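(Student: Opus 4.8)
The plan is to prove, by induction on $k$, that $k$-integrability together with (ii) implies $(k+1)$-integrability; since the base case $k=1$ is exactly hypothesis (i) and formal integrability means $k$-integrability for all $k$, this suffices. By the results of Section~\ref{sec:prolongation}, given that $(A,p,\D)$ is $k$-integrable, being $(k+1)$-integrable is equivalent to the existence of a first prolongation of the $k$-th prolongation $(B^{(k-1)},p_k,\D^{(k)})$, which in turn is equivalent to the vanishing of the torsion class of that $k$-th prolongation. Thus the whole argument reduces to showing that this single torsion class vanishes — the relative-algebroid counterpart of Goldschmidt's inductive proof that the projections $R^{(l+1)}\to R^{(l)}$ are surjective for every $l$.

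Two ingredients are needed. The first is algebraic and should follow from the formal properties of the Spencer complex of a tableau of derivations developed in Section~\ref{sec:relativeDerivations}: if $\tau$ satisfies the $2$-acyclicity condition (ii), namely $H^{j,2}(\tau)=0$ for all $j\ge 0$, then every prolongation $\tau^{(l)}$ is again $2$-acyclic, and more precisely the Spencer cohomology of a prolonged tableau is a shift of the original, with an isomorphism $H^{\bullet,2}(\tau^{(l)})\cong H^{\bullet+l,2}(\tau)$. This mirrors the classical propagation of $2$-acyclicity under prolongation.

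The second ingredient — the crux — is the identification of the tableau of the $k$-th prolongation with the $k$-th algebraic prolongation $\tau^{(k)}$ of $\tau$. This is where hypothesis (i) is used: it plays the role that surjectivity of $R^{(1)}\to R$ plays in the PDE setting, where it is precisely what guarantees that the symbol of the prolonged equation is the prolonged symbol, with no unexpected drop coming from hidden first-order integrability conditions. I would establish this by a secondary induction on $j$ with $0\le j\le k$. The $0$-th prolongation $(A,p,\D)$ has tableau $\tau=\tau^{(0)}$ by definition; assuming the $j$-th prolongation has tableau $\tau^{(j)}$, observe that $k$-integrability forces the torsion class of the $j$-th prolongation (for $j\le k-1$) to vanish, so the $(j+1)$-st prolongation is constructed "honestly"; then, analysing the affine-bundle structure of the projection $M^{(j+1)}\to M^{(j)}$ — whose model fibre is built from $\tau^{(j)}$ — and using that $\tau^{(j)}$ is $2$-acyclic by the first ingredient, one concludes that the tableau of the $(j+1)$-st prolongation is $(\tau^{(j)})^{(1)}=\tau^{(j+1)}$. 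Iterating up to $j=k$ gives that the $k$-th prolongation has tableau $\tau^{(k)}$.

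Granting both ingredients, the proof concludes at once: the torsion class of the $k$-th prolongation is a well-defined Spencer cohomology class in $H^{0,2}(\tau^{(k)})$, and the shift isomorphism of the first ingredient identifies this group with $H^{k,2}(\tau)$, which vanishes by (ii). Hence the first prolongation of the $k$-th prolongation exists, so $(A,p,\D)$ is $(k+1)$-integrable, closing the induction. The step I expect to be the main obstacle is the second ingredient: showing rigorously that, under $1$-integrability, the torsion of a prolongation is captured entirely by the Spencer cohomology of the prolonged tableau, with no residual contribution from the geometry of the base $M^{(j)}$. This is the analogue of the delicate lemma at the heart of Goldschmidt's formal integrability theorem \cite{Goldschmidt1967} (rooted in the homological algebra of Quillen's thesis), and its proof should amount to a careful diagram chase relating the Spencer-type complexes of $\tau,\tau^{(1)},\dots,\tau^{(k)}$ through the first-prolongation maps between the corresponding relative algebroids.
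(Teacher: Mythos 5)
Your overall strategy is the paper's: induct on the order of the prolongation, identify the tableau of the $k$-th prolongation with the algebraic prolongation $\tau^{(k)}$ (this is Lemma \ref{lem:tableauOfProlongation}, an easy consequence of the affine structure of $M^{(1)}\to M$), and use the $2$-acyclicity hypothesis to kill the obstruction at each stage. But there is a genuine gap at the step you yourself flag as the crux, and you leave it unproved: you assert that ``the torsion class of the $k$-th prolongation is a well-defined Spencer cohomology class in $H^{0,2}(\tau^{(k)})$.'' A priori this is false as stated: the torsion class of any relative algebroid lives only in $H^{-1,2}$ of its tableau, i.e.\ in the quotient of \emph{all} relative $2$-derivations by the image of the Spencer differential, and hypothesis (ii) says nothing about $H^{-1,2}(\tau^{(k)})$. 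The nontrivial content --- exactly what the paper isolates as the Fundamental Theorem of Prolongation (Theorem \ref{thm:fundamentalTheoremOfProlongation}) --- is that for a $1$-integrable relative algebroid the torsion class of its (canonical) first prolongation lies in the image of the inclusion $H^{0,2}(\tau)\hookrightarrow H^{-1,2}(\tau^{(1)})$, where it coincides with the curvature class of the original algebroid. The paper's proof of this is not a formal diagram chase in the Spencer complexes: it is a geometric argument choosing a local flat Ehresmann connection $h$ on $M^{(1)}\to M$, comparing $\sigma^*\circ(h_*\D^{(1)}\circ\D^{(1)})$ with $\tilde{\D}_\sigma^2$ for the flat section $\sigma$ through a point, so that the torsion cocycle of the prolongation is literally the curvature $\tilde{\D}_\sigma^2\in\Hom(\wedge^2 B,\FF)\cap\ker\delta_\tau$ of a torsionless lift. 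Without supplying this (or an equivalent argument), your induction does not close; also note the harmless but real indexing slip: the relevant group is $H^{0,2}(\tau^{(k-1)})\cong H^{k-1,2}(\tau)$, not $H^{0,2}(\tau^{(k)})$ --- immaterial only because (ii) assumes vanishing for all $k\ge 0$.

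A second, smaller omission: you reduce $(k+1)$-integrability to the vanishing of the torsion class of the $k$-th prolongation, but by Definition \ref{def:prolongation} and Proposition \ref{prop:1st:prolongation} existence of the prolongation also requires $M^{(k+1)}$ to be smooth with $p_{k+1}$ a submersion. Vanishing torsion only gives fiberwise nonemptiness (surjectivity of $M^{(k+1)}\to M^{(k)}$); smoothness requires that $\tau^{(k+1)}$ have locally constant rank, since $M^{(k+1)}\to M^{(k)}$ is then an affine subbundle modeled on it. The paper handles this with Lemma \ref{lem:prolongationConstantRank} ($2$-acyclicity plus constant rank of $\tau^{(1)}$ forces constant rank of all $\tau^{(k)}$) combined with Lemma \ref{lem:1-integrableCriterion} (the analogue of Goldschmidt's smoothness criterion). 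Your ``first ingredient'' (propagation of $2$-acyclicity and the shift isomorphism) is correct and is implicitly used in the paper, but it must be supplemented by this regularity step for the induction to produce honest relative algebroids at each stage.
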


If, for some $k$, the map $p_k\: M^{(k)}\to M^{(k-1)}$ is a diffeomorphism, the derivation $\D^{(k)}$ actually defines the structure of a Lie algebroid, and we are in the \textbf{finite-type} case. However, in general -- as in Example \ref{ex:introductionSurfaces|nablaK|=1} -- this process does not stop, and the full prolongation tower defines a \textbf{profinite Lie algebroid.} For this reason, we call a relative algebroid a \textbf{relative \emph{Lie} algebroid} if all prolongations exist. Finding solutions to the realization problem in this more general case is much more challenging. The only general statement we can make is in the \emph{analytic} setting, where an existence result, essentially due to Cartan and K\"ahler -- see Bryant \cite[Thm.~3 and Thm.~4]{Bryant2014} -- holds. We state it as follows:

\begin{theoremM}[Bryant-Cartan, Theorem \ref{thm:CartanBryantExistence}]
   Let $(A,p,\D)$ be an analytic relative Lie algebroid. For each $k$ and each $x\in M^{(k)}$, there exists a realization through $x$.
\end{theoremM}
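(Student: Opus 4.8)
The plan is to reduce the statement to a Cartan--K\"ahler existence theorem for an exterior differential system built from the relative algebroid data. First I would recall that a realization through $x \in M^{(k)}$ amounts to a local manifold $P$ (of dimension $n = \rk A$) carrying a pointed coframe $\theta\: TP \xrightarrow{\sim} p^*A$ satisfying $\d\circ\theta^*\circ p^* = \theta^*\circ\D$, with $\theta$ covering the constant map to a chosen point; more precisely, since $(A,p,\D)$ is a relative \emph{Lie} algebroid, all prolongations exist, and a realization of $(A,p,\D)$ through a point lying over $x$ can be recovered from a realization of the Lie algebroid $\D^{(k)}$ (restricted appropriately), so without loss of generality I would work at a level where the derivation has been prolonged. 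The key observation is that the realization equations are precisely Bryant's equations (\ref{eq:BryantsEquations}), which in coframe form say $\d\theta^i = -\tfrac12 c^i_{jk}\,\theta^j\wedge\theta^k$ together with $\d a^\mu = F^\mu_i\,\theta^i$, and these are manifestly the structure equations of an exterior differential system.

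Concretely, I would set up the following EDS. On the manifold $M^{(k)}$ (or a neighborhood of $x$) form the product $\Sigma := M^{(k)} \times \RR^n$ with coordinates $(y, t)$, and consider the Pfaffian system $\mathcal I$ generated by the $1$-forms that express $\theta = \d t + (\text{lower-order corrections in } \d y)$ together with the contact-type forms encoding $\d a^\mu - F^\mu_i\theta^i$; dually, one asks for an $n$-dimensional integral manifold on which $t$ restricts to a coordinate system. The structure equations of the relative algebroid guarantee that $\d\mathcal I \subseteq \mathcal I$ — this is exactly where the relative derivation being part of a relative \emph{Lie} algebroid (so that the relevant compatibility/Jacobi-type identities hold after prolongation) is used — so $\mathcal I$ is a genuine differential ideal. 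Next I would invoke analyticity: all the structure functions $c^i_{jk}$, $F^\mu_i$ and their prolonged analogues are analytic, hence $\mathcal I$ is a real-analytic differential system, and the Cartan--K\"ahler theorem applies provided the system is involutive, or more simply, provided one checks that through the chosen point an integral flag of the required dimension exists with matching Cartan characters. By the involutivity/Cartan-character machinery developed in Section~\ref{sec:relativeDerivations} for tableaux of derivations — and because for a relative \emph{Lie} algebroid the prolongation tower is well-defined — the tableau at $x$ is involutive (or becomes so after finitely many prolongations, which we may absorb into $k$), so Cartan--K\"ahler produces an $n$-dimensional analytic integral manifold $P$ through $x$. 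Restricting the tautological data to $P$ yields the coframe $\theta$ and the maps $(a^\mu, b^\varrho)$ satisfying Bryant's equations, i.e.\ a realization through $x$.

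The main obstacle I anticipate is the bookkeeping that links the \emph{abstract} notion of realization of a relative algebroid to a \emph{concrete} involutive analytic EDS in such a way that the Cartan characters and integral flags match up at the prescribed point $x$ — in other words, verifying that the hypotheses of Cartan--K\"ahler are met, rather than Cartan--K\"ahler itself. This splits into two parts: (a) showing $\d\mathcal I\subseteq\mathcal I$, which is a direct consequence of $\D^{(k)}\circ\D^{(k-1)} = 0$ and the derivation property, and should be essentially formal; and (b) showing involutivity of the tableau, which is the genuinely delicate point. For (b) I would rely on the fact, to be established in Section~\ref{sec:relativeDerivations}, that the tableau of derivations of a prolongation stabilizes in the sense that sufficiently high prolongations are involutive — a relative-algebroid analogue of the Cartan--Kuranishi prolongation theorem — so that by possibly enlarging $k$ we may assume involutivity at $x$ from the outset. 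A secondary technical nuisance is that $M^{(k)}$ and the bundle $A$ need only be analytic manifolds, not affine spaces, so one must phrase the EDS and apply Cartan--K\"ahler in local analytic coordinates around $x$ and check the construction is coordinate-independent; this is routine but must be done carefully so that the resulting $P$ is a bona fide realization and not merely a formal solution.
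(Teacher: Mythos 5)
Your strategy coincides with the paper's: the paper reduces, via the one-to-one correspondence between realizations of a relative algebroid and of its canonical prolongation (Proposition \ref{prop:realizationsOfProlongations}, applied inductively) together with the fact that any tableau becomes involutive after finitely many prolongations, to a prolongation whose tableau is involutive, and then invokes the analytic Cartan--K\"ahler machinery in local coordinates --- concretely, it cites Bryant's Theorem 3 of \cite{Bryant2014}, which is exactly the existence statement for Bryant's equations \eqref{eq:BryantsEquations} that you propose to re-derive by an explicit exterior differential system. The only soft spot in your write-up is the EDS itself: on $M^{(k)}\times\RR^n$ the ansatz ``$\theta=\d t+(\text{corrections in }\d y)$'' does not encode arbitrary coframes, and one needs extra fiber variables (e.g.\ a $\GL(n,\RR)$ factor with $\theta=g\,\d t$) before the forms $\d a^\mu-F^\mu_i\theta^i$ and $\d\theta^i+\tfrac12 c^i_{jk}\theta^j\wedge\theta^k$ generate a Pfaffian ideal with independence condition whose integral manifolds are realizations --- but this is precisely the bookkeeping packaged in Bryant's theorem, so it affects the presentation rather than the correctness of your approach.
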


In Section \ref{sec:constructions}, we consider several important constructions with relative algebroids. First, given some vector bundle $A\to N$ we construct a \textbf{universal relative algebroid} $(A,p_1,\UD)$, which is formally integrable, with the property that every algebroid $(A,p,\D)$ relative to a submersion $p\:M\to N$ is a pullback of the universal one via a classifying map. Then we discuss the operation of \textbf{restriction} to subspaces and how this operation interacts with prolongations and realizations. Finally, we introduce the notion of \textbf{systatic foliation} of a relative algebroid, which captures the directions in which the tableau map is zero. We show that these directions are essentially ``redundant" from the perspective of the realization problem, since the original almost relative algebroid descends to a reduced almost relative algebroid, which has essentially the same realizations and the same integrability properties.

In Section \ref{sec:PDE}, our discussion comes to a full circle by showing that any partial differential equation can be recast as a relative algebroid in such a way that the formal theory of prolongations of PDEs coincides with the prolongation theory for relative algebroids. For this, we first interpret the Cartan distribution on a jet space as a relative connection, and then we show that any relative connection has an associated relative algebroid. The derivation corresponding to the Cartan distribution is nothing more than the horizontal differential in the first row of the variational bicomplex. Then, given a $k$-th order PDE $E$ viewed as a submanifold in the jet space $J^kq$ of a submersion $q\:N\to X$, by pulling back the Cartan derivation one obtains the associated relative algebroid. We will prove the following result:

\begin{theoremM}[Theorem \ref{thm:PDEsAndRelativeAlgebroids}]
    Let $E\subset J^kq$ be a PDE. Then, germs of solutions to $E$ are in one-to-one correspondence with germs of realizations of the associated relative algebroid, modulo diffeomorphisms. Moreover:  
    \begin{enumerate}[(i)]
        \item $E$ is a 1-integrable PDE if and only if the associated relative algebroid is 1-integrable.
        \item If $E$ is a 1-integrable PDE, then the relative algebroid corresponding to its first prolongation $E^{(1)}\subset J^{k+1}q$ is the prolongation of the relative algebroid associated with $E$.
    \end{enumerate}
    In particular, a PDE is formally integrable if and only if its associated relative algebroid is.
\end{theoremM}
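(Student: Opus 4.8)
The plan is to set up the dictionary between PDEs and relative algebroids explicitly, translate each of the two notions of realization/solution into each other, and then verify that the prolongation operations on the two sides match. First I would recall the construction from Section \ref{sec:PDE}: starting from the submersion $q\:N\to X$, the Cartan distribution on $J^kq$ is encoded by a relative connection, and this relative connection carries an associated relative algebroid $(A_E, p_E, \D_E)$ where $A_E$ is (the pullback of) $q^*TX$ and the derivation $\D_E$ is obtained from the horizontal differential in the variational bicomplex, restricted to $E\subset J^kq$. The key observation to isolate first is that a germ of a section $s\:X\to N$ with $j^ks$ taking values in $E$ produces, via $\theta = (j^ks)^*(\text{horizontal part})$, a coframe-valued bundle map $TX\to p_E^*A_E$ intertwining $\d$ with $\D_E$; conversely, any realization $\theta\:TP\to p_E^*A_E$ integrates, by the Frobenius/Pfaff argument applied to the Cartan distribution pulled back to $P$, to such a section, uniquely up to the diffeomorphisms of $P$ acting on the fibers. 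This establishes the one-to-one correspondence "modulo diffeomorphisms" — and here the main subtlety is tracking the diffeomorphism ambiguity carefully, since the realization carries a choice of coframe (equivalently of local coordinates adapted to $q$) that a solution germ does not.

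For part (i), I would compare the first-order obstruction on both sides. On the PDE side, $1$-integrability of $E$ means the projection $\pi_{k+1,k}\:E^{(1)}\to E$ is surjective, i.e. the symbol relation and the first-order compatibility conditions obtained by differentiating the defining equations are consistent. On the relative algebroid side, Section \ref{sec:RelativeAlgebroids} tells us that $1$-integrability is the vanishing of the torsion class in the appropriate Spencer cohomology group. The step to carry out is to identify, under the dictionary, the tableau of derivations $\tau_E$ of $(A_E,p_E,\D_E)$ with the symbol $\mathfrak{g}_k$ of $E$ (as a subspace of $S^k T^*X\otimes q^*(TN/TX)$ or its dual), so that the Spencer complex of $\tau_E$ coincides with the Spencer complex of the symbol of $E$; once this identification is in place, the torsion class of the relative algebroid literally equals the obstruction to surjectivity of $E^{(1)}\to E$, because both are computed from the same $\delta$-cohomology applied to the same ``naive'' first prolongation. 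This is where I expect to spend the most care: the identification of tableaux must be natural enough that it is compatible with pullbacks and with the restriction operation from Section \ref{sec:constructions}, since $E$ sits inside $J^kq$ as a submanifold and the associated relative algebroid is the corresponding restriction.

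For part (ii), assuming $1$-integrability, I would show the relative algebroid of $E^{(1)}\subset J^{k+1}q$ is the prolongation $(B_E, p_{E,1}, \D_E^{(1)})$ of $(A_E,p_E,\D_E)$ in the sense of Section \ref{sec:prolongation}. The strategy is to exhibit a canonical map between the two base spaces: $M^{(1)}$ for the relative algebroid is built by adjoining the extra variables needed to make $\D_E^{(1)}\circ\D_E=0$, and the claim is that these extra variables are exactly the fiber coordinates of $E^{(1)}\to E$, i.e. the order-$(k+1)$ jet coordinates constrained by $E$. Concretely I would check that $\D_E^{(1)}$, which is forced by the equation $\D_E^{(1)}\circ \D_E = 0$ up to the freedom measured by the tableau, reproduces the horizontal differential of the variational bicomplex restricted to $E^{(1)}$; the uniqueness part of the prolongation construction (once the torsion vanishes) then gives the identification. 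Finally, the ``in particular'' clause follows formally: formal integrability means all higher prolongations exist, and since one prolongation step on each side corresponds to one prolongation step on the other by (ii), an induction on $k$ using (i) at each stage (applied now to $E^{(k)}$ and its associated relative algebroid, which is $1$-integrable precisely when the next prolongation exists) closes the argument. The main obstacle throughout is the bookkeeping in the tableau/symbol identification in part (i): getting the dualizations, the twist by $q^*TX$ versus $q^*(TN/TX)$, and the grading conventions of the two Spencer complexes to line up exactly, so that ``torsion class $=0$'' and ``$E^{(1)}\to E$ surjective'' are genuinely the same statement rather than merely analogous ones.
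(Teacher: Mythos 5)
Your outline is essentially correct, and your treatment of the germ correspondence and of part (ii) is close to the paper's; but for part (i) you take a genuinely different and heavier route. You propose to identify the tableau of derivations of the associated relative algebroid with the classical symbol of $E$, match the two Spencer complexes, and then equate the torsion class with Goldschmidt's obstruction to surjectivity of $E^{(1)}\to E$. This can be made to work, but it only compares the pointwise obstructions, whereas 1-integrability on both sides also carries regularity requirements (that $E^{(1)}\to E$, resp.\ $M^{(1)}\to M$, be a surjective submersion from a smooth total space), so in the end you still need the space-level identification that you defer to part (ii). The paper short-circuits the whole symbol/Spencer comparison: by Corollary \ref{cor:CartanDerivation}, $J^1(J^kq)$ is canonically the set of pointwise derivations $\D_y\in\DD^1_{q_k^*TX}$ lifting $\d$, and under this identification the holonomic locus $J^{k+1}q$ is exactly the set of lifts of $\D_\Cc$ satisfying $\D_y\circ\D_\Cc=0$; for a general equation one intersects with $J^1E$, so $E^{(1)}=J^1E\cap J^{k+1}q$ \emph{is} the prolongation space $M^{(1)}$ of the associated relative algebroid, and (i) and (ii) drop out simultaneously from this one identification, with the ``in particular'' clause following by the induction you describe. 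Your route would additionally make explicit the (interesting) fact that the tableau of derivations of a PDE's relative algebroid is essentially its classical symbol, but at the cost of exactly the dualization and grading bookkeeping you flag as the main obstacle. One smaller point: in the converse direction of the germ correspondence no Frobenius/Pfaff integration is needed — a realization $(P,r,\theta)$ already supplies the map $r\:P\to E$, the composition $q_k\circ r$ is a local diffeomorphism and hence factors through a local section of $E\to X$, and anchor-compatibility says this section is tangent to the Cartan distribution, hence holonomic; that is the entire argument.
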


In Section \ref{sec:postlude}, we return to Example \ref{ex:introductionSurfaces|nablaK|=1} and discuss its solutions (i.e., realizations) in light of the theory developed in the previous sections.  

The reader will notice that this paper lays only the foundations of the theory. There are many promising directions to explore, often involving connections to existing literature and related fields of mathematics. We conclude the paper with an outlook on future work in Section \ref{sec:outlook}.

\subsection*{Acknowledgements} We would like to thank Luca Accornero, Francesco Cattafi, Marius Crainic, Ivan Struchiner, and Luca Vitagliano for many discussions that helped us shape the ideas presented here. We especially thank Ori Yudilevich, who several years ago coauthored some foundational work that led to this paper but whose professional path has since taken him elsewhere.

\section{Relative derivations}\label{sec:relativeDerivations}

In this section, we develop the theory of relative derivations. While after this section, we will only encounter relative derivations of degree 1 and 2, we consider here relative derivations of arbitrary degree. We believe that higher-degree relative derivations, like their ordinary counterparts, will prove useful in other applications.

\subsection{Derivations and brackets}

We recall here some basic facts about $k$-derivations and $k$-brackets. For more details and proofs we refer the reader to \cite{CrainicMoerdijk08}.

Let $V\to N$ be a vector bundle and set $\Omega^\bullet(V) := \Gamma(\wedge^\bullet V^*)$. A \textbf{$k$-derivation} on $V$ is a graded derivation of $\Omega^\bullet (V)$ of degree $k$, i.e., a linear map $\D\: \Omega^\bullet (V) \to \Omega^{\bullet + k} (V)$ satisfying
\[
	\D(\alpha \wedge \beta) = \left(\D\alpha \right) \wedge \beta + (-1)^{|\alpha| k} \alpha \wedge \left( \D\beta\right).
\]
for homogeneous $\alpha, \beta \in \Omega^\bullet(V)$.
The \textbf{symbol} of a $k$-derivation $M$ is the bundle map $\sigma(\D) \: T^*N \to \wedge^k V^*$ defined by
\[
	\sigma(\D)(\d f) = \D(f), \mbox{ for $f\in C^\infty(N)$}.
\]
Any derivation is determined by its symbol and its action on a generating set (over $C^\infty(N)$) of $\Omega^1(V)$.

A dual point of view to multi-derivations is via multi-brackets. A $k$-bracket on a vector bundle $V\to N$ is a skew-symmetric $k$-linear map
\[
    [\cdot, \dots, \cdot]\: \underbrace{\Gamma(V) \times\cdots \times \Gamma(V)}_{\text{$(k+1)$-times}}\to \Gamma(V)
\]
together with an anchor $\rho\: \wedge^kV\to TN$ satisfying the Leibniz rule:
\[
    [v_0, \cdots, fv_k] = f[v_0, \cdots, v_k] + \rho(v_0, \dots, v_{k-1})(f) v_k, 
\]
for $v_0, \dots, v_k\in \Gamma(V), f\in C^\infty(N)$. 

The notions of $k$-brackets and $k$-derivations are in duality through the Koszul formula (see \cite[\S 2.5]{CrainicMoerdijk08}):
\begin{align*}
    \langle \sigma(\D)(\d f), v_1\wedge\dots\wedge v_k) &=\langle \d f, \rho(v_1, \dots, v_k)\rangle \\
    \D \alpha(v_0, \dots, v_k) &= \sum_{i=0}^k (-1)^i\rho(v_0, \dots, \widehat{v_i}, \dots, v_k) (\alpha(v_i)) - \alpha([v_0,\dots, v_k]),
\end{align*}
for $f \in C^\infty(N)$, $\alpha\in \Omega^1(V)$ and $v_0, \dots, v_k\in \Gamma(V)$.

\begin{proposition}
	The space of $k$-derivations on $V$, denoted $\Der^k(V)$, is the space of sections of a vector bundle $\DD_V^k\to N$, and the symbol induces a short exact sequence
    \[
        \xymatrix{
    	0 \ar[r] & \Hom(V^*, \wedge^{k+1} V^*) \ar[r] & \DD_V^k \ar[r]^---{\sigma} & \Hom(T^*N, \wedge^k V^*) \ar[r] & 0.}
    \]
\end{proposition}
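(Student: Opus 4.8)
The plan is to show first that $k$-derivations form a sheaf of $C^\infty(N)$-modules which is locally free of finite rank, hence the sections of a vector bundle $\DD_V^k$, and then to identify the symbol sequence. First I would establish $C^\infty(N)$-linearity of the symbol map: since $\D$ is a derivation of degree $k$, for $f,g\in C^\infty(N)$ one has $\D(fg)=(\D f)g+f(\D g)$ (the sign $(-1)^{|f|k}$ is trivial as $f$ has degree $0$), so $\sigma(\D)(\d(fg))=g\,\sigma(\D)(\d f)+f\,\sigma(\D)(\d g)$, which is exactly the Leibniz identity showing $\sigma(\D)$ descends to a well-defined bundle map $T^*N\to\wedge^kV^*$; moreover $\D\mapsto\sigma(\D)$ is $C^\infty(N)$-linear. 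Next I would verify that $\D$ is \emph{local}, i.e.\ determined on an open set $U$ by the restrictions of $\Omega^\bullet(V)$ to $U$: this follows from the Leibniz rule via the standard bump-function argument (if $\alpha$ vanishes on $U$, pick $f$ with $f\equiv 1$ off a neighborhood of a point $x\in U$ and $f(x)=0$, write $\alpha=f\alpha$ near $x$, apply $\D$, evaluate at $x$). Locality lets us work in a trivializing chart.

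The core structural step is the claim that a $k$-derivation is uniquely determined by the pair $(\sigma(\D),\D|_{\Omega^1(V)})$, and conversely. I would argue this as follows. A degree-$k$ derivation is determined by its values on degree-$0$ and degree-$1$ elements: indeed, by the Leibniz rule and induction on degree, $\D$ of a wedge $\alpha_1\wedge\cdots\wedge\alpha_q$ of $1$-forms is expressed through the $\D\alpha_i$ (and the wedge structure), and every element of $\Omega^\bullet(V)$ is locally a $C^\infty(N)$-combination of such wedges; the values on $\Omega^0(V)=C^\infty(N)$ are governed by $\sigma(\D)$. Conversely, given a bundle map $s\colon T^*N\to\wedge^kV^*$ and a linear-over-$C^\infty(N)$-up-to-symbol-correction map on $\Omega^1(V)$, i.e.\ a map $D_1\colon\Omega^1(V)\to\Omega^{k+1}(V)$ with $D_1(f\alpha)=s(\d f)\wedge\alpha+f\,D_1(\alpha)$, one checks that the Leibniz formula gives a well-defined extension to all of $\Omega^\bullet(V)$ (well-definedness reduces to checking compatibility with the relations defining the wedge algebra, which is a routine sign computation). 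This identifies $\Der^k(V)$ locally, after a choice of frame for $V$, with a fixed finite-dimensional space of such data, varying smoothly in the base point — hence $\Der^k(V)=\Gamma(\DD_V^k)$ for a vector bundle $\DD_V^k\to N$ whose fiber at $x$ is this space of ``pointwise'' derivation data.

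For the short exact sequence: surjectivity of $\sigma$ is clear since, given any $s\colon T^*N\to\wedge^kV^*$, the map $D_1\equiv 0$ on a local frame of $\Omega^1(V)$ together with $s$ extends (by the previous paragraph) to a $k$-derivation with symbol $s$; smoothness of the resulting local sections and a partition-of-unity patching give a global derivation realizing $s$. The kernel consists of $k$-derivations with zero symbol, i.e.\ those vanishing on $C^\infty(N)\subset\Omega^\bullet(V)$; such a $\D$ is then $C^\infty(N)$-linear as a map $\Omega^1(V)\to\Omega^{k+1}(V)$ (from $\D(f\alpha)=\sigma(\D)(\d f)\wedge\alpha+f\D\alpha=f\D\alpha$), and, being determined by that restriction, is the same thing as a bundle map $V^*\to\wedge^{k+1}V^*$, i.e.\ a section of $\Hom(V^*,\wedge^{k+1}V^*)$; conversely any such bundle map extends uniquely to a symbol-zero derivation via the Leibniz rule. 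This gives the exactness at the two middle terms, and the sequence as stated. The main obstacle is the middle structural step — checking that the Leibniz-rule extension from degree $\le 1$ to all degrees is consistent (independence of the chosen presentation of an element as a sum of wedges of $1$-forms), which is the usual but slightly delicate sign bookkeeping; everything else is either formal or a standard localization argument. Alternatively, one can bypass part of this by invoking the Koszul duality with $k$-brackets recalled above and the known fact (from \cite{CrainicMoerdijk08}) that $k$-brackets with anchor form the sections of a vector bundle sitting in the dual-looking exact sequence, but I would prefer to keep the argument self-contained on the derivation side.
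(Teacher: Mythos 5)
Your proof is correct, but it takes a different route from the paper. The paper does not prove this proposition at all: it is recalled as a known fact with a reference to Crainic--Moerdijk, and when the paper does prove the analogous statement in the relative setting (the lemma that $\Der^k(\varphi)$ is the space of sections of a bundle $\DD^k_\varphi$), it argues abstractly: the derivations form a $C^\infty$-module that is locally finitely generated and locally free in local trivializations, so the Serre--Swan theorem produces the bundle, and the symbol sequence is then read off. You instead give a self-contained, hands-on construction: locality by bump functions, the observation that a $k$-derivation is determined by (and can be reconstructed from) its symbol together with its action on $\Omega^1(V)$, identification of the fiber of $\DD^k_V$ with the space of pointwise derivation data, and an explicit verification of exactness (kernel of $\sigma$ $=$ $C^\infty$-linear maps $\Omega^1(V)\to\Omega^{k+1}(V)$ $=$ $\Gamma(\Hom(V^*,\wedge^{k+1}V^*))$; surjectivity by local zero-extensions patched with a partition of unity, which is legitimate since derivations with a fixed symbol form an affine space over a $C^\infty(N)$-module). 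What your approach buys is transparency and an explicit description of the fibers; what the paper's approach buys is brevity and the fact that the same two-line argument works verbatim in the relative and foliated settings, which is why the authors phrase all later existence-of-bundle statements through Serre--Swan or its foliated variant (Proposition \ref{prop:BasicSerreSwan}). One small point worth making explicit in your write-up: the well-definedness of $\sigma(\D)$ as a bundle map, i.e.\ that $\D f\big\vert_x$ depends only on $\d_x f$, requires the standard Hadamard-type argument (write $f-f(x)=\sum g_ih_i$ with $g_i(x)=h_i(x)=0$ when $\d_xf=0$, and use $\D(1)=0$); your Leibniz computation plus locality contains all the ingredients, but as stated it only shows derivation-in-$f$ behavior, not pointwise dependence on $\d_x f$.
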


Dually, using the canonical identification $\Hom(W^*, V^*)\cong \Hom(V, W)$, one has 
\begin{equation}\label{eq:symbolExactSequence}
\xymatrix{
	0 \ar[r] & \Hom(\wedge^{k+1} V, V) \ar[r] & \DD_V^k \ar[r]^---{\sigma} & \Hom(\wedge^k V, TN) \ar[r] & 0,}
\end{equation}
which is the sequence that we will use in practice.

A choice of connection $\nabla$ on $V$ determines a splitting $\DD_V^k\to \Hom(\wedge^{k+1} V, V)$ of this sequence. Namely, for any $k$-bracket, the expression
\begin{equation}
    \label{eq:splitting:connection:0}
    [v_0,\dots,v_k]_\nabla:= [v_0,\dots,v_k]+(-1)^{k+1}\sum_{i=0}^k (-1)^i\nabla_{\rho(v_0, \dots, \widehat{v_i}, \dots, v_k)}v_i,
\end{equation} 
is $C^\infty(M)$-multilinear.

\begin{example}
    A derivation $\D\: \Omega^\bullet(V)\to \Omega^\bullet(V)$ of degree 0 corresponds to a linear vector field on $V$. The flow $\varphi_t$ of this linear vector field is a 1-parameter family of vector bundle maps that solves the ODE
    \begin{equation*}
        \begin{cases}
            \frac{\d}{\d t} \varphi_t^* \alpha = \varphi^*_t (\D \alpha),\\
            \varphi_0 = \id_V.
        \end{cases}
    \end{equation*}
\end{example}

\begin{example}
	A Lie algebroid structure on a vector bundle $A\to M$ is 1-derivation $\D\: \Omega^\bullet(A)\to \Omega^{\bullet + 1}(A)$ such that $\D^2 = 0$.
\end{example}

\subsection{Relative derivations} 
Fix two vector bundles $W\to M$ and $V\to N$ together with bundle map $(\varphi, p)\: W \to V$ covering a smooth map $p\: M\to N$.

\begin{definition}
    A \textbf{$k$-derivation relative to $\varphi$} is a map
	\[
		\D\: \Omega^\bullet (V) \to \Omega^{\bullet + k} (W)
	\]
	satisfying the Leibniz rule
	\[
		\D(\alpha\wedge\beta) = (\D\alpha)\wedge (\varphi^*\beta) +(-1)^{|\alpha| k} (\varphi^* \alpha) \wedge (\D\beta).
	\]
	The space of $k$-derivations relative to $\varphi$ is denoted by $\Der^k(\varphi)$.
\end{definition}

\begin{lemma}
	The space $\Der^k(\varphi)$ arises as the space of sections of a vector bundle $\DD^k_\varphi$ over $M$.
\end{lemma}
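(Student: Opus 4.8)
The plan is to mimic, in the relative setting, the standard proof that $k$-derivations on a single vector bundle form the sections of a bundle $\DD^k_V$, while carefully tracking where the map $p\:M\to N$ and the pullback $\varphi^*$ enter. First I would establish the key pointwise-locality property: a $k$-derivation $\D$ relative to $\varphi$ is a local operator, in the sense that $(\D\alpha)_x$ depends only on the germ of $\alpha$ at $p(x)$. This follows from the Leibniz rule by the usual bump-function argument — if $\alpha$ vanishes near $p(x)$, pick $f\in C^\infty(N)$ with $f\equiv 1$ near $p(x)$ and $f\alpha = 0$; then $0 = \D(f\alpha) = (\D f)\wedge\varphi^*\alpha + \varphi^*f\cdot\D\alpha$, and evaluating at $x$ (where $\varphi^*\alpha$ vanishes and $\varphi^*f = f(p(x)) = 1$) gives $(\D\alpha)_x = 0$. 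The same trick with a single function shows that in fact $(\D\alpha)_x$ depends only on the $1$-jet of $\alpha$ at $p(x)$: writing $\alpha$ locally as $\sum f_a\,\sigma^a$ for a local frame $\sigma^a$ of $V^*$ (extending the degree to wedge products of such), $\D\alpha$ is determined by the values $f_a(p(x))$, $(\d f_a)_{p(x)}$, and the finitely many elements $(\D\sigma^a)_x$ and $\sigma(\D)_x$.

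Next I would make the fiber of the candidate bundle explicit. Over a point $x\in M$ with $y = p(x)$, the relative Leibniz rule forces $\D$ to be determined by its symbol $\sigma(\D)_x\: T^*_yN\to \wedge^k W^*_x$ (defined by $\sigma(\D)(\d f) = \D f$ for $f\in C^\infty(N)$) together with a $C^\infty$-linear part measuring the failure of $\D$ to be a tensor; concretely, fixing connections on $V$ and $W$ one gets, exactly as in \eqref{eq:splitting:connection:0}, a decomposition of $\D$ at $x$ into a symbol part in $\Hom(T^*_yN, \wedge^k W^*_x)$ and a tensorial part in $\Hom(V^*_y, \wedge^{k+1}W^*_x)$. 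Thus I would define $\DD^k_\varphi$ to be the vector bundle over $M$ whose fiber at $x$ sits in a short exact sequence
\[
\xymatrix{
0 \ar[r] & \Hom(\wedge^{k+1}W_x, p^*V|_x) \ar[r] & (\DD^k_\varphi)_x \ar[r]^-{\sigma} & \Hom(\wedge^k W_x, T_yN) \ar[r] & 0,}
\]
equivalently $\DD^k_\varphi = p^*\DD^k_V \otimes (\text{appropriate correction})$ — more precisely, replacing $V^*$ in the source by $p^*V^* = \varphi$-pullback and $T^*N$ by $p^*T^*N$, while keeping $\wedge^\bullet W^*$ on the target. Since all of these are pullbacks along $p$ of bundles on $N$ tensored with bundles built from $W$ on $M$, $\DD^k_\varphi$ is a genuine vector bundle over $M$; one can also just define it by the above sequence after choosing connections, and then check the construction is connection-independent.

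The final step is the identification $\Der^k(\varphi) \cong \Gamma(\DD^k_\varphi)$. Given $\D\in\Der^k(\varphi)$, the $1$-jet locality established above shows $x\mapsto \D_x$ (its symbol together with its connection-corrected tensorial part) is a well-defined section; smoothness follows because applying $\D$ to a local frame of $\Omega^\bullet(V)$ produces smooth forms on $M$. Conversely, given a section $s$ of $\DD^k_\varphi$, one reconstructs a relative derivation by the Koszul-type formula: on a local frame $\sigma^a$ of $V^*$ set $\D\sigma^a := s$-tensorial-part$(\sigma^a)$, on functions $\D f := s$-symbol-part$(\d f)$, and extend by the relative Leibniz rule; one checks this is well-defined (independent of the frame, using the compatibility encoded in the symbol sequence) and globalizes via a partition of unity on $M$. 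These two assignments are mutually inverse and $C^\infty(M)$-linear, giving the claimed bundle. The main obstacle — and the one point needing genuine care rather than routine verification — is the well-definedness of the reconstruction map: one must verify that defining $\D$ on a local frame and extending by the twisted Leibniz rule does not depend on the choice of frame, which is precisely where the interplay between $\sigma(\D)$ on $T^*N$ and the tensorial part on $V^*$, pulled back along $\varphi$, has to be reconciled; this is the relative analogue of the classical check for $\DD^k_V$ and goes through by the same Koszul-formula bookkeeping, now with $\varphi^*$ inserted in the right places.
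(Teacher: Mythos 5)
Your argument is correct, but it takes a genuinely different route from the paper. The paper's proof is a two-line appeal to the Serre--Swan theorem: $\Der^k(\varphi)$ is a $C^\infty(M)$-module which, in local trivializations and coordinates, is visibly locally finitely generated (and locally free), hence it is the module of sections of a vector bundle $\DD^k_\varphi$. You instead build the bundle by hand: the bump-function argument showing $(\D\alpha)_x$ depends only on the $1$-jet of $\alpha$ at $p(x)$, the explicit description of the fiber via the symbol sequence $0\to\Hom(\wedge^{k+1}W,p^*V)\to\DD^k_\varphi\to\Hom(\wedge^kW,p^*TN)\to 0$ split by a choice of connection as in \eqref{eq:splitting:connection:0}, and the Koszul-type reconstruction of a relative derivation from a section, with frame-independence as the one step needing care. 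What your approach buys is that it produces, inside the proof, the exact sequence \eqref{eq:symbolsesrelativederivations} that the paper only records afterwards, and it makes the fibers completely explicit; what the paper's approach buys is brevity, since Serre--Swan absorbs precisely the locality and frame-independence bookkeeping you single out. One caveat: your aside that $\DD^k_\varphi$ equals $p^*\DD^k_V$ tensored with an ``appropriate correction'' is not literally correct for a general bundle map $\varphi$ --- the clean identification $\DD^k_{p_*}\cong p^*\DD^k_V$ holds when $W=p^*V$ and $\varphi$ is the canonical projection, as the paper notes in the lemma following this one --- but since you immediately fall back on the connection-split exact-sequence definition and check connection-independence, this slip does not affect the validity of your construction.
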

\begin{proof}
    The space $\Der^k(\varphi)$ is a $C^\infty(M)$-module. In local trivializations and coordinates, it is clear that this module is also locally finitely generated. The lemma follows from the Serre-Swan theorem.
\end{proof}

Note that ordinary derivations are derivations relative to the identity map, so that $\Der^k(V)=\Der^k(\id_V)$ and $\DD^k_V=\DD^k_{\id_V}$.

\begin{definition}
	The \textbf{symbol} of a derivation of a $k$-derivation relative to $(\varphi,p)$ is the map $\sigma(\D)\: p^*T^* N \to \wedge^k W^*$ defined by
	\[
		\sigma(\D)_x(\d_{p(x)} f) = \D(f)_x \in \wedge^k V^*|_x, \quad \text{ for }f\in C^\infty(N), \ x\in M.
	\]
\end{definition}

The dual of the symbol, arising from the canonical identification $\Hom(W, V)\cong \Hom(W^*, V^*)$, is also denoted by $\sigma(\D)\: \wedge^k W \to p^* TN$. As for ordinary derivations, the symbol induces an exact sequence
\begin{equation}\label{eq:symbolsesrelativederivations}
	\xymatrix{
    0 \ar[r] & \Hom(\wedge^{k+1} W, p^* V) \ar[r] & \DD^k_\varphi \ar[r]^---{\sigma}& \Hom(\wedge^kW, p^*TN) \ar[r] & 0.}
\end{equation}

\begin{example}\label{ex:relativeDerivationsPulledBack}
	If $\D_1\in \Der^k(W)$  and $\D_2\in \Der^k(V)$ are $k$-derivations on $W$ and $V$, respectively, then $\varphi^*\circ \D_2$ and $\D_1\circ \varphi^*$ are both derivations relative to $\varphi$. 
\end{example}

\begin{example}\label{ex:PointwiseDerivations}
    Let $V\to N$ be a vector bundle and fix $x\in N$. Any element $\D\in (\DD^k_V)_x$ determines a $k$-derivation \textbf{relative} to the inclusion $\iota_x\: V_x \hookrightarrow V$:
    \[
        \D\: \Omega^\bullet (V)\to \wedge^{\bullet + k} V_x^*
    \]
    It can be defined as follows. Pick any section $\tilde{\D}\in \Gamma(\DD^k_V)$ with $\tilde{\D}_x = \D$ and set
    \[
        \D \alpha = \left(\iota_x^*\circ\tilde{\D}\right)\alpha.
    \]
    This gives a canonical identification between the space of $k$-derivations relative to the inclusion $\iota_x\: W_x \hookrightarrow W$ and the fiber $(\DD^k_W)_x$. 
\end{example}

The most important case relevant to Bryant's equations occurs when $V$ is a vector bundle over $N$, $W=p^*V$ is the pullback of $W$ along a map $p\:M\to N$ and $\varphi=p_*\:p^*V\to V$ is the canonical projection -- see also Remark \ref{rem:no:brackets} below. In this case, the short exact sequence \eqref{eq:symbolsesrelativederivations} becomes
\begin{equation}\label{eq:symbolsesrelativederivations:projection}
	\xymatrix{
    0 \ar[r] & p^*\Hom(\wedge^{k+1} V, V) \ar[r] & \DD^k_\varphi \ar[r]^---{\sigma} & p^*\Hom(\wedge^k V,TN) \ar[r] & 0.}
\end{equation}

The previous sequence suggests the following result.
\begin{lemma}
    Let $p\: M\to N$ be a map, $V\to N$ a vector bundle and $p_*\:p_*V\to W$ the projection. Then there is a canonical isomorphism
    \[ 
        \DD^k_{p_*}\cong p^*\DD^k_V.
    \]
\end{lemma}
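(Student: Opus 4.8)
The plan is to construct the isomorphism $\DD^k_{p_*}\cong p^*\DD^k_V$ directly and then check it is a well-defined bundle map by comparing the two short exact sequences \eqref{eq:symbolsesrelativederivations:projection} and the pullback of \eqref{eq:symbolExactSequence}. First I would observe that the fiber of $p^*\DD^k_V$ over a point $x\in M$ is, by definition, the fiber $(\DD^k_V)_{p(x)}$, which by Example \ref{ex:PointwiseDerivations} is canonically identified with the space of $k$-derivations of $V$ relative to the inclusion $\iota_{p(x)}\:V_{p(x)}\hookrightarrow V$. On the other hand, the fiber of $\DD^k_{p_*}$ over $x$ should be the space of ``pointwise'' relative $k$-derivations $\Omega^\bullet(V)\to \wedge^{\bullet+k}W_x^* = \wedge^{\bullet+k}V_{p(x)}^*$, i.e.\ linear maps satisfying the Leibniz rule with the pullback replaced by evaluation at $x$ composed with $\varphi=p_*$. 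But since $\varphi$ restricted to $W_x = (p^*V)_x$ is exactly the canonical identification $W_x\cong V_{p(x)}$, this Leibniz rule is literally the one defining a $k$-derivation relative to $\iota_{p(x)}$. This gives a fiberwise bijection, and it is linear by construction.

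Next I would promote this fiberwise bijection to a smooth bundle isomorphism. The cleanest way is to give the map on sections: a section of $p^*\DD^k_V$ over an open set $U\subseteq M$ is (locally) of the form $x\mapsto (\tilde\D)_{p(x)}$ for $\tilde\D\in\Gamma(\DD^k_V)$; send it to the relative derivation $p_*^*\circ\tilde\D$ — equivalently $\varphi^*\circ\tilde\D$ — from Example \ref{ex:relativeDerivationsPulledBack}. One checks that $\varphi^*\circ\tilde\D$ indeed satisfies the relative Leibniz rule (this is exactly the content of Example \ref{ex:relativeDerivationsPulledBack}), that it depends $C^\infty(U)$-linearly on $\tilde\D$, and that its value at $x$ agrees with the fiberwise map above, namely $\iota_x^*\circ\tilde\D = (\tilde\D)_{p(x)}$ in the sense of Example \ref{ex:PointwiseDerivations}. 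Since every local section of $p^*\DD^k_V$ is a $C^\infty(U)$-combination of pullbacks $p^*\tilde\D$, this determines the bundle map on all of $p^*\DD^k_V$, and it is injective (hence an isomorphism, by the fiberwise bijectivity already established) because a relative derivation that vanishes at every point is the zero map.

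Finally, to make the identification canonical and compatible with the rest of the theory, I would verify that the isomorphism intertwines the symbol maps: the symbol of $\varphi^*\circ\tilde\D$ at $x$ sends $\d_{p(x)}f$ to $(\varphi^*\tilde\D f)_x = (\tilde\D f)_{p(x)} = \sigma(\tilde\D)_{p(x)}(\d_{p(x)}f)$, so the square
\[
\xymatrix{
\DD^k_{p_*} \ar[r]^---{\sigma} & \Hom(\wedge^k W, p^*TN)\\
p^*\DD^k_V \ar[u]^{\cong} \ar[r]^---{p^*\sigma} & p^*\Hom(\wedge^k V, TN) \ar[u]_{\cong}}
\]
commutes, where the right vertical map uses $W=p^*V$. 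Commutativity on the kernel side ($p^*\Hom(\wedge^{k+1}V,V)\cong\Hom(\wedge^{k+1}W,p^*V)$) is immediate, so the isomorphism is an isomorphism of the short exact sequences \eqref{eq:symbolExactSequence} pulled back along $p$ and \eqref{eq:symbolsesrelativederivations:projection}. The main obstacle, such as it is, is purely bookkeeping: one must be careful that ``the fiber of a pullback bundle'' and ``pointwise relative derivations'' are matched up using the \emph{same} identification $W_x\cong V_{p(x)}$ that $\varphi=p_*$ induces, so that the two Leibniz rules genuinely coincide rather than merely resembling one another; once that is pinned down the rest is formal.
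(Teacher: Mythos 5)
Your argument is correct, but it takes a different route from the paper. The paper's proof is a one-liner at the level of modules: it observes that $\Der^k(p_*)\cong C^\infty(M)\otimes_{C^\infty(N)}\Der^k(V)$ as $C^\infty(M)$-modules, which, combined with the standard description of sections of a pullback bundle, immediately gives $\DD^k_{p_*}\cong p^*\DD^k_V$ via Serre--Swan. You instead work fiberwise: you identify both fibers at $x\in M$ with the same concrete model (derivations relative to the inclusion of the fiber, using $W_x\cong V_{p(x)}$), and then realize the isomorphism explicitly on sections by $p^*\tilde\D\mapsto \varphi^*\circ\tilde\D$, checking it induces the identity on that common model and intertwines the symbol sequences. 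What your version buys is an explicit formula for the isomorphism and its compatibility with the two short exact sequences, which the paper never writes down; what it costs is that you quietly rely on the relative-derivation analogue of Example \ref{ex:PointwiseDerivations}, namely that the fiber of the Serre--Swan bundle $\DD^k_{p_*}$ at $x$ is exactly the space of pointwise relative derivations (i.e.\ that evaluation at $x$ is surjective with kernel $\mathfrak{m}_x\cdot\Der^k(p_*)$). That identification needs the same local finite-generation/local triviality input that underlies the paper's tensor-product identity, so your "bookkeeping" remark is where the real (routine) work sits; at the level of rigor the paper itself adopts, this is acceptable, but it is worth being aware that the two proofs ultimately hinge on the same local statement that every relative derivation is locally a $C^\infty(M)$-combination of pullbacks $\varphi^*\circ\D_i$ of ordinary derivations on $V$.
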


\begin{proof}
    This follows from the observation that 
    \[
        \Der^k(p_*)\cong C^\infty(M)\otimes_{C^\infty(N)} \Der^k(V)
    \]
    as a $C^\infty(M)$-module.
\end{proof}

\subsection{Relative brackets} 
Relative derivations are in duality with relative brackets, a concept that we will introduce here.

\begin{definition}
    Let $V\to N$ be a vector bundle and $p\: M\to N$ any smooth map. A \textbf{$k$-bracket relative to $p$} is a skew-symmetric $\RR$-multilinear map
    \[
        [\cdot, \dots, \cdot]\: \underbrace{\Gamma(V) \times\cdots \times \Gamma(V)}_{\text{$(k+1)$-times}}\to \Gamma(p^*V)
    \]
    and a relative anchor $\rho\: \wedge^k p^*V\to p^*TN$ satisfying the Leibniz rule
    \[
        [v_0, \dots, fv_k] = (p^*f)[v_0, \dots, v_k] + \rho(p^*v_0,\dots,p^*v_{k-1})(f) p^*v_k
    \]
    for all $f\in C^\infty(N)$.
\end{definition}

\begin{remark}\label{rem:no:brackets}
    There seems to be no canonical way to define a bracket relative to an arbitrary bundle map $(\varphi, p)\: W\to V$ because there is no canonical map $\Gamma(V)\to \Gamma(W)$, unless $\varphi$ is a fiberwise isomorphism, in which case $W$ is isomorphic to $p^*V$. 
\end{remark}

Similar to the case of ordinary multiderivations, we have the following correspondence between relative multiderivations and relative multibrackets.

\begin{lemma}\label{lem:derivationsAndBrackets}
	Let $p\: M\to N$ be a map and $V\to N$ a vector bundle. There is a 1:1 correspondence between
	\begin{enumerate}[(i)]
		\item $k$-derivations relative to $p_*\: p^*V\to V$ and
		\item $k$-brackets relative to $p$.
	\end{enumerate}
\end{lemma}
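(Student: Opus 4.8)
The plan is to mimic the classical Koszul-formula correspondence between $k$-derivations and $k$-brackets, the $C^\infty$-linear version of which is recalled in \S2.1 of the excerpt, but now keeping careful track of where sections live. The two Koszul formulas displayed earlier already tell us what the maps must be: given a $k$-derivation $\D\in\Der^k(p_*)$, define the relative anchor $\rho\colon\wedge^k p^*V\to p^*TN$ by dualizing the symbol $\sigma(\D)\colon p^*T^*N\to\wedge^k W^*$ (here $W=p^*V$), i.e. $\langle\sigma(\D)(p^*\d f),v_1\wedge\cdots\wedge v_k\rangle=\langle p^*\d f,\rho(v_1,\dots,v_k)\rangle$, and define the relative $k$-bracket by the second Koszul formula, reading the right-hand side in $\Gamma(p^*V)$: $\alpha([v_0,\dots,v_k]):=\sum_i(-1)^i\rho(p^*v_0,\dots,\widehat{p^*v_i},\dots,p^*v_k)(\alpha(v_i))-(\D\alpha)(p^*v_0,\dots,p^*v_k)$ for $\alpha\in\Omega^1(V)$, where on the left $\alpha$ is contracted with $[v_0,\dots,v_k]\in\Gamma(p^*V)$ via $p^*\alpha$. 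Conversely, given a relative $k$-bracket with anchor $\rho$, the same two formulas run backwards define $\sigma(\D)$ and $\D$ on $\Omega^0(V)$ and $\Omega^1(V)$, and then $\D$ is extended to all of $\Omega^\bullet(V)$ by the relative Leibniz rule, using that $\Omega^\bullet(V)$ is generated as a graded $C^\infty(N)$-algebra by $\Omega^0(V)$ and $\Omega^1(V)$.

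The key steps, in order, are: (1) check that the two constructions above are well defined — in particular that the right-hand side of the bracket formula is $C^\infty(N)$-linear in the slots $v_0,\dots,v_{k-1}$ and that its failure of $C^\infty(N)$-linearity in $v_k$ is exactly the Leibniz defect $\rho(p^*v_0,\dots,p^*v_{k-1})(f)\,p^*v_k$, so that it genuinely defines a section of $p^*V$ (not merely an $\RR$-multilinear map into functions); (2) check the symbol/anchor compatibility, i.e. that $\sigma(\D)$ as defined is $C^\infty(M)$-linear and equals the dual of $\rho$, which is where the pullback bookkeeping $p^*\d(fg)=(p^*f)\,p^*\d g+(p^*g)\,p^*\d f$ enters; (3) verify that $\D$ defined by Leibniz on generators is consistent, i.e. independent of the chosen presentation of an element of $\Omega^\bullet(V)$ — equivalently, that the relation $\D(f\alpha)=(\D f)\wedge p^*\alpha+(p^*f)\D\alpha$ forced by Leibniz is compatible with the formula on $\Omega^1(V)$; and (4) confirm that the two assignments $\D\mapsto([\cdot],\rho)$ and $([\cdot],\rho)\mapsto\D$ are mutually inverse, which is immediate once one observes that a relative derivation is determined by its symbol together with its values on a generating set of $\Omega^1(V)$ over $C^\infty(M)$ — and $p^*\Omega^1(V)$ generates, by the computation $\DD^k_{p_*}\cong p^*\DD^k_V$ from the previous lemma.

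I expect the main obstacle to be step (1), the well-definedness of the relative bracket: in the non-relative case one checks that a certain alternating sum is tensorial, but here the output is required to be a section of the \emph{pulled-back} bundle $p^*V$ while the inputs are honest sections of $V$ over $N$, and $\D\alpha$ only sees $p^*v_i$. One must therefore verify that replacing $v_i$ by $fv_i$ (for $f\in C^\infty(N)$) changes the expression by $p^*(f)$ times itself, plus the anchor term — and crucially that the expression does not depend on any choice beyond the $p^*v_i$, i.e. that if $\sum f_\alpha\, p^*w_\alpha=\sum g_\beta\, p^*u_\beta$ in $\Gamma(p^*V)$ then the corresponding bracket values agree. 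This is handled cleanly by the identification $\Der^k(p_*)\cong C^\infty(M)\otimes_{C^\infty(N)}\Der^k(V)$ already proved, together with the symbol sequence \eqref{eq:symbolsesrelativederivations:projection}: locally, choosing a frame $e_a$ of $V$, a relative $k$-derivation is given by structure functions on $M$ (the anchor components $\rho^j_{a_1\cdots a_k}\in C^\infty(M)$ and the bracket components $c^a_{a_0\cdots a_k}\in C^\infty(M)$), and one simply reads the correspondence off in coordinates, then checks it is frame-independent. Since the paper has just established the $C^\infty(M)$-module and vector-bundle structure on $\Der^k(p_*)$ and its local finite generation, this local computation plus a globalization-by-partition-of-unity remark is the natural route, and I would organize the proof around it rather than around the intrinsic Koszul manipulation.
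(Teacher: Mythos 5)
Your proposal is correct and follows essentially the same route as the paper, which proves the lemma by writing down a modified Koszul formula relating the relative derivation to the relative bracket and anchor. Your additional well-definedness checks (tensoriality, extension by the Leibniz rule, and the local-frame argument via $\Der^k(p_*)\cong C^\infty(M)\otimes_{C^\infty(N)}\Der^k(V)$) are exactly the details the paper leaves implicit, differing only in an immaterial sign convention.
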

\begin{proof}
    This follows from a modified version of the Koszul formula:
    \begin{align*}
        (\D f)(p^*v_1, \dots, p^*v_k) &= \rho(p^*v_1, \dots, p^*v_k)(f),\\
        (\D \alpha)(p^*v_0, \dots, p^*v_k) &= \sum_{i=0}^k(-1)^{i + k} \rho(p^*v_0, \dots, \widehat{p^*v_i}, \dots, p^*v_k)\left(\langle \alpha, v_i\rangle\right)\\
        &\phantom{=} -p^*\alpha([v_0, \dots, v_k])
    \end{align*}
    for $f\in C^\infty(N)$, $\alpha\in \Omega^1(V)$ and $v_0, \dots, v_k\in\Gamma(V)$.
\end{proof}

\begin{remark}
\label{rem:connection:splits}
    Thinking of multiderivations relative to $p$ as multibrackets relative to $p$, one sees that each choice of connection $\nabla$ for $V$ gives a splitting of the sequence \eqref{eq:symbolsesrelativederivations:projection} associated with the anchor. Namely, we have the analogue of formula \eqref{eq:splitting:connection:0}: given a relative $k$-bracket the expression
    \[
    [p^*v_0,\dots,p^*v_k]_\nabla:= [v_0,\dots,v_k]+(-1)^{k+1}p^*\Big(\sum_{i=0}^k (-1)^i\nabla_{\rho(p^*v_0, \dots, \widehat{p^*v_i}, \dots, p^*v_k)}v_i\Big),
    \]
    defines a unique element of $\Hom(\wedge^{k+1} p^*V,p^*V)=p^*\Hom(\wedge^{k+1} V, V)$.
\end{remark}

\subsection{Brackets and derivations relative to foliations}

In practice, relative derivations often appear relative to a submersion $p\: M\to N$. In some cases, however, we encounter derivations that are only \emph{locally} relative to a submersion, i.e., they are relative to a foliation $\FF$ of $M$. In order to define them properly, we need to recall first some basic notions from foliation theory (see, e.g., \cite{MoerdijkMrcun03}).

\subsubsection{Foliated flat vector bundles}
Henceforth, we will identify a foliation $\FF$ on a manifold $M$ with an involutive subbundle $\FF\subset TM$. Associated to $\FF$ one has the sheaf of \textbf{basic functions} given by
\[
    C^\infty_\bas(U) = \{ f\in C^\infty(U) : X(f) = 0 \text{ for all $X\in \Gamma(\FF\vert_U)$} \}.
\]
Given a vector bundle $W\to M$, we will write $\Omega^k(\FF; W)$ for the foliated $k$-forms with values in $W$, i.e., sections of $\wedge^k\FF^*\otimes W$. Recall that a \textbf{$\FF$-connection} on $W$ is a $\RR$-bilinear map $\overline{\nabla}\: \Gamma(\FF)\times \Gamma(W)\to \Gamma(W)$ satisfying:
\[
\overline{\nabla}_{fX}w = f \overline{\nabla}_{X},\quad 
\overline{\nabla}_X(fw) = f\overline{\nabla}_xw + X(f) w,
\] 
for $X\in \Gamma(\FF)$, $f\in C^\infty(M)$ and $w\in \Gamma(W)$. It can be interpreted as a differential operator $\overline{\nabla}\: \Gamma(W)\to \Omega^1(\FF; W)$. A section $w\in \Gamma(W)$ is \textbf{$\overline{\nabla}$-parallel} when $\overline{\nabla} w =0$. This gives rise to a sheaf of modules over $C^\infty_{\bas}$ given by
\[
    \Gamma_{(W, \overline{\nabla})}(U) = \{ w\in \Gamma_W(U) : \overline{\nabla} w  = 0 \}.
\]
Local existence of parallel sections is controlled by the curvature of $\onabla$ which is the the 2-form
$R_{\overline{\nabla}}\in\Omega^2(\FF; \End(W))$ given by
\[
    R_{\overline{\nabla}}(X, Y) = \left[\overline{\nabla}_X, \overline{\nabla}_Y\right] -\overline{\nabla}_{[X, Y]}.
\]  
For a flat $\FF$-connection (i.e. $R_{\overline{\nabla}} = 0$), there exist a parallel local section through every point $w\in W$. However, non-zero global parallel sections may not exist.

\begin{definition}
    A \textbf{foliated vector bundle} $(W, \overline{\nabla})$ is a vector bundle $W$ over a manifold with foliation $(M, \FF)$ together with an $\FF$-connection $\overline{\nabla}$. The foliated vector bundle is called \textbf{flat} when the $\FF$-connection is flat.
\end{definition}
The dual $W^*$ of a foliated vector vector bundle $(W, \overline{\nabla})$ carries a canonical $\FF$-connection $\overline{\nabla}$ determined by
\[
    \LL_X\langle \alpha, w\rangle = \left\langle \overline{\nabla}_X \alpha, w\right\rangle + \left\langle \alpha, \overline{\nabla}_X w\right\rangle,
\]
for $X\in \Gamma_\FF$, $\alpha\in \Gamma_{W^*}$ and $w\in \Gamma_W$. Note that $(W^*, \overline{\nabla})$ is flat iff $(W, \overline{\nabla})$ is flat. 

Next, by a \textbf{map of foliations} $p\: (M_1, \FF_1)\to (M_2, \FF_2)$ we mean a map such that $\d p(\FF_1)\subset\FF_2$. For example, the identity $\id\:M\to M$ maps the trivial foliation to any foliation $\FF_M$. A \textbf{map of foliated vector bundles} $\varphi\: (W_1,\onabla^1) \to (W_2,\onabla^2)$ covering a map of foliations $p\: (M_1, \FF_1)\to (M_2, \FF_2)$ is a bundle map $\varphi\:W_1\to W_2$  which satisfies
\[ 
    \onabla^1_X(\varphi^*\alpha)) = \varphi^*(\onabla^2_{p_*(X)}\alpha),\quad\text{for all }X\in \Gamma_{\FF_1}, \alpha\in \Omega^1(W_2).
\]
In the case of flat foliated bundles, this condition ensures that pullback maps flat forms to flat forms $\varphi^*\:\Omega^\bullet_{(W_2,\onabla^2)}\to\Omega^\bullet_{(W_1,\onabla^1)}$.

\begin{remark}
\label{rem:fiberwise:iso}
    When $\varphi$ is a fiberwise isomorphism, we obtain also a pullback of sections which preserves flatness:
    \[ \varphi^*\:\Gamma_{(W_2,\onabla^2)}\to\Gamma_{(W_1,\onabla^1)},
    \quad (\varphi^*w)(x):=\varphi^{-1}(w(p(x))). \]
\end{remark}

There is a version of the Serre-Swan theorem for flat foliated vector bundles.

\begin{proposition}\label{prop:BasicSerreSwan}
    Let $(M, \FF)$ be a manifold with foliation. There is, up to  a natural isomorphism, a one-to-one correspondence 
    \[
        \left\{ 
            \substack{ 
                \mbox{ 
                    locally finitely generated}\\
                    \\
                \mbox{and locally free $C^\infty_{\mathrm{bas}}$-modules
                } \\
            } 
        \right\}
        \overset{1:1}{\longleftrightarrow} 
        \left\{
            \substack{
                \mbox{flat foliated vector}\\ \\
                \mbox{bundles over $(M, \FF)$}\\
            }
        \right\}
    \]
\end{proposition}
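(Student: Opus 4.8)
The plan is to mimic the proof of the classical Serre--Swan theorem, replacing the structure sheaf $C^\infty_M$ by the sheaf $C^\infty_\bas$ and carrying the flat $\FF$-connection along throughout. In one direction, to a flat foliated vector bundle $(W,\onabla)$ I assign its sheaf of parallel sections $\Gamma_{(W,\onabla)}$. This is a sheaf of $C^\infty_\bas$-modules because $\onabla(fw)=f\,\onabla w$ whenever $f$ is basic and $w$ is $\onabla$-parallel. It is locally finitely generated and locally free: working in a simple foliated chart $U\cong P\times T$ with $P$ contractible, flatness of $\onabla$ lets one integrate parallel sections along the plaques, with smooth dependence on the transversal parameter (a linear ODE with parameters), producing a local $\onabla$-parallel frame $e_1,\dots,e_r$ of $W|_U$; over any plaque-saturated open $V=p^{-1}(V')\subset U$ the parallel sections are exactly the $\sum_i f_i e_i$ with $f_i\in C^\infty_\bas(V)$, so $\Gamma_{(W,\onabla)}|_U\cong (C^\infty_\bas|_U)^r$.

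Conversely, to a locally free, locally finitely generated sheaf $\mathcal{E}$ of $C^\infty_\bas$-modules I associate the sheaf $\mathcal{W}:=C^\infty_M\otimes_{C^\infty_\bas}\mathcal{E}$ of $C^\infty_M$-modules, together with the operator $\onabla_X(f\otimes e):=(Xf)\otimes e$ for $X\in\Gamma(\FF)$; this is well defined precisely because $X$ annihilates basic functions, and a one-line computation with $[X,Y]$ shows it is flat. Since $\mathcal{E}$ is locally $(C^\infty_\bas)^r$, the sheaf $\mathcal{W}$ is locally $(C^\infty_M)^r$, hence by the ordinary Serre--Swan theorem it is the sheaf of sections of a vector bundle $W\to M$, and transporting $\onabla$ gives it a flat $\FF$-connection. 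Equivalently and more concretely, one glues trivial bundles $U_\alpha\times\RR^r$ along the transition cocycle $g_{\alpha\beta}\in\GL_r(C^\infty_\bas(U_{\alpha\beta}))$ of $\mathcal{E}$, equipping each chart with the flat $\FF$-connection whose parallel sections are the fibrewise-basic vector functions; because the $g_{\alpha\beta}$ are basic they intertwine these connections, so the flat $\FF$-connection glues globally.

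The two assignments are mutually inverse up to canonical isomorphism, and natural. Starting from $(W,\onabla)$, the multiplication map $f\otimes w\mapsto fw$ defines a morphism $C^\infty_M\otimes_{C^\infty_\bas}\Gamma_{(W,\onabla)}\to\Gamma(W)$ which, checked on a local parallel frame, is an isomorphism of $C^\infty_M$-modules intertwining the $\FF$-connections. Starting from $\mathcal{E}$, the computation of the previous paragraph identifies $\Gamma_{(W,\onabla)}$ with $\mathcal{E}$ via $e\mapsto 1\otimes e$. Functoriality with respect to fibrewise-isomorphism morphisms of flat foliated bundles follows from Remark~\ref{rem:fiberwise:iso}, which supplies the pullback of parallel sections on the geometric side matching pullback of modules on the algebraic side.

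I expect the main obstacle to be the sheaf-theoretic bookkeeping rather than any single hard computation: because $C^\infty_\bas$ generally has very few global sections (for a foliation with dense leaves it may contain only the constants), the entire argument must be carried out at the level of sheaves of $C^\infty_\bas$-modules, and one cannot replace $\mathcal{E}$ by its module of global sections. The technical heart is the equivalence, in a simple foliated chart, between flatness of the $\FF$-connection and the existence of a local $\onabla$-parallel frame varying smoothly in the transverse directions; this is where the local structure theory of foliations enters, and it is precisely what makes ``locally free over $C^\infty_\bas$'' on the module side match ``flat'' on the geometric side.
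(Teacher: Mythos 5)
Your proposal is correct and follows essentially the same route as the paper's proof: parallel sections give the $C^\infty_\bas$-module in one direction, and in the other direction one tensors with $C^\infty$ over $C^\infty_\bas$, invokes the classical Serre--Swan theorem, and defines the flat $\FF$-connection by declaring the given module to consist of parallel sections, with naturality inherited from classical Serre--Swan. You merely spell out details the paper delegates to the surrounding discussion (the local parallel frame from flatness, the flatness of $\onabla_X(f\otimes e)=(Xf)\otimes e$, and the mutual-inverse check via the multiplication map), all of which are accurate.
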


\begin{proof}
    Given $(B, \overline{\nabla})\to M$, we just saw how to construct a locally finitely generated and locally free $C^\infty_{\bas}$-module $\Gamma_{(B, \overline{\nabla})}$. For the converse, given such a module $\Gamma_{\para}$, set 
    \[
        \Gamma_B := \Gamma_{\para}\otimes_{C^\infty_{\bas}} C^\infty
    \]
    which is a locally finitely generated, locally free $C^\infty$-module. By the Serre-Swan theorem, $\Gamma_B$ is the sheaf of sections of a vector bundle $B\to M$. The flat $\FF$-connection is determined by requiring its local flat sections to satisfy
    \[
        \overline{\nabla}_X b = 0, \quad \text{ for all $X\in \Gamma_\FF$, $b\in \Gamma_{\para}$}.
    \]
    It is well-defined because $\Gamma_{\para}$ is a $C^\infty_\bas$-module. These constructions are inverses of each other. Naturality follows from naturality of the original Serre-Swan theorem, observing that the resulting maps preserve the parallel sections, and so are maps of flat foliated vector bundles.
\end{proof} 

Let $(W, \overline{\nabla})$ be flat foliated vector bundle over a foliated manifold $(M, \FF)$. The \textbf{holonomy} representation of a leaf $L$ of $\FF$ is denoted
\[
    h\:\pi_1(L, x)\to \GL(W_x) 
\]
and is obtained, as usual, by parallel transport along loops based at $x$. More generally, parallel transport along paths, gives the groupoid representation
\[ \Pi_1(\FF)\to \GL(W),\]
where $\Pi_1(\FF)\tto M$ is the Lie groupoid whose arrows are the leafwise homotopy classes of paths in $\FF$, and $\GL(W)\tto M$ is the Lie groupoid whose arrows are the linear isomorphisms between the fibers of $W$. Conversely, any such Lie groupoid representation defines a flat $\FF$-connection on $B$.

If $\tilde{\FF}\subset \FF$ is subfoliation, a flat $\FF$-connection determines by restriction a flat $\tilde{\FF}$-connection and the holonomy representations are related by
\[
\xymatrix{
\Pi_1(\tilde{\FF})\ar[dr]^{\tilde{h}}\ar[d]_{i_*}\\
\Pi_1(\FF)\ar[r]_{h} & \GL(W)
}
\]

\begin{proposition}\label{cor:BundlesWithFlatPartialConnectionsOfSimpleFoliations}
    Let $(M,\FF)$ be a foliated manifold and $p\: M\to N$ a surjective submersion with connected fibers such that $\ker (\d p) \subset \FF$. Then there is a foliation $\FF_N$ on $N$ such that $\FF =(\d p)^{-1}(\FF_N)$ and $p\: (M, \FF)\to (N, \FF_N)$ is a map of foliated manifolds, for which there is, up to a natural isomorphism, a one-to-one correspondence between 
    \[
            \left\{
            \substack{
                \mbox{vector bundles over $N$ with} \\ \\
                \mbox{flat $\FF_N$-connection}
            } 
            \right\}
            \xleftrightarrow{1:1}
            \left\{
            \substack{ 
                \mbox{vector bundles over $M$ with}\\ \\ \mbox{flat $\FF$-connection having}\\ \\
                \mbox{trivial holonomy along $\ker(\d p)$}
            }      
            \right\}
        \]
\end{proposition}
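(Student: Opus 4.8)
The plan is to first exhibit the foliation $\FF_N$ together with the relation $\FF=(\d p)^{-1}(\FF_N)$, and then to set up the correspondence via the two assignments $V\mapsto p^*V$ (pullback of connections) and $(W,\onabla)\mapsto$ (the bundle of fibrewise $\onabla$-parallel sections), checking that they are mutually inverse up to natural isomorphism.

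\emph{Step 1: construction of $\FF_N$.} Since $p$ is a submersion with $\ker(\d p)\subset\FF$, at each $x\in M$ the subspace $\d p_x(\FF_x)\subset T_{p(x)}N$ has dimension $\rk\FF-(\dim M-\dim N)$, which is constant. The key point is that $\d p_x(\FF_x)$ depends only on $p(x)$. In a submersion chart, where $p\colon(x,u)\mapsto x$, choose a local frame for $\FF$ given by the vertical fields $\partial_{u^j}$ together with fields $Y_a=\sum_i g^i_a(x,u)\,\partial_{x^i}+(\text{vertical})$, so that $\d p(\FF_{(x,u)})$ is the row space of $G(x,u)=(g^i_a(x,u))$. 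Involutivity forces $[\partial_{u^j},Y_a]\in\FF$; comparing the $\partial_x$-components gives $\partial_{u^j}G=D_j\,G$ for matrices $D_j$ of smooth functions, and solving this linear system along the $u$-directions writes $G(x,u)=C(x,u)\,G(x,0)$ with $C$ invertible, so the row space of $G$ is independent of $u$. Since the fibres of $p$ are connected, these charts patch and $\d p(\FF)$ descends to a smooth, constant-rank subbundle $\FF_N\subset TN$; a dimension count gives $\FF=(\d p)^{-1}(\FF_N)$. Involutivity of $\FF_N$ follows because a local $p$-projectable lift of a section of $\FF_N$ lies in $\FF$, the bracket of two such lifts again lies in $\FF$ and is $p$-related to the bracket downstairs, which therefore lies in $\d p(\FF)=\FF_N$. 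By construction $p\colon(M,\FF)\to(N,\FF_N)$ is a map of foliations, and since the connected fibres of $p$ are leaves of $\ker(\d p)\subset\FF$, every leaf of $\FF$ is $p$-saturated and maps onto a leaf of $\FF_N$, while the preimage of a leaf of $\FF_N$ is a connected leaf of $\FF$.

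\emph{Step 2: the correspondence.} Given $(V,\onabla)\to N$ flat over $(N,\FF_N)$, equip $W:=p^*V$ with the $\FF$-connection obtained by declaring the pullbacks of local $\onabla$-flat frames of $V$ to be flat; this is consistent because the transition matrices between $\onabla$-flat frames are $\FF_N$-basic, hence pull back to $\FF$-basic matrices (cf.\ Proposition \ref{prop:BasicSerreSwan}). It is flat, and since a flat frame pulled back from $N$ is constant along the connected fibres of $p$, the holonomy along $\ker(\d p)$ is trivial. Conversely, let $(W,\onabla)\to M$ be flat over $(M,\FF)$ with trivial holonomy along $\ker(\d p)$. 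Restricting $\onabla$ to $\ker(\d p)$ gives a flat connection along the connected fibres of $p$ with trivial holonomy, so $W$ is canonically trivialised over each fibre by parallel sections; let $V_y$ be the space of $\onabla$-parallel sections of $W|_{p^{-1}(y)}$. Using local sections of $p$ one checks that $V:=\bigsqcup_y V_y$ is a smooth vector bundle over $N$ and that evaluation $p^*V\to W$, $(x,\xi)\mapsto\xi(x)$, is a canonical isomorphism (a fibrewise-parallel section is determined by its value at one point). To transport $\onabla$ to $V$: for $\bar X\in\Gamma(\FF_N)$ take a local $p$-projectable lift $X\in\Gamma(\FF)$; for $s\in\Gamma(V)$ the section $p^*s$ of $W$ is $\onabla$-parallel in the vertical directions, so $\onabla_X(p^*s)$ is independent of the choice of lift (two lifts differ by a vertical field), and flatness together with projectability of $X$ shows $\onabla_X(p^*s)$ is again vertically parallel, hence equal to $p^*t$ for a unique $t\in\Gamma(V)$; set $\onabla^N_{\bar X}s:=t$. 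One verifies $\onabla^N$ is a flat $\FF_N$-connection, uniquely characterised by $p^*\onabla^N=\onabla$. The assignments $V\mapsto(p^*V,p^*\onabla)$ and $(W,\onabla)\mapsto(V,\onabla^N)$ are inverse to one another up to the evident canonical isomorphisms, and they are natural in morphisms of foliated vector bundles, which gives the stated one-to-one correspondence.

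\emph{Main obstacle.} The conceptual crux is Step 1: deducing, from involutivity of $\FF$ and $\ker(\d p)\subset\FF$, that $\d p(\FF)$ is basic, so that $\FF_N$ exists with $\FF=(\d p)^{-1}(\FF_N)$ and leaves corresponding under $p$. In Step 2 the only delicate points are the smoothness and local triviality of $V$ and the two independence claims used to define $\onabla^N$ (independence of the projectable lift, and vertical parallelism of $\onabla_X(p^*s)$); both rest on the triviality of the fibre holonomy, which is precisely why that hypothesis is imposed.
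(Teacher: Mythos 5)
Your argument is correct, and its overall skeleton (construct $\FF_N$, pull back in one direction, descend in the other, check the two assignments are mutually inverse) is the same as the paper's; the substance differs in two places, in ways worth noting. First, the paper simply asserts that $\FF_N:=\d p(\FF)$ is a well-defined involutive subbundle, whereas you actually prove the basic-ness of $\d p(\FF)$ by the coordinate/ODE argument ($\partial_{u^j}G=D_jG$, hence $G(x,u)=C(x,u)G(x,0)$ with $C$ invertible along a path in the fibre), plus the projectable-lift argument for involutivity; this fills a gap the paper leaves to the reader, and it is sound (integrating along a single radial path needs no compatibility condition, and connectedness of the fibres lets the local statement propagate). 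Second, for the converse (descent) direction the paper works representation-theoretically: the holonomy representation of $\ker(\d p)$ factors through the submersion groupoid $M\times_p M$, whose free and proper action yields the quotient bundle $W_N$ with $p^*W_N\cong W$, and the flat $\FF_N$-connection is obtained by descending the holonomy representation along the surjective morphism $p_*\:\Pi_1(\FF)\to\Pi_1(\FF_N)$ and invoking the correspondence between flat connections and $\Pi_1$-representations. You instead realize the descended bundle concretely as the fibrewise $\onabla$-parallel sections and transport the connection by hand via projectable lifts, checking independence of the lift and vertical parallelism of $\onabla_X(p^*s)$ by a flatness/bracket computation. The two descents produce the same object; your route is more elementary and self-contained (no groupoid or holonomy machinery, and the verifications you flag as delicate are exactly the right ones), while the paper's route is shorter given its earlier setup and has the side benefit of making the relation between the holonomy representations $h$ and $h_N$ explicit, which is the form of the statement used later in the text. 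The only places where you compress are the smoothness/local triviality of $V$ (local sections of $p$ plus smooth dependence of parallel transport on parameters) and the routine verification that $\onabla^N$ is a flat connection; both are standard and do not affect correctness.
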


\begin{proof}
    Since $p$ is a surjective submersion and $\ker(\d p)\subset \FF$,  it follows that 
    \[ \FF_N := \d p(\FF)\] 
    is a well-defined involutive subbundle of $TN$. Since the fibers of $p$ are connected and contained in $\FF$, one has that $\FF =(\d p)^{-1}(\FF_N)$.

    One direction of the correspondence is clear: since $p$ is a map of foliations, if $(W_N,\overline{\nabla}_N)\to (N, \FF_N)$ is a flat foliated vector bundle, then $(p^*W_N, p^*\overline{\nabla}_N)\to (M, \FF)$ is a vector bundle with flat $\FF$-connection with trivial holonomy along $\ker(\d p)$.

    For the converse, suppose we are given a vector bundle with a flat $\FF$-connection $(W, \overline{\nabla})\to M$ having trivial holonomy along $\FF_p:=\ker(\d p)$. Then the holonomy representation of $\FF_p$ factors via the submersion groupoid 
    \[
    \xymatrix{
    \Pi_1(\FF_p)\ar[dr]\ar[d]\\
    M\times_p M\ar[r] & \GL(W)}
    \]
    The resulting linear action of $M\times_p M\tto M$ on $W\to M$ is free and proper. The quotient is a vector bundle $W_N\to N$ and there is a canonical isomorphism
    \[ p^*W_N\cong W. \]
    Moreover, the submersion $p\:M\to N$ induces a surjective groupoid morphism
    \[ p_*\:\Pi_1(\FF)\to \Pi_1(\FF_N) \]
    whose kernel contains $\Pi_1(\FF_p)$. It follows that the holonomy representation descends to a representation of $\Pi_1(\FF_N)$ making the following diagram commute. 
    \[
    \xymatrix{
    \Pi_1(\FF)\ar[r]^{h}\ar[d]_{p_*} & \GL(B)\ar[d]\\
    \Pi_1(\FF_N)\ar[r]^{h_N} & \GL(W_N)}
    \]
    Hence, there is a unique flat $\FF_N$-connection $\onabla_N$ on $W_N$ whose holonomy representation is $h_N$. For such a connection one has
    \[ p^*\onabla_N=\nabla. \]
    The previous two constructions are inverse to each other so the result follows.
\end{proof}

\begin{example}
    \label{ex:relating:things}
    Let $\FF$ be any foliation on $M$ and denote by $\nu(\FF) = TM/\FF$ the normal bundle of $\FF$. This carries a flat $\FF$-connection, namely the \textbf{Bott connection}
    \[
        \overline{\nabla}^{\text{Bott}}_X \overline{Y} := \overline{[X, Y]}, \quad \mbox{ for $X\in \Gamma(\FF)$, $\overline{Y}\in\nu(\FF)$}.
    \]
    When $\FF$ is a simple foliation, so that $N=M/\FF$ is a manifold, the resulting foliation on $N$ is the trivial one: $\FF_N=0_N$. The corresponding vector bundle over $N$ is the tangent bundle $TN$ (a flat foliated vector bundle for the trivial foliation).
    
    More generally, any vector bundle $V\to N$ is a flat foliated vector bundle for the trivial foliation, so the pullback $W:=p^*V$ carries a canonical flat $\FF$-connection $\onabla$. It is the connection whose local flat sections are the sections of the form $p^*v$, with $v$ any a local section of $V$.
\end{example}

\subsubsection{Derivations relative to a foliation}

Throughout the rest of this section, we fix a flat foliated vector bundle $(W, \onabla)$ over $(M, \FF)$. The sheaf of sections of $\wedge^\bullet W^* \to M$ (the ``$W$-forms'') will be denoted by $\Omega^\bullet_W$, whereas the sheaf $\Omega^\bullet_{(W,\overline{\nabla})}$ of $\onabla$-parallel $W$-forms is given by  
\[
    \Omega^\bullet_{(W,\overline{\nabla})} (U) := \{ \alpha \in \Omega^\bullet_W(U) : \overline{\nabla} \alpha = 0 \}.
\]

\begin{definition}
    A \textbf{parallel $k$-derivation} of on a flat foliated vector bundle $(W, \overline{\nabla})$ is a map of sheaves
    \[
        \D\: \Omega^\bullet_{(W, \overline{\nabla})} \to \Omega^{\bullet + k}_{(W, \overline{\nabla})}
    \]
    satisfying, for any homogeneous $\alpha, \beta\in \Omega^\bullet_{(W, \overline{\nabla})}$, the Leibniz rule
    \[
        \D(\alpha\wedge\beta) = (\D\alpha)\wedge \beta + (-1)^{|\alpha|k} \alpha \wedge(\D\beta).
    \]
    The sheaf of parallel $k$-derivations is denoted by $\Der^k_{\para}(W, \overline{\nabla})$.
\end{definition}

\begin{lemma}\label{lem:ParallelConnectionsVectorBundleWithConnection}
    $\Der^k_{\para}(W, \overline{\nabla})$ is the sheaf of flat sections of a flat foliated vector bundle, denoted $(\DD^k_{(W, \overline{\nabla})}, \overline{\nabla})$.
\end{lemma}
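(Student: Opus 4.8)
The plan is to reduce to the ordinary (non‑foliated) case via the flat Serre--Swan theorem. By Proposition~\ref{prop:BasicSerreSwan} it suffices to show that $\Der^k_{\para}(W,\onabla)$ is a sheaf of $C^\infty_\bas$-modules that is locally finitely generated and locally free. The module structure is immediate: if $\D$ is a parallel $k$-derivation and $f\in C^\infty_\bas$, then for $\onabla$-parallel forms $\alpha,\beta$ the form $f\alpha$ is again $\onabla$-parallel and $f\D$ still satisfies the Leibniz identity, so $f\D\in\Der^k_\para(W,\onabla)$; the sheaf axioms hold since the defining conditions are local. Everything else is a local statement on $M$.

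To analyze the local structure I would work on a foliation chart: an open $U$ together with a surjective submersion $q\colon U\to N_0$ with connected, contractible fibers such that $\ker(\d q)=\FF|_U$. On such a chart $C^\infty_\bas|_U\cong q^*C^\infty_{N_0}$ (basic functions are exactly those constant along the fibers of $q$), and since the $\FF|_U$-holonomy is trivial (contractible leaves), Proposition~\ref{cor:BundlesWithFlatPartialConnectionsOfSimpleFoliations} together with Example~\ref{ex:relating:things} identifies $(W,\onabla)|_U$ with the pullback $q^*(W_{N_0},\nabla_0)$, where $W_{N_0}\to N_0$ is an ordinary vector bundle viewed as a flat foliated vector bundle for the trivial foliation $\FF_{N_0}=0$. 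Under this identification the $\onabla$-parallel $W$-forms over any $\FF|_U$-saturated open are precisely the pullbacks $q^*\alpha_0$ of $W_{N_0}$-forms.

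The key step is then to match parallel $k$-derivations upstairs with ordinary $k$-derivations downstairs. A parallel $k$-derivation $\D$ on $(W,\onabla)|_U$ sends $q^*\Omega^\bullet(W_{N_0})$ into $q^*\Omega^{\bullet+k}(W_{N_0})$, so by injectivity of $q^*$ there is a unique operator $\D_0\colon\Omega^\bullet(W_{N_0})\to\Omega^{\bullet+k}(W_{N_0})$ with $\D\circ q^*=q^*\circ\D_0$; pulling back the Leibniz rule and using that $q^*$ is an injective algebra homomorphism shows $\D_0$ is an ordinary $k$-derivation, and conversely every $\D_0\in\Der^k(W_{N_0})$ arises this way. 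Running the same argument over smaller saturated subcharts, this yields an isomorphism of sheaves of $C^\infty_\bas|_U$-modules between $\Der^k_\para(W,\onabla)|_U$ and the pullback along $q$ of the sheaf of sections of the $k$-derivation bundle $\DD^k_{W_{N_0}}$ from \eqref{eq:symbolExactSequence}. Shrinking $U$ so that $\DD^k_{W_{N_0}}$ is trivial over $N_0$ exhibits $\Der^k_\para(W,\onabla)|_U$ as a free $C^\infty_\bas|_U$-module of finite rank, completing the hypotheses of Proposition~\ref{prop:BasicSerreSwan}; the flat foliated vector bundle it produces is the desired $(\DD^k_{(W,\onabla)},\onabla)$.

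The main obstacle is the middle of the argument: verifying carefully that on a foliation chart with trivial holonomy the parallel forms are exactly the pullbacks, that the correspondence $\D\leftrightarrow\D_0$ is well defined in both directions and compatible with the Leibniz rule and the module structures, and that the resulting isomorphism of $C^\infty_\bas(U)$-modules upgrades to an isomorphism of sheaves, so that ``locally finitely generated and locally free'' holds in the precise sense demanded by Proposition~\ref{prop:BasicSerreSwan}. Granting the ordinary $k$-derivation bundle and the flat Serre--Swan theorem, the rest is formal.
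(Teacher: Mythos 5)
Your proposal is correct and follows the same route as the paper: the paper's proof simply asserts that $\Der^k_{\para}(W,\onabla)$ is a locally finitely generated, locally free $C^\infty_\bas$-module and applies Proposition~\ref{prop:BasicSerreSwan}. You supply exactly the local verification (foliation chart, trivial holonomy, identification of parallel derivations with ordinary derivations on the local leaf space) that the paper leaves implicit, so this is the same argument carried out in more detail.
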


\begin{proof}
    $\Der^k_{\para}(W, \overline{\nabla})$ is a locally finitely generated, locally free $C^\infty_{\bas}$-module. Thus, by Proposition \ref{prop:BasicSerreSwan}, it is the space of parallel sections of a flat foliated vector bundle $(\DD^k_{(W, \overline{\nabla})}, \overline{\nabla})$.
\end{proof}

We are now ready to introduce the following generalization of derivations relative to submersions to derivations relative to foliations.

\begin{definition}
    A \textbf{$k$-derivation on $(W, \overline{\nabla})$ relative to $\FF$} is a map of sheaves
    \[ \D\: \Omega^\bullet_{(W, \overline{\nabla})} \to \Omega^{\bullet + k}_W,\]
    satisfying for homogeneous $\alpha, \beta\in \Omega^\bullet_{(W, \overline{\nabla})}$ the Leibniz rule
    \[
        \D(\alpha\wedge\beta) = (\D\alpha)\wedge\beta + (-1)^{|\alpha|k} \alpha\wedge(\D\beta).
    \]
    Its \textbf{symbol} is the map $\sigma(\D)\in \Hom(\nu^*(\FF), \wedge^k W^*)$ given by
    \[
        \sigma(\D)(\d f) = \D(f), \quad \text{for $f\in C^\infty_{\bas}$.}
    \]
\end{definition}    

In practice, we will think of the symbol as a map $\sigma(\D)\:\wedge^k W \to \nu(\FF)$ via the usual canonical identification.

\begin{remark}
    The relative $k$-derivations just defined are canonically identified with the global sections of the vector bundle $\DD^k_{(W, \overline{\nabla})}$, while parallel $k$-derivations correspond to parallel sections of $\DD^k_{(W, \onabla)}$. The flat $\FF$-connection on sections of $\DD^k_{(W, \onabla)}$ is given by
    \[
        (\onabla_X \D)\alpha = \onabla_X(\D \alpha), \quad \mbox{ for $\alpha\in \Omega^\bullet_{(W, \onabla)}$}.
    \]
    Unlike global parallel $k$-derivations, which may or may not exist, relative $k$-derivations always exist in abundance.
\end{remark}

The duality between brackets and derivations relative to submersions extends to foliations.

\begin{definition}
    A \textbf{$k$-bracket on $(W, \overline{\nabla})$ relative to $\FF$} is a skew-symmetric $\RR$-multilinear map of sheaves
    \[
        [\cdot, \dots, \cdot]\: \underbrace{\Gamma_{(W, \overline{\nabla})} \times\cdots \times \Gamma_{W, \overline{\nabla})}}_{\text{$(k+1)$-times}}\to \Gamma_W
    \]
    with a relative anchor $\rho\: \wedge^k W\to \nu(\FF)$ satisfying the Leibniz rule
    \[
        [w_0, \dots, fw_k] = f[w_0, \dots,w_k] + \rho(w_0,\dots,w_{k-1})(f) w_k
    \]
    for any local flat sections $w_i$ and basic function
    $f\in C^\infty_\bas$.
\end{definition}

In a manner entirely similar to Lemma \ref{lem:derivationsAndBrackets}, we find:

\begin{lemma}
	Let $(W, \onabla)$ be a flat foliated vector bundle over $(M, \FF)$. There is a 1:1 correspondence between
	\begin{enumerate}[(i)]
		\item $k$-derivations relative the $\FF$ and
		\item $k$-brackets relative to $\FF$.
	\end{enumerate}
\end{lemma}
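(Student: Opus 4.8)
The plan is to mirror the proof of Lemma \ref{lem:derivationsAndBrackets} (the submersion case), adapting the Koszul-type formula to the foliated setting while tracking flatness at every step. The only structural difference is that domains and codomains are now sheaves of parallel sections over $C^\infty_\bas$ rather than $C^\infty$-modules, and the anchor takes values in $\nu(\FF)$ rather than $p^*TN$; but since a parallel $k$-derivation is determined by its restriction to $\Omega^{\leq 1}_{(W,\onabla)}$ and those sheaves are locally finitely generated and locally free over $C^\infty_\bas$ (Proposition \ref{prop:BasicSerreSwan}), all the algebra goes through verbatim.

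First I would define, starting from a $k$-derivation $\D$ relative to $\FF$, a relative anchor $\rho\:\wedge^k W\to\nu(\FF)$ as (the dual of) the symbol $\sigma(\D)$, and a relative $k$-bracket by the formula
\[
    p^*\alpha([w_0,\dots,w_k]) := \sum_{i=0}^k(-1)^{i+k}\rho(w_0,\dots,\widehat{w_i},\dots,w_k)(\langle\alpha,w_i\rangle) - (\D\alpha)(w_0,\dots,w_k),
\]
for local flat sections $w_0,\dots,w_k\in\Gamma_{(W,\onabla)}$ and $\alpha\in\Omega^1_{(W,\onabla)}$. One checks first that the right-hand side is $C^\infty_\bas$-linear in $\alpha$ (so that it legitimately defines a section of $W$ by Serre--Swan), then that it is skew-symmetric and satisfies the Leibniz rule of a relative bracket, using the Leibniz rule of $\D$ and the fact that for $f\in C^\infty_\bas$ one has $\D f=\sigma(\D)(\d f)$ and $\rho$ is $\sigma(\D)$ dualized. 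The Leibniz rule $[w_0,\dots,fw_k]=f[w_0,\dots,w_k]+\rho(w_0,\dots,w_{k-1})(f)w_k$ comes out of a short computation, exactly as in \cite[\S 2.5]{CrainicMoerdijk08}.

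Conversely, given a relative $k$-bracket $([\cdot,\dots,\cdot],\rho)$, I would define $\D$ on $\Omega^0_{(W,\onabla)}=C^\infty_\bas$ by $\D f:=\langle\sigma,\d f\rangle$ where $\sigma$ is $\rho$ dualized, and on $\Omega^1_{(W,\onabla)}$ by the same Koszul formula read backwards, then extend to all of $\Omega^\bullet_{(W,\onabla)}$ by forcing the Leibniz rule on a local generating set of parallel $1$-forms. One must check this extension is well-defined (independent of the generating set and consistent on overlaps of the wedge relations) — this is the standard argument that a derivation is determined by its symbol and its action on degree-one generators, here carried out over the ring $C^\infty_\bas$ — and that $\D$ actually lands in $\Omega^{\bullet+k}_W$ rather than requiring parallel output. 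Finally, verifying that the two constructions are mutually inverse is immediate from the formulas. The main obstacle is bookkeeping rather than conceptual: ensuring that every expression built from flat sections and basic functions is well-defined as a sheaf map over $C^\infty_\bas$ and that the local extensions glue; the flatness of $(W,\onabla)$ guarantees local flat frames exist, which is precisely what makes the Serre--Swan dictionary of Proposition \ref{prop:BasicSerreSwan} applicable and reduces everything to the already-known submersion case of Lemma \ref{lem:derivationsAndBrackets}.
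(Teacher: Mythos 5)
Your proposal is correct and follows essentially the same route as the paper, which proves the submersion case (Lemma \ref{lem:derivationsAndBrackets}) via the modified Koszul formula and then notes the foliated case is entirely analogous, with $C^\infty_\bas$-linearity and Proposition \ref{prop:BasicSerreSwan} handling the dictionary between flat sections and sections, exactly as you do. The bookkeeping you flag (linearity over $C^\infty_\bas$, local flat frames, gluing) is precisely what makes the adaptation go through, so nothing is missing.
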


Notice that in the case of a submersion $p\:M\to N$ and a vector bundle $V\to N$ -- see Example \ref{ex:relating:things} -- these definitions and results specialize to the previous notions of $k$-derivation and $k$-bracket relative to $p\:M\to N$.

\subsubsection{The structure of relative derivations}\label{sec:structureOfRelativeDerivations}
We start by giving the analogue of the short exact sequence \eqref{eq:symbolsesrelativederivations:projection}. The proof is immediate.

\begin{lemma}\label{lem:sessymbolconnection}
    Given a foliated flat bundle $(W, \onabla)\to (M, \FF)$, the symbol map induces a short exact sequence 
    \begin{equation}
    \label{eq:symbolsesrelativederivations:foliation}
        \xymatrix{0 \ar[r]& 
    \Hom(\wedge^{k+1}W, W) \ar[r]&
    \DD^k_{(W, \overline{\nabla})} \ar[r]^---{\sigma} & \Hom(\wedge^kW, \nu(\FF))\ar[r]& 0,}
    \end{equation}
    If we equip these bundles with the induced connections $\onabla$ and $\overline{\nabla}\otimes \overline{\nabla}^{\mathrm{Bott}}$ (for the last term), this is a sequence of flat foliated vector bundles.
\end{lemma}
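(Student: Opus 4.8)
The plan is to deduce exactness pointwise from a local reduction, and then verify compatibility with the flat connections by a direct computation. All three terms in \eqref{eq:symbolsesrelativederivations:foliation} are sheaves of sections of vector bundles over $M$ — for the middle term this is Lemma \ref{lem:ParallelConnectionsVectorBundleWithConnection} — so exactness may be checked fiberwise, hence in a neighborhood of an arbitrary point. Shrinking $M$ so that $\FF$ is simple with connected fibers of $p\:M\to N:=M/\FF$ and $(W,\onabla)\cong(p^*V,p^*\onabla_V)$ for a vector bundle $V\to N$ (Proposition \ref{cor:BundlesWithFlatPartialConnectionsOfSimpleFoliations} and Example \ref{ex:relating:things}), a $k$-derivation on $(W,\onabla)$ relative to $\FF$ is exactly a $k$-derivation relative to the projection $p_*\:p^*V\to V$, and $\nu(\FF)\cong p^*TN$. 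Under these identifications \eqref{eq:symbolsesrelativederivations:foliation} becomes the already-established sequence \eqref{eq:symbolsesrelativederivations:projection}, which gives exactness.

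For a coordinate-free identification of the sub- and quotient bundles one can also argue directly. If $\sigma(\D)=0$, then $\D(f\alpha)=f\,\D(\alpha)$ for every $f\in C^\infty_\bas$, so $\D$ restricts to a $C^\infty_\bas$-linear map $\Omega^1_{(W,\onabla)}\to\Omega^{k+1}_W$; by the foliated Serre--Swan theorem (Proposition \ref{prop:BasicSerreSwan}) this extends uniquely to a $C^\infty$-linear bundle map $W^*\to\wedge^{k+1}W^*$, i.e.\ to an element of $\Hom(\wedge^{k+1}W,W)$, and conversely any such tensor defines a relative $k$-derivation with vanishing symbol by extending from $\Omega^1_{(W,\onabla)}$ via the Leibniz rule. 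Surjectivity of $\sigma$ then follows from the splitting afforded by a choice of connection, exactly as in Remark \ref{rem:connection:splits} (formula \eqref{eq:splitting:connection:0}, read leafwise and on parallel forms), or again from the local reduction above.

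It remains to check that the two maps send parallel sections to parallel sections for the stated connections. The key point is that $\onabla^{\mathrm{Bott}}_X(\d f)=0$ for $X\in\Gamma(\FF)$ and $f\in C^\infty_\bas$, so the local parallel sections of $\nu^*(\FF)$ are spanned by such $\d f$. Given a parallel relative derivation $\D$, i.e.\ $(\onabla_X\D)\alpha=\onabla_X(\D\alpha)=0$ for all $X\in\Gamma(\FF)$ and all parallel $\alpha$, one computes, using the induced connection on $\Hom(\wedge^kW,\nu(\FF))$ together with $\onabla^{\mathrm{Bott}}(\d f)=0$, that $\big((\onabla\otimes\onabla^{\mathrm{Bott}})_X\sigma(\D)\big)(\d f)=\onabla_X(\D f)=\sigma(\onabla_X\D)(\d f)$; since the $\d f$ span $\nu^*(\FF)$ this shows $\sigma$ is a map of foliated vector bundles. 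Likewise, an $\onabla$-parallel tensor $T\in\Hom(\wedge^{k+1}W,W)$ gives, via the Leibniz rule, a derivation with $\onabla_X(\D\alpha)=\onabla_X(T^*\alpha)=0$ on parallel $\alpha$, hence a parallel derivation, so the inclusion is parallel as well. Flatness of all three bundles is inherited from that of $(W,\onabla)$ and of the Bott connection on $\nu(\FF)$. The only delicate point — the ``main obstacle'', such as it is — is the bookkeeping in this last step: one must keep track that relative derivations are maps out of the sheaf of \emph{parallel} forms into \emph{all} forms, and that the connection on $\DD^k_{(W,\onabla)}$ is precisely post-composition of $\D$ with $\onabla$; once this is in place, the identities above are forced.
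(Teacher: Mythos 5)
Your proof is correct, and since the paper itself offers no argument for this lemma (it is declared ``immediate''), your write-up simply supplies the details in exactly the expected way: reduce locally to the submersion case, where the sequence becomes \eqref{eq:symbolsesrelativederivations:projection}, and then verify directly that the inclusion and the symbol intertwine the induced connections, using that parallel sections of $\nu^*(\FF)$ are locally spanned by $\d f$ with $f$ basic. The only loose spot is the parenthetical appeal to Remark \ref{rem:connection:splits} for surjectivity, which strictly requires an extension $\nabla$ of $\onabla$ as in Lemma \ref{lem:sessymbolconnectionsplitting}; but your local reduction already covers surjectivity, so nothing is missing.
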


An \textbf{extension} of the $\FF$-connection $\overline{\nabla}$ to an ordinary connection $\nabla$ is a connection on $B$ such that 
\[ \nabla_X w = \overline{\nabla}_X w,\quad \text{for all }X\in \Gamma(\FF),\ w\in \Gamma(W).\] It induces a well-defined map
\[
    \nabla\: \Gamma_{\nu(\FF)}\times \Gamma_{(W, \overline{\nabla})} \to \Gamma_W, \quad \nabla_{[X]}w:= \nabla_X w.
\]
The extension $\overline{\nabla}$ is called \textbf{$\FF$-parallel} when this map takes values in $\Gamma_{(W, \overline{\nabla})}$. While extensions always exist, parallel extensions are only guaranteed to exist locally.

Extensions of $\overline{\nabla}$ yield splittings of the previous short exact sequence.

\begin{lemma}\label{lem:sessymbolconnectionsplitting}
    Let $(W, \onabla) \to (M, \FF)$ be flat foliated bundle. An extension $\nabla$ of $\overline{\nabla}$ induces a splitting of the short exact sequence \eqref{eq:symbolsesrelativederivations:foliation} so that
    \[
         \DD^k_{(W, \overline{\nabla})} \cong \Hom(\wedge^{k+1}W, W) \oplus \Hom(\wedge^kW, \nu(\FF)).
    \]
    If the extension is $\FF$-parallel, then this splitting is an isomorphism of flat foliated vector bundles.
\end{lemma}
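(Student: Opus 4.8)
The plan is to imitate the connection-induced splitting of the non-foliated sequence (formula~\eqref{eq:splitting:connection:0} and Remark~\ref{rem:connection:splits}), now carried out at the level of the sheaves of $\onabla$-parallel objects. Using the duality between relative derivations and relative brackets, I would view a $k$-derivation $\D$ on $(W,\onabla)$ relative to $\FF$ as a relative $k$-bracket $[\cdot,\dots,\cdot]$ with relative anchor $\rho=\sigma(\D)\colon\wedge^k W\to\nu(\FF)$. An extension $\nabla$ of $\onabla$ induces, as recalled above, a well-defined pairing $\nabla\colon\Gamma_{\nu(\FF)}\times\Gamma_{(W,\onabla)}\to\Gamma_W$, $\nabla_{[X]}w:=\nabla_X w$, which makes sense precisely because $\nabla_X w=\onabla_X w=0$ for $X\in\Gamma(\FF)$ and $w$ flat. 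With it one defines, for local flat sections $w_0,\dots,w_k$,
\[
    [w_0,\dots,w_k]_\nabla:=[w_0,\dots,w_k]+(-1)^{k+1}\sum_{i=0}^k(-1)^i\,\nabla_{\rho(w_0,\dots,\widehat{w_i},\dots,w_k)}w_i.
\]

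The first step is to verify that $(w_0,\dots,w_k)\mapsto[w_0,\dots,w_k]_\nabla$ is $C^\infty_{\bas}$-multilinear, hence (after extending scalars to $C^\infty$, as in Proposition~\ref{prop:BasicSerreSwan}) defines an element of $\Hom(\wedge^{k+1}W,W)$. This is the same cancellation as in~\eqref{eq:splitting:connection:0}: the failure of tensoriality of $[\cdot,\dots,\cdot]$ in the last slot, dictated by $\rho$ through the Leibniz rule for relative brackets, is exactly compensated by the failure of tensoriality of $\nabla$ in its $w_k$-slot, and skew-symmetry spreads this to every slot. This produces a bundle map $r\colon\DD^k_{(W,\onabla)}\to\Hom(\wedge^{k+1}W,W)$, $\D\mapsto[\cdot,\dots,\cdot]_\nabla$. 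On the subbundle $\Hom(\wedge^{k+1}W,W)\subset\DD^k_{(W,\onabla)}$ appearing in~\eqref{eq:symbolsesrelativederivations:foliation} the anchor vanishes, so the correction term drops out and $r$ restricts to the identity there. Hence $r$ is a retraction of the inclusion, and $(r,\sigma)$ is an isomorphism $\DD^k_{(W,\onabla)}\xrightarrow{\ \sim\ }\Hom(\wedge^{k+1}W,W)\oplus\Hom(\wedge^kW,\nu(\FF))$ splitting the sequence.

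For the last assertion, suppose $\nabla$ is $\FF$-parallel. By the Serre--Swan correspondence of Proposition~\ref{prop:BasicSerreSwan}, it suffices to show that $(r,\sigma)$ carries $\onabla$-parallel sections to $\onabla$-parallel sections. For $\sigma$ this is contained in Lemma~\ref{lem:sessymbolconnection}. For $r$: if $\D$ is a parallel relative $k$-derivation, then it maps $\onabla$-parallel $W$-forms to $\onabla$-parallel $W$-forms, so the associated relative bracket sends flat sections to flat sections and, by Lemma~\ref{lem:sessymbolconnection}, its anchor $\rho$ is a parallel section of $\Hom(\wedge^kW,\nu(\FF))$; thus $[w_0,\dots,w_k]$ is flat, and $\FF$-parallelness of $\nabla$ says precisely that $\nabla_{[X]}w$ is flat whenever $w$ is flat, so each summand $\nabla_{\rho(\cdots)}w_i$ is flat. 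Therefore $[w_0,\dots,w_k]_\nabla$ is flat, i.e.\ $r(\D)$ is a parallel section of $\Hom(\wedge^{k+1}W,W)$, and consequently $r$, $\sigma$, and the isomorphism $(r,\sigma)$ are morphisms of flat foliated vector bundles.

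The only genuine computation is the $C^\infty_{\bas}$-multilinearity in the first step, and it is formally identical to the one behind~\eqref{eq:splitting:connection:0}; I expect the only delicate point to be the bookkeeping in the last paragraph — recognizing that $\FF$-parallelness of the extension is exactly what is needed to keep $[w_0,\dots,w_k]_\nabla$ inside the sheaf of flat sections, and that without this hypothesis $r$ still splits the sequence of vector bundles but need not intertwine the flat $\FF$-connections.
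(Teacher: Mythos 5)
Your proposal is correct and follows essentially the same route as the paper's proof: the splitting is given by the corrected bracket $[w_0,\dots,w_k]_\nabla$ of Remark \ref{rem:connection:splits}, whose $C^\infty_{\bas}$-multilinearity on flat sections extends to a bundle map $\wedge^{k+1}W\to W$, and in the $\FF$-parallel case one checks that parallel data is sent to parallel data. Your extra bookkeeping (the retraction restricting to the identity on $\Hom(\wedge^{k+1}W,W)$, and the flatness of each correction term) just makes explicit what the paper leaves implicit.
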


\begin{proof}
    For any extension $\nabla$ we can define a splitting $\DD^k_{(W, \overline{\nabla})}\to \Hom(\wedge^{k+1}W, W)$ of \eqref{eq:symbolsesrelativederivations:foliation} similar to Remark \ref{rem:connection:splits}: given a relative bracket in $\DD^k_{(W, \overline{\nabla})}$ the expression
    \[
        [w_0,\dots, w_k]_{\nabla} := [w_0,\dots,w_k] +(-1)^{k+1} \sum_{i=0}^k (-1)^i \nabla_{\rho(w_0,\dots, \widehat{w_i},\dots, w_k)} w_i,
    \]
    which is defined for local flat sections $w_i\in\Gamma_{(W,\onabla)}$, extends to a unique $C^\infty$-linear map, determining a bundle map $\wedge^{k+1}W\to W$. When $\nabla$ is an $\FF$-parallel extension, this bundle map sends $\FF$-flat sections to $\FF$-flat sections, so it is map of foliated flat bundles.
\end{proof}

Any $k$-derivation $\D\in \Gamma(\DD^k_{W})$ can be restricted to $\Omega^\bullet_{(W,\overline{\nabla})}$ to yield a derivation $\Pi(\D)$ relative to $\FF$. This restriction map is $C^\infty$-linear, so it is induced from a bundle map $\Pi\: \DD^k_{W}\to \DD^k_{(W, \overline{\nabla})}$.
    
\begin{lemma}\label{lem:structureofRelativeDerivations}
    There is a short exact sequence
    \[
        \xymatrix{0 \ar[r] & \Hom(\wedge^kW, \FF) \ar[r]^---{\iota} & \DD^k_{W} \ar[r] ^{\Pi} &\DD^k_{(W, \overline{\nabla})} \ar[r] & 0}.
    \]  
    where the inclusion $\iota$ is defined at the level of sections by
    \[
        \iota(\rho)(\alpha)(w_1, \dots, w_{k+l}) = \frac{1}{k!l!} \sum_{\sigma \in S_{k+l}} (-1)^\sigma \left(\overline{\nabla}_{\rho(w_{\sigma(1)}, \dots, w_{\sigma(k)})}\alpha\right)(w_{\sigma(k+1)}, \dots, w_{\sigma(k+l)}).
    \]
    for $\alpha\in \Omega^l(W)$ and $w_1, \dots, w_{k+l}\in \Gamma(B)$. 
\end{lemma}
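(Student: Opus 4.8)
\emph{Strategy.} The plan is to obtain the asserted sequence as the middle column of a morphism between the symbol sequence \eqref{eq:symbolExactSequence} of $\DD^k_W$ and the relative symbol sequence \eqref{eq:symbolsesrelativederivations:foliation} of $\DD^k_{(W,\onabla)}$ (Lemma \ref{lem:sessymbolconnection}), and then to check by hand that the explicit formula for $\iota$ realizes the resulting splitting. First I would verify that the restriction map is well defined: for $\D\in\Gamma(\DD^k_W)$ and parallel forms $\alpha,\beta\in\Omega^\bullet_{(W,\onabla)}$ the product $\alpha\wedge\beta$ is again parallel, and the graded Leibniz rule for $\D$ restricts verbatim; hence $\D\mapsto\Pi(\D):=\D|_{\Omega^\bullet_{(W,\onabla)}}$ is a $k$-derivation on $(W,\onabla)$ relative to $\FF$ in the sense of the preceding definition, and this assignment is patently $C^\infty(M)$-linear, hence a bundle map $\Pi\colon\DD^k_W\to\DD^k_{(W,\onabla)}$.

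\emph{The commutative ladder and the snake lemma.} Next I would assemble the diagram
\[
\xymatrix{
0\ar[r] & \Hom(\wedge^{k+1}W,W)\ar[r]\ar@{=}[d] & \DD^k_W\ar[r]^{\sigma}\ar[d]^{\Pi} & \Hom(\wedge^kW,TM)\ar[r]\ar[d]^{\pi_*} & 0\\
0\ar[r] & \Hom(\wedge^{k+1}W,W)\ar[r] & \DD^k_{(W,\onabla)}\ar[r]^{\sigma} & \Hom(\wedge^kW,\nu(\FF))\ar[r] & 0
}
\]
where $\pi_*$ is induced by the projection $TM\to\nu(\FF)=TM/\FF$. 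The right square commutes because for a basic function $f$ one has $\D(f)=\Pi(\D)(f)$ while $\d f$ lies in $\nu^*(\FF)\subset T^*M$, so that the symbol of $\Pi(\D)$ is exactly the restriction of $\sigma(\D)$ to $\nu^*(\FF)$, i.e.\ $\pi_*\circ\sigma(\D)$ after dualizing. The left square commutes because a $k$-derivation with vanishing symbol is tensorial — completely determined by its algebraic part $\phi\in\Hom(\wedge^{k+1}W,W)$, with no differentiations — so its restriction to parallel forms is precisely the canonical zero-symbol relative derivation attached to $\phi$. Since the left vertical map is an isomorphism and $\pi_*$ is fibrewise surjective with kernel $\Hom(\wedge^kW,\FF)$, the snake lemma gives at once that $\Pi$ is surjective and that $\sigma$ induces an isomorphism $\ker\Pi\xrightarrow{\ \sim\ }\Hom(\wedge^kW,\FF)$; in particular $\ker\Pi$ is a subbundle of the expected rank.

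\emph{Identifying $\iota$.} It then remains to check that the explicit formula in the statement defines the inverse of this isomorphism. Concretely, I would verify three properties of the operator $\iota(\rho)$ attached to $\rho\in\Hom(\wedge^kW,\FF)$: (a) it is a genuine graded $k$-derivation on $W$, depending linearly and indeed pointwise on $\rho$, hence defines a bundle map $\iota$ into $\DD^k_W$; (b) its symbol is $\rho$, regarded inside $\Hom(\wedge^kW,TM)$; and (c) it annihilates parallel forms, i.e.\ $\Pi(\iota(\rho))=0$. Property (c) is immediate: if $\alpha$ is $\onabla$-parallel then every summand of the symmetrization contains a factor $\onabla_{\rho(\cdots)}\alpha$ with $\rho(\cdots)\in\Gamma(\FF)$, which vanishes. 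Property (b) follows by evaluating the formula on a function $f$ (so the form-degree is $0$): skew-symmetry of $\rho$ together with $\onabla_Xf=X(f)$ collapses the symmetrization to $\rho(w_1,\dots,w_k)(f)=\langle\d f,\rho(w_1,\dots,w_k)\rangle$, which is exactly the dualized symbol. Granting (a)–(c): since $\sigma|_{\ker\Pi}$ is injective and $\sigma\circ\iota=\id$, the map $\iota$ is injective with image exactly $\ker\Pi$, and the sequence of the lemma is exact.

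\emph{The main obstacle.} The one genuinely computational point is (a), that $\iota(\rho)$ obeys the graded Leibniz rule. The cleanest way I would organize it is to recognize $\iota(\rho)$ as the coordinate-free composite
\[
\Omega^l(W)\xrightarrow{\ \onabla\ }\Omega^1(\FF;\wedge^lW^*)\longrightarrow\Gamma(\wedge^kW^*\otimes\wedge^lW^*)\xrightarrow{\ \wedge\ }\Omega^{k+l}(W),
\]
in which the middle map contracts the $\FF^*$-slot against $\rho\in\Gamma(\wedge^kW^*\otimes\FF)$ and the last map is the normalized antisymmetrization carrying the constant $\tfrac1{k!\,l!}$; this is independent of any chosen extension of $\onabla$ to an ordinary connection, since only the values along $\FF$ enter. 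The Leibniz rule for $\iota(\rho)$ then reduces to the Leibniz rule $\onabla_X(\alpha\wedge\beta)=(\onabla_X\alpha)\wedge\beta+\alpha\wedge(\onabla_X\beta)$ plus bookkeeping of shuffle signs: distributing the $k+l+m$ arguments among $\rho$ (which takes $k$), $\alpha$ (which takes $l$) and $\beta$ (which takes $m$), the terms in which $\rho$'s block is spent on $\alpha$ reassemble into $\iota(\rho)(\alpha)\wedge\beta$, those in which it is spent on $\beta$ into $\pm\,\alpha\wedge\iota(\rho)(\beta)$, and transposing $\rho$'s block past $\alpha$ produces exactly the sign $(-1)^{|\alpha|k}$. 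This is the only place where real work happens; everything else is the definition of the symbol together with formal diagram-chasing.
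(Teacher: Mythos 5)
Your proposal is correct and follows essentially the same route as the paper: both obtain the sequence from the morphism of symbol short exact sequences induced by restriction (the paper's commutative diagram, your snake-lemma ladder) and then identify the kernel with $\Hom(\wedge^kW,\FF)$ via the explicit map $\iota$. The only difference is that you spell out the verification that $\iota(\rho)$ is a derivation with symbol $\rho$ annihilating parallel forms, which the paper leaves implicit in its closing remark.
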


\begin{remark}
The map $\iota\:\Hom(\wedge^kW, \FF) \to \DD^k_{W}$ in the previous statement is the unique linear map satisfying
\[
\begin{cases}
    \iota(\rho)(f) = \rho(f),\quad &\text{ if }f\in C^\infty(M),\\
    \iota(\rho)(\alpha) = 0,\quad &\text{ if }\alpha\in \Omega^\bullet_{(W, \overline{\nabla})}.
\end{cases}
\]
\end{remark}

\begin{proof}
    Restriction induces a surjective map of the short exact sequences of the symbols, resulting in the following commutative diagram:
    \[
    \xymatrix{
     & & 0\ar[d] & 0\ar[d] \\
        & 0 \ar[d] \ar[r] & K \ar[d]^{\iota} \ar[r] & \Hom(\wedge^k W,\FF) \ar[d]\ar[r] & 0\\
    0 \ar[r] & {\Hom(\wedge^{k+1}W,W)} \ar[r] \ar[d] & \DD^k_{W} \ar[r]^---{\sigma} \ar[d]^{\Pi} & \Hom(\wedge^k W,TM) \ar[r] \ar[d] & 0 \\
    0 \ar[r] & \Hom(\wedge^{k+1}W,W) \ar[r] \ar[d] & \DD^k_{(W, \overline{\nabla})} \ar[r]^---{\sigma} \ar[d]   & \Hom(\wedge^kW,\nu(\FF)) \ar[r] \ar[d] & 0\\
    & 0 & 0 & 0  
    }
    \]
    It follows that $K=\Hom(\wedge^k W,\FF)$ and that $\iota$ is as in the statement.
\end{proof}

\subsection{Morphisms and extensions of relative derivations}
\label{sec:morphisms:extensions}

In this section, we discuss morphisms of relative derivations, extensions and what it means to ``compose" two relative degree 1 derivations.

\begin{definition}
\label{def:Phi-related:derivations}
Let $(\varphi,p)\: (W,\onabla^M) \to (V,\onabla^N)$ a map of flat foliated vector bundles covering $p\: (M,\FF_M)\to (N, \FF_N)$. We say that a derivation $\D_W\in \Gamma(\DD^k_{(W,\onabla^M)})$ is \textbf{$(\varphi,p)$-related} to a derivation $D_V\in \Gamma(\DD^k_{(V,\onabla^N)})$ if
\begin{equation}
    \label{eq:morphism:derivations}
    \D_W \circ \varphi^* = \varphi^* \circ \D_V.
\end{equation}
\end{definition}  
Note that \eqref{eq:morphism:derivations} implies that the anchors of $\D_W$ and $\D_V$ are related by
\[ \rho_V\circ \wedge^k\varphi=\d\varphi\circ\rho_W. \]
The relation between the associated $k$-brackets is more complicated to express since, in general, there is no map relating sections of $W$ and sections of $V$. However, when $\varphi$ is a fiberwise isomorphism, as in Remark \ref{rem:fiberwise:iso}, one can express relation \eqref{eq:morphism:derivations} in terms of $k$-brackets as
\[ [\varphi^*v_0,\dots,\varphi^*v_k]_W=\varphi^*[v_0,\dots,v_k]_V,\quad \text{ for all }v_i\in\Gamma_{(V,\onabla^V)}. \]
Actually, when $\varphi$ is a fiberwise isomorphism, it induces a bundle map 
\[
\varphi_*\: \DD^k_{(W, \onabla^W)}\to \DD^k_{(V, \onabla^V)}
\]
as follows. Recall that an element $\D_x$ in the fiber $(\DD^k_{(W, \onabla^W)})_x$ can be regarded as a derivation $\D_x \: \Omega^\bullet_{(W, \onabla^W)}\to \big(\wedge^{\bullet + k} W^*\big)_x$ relative to the inclusion (as a map of foliated vector bundles). We define $\varphi_*(\D_x)$ by
\[
    \varphi_*(\D_x) := \varphi_* \circ \D_x \circ \varphi^*\: \Omega^\bullet_{(V, \onabla^V)} \to \left(\wedge^{\bullet + k} V^*\right)_{p(x)},
\]
where $\varphi_*= (\varphi^*)^{-1}\: \big(\wedge^{\bullet}W^*\big)_x \to \big(\wedge^{\bullet}V^*\big)_{p(x)}$. 
Then, a derivation $\D_W$ is $(\varphi,p)$-related to a derivation $\D_V$ if and only if the outside square in the following diagram commutes.
\[
\xymatrix{
\DD^k_{(W, \onabla^W)}\ar[r]^{\varphi_*}\ar[d] & \DD^k_{(V, \onabla^V)}\ar[d]\\
M\ar[r]_{p} \ar@/^1pc/[u]^{\D_W} & N \ar@/_1pc/[u]_{\D_V}
}
\]

\begin{example}[Extensions of relative derivations]
\label{ex:extension:derivations}
If $\D_W$ is $(\varphi,p)$-related to $\D$ and the base map $p$ is a submersion, 
we say that $\D_W$ is an \textbf{extension} of $\D_V$. In particular, we have the following:
\begin{enumerate}
    \item An extension of a derivation $\D\in\Gamma(\DD^k_{(V,\onabla)})$ to a derivation $\D_1\in\Gamma(\DD^k_V)$ is an ordinary derivation $\D_1$ that agrees with $\D$ on the flat forms:
    \[ \D_1\alpha=\D\alpha,\quad \text{ for all }\alpha\in\Omega^\bullet_{(V,\onabla)}. \] 
    This fits into the setting of Definition \ref{def:Phi-related:derivations} by viewing the identity map $\id\:(N,0_N)\to (N,\FF)$ as a map of foliated manifolds.
    \item More generally, given a a submersion $p\:M\to N$, 
    an extension of a derivation $\D\in\Gamma(\DD^k_{(V,\onabla)})$ to a derivation $\D_1\in\Gamma(\DD^k_{p})$, is a derivation $\D_1$ on $V$ relative to $p\:M\to N$ 
    satisfying:
    \[ \D_1\alpha=p^*(\D\alpha),\quad \text{ for all }\alpha\in\Omega^\bullet_{(V,\onabla)}. \] 
\end{enumerate}
Extensions will be important in the theory of relative algebroids, developed in Section \ref{sec:RelativeAlgebroids}.
\end{example}

Given two ordinary derivations, their composition usually fails to be a derivation. An important exception occurs in the case of a 1-derivation $\D\in\Gamma(\DD^1_V)$: the square $\D^2:=D\circ D$ is a 2-derivation. It is easy to see that if $\D$ has symbol $\rho$ and associated 1-bracket $[\cdot,\cdot]$ then $\D^2$ has symbol
\[ \rho_{\D^2}(v_1,v_2)(f)=\rho(v_1)\big(\rho(v_2)(f)\big)-\rho(v_2)\big(\rho(v_1)(f)\big)-\rho([v_1,v_2])(f), \]
while the associated 2-bracket is the Jacobiator
\[ [v_1,v_2,v_3]_{\D^2}=[v_1,[v_2,v_3]]+[v_2,[v_3,v_1]]+[v_3,[v_1,v_2]]. \]

This generalizes to relative 1-derivations which are extensions, as in Example \ref{ex:extension:derivations}. For the statement, note that under the assumptions of that example, the foliation $\FF$ pulls back to a foliation $p^*\FF$ on $M$, and the bundle $p^*V$ inherits a flat $p^*\FF$-connection $p^*\onabla$ from $(V, \overline{\nabla})$. Pullback gives a canonical isomorphism between the space of flat forms
\[ \Omega^\bullet_{(p^*V, \overline{\nabla})} \cong p^*\Omega^\bullet_{(V, \overline{\nabla})}\]

\begin{lemma}
    \label{lem:square}
    If a derivation $\D_1 \in\Gamma(\DD^1_{(p^*V,\onabla)})$ extends a derivation $\D_0\in\Gamma(\DD^1_{(V,\onabla)})$, their composition 
    \[ \D_1\circ \D_0\:\Omega^\bullet_{(p^*V,\onabla)}\to \Omega^{\bullet+2}(p^*V),\]
    is a relative 2-derivation with symbol and 2-bracket given by
    \begin{equation}\label{eq:1-derivation:square}
    \begin{aligned}
       \rho_{\D_1\circ \D_0}(p^*v_1,p^*v_2)(f)&=\rho_1(p^*v_1)\big(\rho_0(v_2)(f)\big)-\rho_1(p^*v_2)\big(\rho_0(v_1)(f)\big)\\
       &\phantom{=}-\rho_1(p^*[v_1,v_2]_0)(f),\\
       [p^*v_1,p^*v_2,p^*v_3]_{\D_1\circ \D_0}&=[v_1,[v_2,v_3]_0]_1+[v_2,[v_3,v_1]_0]_1+[v_3,[v_1,v_2]_0]_1, 
    \end{aligned}
    \end{equation}
    for any $v_1,v_2,v_3\in\Gamma_{(V,\onabla)}$ and $f\in C^\infty_\bas$. 
\end{lemma}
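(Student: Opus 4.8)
The plan is to verify formula \eqref{eq:1-derivation:square} by a direct computation, reducing everything to the Koszul-type formulas that express a relative $1$-derivation in terms of its anchor and $1$-bracket. First I would check that $\D_1\circ \D_0$ is indeed a relative $2$-derivation: since $\D_0\:\Omega^\bullet_{(V,\onabla)}\to\Omega^{\bullet+1}_{(p^*V,\onabla)}$ is a derivation relative to $p$ and $\D_1$ is an extension (an ordinary derivation on $p^*V$ that restricts to $\D_0$ on flat forms), the composition lands in $\Omega^{\bullet+2}(p^*V)$, and the graded Leibniz rule for $\D_1\circ\D_0$ on two flat forms $\alpha,\beta\in\Omega^\bullet_{(V,\onabla)}$ follows by applying the Leibniz rules of $\D_0$ and then $\D_1$ in turn, using that $\D_0\alpha,\D_0\beta$ are again flat (because $\D_0$ maps $\Omega^\bullet_{(V,\onabla)}$ into $\Omega^{\bullet+1}_{(p^*V,\onabla)}$) and that $\varphi^*$ on flat forms is compatible with wedge products. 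Collecting the sign contributions $(-1)^{|\alpha|}$ from $\D_0$ and then $(-1)^{|\alpha|+1}$ from $\D_1$ gives the total sign $(-1)^{2|\alpha|}=(+1)$ expected of a degree-$2$ derivation.

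Next I would compute the symbol. By definition $\sigma(\D_1\circ\D_0)$ is obtained by evaluating $\D_1\circ\D_0$ on basic functions $f\in C^\infty_\bas(N)$. We have $\D_0 f=\rho_0(\d f)\in\Omega^1_{(p^*V,\onabla)}$, i.e. $(\D_0 f)(v)=\rho_0(p^*v)(f)$ for flat $v$. Applying $\D_1$, which satisfies the ordinary Koszul formula for a $1$-derivation, to the $1$-form $\D_0 f$ and evaluating on $p^*v_1,p^*v_2$ gives
\[
(\D_1\D_0 f)(p^*v_1,p^*v_2)=\rho_1(p^*v_1)\big((\D_0 f)(v_2)\big)-\rho_1(p^*v_2)\big((\D_0 f)(v_1)\big)-(\D_0 f)([v_1,v_2]_1),
\]
and substituting $(\D_0 f)(v)=\rho_0(v)(f)$ and noting $[v_1,v_2]_1$ is $p$-flat with $p^*[v_1,v_2]_1$ the appropriate section (here I must be careful that $[\cdot,\cdot]_1$ extends $[\cdot,\cdot]_0$, so on flat arguments the two brackets agree under pullback) yields exactly the stated symbol formula. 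The one subtlety is making precise that $(\D_0 f)([v_1,v_2]_1)=\rho_0(p^*[v_1,v_2]_0)(f)$; this uses that $\D_1$ extends $\D_0$ and that flat-form evaluation only sees the pullback of the $\D_0$-bracket.

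For the $2$-bracket I would evaluate $\D_1\circ\D_0$ on a flat $1$-form $\alpha\in\Omega^1_{(V,\onabla)}$ and three flat sections $v_1,v_2,v_3$. Using the relative Koszul formula for $\D_0\alpha\in\Omega^2_{(p^*V,\onabla)}$ and then the ordinary Koszul formula for $\D_1$ applied to this $2$-form, one expands $(\D_1\D_0\alpha)(p^*v_1,p^*v_2,p^*v_3)$ into anchor-derivative terms and bracket terms; the anchor-derivative terms must be matched against the anchor-part of $\D_1\circ\D_0$ (already computed), and what remains is $-\langle p^*\alpha,[p^*v_1,p^*v_2,p^*v_3]_{\D_1\circ\D_0}\rangle$. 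Comparing with the analogous expansion of $-\langle p^*\alpha,[v_i,[v_j,v_k]_0]_1+\text{cyclic}\rangle$ gives the identification. The main obstacle is purely bookkeeping: tracking the signs and the cyclic-permutation combinatorics in the double Koszul expansion, and—more conceptually—being careful at each step about which bracket ($[\cdot,\cdot]_0$ or $[\cdot,\cdot]_1$) and which pullback is in play, since $\D_0$ produces flat forms on $p^*V$ whereas a generic application of $\D_1$ does not, so only the first application of $\D_1$ to the flat form $\D_0\alpha$ is governed by the flat/relative structure. Once the flat-form identity \eqref{eq:1-derivation:square} is established on generators, $C^\infty_\bas$-linearity of all terms extends it to all of $\Omega^\bullet_{(p^*V,\onabla)}$, and since a relative $2$-derivation is determined by its symbol and its action on a generating set, the proof is complete.
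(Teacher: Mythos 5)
Your overall strategy (direct computation: graded Leibniz rule first, then Koszul duality for the symbol and the 2-bracket) is the same as the paper's, and your sketch of the symbol/2-bracket part — in particular the point that on flat sections the bracket of $\D_1$ is the pullback of the bracket of $\D_0$ because $\D_1$ extends $\D_0$ — is the right bookkeeping. The gap is in the first step, the verification that $\D_1\circ\D_0$ is a $2$-derivation. You justify the Leibniz rule by claiming that ``$\D_0\alpha,\D_0\beta$ are again flat, because $\D_0$ maps $\Omega^\bullet_{(V,\onabla)}$ into $\Omega^{\bullet+1}_{(p^*V,\onabla)}$.'' This is false: a derivation relative to a foliation sends flat forms to \emph{arbitrary} forms, $\D_0\:\Omega^\bullet_{(V,\onabla)}\to\Omega^{\bullet+1}_V$; if its values were again flat it would be a parallel derivation and the relative structure would carry no information (in Bryant's equations $\D\theta^i=-\tfrac12 c^i_{jk}(x,y)\,\theta^j\wedge\theta^k$ is precisely not flat, since the $c^i_{jk}$ depend on the free variables $y$).

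More importantly, even granting that claim your argument does not establish the Leibniz rule: expanding $\D_1\big(\D_0\al\wedge\be+(-1)^{|\al|}\al\wedge\D_0\be\big)$ produces, besides the two diagonal terms whose signs you track, the two cross terms
\[
(-1)^{|\al|+1}\,p^*(\D_0\al)\wedge\D_1\be\qquad\text{and}\qquad(-1)^{|\al|}\,\D_1\al\wedge p^*(\D_0\be),
\]
and the whole content of the lemma is that these cancel \emph{because} $\D_1$ extends $\D_0$, i.e. $\D_1\gamma=p^*(\D_0\gamma)$ for flat $\gamma$, so both cross terms become $p^*(\D_0\al)\wedge p^*(\D_0\be)$ with opposite signs. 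You state the extension property at the outset but never use it at this point, and flatness of $\D_0\al$ (even if true) would not produce this cancellation; without the extension hypothesis the composite genuinely fails the Leibniz rule. (Your sign count is also internally inconsistent: $(-1)^{|\al|}\cdot(-1)^{|\al|+1}=-1$, not $(-1)^{2|\al|}$; the correct coefficient $+1$ on $p^*\al\wedge\D_1(\D_0\be)$ comes out once the cross terms are cancelled as above.) Once this cancellation is spelled out, the remainder of your computation matches the paper's proof, which likewise treats the symbol and 2-bracket formulas as a routine consequence of the duality (Koszul) formulas.
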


\begin{proof}
    The fact that $\D_1\circ \D_0$ is a $2$-derivation follows from 
    \begin{align*}
    \D_1\circ \D_0(\al\wedge\be)&=\D_1(\D_0\al\wedge\be+(-1)^{|\al|}\al\wedge \D_0\be)\\
        &=\D_1(\D_0\al)\wedge p^*\be +(-1)^{|\al|+1}p^*\D_0\al\wedge\D_1\be+\\
        &\phantom{=}+(-1)^{|\al|}\D_1\al\wedge p^*\D_0\be+(-1)^{2|\al|}p^*\al\wedge \D_1(\D_0\be)\\
        &=(\D_1\circ\D_0)(\al)\wedge p^*\be +p^*\al\wedge (\D_1\circ\D_0)(\be),
    \end{align*}
    where we used the extension property $\D_1\alpha=p^*(\D_0\alpha)$ to cancel two terms. The expressions for the symbol and 2-bracket follow from straightforward computations using the formulas for the duality (see the proof of Lemma \ref{lem:derivationsAndBrackets}).
\end{proof}

\subsection{Tableaux of derivations}

Tableaux are extremely useful gadgets in the theory of PDEs \cite{BryantChern1991, Bryant2014, Salazar2013}. They codify higher order consequences of a set of partial differential equations and, when a tableau is involutive, provides a measure of the size of the space of local solutions. 

Let $W, V$ be finite-dimensional vector spaces. Classically, a \textbf{tableau} is a subspace $\TT \subset \Hom(W, V)$ or, more generally, a linear map $\tau\: \TT\to \Hom(W, V)$. In practice, a PDE does not come with a single tableau but with a family of tableaux depending on coordinates, i.e., a vector bundle. The theory of tableaux is pointwise in nature, and thus naturally carries over to vector bundles (possibly under some extra constant rank assumption). Associated to a classical tableau, one has the notions of prolongation, the Spencer complex, involutivity and of Cartan characters. We refer to \cite{Bryant2014, BryantChern1991, Salazar2013} for more details.

It turns out that tableaux associated to classification problems as in the Introduction (or the applications of Theorem 4 in Bryant \cite{Bryant2014}) are not tableaux in the classical sense. Bryant was certainly aware of this, as is evident from his computations in \cite{Bryant2014} of the Cartan characters and prolongations. However, to the authors' knowledge, this is not formalized or mentioned anywhere in the literature.
In this section, we will extend the classical notion of tableau by formalizing the notion of a \emph{tableau of derivations}.

\subsubsection{Definition of tableau of derivations} In what follows, we fix vector bundles $W\to M$ and $V\to N$ and a vector bundle map $(\varphi,p)\: W\to V$. 

\begin{definition}
	A \textbf{tableau of $k$-derivations} relative to a vector bundle map $\varphi$ is a vector subbundle $\TT\subset \DD^k_\varphi$.
\end{definition}

\begin{remark}[Tableaux for derivations relative to foliations]
     All the definitions and results that follow apply equally well to bundles $\DD^k_{(W,\overline{\nabla})}$ of derivations relative to foliations. This is due to the pointwise nature of the operations and notions related to tableaux and the fact that $\DD^k_{(W, \overline{\nabla})}$ is locally isomorphic to a space of derivations relative to a bundle map.
\end{remark}

The theory of prolongations and involutivity of tableaux of derivations, rests upon the following definition.

\begin{definition}\label{def:SpencerDifferentialModifiedTableau}
	The \textbf{Spencer differential} $\delta\: \Hom(\wedge^l W, \DD^k_\varphi)\to \DD^{k+l}_\varphi$ is the unique $C^\infty(M)$-linear map such that
	\[
		\delta(\omega\otimes \D) = \omega\wedge \D, 
	\]
	for any $\omega\in \Omega^l(W)$ and $\D\in \Gamma(\DD^k_\varphi)$.
\end{definition}

\begin{remark}
    Sometimes, instead of a subbundle $\TT\subset \DD^k_\varphi$, we need to consider a vector bundle map $\tau\: \TT\to \DD^k_\varphi$ that is not necessarily injective. For such a bundle map, the Spencer differential $\delta_\tau\:\Hom(\wedge^lW, \TT)\to \DD^{k+l}_\varphi$ is defined by requiring
    \[
        \delta_\tau(\omega\otimes \D) = \omega \wedge \tau(\D),
    \]
    for $\omega \otimes \D \in \Gamma(\wedge^l W^* \otimes \TT)$. For a classical tableau, such objects were considered in \cite{Salazar2013} under the name ``generalized tableaux". In this paper, we will refer to the bundle map $\tau$ as a \textbf{tableau map}.
\end{remark}

An element in $\Gamma\left(\Hom(\wedge^l W, \DD^k_\varphi)\right)$ can be viewed as $W$-form of degree $l$ with values in $\DD^k_\varphi$, so it is determined by its action on $\Omega^1(V)$. For $\xi\in \Gamma\left(\Hom(\wedge^l W, \DD^k_\varphi)\right)$, the Spencer differential $\delta\xi$ is the $(k+l)$-derivation relative to $\varphi$ acting on $\alpha\in \Omega^1(V)$ as
\begin{equation}\label{eq:SpencerDifferentialOneForms}
    \begin{split}
        \left(\delta\xi\right)&(\alpha)(w_1, \dots, w_{k+l + 1})= \\
        &= \frac{1}{(k+1)!l!}\sum_{\sigma\in S_{k+l+ 1}}(-1)^\sigma \left(\xi(\alpha)(w_{\sigma(1)}, \dots, w_{\sigma(l)})\right) (w_{\sigma(l+1)}, \dots, w_{\sigma(l+k + 1)}) 
    \end{split}
\end{equation}
where $w_1, \dots, w_{k+l+1} \in \Gamma(W)$. Its symbol acts on a function $f\in C^\infty(N)$ by
\begin{equation}\label{eq:SpencerDifferentialFunctions}
    \begin{split}
        &\left(\delta \xi \right)(f)(w_1, \dots, w_{k+l}) \\
        &= \frac{1}{k!l!}\sum_{\sigma\in S_{k+l}} (-1)^\sigma \left(\xi(f)(w_{\sigma(1)}, \dots, w_{\sigma(l)})\right)(w_{\sigma(l+1)}, \dots, w_{\sigma(l+k)}) 
    \end{split}
\end{equation}

\begin{example}\label{ex:normalTableauAsGeneralizedTableau}
    There is more than one way of interpreting a classical tableau bundle as a tableau of derivations.
    
    One direct way is through 0-derivations. Let $\varphi\: W\to V$ be any bundle map covering the identity on $M$ (e.g., it can be the zero map). From the short exact sequence (\ref{eq:symbolsesrelativederivations}), it follows that $\Hom(W, V)\subset \DD^0_{\varphi}$. Hence, a classical tableau bundle $\TT\subset \Hom(W, V)$ can be seen as a 0-tableau in $\DD^0_\varphi$. 
    
    Another interpretation of a classical tableau as a bundle of derivations is through relative 1-derivations. Assume that the bundle map $(\varphi,p)\:W\to V$ is a fiberwise isomorphism. By Lemma \ref{lem:structureofRelativeDerivations}, there is a short exact sequence
    \[
    \xymatrix{
        0\ar[r] & \Hom(W, \ker(\d p)) \ar[r] & \DD^1_W \ar[r]^{\varphi_*} & p^*\DD^1_V \ar[r] & 0}.
    \]
    A subbundle $\TT\subset \Hom(W, \ker(\d p))$ is both a tableau in the classical sense as well as a tableau of 1-derivations. 
    
    In both cases, the Spencer differential on $\Hom(W, \TT)$, interpreted as a tableau of derivations, corresponds to the classical Spencer differential for $\TT$ as a classical tableau. Therefore, both interpretations of $\TT$ as a tableau of derivations recover the usual prolongation and Spencer cohomology theory.
\end{example}

\begin{remark}
    There is a warning: while na\"{i}vely the subbundle $\TT\subset \Hom(\wedge^2W,V)$ is a classical tableau (in the sense that it is a subbundle of a $\Hom$-space), it really depends on the context whether it should be treated as such. Usually, the exterior power $\wedge^2 W$ indicates the presence of brackets. Interpreting $\Hom(\wedge^2W, V)$ as a classical tableau leads to different Cartan characters and prolongations. For instance, to compute the characters of $\TT$ as a classical tableau, a flag of $\wedge^2W$ must be used. However, as we shall see, in the derivation picture one only requires a flag of $W$.
\end{remark}

\subsubsection{Spencer cohomology} 
Given a tableau of derivations we define its prolongation as follows.

\begin{definition}
    The \textbf{first prolongation} of a tableau $\TT\subset \DD^k_\varphi$ is defined as
    \[
        \mathcal{T}^{(1)} = \ker \delta\big\vert_{\Hom(W, \mathcal{T})}.
    \]
\end{definition}

The prolongation of a tableau $\TT \subset \DD^k_\varphi$ is a subbundle $\TT^{(1)} \subset \Hom(W, \TT)$ whenever it has constant rank. Hence, in this case, $\TT^{(1)}$ is a classical tableau and one defines the higher prolongations of $\TT$, when they exist, recursively as
\[
    \TT^{(m)} := (\TT^{(m-1})^{(1)}.
\]  
The Spencer differentials 
\[
    \delta\: \Hom(\wedge^l W, \TT^{(m)}) \to \Hom(\wedge^{l+1} W, \TT^{(m-1)})
\]
are defined for $m \geq 1$ as in Definition \ref{def:SpencerDifferentialModifiedTableau} by restricting to $\Hom(\wedge^lW, \TT^{(m)})$ (i.e., regarding $\TT^{(m)}$ as classical tableau, as in Example \ref{ex:normalTableauAsGeneralizedTableau}). 
It is clear from the definition that the Spencer differentials square to zero.

\begin{definition}
    The \textbf{Spencer cohomologies} of a tableau $\TT \subset \DD^k_\varphi$ are defined as
    \[
        H^{m, l}(\mathcal{T}) := \frac{\ker \delta\: \Hom(\wedge^l W, \TT^{(m)}) \to \Hom(\wedge^{l+1} W, \TT^{(m-1)}) }{\im \delta\: \Hom(\wedge^{l-1} W, \TT^{(m+1)}) \to \Hom(\wedge^{l} W, \TT^{(m)})}
    \]
    for $m\geq 1$. In the special case $m=0$, one sets
    \[
        H^{0, l}(\TT):= \frac{\ker \delta\: \Hom(\wedge^l W, \TT)\to \DD^{k+l}_{\varphi}}{\im \delta\: \Hom(\wedge^{l-1}W, \TT^{(1)})\to \Hom(\wedge^l W, \TT)}
    \]
    and for $m=-1$, one defines
    \[
        H^{-1, l}(\TT):= \frac{\DD^{k+ l}_\varphi}{\im \delta\: \Hom(\wedge^l W, \DD^k_\varphi)\to \DD^{k+l}_\varphi}.
    \]
\end{definition}

\begin{remark}
    Note that $H^{m, 0}(\TT)=H^{m, 1}(\TT) = 0$ from the definition of the first prolongation. 
\end{remark}  

\begin{definition}
    A tableau $\TT\subset \DD^k_\varphi$ is called \textbf{involutive} if all prolongations $\TT^{(m)}$, $m\geq 1$, have locally constant rank and
    \[
        H^{m,l}(\TT) = 0, \quad \mbox{ for all $m\geq 0, l\geq 1$}.
    \]
\end{definition}

\subsubsection{Cartan characters and Cartan's test} 

Heuristically, a system of differential equations is in \textit{involution} when there are no higher order hidden consequences of the equations. These higher order consequences appear as cohomology classes in the Spencer cohomology groups which, in general, are hard to compute. A practical way of checking involutivity is through Cartan's test. In this section, we make these notions precise for tableaux of derivations. 

We continue to assume that $(\varphi,p)\: W\to V$ is a fixed morphism of vector bundles. By a \emph{flag} of $W$ we mean sequence of vector subbundles of $W$,
\[
    0 = W_0 \subset W_1 \subset \dots \subset W_n = W,\quad \text{ with }\rk W_i = i.
\]
Fix a flag $(W_i)$ for $W$, denote by $\iota_i\: W_i\hookrightarrow W$ the inclusion map and $\iota_i^*\: \wedge^\bullet W^*\to \wedge^\bullet W_i^*$ the induced restrictions. A relative $k$-derivation $\D\: \Omega^\bullet (V)\to \Omega^{\bullet+k}(W)$ can be post-composed with the restriction map to yield an $k$-derivation $\iota_i^*\circ \D$ relative to $\varphi\circ\iota_i$ (cf.~Example \ref{ex:relativeDerivationsPulledBack}). In other words, there is restriction map
\[
    (\iota_i)_*\: \DD^k_\varphi \to \DD^k_{\varphi\circ\iota_l}.
\]
If $\TT\subset \DD^k_\varphi$ is a tableau, we set 
\[
    \TT_i := \ker (\iota_i)_*\cap \TT.
\]
Notice that $\TT_{i}\subseteq \TT_{i-1}$ and that $\TT_0 = \TT$, $\TT_n = 0$ where $n=\rk E$.

\begin{definition}
    Let $\TT\subset \DD^k_\varphi$ be a tableau and $(W_i)$ a flag for $W$. The \textbf{Cartan characters} of $\TT$ with respect to the flag are
    \[
        s_i:= \rk \TT_{i-1} -\rk \TT_{i},\quad (i=1,\dots,n=\rk W).
    \]
\end{definition}

As for classical tableaux, the Cartan characters bound the rank of the prolongation and, moreover, provide a practical way to verify that a tableau of derivations is involutive through Cartan's test. Here we present an extension of Cartan's test to tableaux of derivations.

\begin{theorem}[Cartan's test]\label{thm:CartansTest}
    Let $\TT\subset \DD^k_\varphi$ be a tableau of derivations.
    \begin{enumerate}[(i)]
        \item \emph{(Cartan's bound).} If $\{s_i\}$ are the Cartan characters w.r.t.\,a flag $(W_i)$, the dimension of the prolongation is constrained by
        \[
            \rk \TT^{(1)} \leq s_1 + 2s_2 + \dots + n s_n.
        \]
        \item \emph{(Cartan's test).} If the Cartan characters $s_i$ are locally constant and Cartan's bound is achieved, i.e.,
        \[
            \rk \TT^{(1)} = s_1 + 2s_2 + \dots + n s_n,
        \]
        then $\TT$ is involutive.
    \end{enumerate}
\end{theorem}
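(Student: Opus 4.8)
The plan is to reduce the statement for tableaux of $k$-derivations to the classical Cartan test, which is already available in the references (\cite{BryantChern1991,Bryant2014,Salazar2013}). The key observation is that both parts of the theorem are pointwise statements about the symbol and the Spencer differential, so we may work fiberwise at a point $x\in M$ and forget the bundle structure: we are given finite-dimensional vector spaces $W=W_x$, a target space for the derivations, and a subspace $\TT=\TT_x\subset (\DD^k_\varphi)_x$. Using the splitting of the short exact sequence \eqref{eq:symbolsesrelativederivations} induced by a choice of connection (Remark \ref{rem:connection:splits}), we identify
\[
    (\DD^k_\varphi)_x \cong \Hom(\wedge^{k+1}W, V_x)\oplus \Hom(\wedge^k W, T_{p(x)}N),
\]
and the Spencer differential $\delta\colon \Hom(W,\DD^k_\varphi)\to \DD^{k+1}_\varphi$ becomes, via the formulas \eqref{eq:SpencerDifferentialOneForms}–\eqref{eq:SpencerDifferentialFunctions}, nothing but the ordinary Spencer/Koszul wedging operator on a $\Hom$-space. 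Thus $\TT$ behaves exactly like a classical (generalized) tableau $\tau\colon \TT\to \Hom(W,U)$ with $U:= V_x\oplus T_{p(x)}N$ sitting in appropriate exterior-power degrees — the point being, as stressed in the Remark after Example \ref{ex:normalTableauAsGeneralizedTableau}, that one uses a flag of $W$ alone, never a flag of $\wedge^2 W$, and this is precisely what the derivation picture forces.

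First I would set up the reduction carefully: fix a connection so that $\DD^k_\varphi$ splits, and check that under this identification the filtration $\TT_i = \ker(\iota_i)_*\cap\TT$ coincides with the classical filtration $\TT\cap \Hom(\wedge^k W + W\wedge\text{(lower)},\dots)$ used in the definition of the classical Cartan characters — i.e. that $\D\in\TT$ lies in $\TT_i$ iff $\iota_i^*\D=0$, iff the corresponding element of $\Hom(\wedge^{k+1}W,V_x)\oplus\Hom(\wedge^k W,T_{p(x)}N)$ vanishes on all arguments drawn from $W_i$. Hence the Cartan characters $s_i=\rk\TT_{i-1}-\rk\TT_i$ of Definition \ref{def:SpencerDifferentialModifiedTableau}-context agree with the classical ones for $\TT$ viewed as a tableau in $\Hom(W,\widehat V)$ with an extra $\wedge^k W$ decoration. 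Second, with this dictionary in hand, part (i) (Cartan's bound $\rk\TT^{(1)}\le \sum_i i\,s_i$) is literally the classical bound, since $\TT^{(1)}=\ker\delta|_{\Hom(W,\TT)}$ is computed by the same wedging map; one only needs to observe that the classical proof of the bound — restricting an element of $\TT^{(1)}$ successively to the flag subspaces and counting — uses only the flag of $W$, so it transfers verbatim. Third, for part (ii), I would invoke the classical equivalence "Cartan's bound achieved $\iff$ $\TT$ involutive $\iff$ all $H^{m,l}(\TT)=0$ for $l\ge 1$": the classical Cartan test (e.g. \cite[Ch.~III]{BryantChern1991}) gives exactly this, and since our $H^{m,l}(\TT)$ are by definition the cohomology of the same Spencer complex (with $m\ge 1$ terms literally classical, as noted in the text, and the $m=0$ term fitting the pattern by the definition of $\TT^{(1)}$), involutivity in our sense is involutivity in the classical sense.

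The main obstacle I anticipate is not conceptual but bookkeeping: making the identification in the first step genuinely natural enough that the combinatorial factors in \eqref{eq:SpencerDifferentialOneForms}–\eqref{eq:SpencerDifferentialFunctions} (the $\tfrac{1}{(k+1)!\,l!}$ and the antisymmetrization over $S_{k+l+1}$) match the normalization used in the cited classical references, and ensuring that the two summands $\Hom(\wedge^{k+1}W,V_x)$ and $\Hom(\wedge^k W,T_{p(x)}N)$ of the splitting are treated on equal footing by the Spencer differential — i.e. that $\delta$ respects this direct sum and acts as the classical Koszul differential on each summand separately (this is where the hypothesis that $\varphi$ is a fiberwise isomorphism, or at least the exactness of \eqref{eq:symbolsesrelativederivations}, is used to guarantee the splitting behaves well under $\delta$). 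Once that compatibility is pinned down, everything else is a transcription of the classical statements; I would state the reduction as a lemma ("the Spencer complex of a tableau of $k$-derivations, after choosing a connection, is isomorphic to the Spencer complex of an associated classical tableau") and then quote the classical Cartan test.
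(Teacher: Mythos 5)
There is a genuine gap: the reduction lemma your whole plan rests on --- ``after choosing a connection, the Spencer complex of a tableau of $k$-derivations is isomorphic to the Spencer complex of an associated classical tableau'' --- is not available, and the paper explicitly warns against it. A connection does split $\DD^k_\varphi\cong \Hom(\wedge^{k+1}W,V)\oplus\Hom(\wedge^kW,TN)$, but the resulting ``classical'' tableau has source $\wedge^{k+1}W\oplus\wedge^kW$, not $W$: its classical Cartan characters would be computed from a flag of that exterior-power space, its classical prolongation would sit in $\Hom(\wedge^{k+1}W\oplus\wedge^kW,\TT)$ rather than in $\Hom(W,\TT)$, and its classical Spencer differential is not the operator $\delta(\omega\otimes\D)=\omega\wedge\D$ of Definition \ref{def:SpencerDifferentialModifiedTableau}, which skew-symmetrizes the new $W$-slot into the existing $\wedge^{k}W$/$\wedge^{k+1}W$ slots. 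So the dictionary ``$\TT_i$, $s_i$, $\TT^{(1)}$, $H^{m,l}$ agree with their classical counterparts'' fails at the first step, and with it the claim that the classical proof of the bound ``transfers verbatim.'' Concretely, the classical identity $(\TT^{(1)})_i=(\TT_i)^{(1)}$, which underlies the usual counting argument, only survives as an inclusion here (Lemma \ref{lem:prolongationAndFlags}), and the paper gives an explicit counterexample ($\TT=\Hom(\wedge^2W,W)$ with $\dim W=2$) where it is strict. Likewise, your appeal in part (ii) to the classical equivalence ``bound achieved $\iff$ involutive'' overreaches: the paper states that even the converse implication is only conjectural for tableaux of derivations and ``does not seem to follow from Cartan's test applied to the prolongation as a classical tableau.''

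What is actually needed, and what the paper supplies, is new content precisely where the complex is non-classical, namely the groups $H^{0,l}(\TT)$ whose differential lands in $\DD^{k+l}_\varphi$. The paper first proves a generalized Cartan bound, $\rk\,\ker\bigl(\delta\:\Hom(\wedge^lW,\TT)\to\DD^{k+l}_\varphi\bigr)\le\sum_i s_i\bigl(\binom{n}{l}-\binom{n-i}{l}\bigr)$, by a filtration argument adapted to derivations (Lemma \ref{lem:GeneralizedCartansBound}); part (i) is the case $l=1$. For part (ii) it then runs a two-step count: using $s_i^{(1)}\le s_i+\cdots+s_n$ and exactness of the Spencer sequence it shows Cartan's bound is achieved for $\TT^{(2)}$ and $H^{0,2}(\TT)=0$; the classical Cartan test (Seiler) is invoked only for $\TT^{(1)}$, which genuinely is a classical tableau inside $\Hom(W,\TT)$, giving $H^{m,l}(\TT)=0$ for $m\ge1$; finally the vanishing $H^{0,l}(\TT)=0$ for all $l$ requires computing $\rk\TT^{(m)}=\sum_i s_i\binom{i+m-1}{m}$ and a combinatorial identity to match the generalized bound. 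Your proposal, by collapsing everything to the classical test, skips exactly these steps, and they cannot be recovered by the splitting trick; if you want to keep a reduction-style argument you would at minimum have to prove the generalized bound for the mixed differential into $\DD^{k+l}_\varphi$ yourself, which is where the real work lies.
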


A flag for which Cartan's test holds is called a \textbf{regular flag} for $\TT$. From the proof below, it will be clear that if $(W_i)$ is a regular flag for $\TT$, then it is also a regular flag for the first prolongation $\TT^{(1)}$. 

\begin{remark}
    For a classical tableau, Cartan's test provides an equivalent characterization of involutivity (see \cite{Seiler2007}, Theorem 3.4). We suspect that it is also an equivalence for tableau of derivations, but it does not seem to follow from Cartan's test applied to the prolongation as a classical tableau. The reason is that if $\TT$ is a tableau, regular flags for the prolongation $\TT^{(1)}$ may not be regular for $\TT$ itself. This poses no obstacle to our applications: computing the Cartan characters is typically much more practical than computing the Spencer cohomology groups, making the implication in Theorem \ref{thm:CartansTest} the most relevant one.
\end{remark}


Before we give the proof of the previous theorem, we make the following observation. Since the first prolongation $\TT^{(1)}\subset \Hom(W, \TT)$ is also a tableau, it comes with spaces $(\TT^{(1)})_i$, associated to a choice of flag for $W$. These are related to the prolongations of $\TT_i$ as follows.

\begin{lemma}\label{lem:prolongationAndFlags}
    Let $\TT\subset \DD^k_\varphi$ be a tableau of derivations and fix a flag $(W_i)$ for $W$. Then:
    \[ \left(\TT^{(1)}\right)_i \subseteq (\TT_i)^{(1)}, \quad (i=1,\dots,n). \]
\end{lemma}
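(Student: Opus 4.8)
The plan is to unwind both sides of the claimed inclusion directly in terms of the Spencer differential, using only the definitions of the first prolongation and of the restriction spaces $\TT_i$. An element of $\left(\TT^{(1)}\right)_i$ is, by definition, an element $\xi \in \TT^{(1)} \subset \Hom(W, \TT)$ lying in the kernel of the restriction map $(\iota_i)_*$ that is induced on $\Hom(W,\TT)$ by regarding $\TT^{(1)}$ as a classical tableau (so here $(\iota_i)_*$ acts on the $W$-form slot, i.e.\ $\xi \in \Hom(W,\TT)$ is killed by $(\iota_i)_*$ iff it vanishes on $W_i \subset W$). On the other hand $(\TT_i)^{(1)} = \ker \delta\big|_{\Hom(W, \TT_i)}$, so to prove the inclusion I must show two things for such a $\xi$: first, that $\xi$ actually takes values in $\TT_i \subset \TT$ (not merely in $\TT$), and second, that $\xi \in \ker \delta$ — but the latter is immediate since $\xi \in \TT^{(1)} = \ker\delta|_{\Hom(W,\TT)}$ and $\delta$ on $\Hom(W,\TT_i)$ is just the restriction of $\delta$ on $\Hom(W,\TT)$.

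So the real content is the first point: if $\xi\colon W\to \TT$ is a prolongation element vanishing on $W_i$, then for each $w\in W$ the derivation $\xi(w)\in\TT$ actually lies in $\TT_i = \ker(\iota_i)_*\cap\TT$, i.e.\ $\iota_i^*\circ\xi(w) = 0$ as a derivation relative to $\varphi\circ\iota_i$. The key is the prolongation equation $\delta\xi = 0$. Writing it out via \eqref{eq:SpencerDifferentialOneForms} and \eqref{eq:SpencerDifferentialFunctions}, the vanishing of $\delta\xi$ says precisely that the expression $\xi(\alpha)(w_1)(w_2,\dots)$ (and the analogous one on functions) is totally symmetric in its $W$-arguments across the two slots, once antisymmetrized appropriately. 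Concretely: for $w \in W_i$ and arbitrary $w', w_2,\dots$ one has, from $\delta\xi = 0$, that $\big(\xi(w')\big)(\alpha)(w, w_3,\dots)$ can be traded for terms in which $w$ appears in the outer ($W$-form) slot of $\xi$, i.e.\ terms of the form $\big(\xi(w)\big)(\dots)$ — and these vanish because $\xi$ vanishes on $W_i$. Iterating this symmetry exchange lets me push the $W_i$-argument $w$ from any evaluation slot of $\xi(w')$ into the outer slot, forcing $\iota_i^*\xi(w') = 0$. This is the step I expect to be the main obstacle: it is a combinatorial bookkeeping argument with the permutation sums in the Spencer differential formulas, and one has to be careful that the symbol part (the function evaluation \eqref{eq:SpencerDifferentialFunctions}) behaves consistently with the $1$-form part \eqref{eq:SpencerDifferentialOneForms} — but the symmetry is exactly what $\ker\delta$ encodes, so it should go through cleanly once set up.

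An alternative, cleaner route avoiding heavy index manipulation: use the fact (implicit in the setup of Cartan's test) that post-composition with $\iota_i^*$ gives a restriction morphism $(\iota_i)_*\colon \DD^k_\varphi \to \DD^k_{\varphi\circ\iota_i}$, and that this map is compatible with the Spencer differential in the sense that $(\iota_i)_* \circ \delta = \delta \circ (\iota_i \otimes (\iota_i)_*)^*$ appropriately — more precisely, restricting both the $W$-form argument and the target derivation to $W_i$ commutes with $\delta$. Then $\xi \in \left(\TT^{(1)}\right)_i$ means $\xi$ restricted to $W_i$ (outer slot) is zero; one wants instead that $\xi$ composed with $(\iota_i)_*$ on its values is zero. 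The bridge is again $\delta\xi = 0$: applying the appropriate restriction to the identity $\delta\xi = 0$ shows that the two possible "restrictions of $\xi$" — on the outer slot and on the value — differ by $\delta$ of something that lands in the relevant kernel, and since the outer-slot restriction vanishes, so does a suitable combination, yielding $(\iota_i)_* \circ \xi = 0$, i.e.\ $\xi(W)\subseteq \TT_i$. I would present whichever of these two formulations keeps the permutation combinatorics lightest; in either case the inclusion $\left(\TT^{(1)}\right)_i \subseteq (\TT_i)^{(1)}$ then follows immediately, and the same argument applies verbatim to the foliated case by the pointwise nature of the constructions.
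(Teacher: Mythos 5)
Your overall framing is the one the paper uses: since $\delta$ on $\Hom(W,\TT_i)$ is just the restriction of $\delta$ on $\Hom(W,\TT)$, the whole content is to show that $\xi\in(\TT^{(1)})_i$ takes values in $\TT_i$, and this must come from $\delta\xi=0$. But the concrete mechanism you describe for that step does not work. You take $w\in W_i$ with the \emph{other} arguments arbitrary and claim that $(\xi(w')\alpha)(w,w_3,\dots)$ can be traded for terms of the form $(\xi(w)\alpha)(\dots)$. Writing out $0=(\delta\xi)(\alpha)(w',w,w_3,\dots,w_{k+2})$ via \eqref{eq:SpencerDifferentialOneForms}, the trade produces not only $(\xi(w)\alpha)(w',w_3,\dots)$ but also all the terms $(\xi(w_j)\alpha)(w',w,\dots,\widehat{w_j},\dots)$ with $w_j$ arbitrary; these neither vanish nor have $w$ in the outer slot, and iterating the exchange does not make them go away. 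Worse, the statement you are steering toward -- that $\xi(w')$ already vanishes when a \emph{single} evaluation argument lies in $W_i$ -- is false in general: take $W$ three-dimensional with basis $e_1,e_2,e_3$, $V$ a vector space over a point, $\TT=\DD^1_\varphi\cong\Hom(\wedge^2W,V)$ and $W_1=\langle e_1\rangle$; then $\delta\xi=0$ together with $\xi(e_1)=0$ only forces $\xi(e_2)(e_1,e_3)=\xi(e_3)(e_1,e_2)$, which need not be zero, even though $\xi\in(\TT^{(1)})_1$.

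What you actually need is weaker, because $\TT_i=\ker(\iota_i)_*\cap\TT$ only asks that $(\xi(u)\alpha)(w_0,\dots,w_k)=0$ when \emph{all} $k+1$ evaluation arguments lie in $W_i$ (the map $(\iota_i)_*$ restricts every slot to $W_i$). This follows in one step, which is exactly the paper's proof: evaluate $0=(\delta\xi)(\alpha)(u,w_0,\dots,w_k)$ with $u\in W$ arbitrary and $w_0,\dots,w_k\in W_i$; every term except the one with $u$ in the outer slot involves some $\xi(w_j)$ with $w_j\in W_i$, hence vanishes since $\xi\in(\TT^{(1)})_i$, leaving $(\xi(u)\alpha)(w_0,\dots,w_k)=0$ (and the same evaluation with $f\in C^\infty(N)$ via \eqref{eq:SpencerDifferentialFunctions} handles the symbol part). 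Your ``alternative route'' via a commutation of $(\iota_i)_*$ with $\delta$ is too vague as stated to carry the argument; once made precise it reduces to this same one-line evaluation. So the skeleton of your proposal is right, but the key step as you describe it would fail and needs to be replaced by the all-arguments-in-$W_i$ evaluation.
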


\begin{proof}
    Let $\xi\in (\TT^{(1)})_i$, so $\xi(w) = 0$ for all $w\in W_i$. Then, for any $\alpha\in \Omega^1(V)$, $u \in W$ and $w_0, \dots, w_k\in W_i$,  applying \eqref{eq:SpencerDifferentialOneForms} one finds
    \begin{align*}
        0 &= (\delta \xi)(\alpha)(u, w_0, \dots, w_k) \\
        &= \left(\xi(u) \alpha\right)(w_0, \dots, w_k) - \sum_{i=0}^k(-1)^i\left( \xi(w_i)\alpha\right)(u, w_0,\dots, \widehat{w_i}, \dots, w_k)\\
        &= \left(\xi(u)\alpha\right)(w_0, \dots, w_k).
    \end{align*}
    Therefore, $\xi(u)\in \TT_i$ for all $u\in W$, and so $\xi\in \left(\TT_i\right)^{(1)}$.
\end{proof}

\begin{remark}
    For a tableaux $\TT\subset \Hom(W, V)$ in the classical sense, the inclusion in Lemma \ref{lem:prolongationAndFlags} is an equality. However, in our more general case this may fail. Consider, for example, the case where $W = V$ is a 2-dimensional vector space, with flag $W_0\subset W_1\subset W_2$, and let 
    \[ \TT:= \DD^1_\varphi = \Hom(\wedge^2W, W).\] 
    Then $\TT_1 = \Hom(\wedge^2W, W) = \TT$, and so $(\TT_1)^{(1)} = \TT^{(1)} = \Hom(W, \Hom(\wedge^2W, W))$. On the other hand, $(\TT^{(1)})_1 = \Hom(W_1, \Hom(\wedge^2W, W))\subsetneq (\TT_1)^{(1)}$.
\end{remark}

The proof of Theorem \ref{thm:CartansTest} rests on the following lemma.

\begin{lemma}\label{lem:GeneralizedCartansBound}
    If $L := \ker \left( \delta\: \Hom(\wedge^lW, \TT)\to \DD^{k+l}_\varphi\right)$,
    then
    \begin{equation}
    \rk L \leq \sum_{i=1}^ns_i \left( \binom{n}{l} - \binom{n-i}{l}\right)
    \end{equation}
    for any flag $(W_i)$ of $W$.
\end{lemma}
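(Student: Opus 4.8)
\textbf{Proof strategy for Lemma \ref{lem:GeneralizedCartansBound}.}

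The plan is to mimic the standard proof of Cartan's bound for classical tableaux, but carried out relative to a flag of $W$ rather than of $\wedge^2 W$ or similar. First I would reduce to a pointwise statement: everything here is fiberwise and of constant rank, so it suffices to fix $x\in M$ and work with the vector spaces $W=W_x$, $\TT=\TT_x\subset\DD^k_\varphi|_x$, the flag $(W_i)$, and the Spencer map $\delta\colon \Hom(\wedge^l W,\TT)\to \DD^{k+l}_\varphi$. The key is to filter the domain $\Hom(\wedge^l W,\TT)$ by subspaces adapted to the flag and to estimate, level by level, how much of each graded piece can lie in $\ker\delta$.

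The central idea is the following filtration. For each multi-index or, more simply, for each subset encoding which flag steps a form ``sees,'' one decomposes $\wedge^l W$ using the flag $(W_i)$: writing $W = W_{i-1}\oplus \ell_i$ for a line complement $\ell_i$ at each step, one gets a basis-adapted decomposition of $\wedge^l W^*$ indexed by $l$-subsets $I\subset\{1,\dots,n\}$. An element $\xi\in\Hom(\wedge^l W,\TT)$ is then a collection $\xi_I\in\TT$, one for each such $I$. The condition $\delta\xi=0$, written out via \eqref{eq:SpencerDifferentialOneForms}, is a collection of linear relations; the crucial point — exactly as in the classical case — is that when one orders the $l$-subsets appropriately (say by the position of their largest element, then lexicographically), the relation coming from a subset $I$ with $\max I = i$ expresses a ``new'' component of $\delta\xi$ in terms of $\xi_I$ modulo the components $\xi_J$ with $J$ earlier in the order; and the image of $\xi_I$ under the relevant symbol-type map lands in $\TT/\TT_i$ precisely because $\xi_I$ restricted to $W_{i-1}$ is what survives. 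Here one uses Lemma \ref{lem:prolongationAndFlags} and the restriction maps $(\iota_i)_*$ to identify the relevant quotients with $\TT/\TT_i$, whose dimension is $s_i$ (by definition of the Cartan characters, $\dim \TT/\TT_i = s_1+\cdots+s_i$, and the graded piece at step $i$ contributes $s_i$).

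Carrying this out, for each $l$-subset $I$ with $\max I = i$ the component $\xi_I$ is constrained to lie, modulo the earlier data, in a space of dimension at most $\dim \TT_{i-1} - \text{(rank of the constraint)}$; summing over all $l$-subsets with a given largest element $i$ — there are $\binom{i-1}{l-1}$ of them — and then over $i$, one obtains $\rk L \le \sum_i \big(\text{something}\big)$. A clean way to package the bookkeeping is to note that the number of $l$-subsets of $\{1,\dots,n\}$ that meet $\{1,\dots,i\}$ nontrivially is $\binom{n}{l}-\binom{n-i}{l}$, and to argue that the ``unconstrained'' components of $\xi$ are exactly those indexed by subsets living entirely in $\{i+1,\dots,n\}$ after passing to the associated graded of the flag filtration on $\TT$; telescoping $\dim\TT_{i-1}-\dim\TT_i = s_i$ then yields precisely
\[
    \rk L \leq \sum_{i=1}^n s_i\left(\binom{n}{l}-\binom{n-i}{l}\right).
\]
The main obstacle — and the place where this genuinely differs from the classical argument — is verifying that the symbol/restriction map appearing in the $I$-th Spencer relation really has image controlled by $\TT/\TT_i$ and not by some coarser quotient; this is where the derivation structure (as opposed to a plain $\Hom$-tableau) enters, and it is exactly the phenomenon flagged in the Remark after Lemma \ref{lem:prolongationAndFlags} that $(\TT^{(1)})_i$ can be strictly smaller than $(\TT_i)^{(1)}$. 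So I would spend the bulk of the proof pinning down, via \eqref{eq:SpencerDifferentialOneForms} applied with arguments drawn from $W_{i}$ versus a complementary line, the precise linear-algebra identity that makes the ordering triangular, and only then read off the dimension count. The remaining steps — choosing the adapted basis, ordering the subsets, and the combinatorial identity for $\binom{n}{l}-\binom{n-i}{l}$ — are routine.
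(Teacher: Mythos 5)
Your overall plan — reduce to a point, decompose $\Hom(\wedge^lW,\TT)$ into components $\xi_I$ indexed by $l$-subsets via a flag-adapted basis, make the Spencer relations triangular for a suitable ordering, and count — is genuinely different from the paper's proof, which filters $L$ on the value side, setting $L_i=L\cap\Hom(\wedge^lW,\TT_i)$, and bounds each quotient $L_{i-1}/L_i$ against the pullback subspace $\pi_i^*\Hom(\wedge^l(W/W_i),\TT_{i-1}/\TT_i)$. A domain-side argument of your type can be completed, but not the way you set it up, and this is where the genuine gap lies. You order the subsets by their \emph{largest} element and want the relation attached to $I$ with $\max I=i$ to confine $\xi_I$, modulo earlier components, to a space controlled by $\TT_{i-1}$ (or $\TT/\TT_i$). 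If such a triangularity held, the count it produces is $\sum_I\rk\TT_{\max I-1}=\sum_i s_i\binom{i}{l}$, which for $l\ge 2$ is strictly smaller than the bound of the lemma — and it is false: take $M=N=\{*\}$, $W=\RR^2$, $V=\RR$, $\TT=\Hom(W,V)=\DD^0_\varphi$, $l=2$; then $\DD^{2}_\varphi\cong\Hom(\wedge^3W,V)=0$, so $L=\Hom(\wedge^2W,\TT)$ has rank $2$, while $s_1=s_2=1$ gives $\sum_i s_i\binom{i}{2}=1$. So no max-based ordering can be made triangular in the sense you describe. What does work is ordering by the \emph{smallest} element: for $I=\{i_1<\dots<i_l\}$ evaluate $\delta\xi=0$, via \eqref{eq:SpencerDifferentialOneForms} and \eqref{eq:SpencerDifferentialFunctions}, on $e_{i_1},\dots,e_{i_l}$ together with $k+1$ (resp.\ $k$) vectors from $W_{i_1-1}$; every cross term involves a component $\xi_J$ with $\min J<\min I$, so killing components in increasing order of $\min$ one finds that the surviving component $\xi_I$ lies in $\TT_{\min I-1}$. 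This yields $\rk L\le\sum_I\rk\TT_{\min I-1}$, and since there are $\binom{n-i}{l-1}$ subsets with $\min I=i$ and $\rk\TT_{i-1}=s_i+\dots+s_n$, the identity $\sum_{i\le j}\binom{n-i}{l-1}=\binom{n}{l}-\binom{n-j}{l}$ gives exactly the stated bound.

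Your closing bookkeeping also does not hold together as written. If the ``unconstrained'' components at level $i$ were those with $I\subset\{i+1,\dots,n\}$, the natural conclusion would be $\rk L\le\sum_i s_i\binom{n-i}{l}$, which is not the claimed inequality (and is false already for $l=1$); if instead you mean that, after passing to the associated graded of the filtration $\TT_\bullet$, the kernel meets the pulled-back components trivially, that is not true level by level: with $k=0$, $l=1$, $\TT=\Hom(\RR^2,\RR)$, dual basis $e^1,e^2$, the symmetric element $\xi$ with $\xi(e_1)=e^2$, $\xi(e_2)=e^1$ lies in $L$, its reduction modulo $\TT_1$ is a nonzero pullback from $W/W_1$, yet $\xi\notin\Hom(W,\TT_1)$. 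So the triangularity must be run on $L$ itself with the min-ordering, not on target-graded pieces. Two smaller points: Lemma \ref{lem:prolongationAndFlags} is not the relevant input here (it compares $(\TT^{(1)})_i$ with $(\TT_i)^{(1)}$), and the step you yourself defer as ``the bulk of the proof'' is precisely the one that, as set up, would have to establish a false inequality. With the min-ordering correction, however, your domain-side strategy does go through and gives a proof along classical Cartan-bound lines, distinct from the paper's value-side filtration.
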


\begin{proof}
    Let $(W_i)$ be a flag of $W$ and set
    \[
        L_i:= L \cap \Hom(\wedge^l W, \TT_i).
    \]
    These define a filtration $L = L_0 \supseteq \dots \supseteq L_{i-1} \supseteq L_{i} \supseteq \dots \supseteq L_n = 0$, and there are natural inclusions
    \[ L_{i-1}/L_i\hookrightarrow \Hom\left(\wedge^lW, \TT_{i-1}/\TT_i\right). \]
    If $\pi_i\: W \to W/W_i$ is the projection, then pullback gives another set of inclusions
    \[ \pi_i^*\: \Hom\left(\wedge^l(W/W_i), \TT_{i-1}/\TT_i\right)\to \Hom\left(\wedge^lW,\TT_{i-1}/\TT_i\right).\]
    \begin{claim*}
        $\left(L_{i-1}/L_i\right)\cap \im \pi_i^* = \{0\}$.
    \end{claim*}    
    To prove this claim let $\pi_i^*\xi\in L_{i-1}\cap \pi_i^*\Hom\left(\wedge^2\left(W/W_i\right), \TT_{i-1}\right)$. If $w_1, \dots w_l\in W$ and $w_{l+1}, \dots, w_{k+l+1}\in W_i$, then by \eqref{eq:SpencerDifferentialOneForms} for $\alpha\in \Omega^1(V)$ we have
    \begin{align*}
        0 &= \delta\left( \pi_i^*\xi\right)(\alpha)(w_1, \dots, w_{k+l + 1})\\
        &= \frac{1}{l!(k+ 1)!} \sum_{\sigma \in S_{k+l + 1}} (-1)^\sigma (\xi(\alpha)\left(\pi_i\left(v_{\sigma(1)}\right),\dots \pi_i\left(w_{\sigma(l)}\right)\right) \left(w_{l+1}, \dots, w_{\sigma(k+l)}\right)\\
        &=((\pi_i^*\xi)(\alpha)(w_1,\dots, w_l))(w_{l+1},\dots,  w_{k+l + 1}).
    \end{align*}
    It follows that $\pi_i^*\xi\in \Hom\left(\wedge^l W, \TT_{i}\right)$, and the claim follows.

    The estimate of the lemma now follows from this claim by telescoping:
    \begin{align*}
        \rk L &= \sum_{i=1}^n \rk L_{i-1} - \rk L_i \\
        &\leq\sum_{i=1}^n \rk \Hom(\wedge^lW, \TT_{i-1}/ \TT_i) - \rk \Hom(\wedge^l(W/W_i), \TT_{i-1}/T_i)\\
        &= \sum_{i=1}^ns_i \left( \binom{n}{l} - \binom{n-i}{l}\right). \qedhere
    \end{align*}
\end{proof}

\begin{proof}[Proof of Theorem \ref{thm:CartansTest}]
    Note that the case $l=1$ in Lemma \ref{lem:GeneralizedCartansBound} is exactly item (i) (Cartan's bound) in the theorem.

    For the proof of item (ii), suppose that $(W_i)$ is a regular flag for $\TT$, with characters $(s_i)$, so that Cartan's test on $\dim \TT^{(1)}$ is satisfied. Since the Cartan characters are assumed locally constant, so is the rank of $\TT^{(1)}$. 
    
    The proof goes in two steps:
    \begin{description}
        \item[Step 1] Prove the inequality 
            \begin{equation}
            \label{eq:inequaility:Cartan:Test:proof}
            \rk (\im \delta) \geq \sum_{i=1}^n s_i \left( \binom{n}{l} - \binom{n-i}{l} \right)
            \end{equation}
            for $l = 2$ and that Cartan's bound is achieved for $\rk \TT^{(2)}$ -- as the first prolongation of $\TT^{(1)}$ -- for the same flag $(W_i)$. Then, by Lemma \ref{lem:GeneralizedCartansBound}, it follows that
            \[ H^{0, 2}(\TT)=0,\]        
            and that -- by Cartan's test for ordinary tableaux (\cite[Theorem 3.4]{Seiler2007}) -- all prolongations $\TT^{(m)}$, $m\geq 1$, have locally constant rank and are involutive, so that 
            \[ H^{m,l}(\TT) = H^{m-1, l}(\TT^{(1)}) = 0,\quad \text{for }m\geq 1,\, l \geq 0;\]
        \item[Step 2] Prove inequality          \eqref{eq:inequaility:Cartan:Test:proof} for arbitrary $l$ so that, applying Lemma \ref{lem:GeneralizedCartansBound}, one has
        \[ H^{0,l}(\TT)=0,\quad \text{for }l\ge 0.\]  
    \end{description} 
    
    \textit{Step 1.} First, we claim that the Cartan characters $s^{(1)}_i$ of the first prolongation satisfy the bound:
    \begin{equation}
        \label{eq:characters:1st:prolongation}
        s^{(1)}_i \leq s_i + s_{i+1} + \dots + s_n.
    \end{equation}
    This follows from the Euler characteristic of the exact sequence
    \[
        \xymatrix{0 \ar[r]& \left(\TT^{(1)}\right)_i \ar[r]& \left( \TT^{(1)}\right)_{i-1} \ar[r]^{\iota_{w_i}} & \TT_{i-1} }
    \]
    where $\iota_{w_i}$ is interior contraction by a local frame  $(w_i)$ adapted to $(W_i)$, i.e., $\{w_1, \dots, w_i\}$ is a frame for $E_i$.
    
    From the bound on $s_i^{(1)}$ together with Cartan's bound for $\rk \TT^{(2)}$, we see that
    \[ \rk \TT^{(2)} \leq \sum_{i=1}^n is^{(1)}_i \leq \sum_{i=1}^n \binom{i+1}{2}s_i.\] 
    Furthermore, from the exact row of the Spencer complex
    \[
        \xymatrix{0 \ar[r]& \TT^{(2)} \ar[r]&  \Hom(W, \TT^{(1)}) \ar[r]^{\delta} & \Hom(\wedge^2W, \TT)\ar[r] & \DD^{k+2}_\varphi},
    \]
    we conclude that 
    \begin{align}
    \begin{split}\label{eq:inequalityImageDifferential}
        \rk (\im \delta) &= \rk \left(\Hom(W, \TT^{(1)})\right) - \rk \TT^{(2)}\\
        &\geq \sum_{i=0}^n s_i\left( n i - \binom{i+1}{2}\right)\\
        &= \sum_{i=1}^ns_i \left( \binom{n}{2} - \binom{n-i}{2}\right),
    \end{split}
    \end{align}
    where we used the assumption that Cartan's bound is achieved for $\rk \TT^{(1)}$. This proves the inequality \eqref{eq:inequaility:Cartan:Test:proof} for $l=2$. But then by Lemma \ref{lem:GeneralizedCartansBound}, the inequality \eqref{eq:inequalityImageDifferential} must be an equality, so that
    \[ \rk \TT^{(2)} = \sum_{i=1}^n s_i\binom{i+1}{2}. \] 
    Hence, Cartan's bound is achieved for $\TT^{(2)}$ and the proof of step 1 is concluded. Incidentally, this also shows that one must have equality in \eqref{eq:characters:1st:prolongation}.

    \textit{Step 2.} It remains to prove \eqref{eq:inequaility:Cartan:Test:proof} for $l\geq 3$. This bound comes again from the Spencer complex:
    \[
    \begin{split}
        \xymatrix{ 0 \ar[r] & \TT^{(l)} \ar[r] & \Hom(W, \TT^{(l-1)}) \ar[r] & \Hom(\wedge^2 W, \TT^{(l-2)}) \ar[r] & \dots }\qquad\\
        \qquad\xymatrix{\dots \ar[r]&\Hom(\wedge^{l-1} W, \TT^{(1)}) \ar[r]^---{\delta} & \Hom(\wedge^lW, \TT)  \ar[r] & \DD^{k+l}_\varphi }
    \end{split}
    \]
    Because $\TT^{(1)}$ is involutive, this sequence is exact up until $\im \delta$. It follows that
    \[
        \rk(\im \delta) = \sum_{k=1}^l (-1)^{k+1} \rk\left(\Hom(\wedge^{l-k}W, \TT^{(k)})\right).
    \]
    We already know that Cartan's bound is achieved for all prolongations, so $\rk\TT^{(k)}$ can be given entirely in terms of the Cartan characters of $\TT$. An induction argument using $s^{(1)}_i = s_i + \dots + s_n$ gives 
    $\rk \TT^{(k)} = \sum_{i=1}^n s_i \binom{i + k - 1}{k}$, 
    and therefore
    \[
        \rk(\im \delta) =\sum_{i=1}^n s_i \left( \sum_{k=1}^l (-1)^{k+1} \binom{n}{l-k}\binom{i+k-1}{k} \right).
    \]
    The result now follows from the combinatorial identity
    \[
        \sum_{k=1}^l (-1)^{k+1} \binom{n}{l-k}\binom{i+k-1}{k} = \binom{n}{l} - \binom{n-i}{l}.\qedhere
    \]
\end{proof}

\subsubsection{Symbol exact sequences of tableaux and involutivity}\label{subsubsec:symbolExactSequences}

We continue to assume that $(\varphi,p)\: W \to V$ is a vector bundle map.

\begin{proposition}\label{prop:tautologicalTableauIsInvolutive}
    Both $\DD^k_{\varphi}$ and $\Hom(\wedge^{k+1}W, p^*V)\subset \DD^k_\varphi$ are involutive tableaux for which every flag is regular.
\end{proposition}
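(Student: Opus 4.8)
The plan is to verify Cartan's test (Theorem~\ref{thm:CartansTest}) for both tableaux with respect to an \emph{arbitrary} flag $(W_i)$ of $W$; since a regular flag is by definition one on which Cartan's test holds, this establishes simultaneously that the two tableaux are involutive and that every flag is regular. Write $n=\rk W$, $r=\rk V$, $q=\dim N$, and use the conventions $\binom{a}{b}=0$ for $b<0$ or $b>a$. All objects in sight (the spaces $\TT_i$, the prolongation, the Spencer differential) are pointwise in nature, so we may argue fiberwise, and the ranks computed below depend only on $n,r,q$, hence are automatically locally constant.

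\emph{Cartan characters.} For a flag $(W_i)$ let $(\iota_i)_*\colon\DD^k_\varphi\to\DD^k_{\varphi\circ\iota_i}$ be the restriction map. It is a morphism between the symbol sequences~\eqref{eq:symbolsesrelativederivations} for $\varphi$ and for $\varphi\circ\iota_i$, whose outer components are the restriction maps $\Hom(\wedge^{k+1}W,p^*V)\to\Hom(\wedge^{k+1}W_i,p^*V)$ and $\Hom(\wedge^kW,p^*TN)\to\Hom(\wedge^kW_i,p^*TN)$; these are surjective with kernels of ranks $r(\binom{n}{k+1}-\binom{i}{k+1})$ and $q(\binom{n}{k}-\binom{i}{k})$. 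By the snake lemma, $\rk\ker(\iota_i)_*=r(\binom{n}{k+1}-\binom{i}{k+1})+q(\binom{n}{k}-\binom{i}{k})$, and intersecting with the two tableaux gives
\[
\rk\bigl(\Hom(\wedge^{k+1}W,p^*V)\bigr)_i = r\Bigl(\binom{n}{k+1}-\binom{i}{k+1}\Bigr),\qquad
\rk\bigl(\DD^k_\varphi\bigr)_i = r\Bigl(\binom{n}{k+1}-\binom{i}{k+1}\Bigr)+q\Bigl(\binom{n}{k}-\binom{i}{k}\Bigr).
\]
Using Pascal's rule, the Cartan characters are $s_i=r\binom{i-1}{k}$ for the first tableau and $s_i=r\binom{i-1}{k}+q\binom{i-1}{k-1}$ for $\DD^k_\varphi$; in particular they do not depend on the flag.

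\emph{Rank of the first prolongation.} The key point is that the Spencer differential $\delta$ restricts to exterior multiplication on the relevant subspaces: on $\Hom(W,\Hom(\wedge^{k+1}W,p^*V))=W^*\otimes\wedge^{k+1}W^*\otimes p^*V$ it is, up to sign, $\wedge\otimes\id_{p^*V}$ with values in $\Hom(\wedge^{k+2}W,p^*V)\subseteq\DD^{k+1}_\varphi$ (because a derivation of zero symbol stays of zero symbol after wedging with a $W$-form), and modulo the symbol it is $\wedge\otimes\id_{p^*TN}\colon W^*\otimes\wedge^kW^*\otimes p^*TN\to\wedge^{k+1}W^*\otimes p^*TN$. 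Thus $\delta\colon\Hom(W,\DD^k_\varphi)\to\DD^{k+1}_\varphi$ becomes a morphism of short exact sequences, from the sequence obtained by applying $W^*\otimes(-)$ to~\eqref{eq:symbolsesrelativederivations} to the sequence~\eqref{eq:symbolsesrelativederivations} written for $\DD^{k+1}_\varphi$, whose two outer vertical maps are the surjective wedge maps. The snake lemma then yields a short exact sequence
\[
0\to\ker\bigl(\wedge\otimes\id_{p^*V}\bigr)\to\bigl(\DD^k_\varphi\bigr)^{(1)}\to\ker\bigl(\wedge\otimes\id_{p^*TN}\bigr)\to 0,
\]
whose first term is exactly $\bigl(\Hom(\wedge^{k+1}W,p^*V)\bigr)^{(1)}$. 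Since $\rk\ker(\wedge\colon W^*\otimes\wedge^jW^*\to\wedge^{j+1}W^*)=n\binom{n}{j}-\binom{n}{j+1}$ by rank--nullity and surjectivity, we obtain $\rk\bigl(\Hom(\wedge^{k+1}W,p^*V)\bigr)^{(1)}=r\bigl(n\binom{n}{k+1}-\binom{n}{k+2}\bigr)$ and $\rk(\DD^k_\varphi)^{(1)}=r\bigl(n\binom{n}{k+1}-\binom{n}{k+2}\bigr)+q\bigl(n\binom{n}{k}-\binom{n}{k+1}\bigr)$.

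\emph{Cartan's bound.} It remains to check that $\rk\TT^{(1)}=\sum_{i=1}^n i\,s_i$ in both cases, which reduces to the combinatorial identity
\[
\sum_{i=1}^n i\binom{i-1}{k}=(k+1)\sum_{i=1}^n\binom{i}{k+1}=(k+1)\binom{n+1}{k+2}=n\binom{n}{k+1}-\binom{n}{k+2}
\]
(and the same with $k$ replaced by $k-1$), which follows from $i\binom{i-1}{k}=(k+1)\binom{i}{k+1}$ and the hockey-stick identity. With Cartan's bound attained and the characters locally constant for every flag, Theorem~\ref{thm:CartansTest}(ii) gives that both tableaux are involutive and that every flag is regular. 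The one technical step requiring care — everything else being bookkeeping with binomial coefficients and snake-lemma diagram chases — is the identification of the restricted Spencer differentials with the wedge maps together with the verification that this identification is compatible with the symbol exact sequences, so that the two snake-lemma arguments apply.
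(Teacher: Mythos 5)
Your proposal is correct and follows essentially the same route as the paper: verify Cartan's test (Theorem~\ref{thm:CartansTest}) for an arbitrary flag, compute the Cartan characters via the restriction maps $(\iota_i)_*$, and control $(\DD^k_\varphi)^{(1)}$ through the compatibility of the Spencer differential with the symbol exact sequence \eqref{eq:symbolsesrelativederivations}, which is exactly the content of Lemma~\ref{lemma:symbolAndSpencerDifferential}. The only (cosmetic) difference is that you verify the bound $\rk\TT^{(1)}=\sum_i i\,s_i$ for $\DD^k_\varphi$ by explicit binomial bookkeeping, whereas the paper deduces it from the additivity of ranks and characters in the short exact sequences relating $\DD^k_\varphi$ to its classical sub- and quotient tableaux, for which Cartan's test is checked first.
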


In order to prove this proposition, will use the following lemma which relates the Spencer differential and 
the short exact sequence \eqref{eq:symbolsesrelativederivations} induced by the symbol map.

\begin{lemma}\label{lemma:symbolAndSpencerDifferential}
    The following diagram of short exact sequences commutes
    \[
    \xymatrix{
       {\Hom(W, \Hom(\wedge^{k+1}W, p^*V))} \ar[d] \ar[r] & {\Hom(W, \DD^k_\varphi)} \ar[d]^{\delta} \ar[r] & {\Hom(W, \Hom(\wedge^kW, p^*TN))} \ar[d] \\
        {\Hom(\wedge^{k+2}W, p^*V)} \ar[r]                    & \DD^{k+1}_\varphi \ar[r]                            & {\Hom(\wedge^{k+1}W, p^*TN)},
    }
    \]
    where the side vertical arrows are skew-symmetrization. In particular, the Spencer differential $\delta\:\Hom(E, \DD^k_{\varphi})\to \DD^{k+1}_\varphi$ is surjective.
\end{lemma}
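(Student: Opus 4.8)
## Proof plan for Lemma \ref{lemma:symbolAndSpencerDifferential}

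The plan is to verify commutativity of the diagram directly from the defining formulas, working one square at a time, and then deduce surjectivity of $\delta$ by a diagram chase. First I would recall that all the bundles in sight are built out of $W$, $p^*V$ and $p^*TN$ via $\Hom$ and exterior powers, and that the horizontal short exact sequences are the symbol sequence \eqref{eq:symbolsesrelativederivations} (resp.\ its image under the functor $\Hom(W,-)$). Thus it suffices to check the diagram commutes on sections, and since everything is $C^\infty(M)$-linear and pointwise, it suffices to evaluate on $1$-forms $\alpha \in \Omega^1(V)$ and on functions $f \in C^\infty(N)$, using formulas \eqref{eq:SpencerDifferentialOneForms} and \eqref{eq:SpencerDifferentialFunctions} for $\delta$.

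The \textbf{right-hand square} concerns symbols: I would take $\xi \in \Gamma(\Hom(W,\DD^k_\varphi))$, compute the symbol $\sigma(\delta\xi)$ by applying \eqref{eq:SpencerDifferentialFunctions} with $l=1$, and observe that $\sigma(\delta \xi)(f)(w_0,\dots,w_k) = \sum_i (-1)^i \big(\sigma(\xi(w_i))(f)\big)(w_0,\dots,\widehat{w_i},\dots,w_k)$, which is exactly the skew-symmetrization of $w \mapsto \sigma(\xi(w))$, i.e.\ the image of $\xi$ under $\Hom(W,\sigma)$ followed by the right vertical arrow. The \textbf{left-hand square} concerns the sub-objects $\Hom(\wedge^{k+1}W,p^*V)$: here the relevant derivations have zero symbol, so $\delta$ restricted to $\Hom(W,\Hom(\wedge^{k+1}W,p^*V))$ is (by \eqref{eq:SpencerDifferentialOneForms}, noting the symbol-free terms) precisely the wedge/skew-symmetrization map $\Hom(W,\Hom(\wedge^{k+1}W,p^*V)) \to \Hom(\wedge^{k+2}W,p^*V)$, which is the left vertical arrow. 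Commutativity of the middle then follows formally, or can be read off directly from \eqref{eq:SpencerDifferentialOneForms}. I would be slightly careful with the combinatorial normalization factors $\tfrac{1}{(k+1)!l!}$ in \eqref{eq:SpencerDifferentialOneForms}–\eqref{eq:SpencerDifferentialFunctions} so that the "skew-symmetrization" on the vertical arrows is normalized consistently with how those maps appear in the symbol sequence; this bookkeeping is the only place where an error could creep in, but it is routine.

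For the final claim, surjectivity of $\delta\colon \Hom(W,\DD^k_\varphi) \to \DD^{k+1}_\varphi$, I would argue by the five-lemma-style chase on the commuting diagram of short exact sequences. The right vertical arrow, skew-symmetrization $\Hom(W,\Hom(\wedge^k W,p^*TN)) \to \Hom(\wedge^{k+1}W,p^*TN)$, is surjective because skew-symmetrization of a $\Hom(\wedge^k W, p^*TN)$-valued $W$-form onto $\Hom(\wedge^{k+1}W,p^*TN)$ is split surjective (any alternating $(k+1)$-form is, up to a constant, the skew-symmetrization of itself viewed with one argument singled out). Likewise the left vertical arrow $\Hom(W,\Hom(\wedge^{k+1}W,p^*V)) \to \Hom(\wedge^{k+2}W,p^*V)$ is surjective for the same reason. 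Given $\D \in \Gamma(\DD^{k+1}_\varphi)$ with symbol $\bar\sigma := \sigma(\D) \in \Hom(\wedge^{k+1}W,p^*TN)$, pick a preimage $\eta$ of $\bar\sigma$ under the right vertical map and lift it through $\Hom(W,\sigma)$ to some $\xi_0 \in \Hom(W,\DD^k_\varphi)$; then $\D - \delta\xi_0$ has vanishing symbol, hence lies in $\Hom(\wedge^{k+2}W,p^*V) \subset \DD^{k+1}_\varphi$, and by surjectivity of the left vertical arrow it equals $\delta\xi_1$ for some $\xi_1 \in \Hom(W,\Hom(\wedge^{k+1}W,p^*V)) \subset \Hom(W,\DD^k_\varphi)$. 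Then $\D = \delta(\xi_0 + \xi_1)$, as desired. The main (very mild) obstacle is purely notational: keeping the normalization constants in the Spencer differential consistent with the skew-symmetrization maps so that the squares commute on the nose rather than up to a nonzero scalar; none of this affects exactness or surjectivity.
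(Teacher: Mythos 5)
Your proof is correct and follows the same route as the paper, which simply declares the commutativity "clear" from the formulas \eqref{eq:SpencerDifferentialOneForms} and \eqref{eq:SpencerDifferentialFunctions}; you spell out the square-by-square verification and the diagram chase for surjectivity that the authors leave implicit. No gaps — the surjectivity of the two skew-symmetrization side arrows and the resulting chase are exactly the intended argument.
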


\begin{proof}
    This is clear from \eqref{eq:SpencerDifferentialOneForms} and \eqref{eq:SpencerDifferentialFunctions}.
\end{proof}

\begin{proof}[Proof of Proposition \ref{prop:tautologicalTableauIsInvolutive}]
    Consider first the tableau $\TT := \Hom(\wedge^{k}W,p^*V)$. In this case the Spencer differential is the skew-symmetrization map
    \[
        \delta\: \Hom(W, \Hom(\wedge^{k}W, p^*V))\to \Hom(\wedge^{k+1}W, p^*V)
    \]
    which is surjective. Setting $n = \rk W$ and $m = \rk V$, we have 
    \[ \rk \TT^{(1)} = m \left( {n \binom{n}{k} - \binom{n}{k+1}}\right)=m\,k \binom{n+1}{k+1}.\]
    Now let $(W_i)$ be a (local) flag for $W$. From the exact sequence
    \[
        \xymatrix{0 \ar[r] & \TT_i \ar[r] & \Hom(\wedge^kW, p^*V) \ar[r] & \Hom(\wedge^k W_i, p^*V)\ar[r] & 0}
    \]
    it follows that $s_i = m \left(\binom{i}{k} - \binom{i-1}{k}\right) = m \binom{i-1}{k-1}$. Hence, we find that 
    \[ \sum_{i=1}^n i s_i = mk\sum_{i=k}^n \binom{i}{k}=m\,k \binom{n+1}{k+1}=
    \rk \TT^{(1)}, \] 
    so Cartan's Test \ref{thm:CartansTest} holds and $\TT$ is involutive.

    Next, to show that $\DD^k_{\varphi}$ is also involutive we apply Lemma \ref{lemma:symbolAndSpencerDifferential}. Setting $\DD = \DD^k_{\varphi}$ and $\mathcal{S} = \Hom(\wedge^{k} W, p^*TN)$, the lemma gives a short exact sequence
    \[
    \xymatrix{0 \ar[r] & \TT^{(1)} \ar[r] &  \DD^{(1)} \ar[r] &  \mathcal{S}^{(1)} \ar[r] &  0.}
    \]  
    Moreover, restriction gives a commutative diagram
    \[
    \xymatrix{
    0 \ar[r] & {\Hom(\wedge^{k+1}W, p^*V)} \ar[d] \ar[r] & \DD^{k}_\varphi \ar[d] \ar[r]        & {\Hom(\wedge^{k}W, p^*TN)} \ar[d]\ar[r] & 0 \\
            0 \ar[r]& {\Hom(\wedge^{k+1}W_i, p^*V)} \ar[r]         & \DD^k_{\varphi\circ \iota_{W_i}} \ar[r] & {\Hom(\wedge^kW_i, p^*TN)}        \ar[r]   & 0
    }
    \]
    so we obtain that the restricted spaces also fit into a short exact sequence
    \[
    \xymatrix{0 \ar[r] & \TT_i \ar[r] &  \DD_i \ar[r] &  \mathcal{S}_i \ar[r] &  0.}
    \]
    Since Cartan's test is satisfied for both $\mathcal{S}$ and $\TT$, we conclude that it is satisfied for $\DD$, which then must also be involutive.
\end{proof}

\section{Relative algebroids}\label{sec:RelativeAlgebroids}

\subsection{Relative algebroids} 

With the formalism of relative derivations at hand, we are now ready to introduce the geometric objects underlying Cartan's realization problem and Bryant's equations (\ref{eq:BryantsEquations}).

\begin{definition}
\label{def:relative:algebroid}
    An \textbf{almost Lie algebroid relative to a submersion} or in short \textbf{algebroid relative to a submersion} $p\: M \to N$ consists of a vector bundle $A\to N$ together with a 1-derivation
    \[
        \D\: \Omega^\bullet (A)\to \Omega^{\bullet + 1} (p^*A)
    \]
    relative to $p$.  
\end{definition}

We will denote by $(A, p, \D)$ the entire structure of the algebroid relative to $p$ and often write $B$ for the pullback bundle $p^* A\to M$. The manifolds $M$ and $N$ will be specified when those are not clear from the context. We like to graphically depict an algebroid relative to $p$ as
\[
    \xymatrix{
    B \ar[d] \ar[r] & A \ar[d] \ar@{-->}@/^1pc/[l]^{\D} \\
    M \ar[r]_p  & N                   
    }
\]
where the dotted arrow is not a map but indicates a derivation that is defined on $\Omega^\bullet(A)$ and takes values in $\Omega^{\bullet+1}(B)$. We use the letter $B$ to stand for Bryant, whose equations (\ref{eq:BryantsEquations}) inspired this definition.

According to Lemma \ref{lem:derivationsAndBrackets}, the structure of an algebroid relative to $p\: M\to N$ can also be encoded by a relative bracket and anchor
\[
    [\cdot, \cdot]\: \wedge^2\Gamma(A)\to \Gamma(B), \quad \rho\: B\to p^*TN
\]
subject to the Leibniz rule:
\[
    [a_0, fa_1] = p^*f [a_0, a_1] + \LL_{\rho(p^*a_0)}(f) p^*a_1.
\]
The corresponding derivation is determined through the Koszul formula (see the proof of Lemma \ref{lem:derivationsAndBrackets}).

\begin{example}
    In the case that $M=N$ and $p=\id$, we recover a notion of an \textbf{almost Lie algebroid}: a vector bundle $A\to N$ together with an anchor and a bracket subject to the Leibniz rule, or, dually, with a degree 1 derivation $\D_A$ on $\Omega^\bullet(A)$. Such an almost Lie algebroid is a Lie algebroid when $\D_A^2 =0$. In particular, we can consider any (almost) Lie algebroid as an algebroid relative to the identity. A special case is the tangent bundle $(TP,\d)\to P$ with the de Rham differential $\d$. 

    At this point, when dealing with an arbitrary relative algebroid, it is not clear to make sense of ``$\D^2 =0$''. We will soon see how to handle this issue. This issue also influences our terminology: the objects in Definition \ref{def:relative:algebroid} should properly be called \emph{almost relative Lie algebroid}. This name is too long and so we refer to them simply as \emph{relative algebroids}, removing ``Lie'' from its name rather than adding ``almost''. Later, after we make sense of $\D^2=0$, we will be able to define \emph{relative Lie algebroids}.
\end{example}

\begin{example}[Relative algebroids in coordinates]\label{ex:relativeAlgebroidsInCoordinates}
    Let $(U,(x^\mu, y^\varrho))$ be a coordinate system for $M$ and $(V,x^\mu)$ a coordinate system for $N$ such that
    \[ p(x^\mu, y^\varrho)=x^\mu. \]
    Also, fix $\{e_i\}$ a frame for $A|_V$, and let $\{\theta^i\}$ be the corresponding dual coframe. Then the derivation $\D$ is determined by
    \begin{equation}\label{eq:relativeAlgebroidInCoordinates}
        \begin{cases}
            \D \theta^i = -\frac{1}{2}c^{i}_{jk}(x^\mu, y^\varrho)\, \theta^j\wedge\theta^k,\\
            \D x^\mu = F^\mu_i(x^\mu, y^\varrho)\, \theta^i.
        \end{cases}
    \end{equation}
    for functions $c^{i}_{jk}, F^\mu_i\in C^\infty(M)$. Dually, the anchor and bracket are given by
    \[ 
    \begin{cases}
        [e_i,e_j]=c_{ij}^k e_k,\\
        \rho(e_i)=F^\mu_i\partial_{x^\mu}.
    \end{cases}
    \]
    The equations above look striking similar to Bryant's equations \eqref{eq:BryantsEquations} except that one has $\D$ instead of $\d$ and the $c^{i}_{jk}$ also depends on the ``free derivatives'' $y^\varrho$. 
\end{example}

A true globalization of Bryant's equations requires a globalization of the projection. The language introduced in the previous section for derivations relative to foliations precisely captures this idea. This is not a significant generalization; as we will see in Section \ref{sec:prolongation}, whenever the first prolongation of a relative algebroid exists, it is always relative to a submersion.

Nevertheless, there are natural examples of relative algebroids that are relative to a foliations rather than a submersion. These include relative algebroids underlying Pfaffian fibrations \cite{CattafiCrainicSalazar2020} and those obtained through restrictions (Proposition \ref{prop:restriction}).

\begin{definition}
    An \textbf{algebroid relative to a foliation} $(M,\FF)$ is a flat foliated vector bundle $(M,\FF,B,\overline{\nabla})$ together with a 1-derivation $\D$ relative to $\FF$:
    \[
        \D\: \Omega^{\bullet}_{(B, \overline{\nabla})}\to \Omega^{\bullet + 1}_B.
    \]
    We will write $(B, \overline{\nabla}, \D)$ to denote a relative algebroid over $(M, \FF)$. If there is ambiguity in what $(M, \FF)$ could be, we write $(B, \overline{\nabla}, \D)\to (M, \FF)$.
\end{definition}

\begin{example}
    \label{ex:relativeAlgebroidsInCoordinates:2}
    Let $(A, p, \D)$ be an algebroid relative to a submersion $p\:M\to N$. Then it can be viewed as an algebroid relative to the foliation $\FF=\ker(\d p)$. Namely, the vector bundle $B=p^*A$ carries a canonical flat $\FF$-connection $\onabla$ determined by requiring
    \[
    \overline{\nabla}p^*a = 0,\quad \text{ for all } a\in \Gamma(A),
    \]
    and since $\Omega^\bullet_{(B, \overline{\nabla})}= p^*\Omega^\bullet_A$,
    we can view $\D$ as a derivation relative to $\FF$. 
    
    Conversely, any algebroid relative to a foliation is locally an algebroid relative to a submersion. It is globally an algebroid relative to a submersion precisely when $\FF$ is simple and $\overline{\nabla}$ has no holonomy along $\FF$ (see Corollary \ref{cor:BundlesWithFlatPartialConnectionsOfSimpleFoliations}).

    Suppose we choose foliation coordinates $(U,(x^\mu, y^\varrho))$ on $(M, \FF)$, so that plaques of $\FF$ correspond to $\{y^\varrho = c^\varrho\}$. Furthermore, let $\{e_i\}$ be a local frame  of flat sections of $B|_U$, so the dual coframe $\{\theta^i\}$ consists of local flat $B$-forms in $\Omega^1_{(B, \overline{\nabla})}(U)$. Then the derivation $\D$ is still determined by the same equations \eqref{eq:relativeAlgebroidInCoordinates}. Also, we now see that we can retrieve Bryant's equations, i.e., have the $c_{ij}^k$ not depend on the free variables, exactly when there is a local coframe $(\theta^i)$ such that $\overline{\nabla}(\D\theta^i)=0$. We will see later that this property is always satisfied for any prolongation of a relative algebroid (Definition \ref{def:prolongation}).
\end{example}

Using the notion of $(\varphi, p)$-related derivations (see Definition \ref{def:Phi-related:derivations}, morphisms of relative algebroids can be defined as follows:

\begin{definition}
    A \textbf{morphism} $(\varphi,p)\:(B_1, \overline{\nabla}_1, \D_1)\to (B_2, \overline{\nabla}_2,\D_2)$ of relative algebroids consists of 
    \begin{enumerate}[(i)]
        \item a map of foliations $p\: (M_1, \FF_1)\to (M_2, \FF_2)$,
        \item a map of flat foliated vector bundles $\varphi\: (B_1,\onabla_1) \to (B_2,\onabla_2)$ covering $p$,
    \end{enumerate}
    such that $\D_1$ is $(\varphi,p)$-related to $\D_2$:
    \[
        \D_1 \circ \varphi^* = \varphi^* \circ \D_2, \quad \text{ on $\Omega^\bullet_{(B_2, \onabla_2)}$}.
    \] 
\end{definition}

\subsection{Realizations}\label{sec:realizations}
In this section, we will fix an algebroid $(B, \overline{\nabla},\D)$ relative to a foliation $(M, \FF)$. 

The tangent bundle of any manifold $P$ is a Lie algebroid with derivation $\d$, the de Rham differential, so we will denote it by $(TP,\d)$. A realization is an object that ``realizes" an algebroid as the tangent bundle of a manifold. 

\begin{definition}
    A \textbf{realization} $(P, r, \theta)$ of $(B, \overline{\nabla}, \D)$ is a morphism of relative algebroids from $(TP, \d)$ to $(B, \overline{\nabla}, \D)$ that is fiberwise an isomorphism.
\end{definition}

Explicitly, a realization consists of a manifold $P$ together with a bundle map $(\theta,r)\: TP\to B$ that is fiberwise an isomorphism and satisfies
\[ \d \circ \theta^* = \theta^* \circ \D, \quad \text{on } \Omega^\bullet_{(B, \overline{\nabla})}. \]

\begin{remark}
\label{rem:realizations:MC}
    In \cite{CrainicYudilevich2024, FernandesStruchiner2014}, realizations of Lie algebroids are defined through Maurer-Cartan forms. In our situation, the bundle map $(\theta, r)$ can be reinterpreted as a one-form $\theta\in \Omega^1(P; r^*B)$ that is fiberwise an isomorphism. Choosing any extension $\nabla$ of $\overline{\nabla}$ on $B$, Lemma \ref{lem:sessymbolconnectionsplitting} gives a splitting under which the relative derivation $\d \theta^* - \theta^*\D\in \Gamma(r^*\DD^1_{(B, \overline{\nabla})})$ decomposes into two components: $(\MC^\nabla_\theta, \Pi \circ \d r - \rho \circ \theta)$, where $\Pi\: TM \to \nu(\FF)$ is the projection and the Maurer-Cartan form is given by
    \[
        \MC_\theta^\nabla:= \d^{\nabla} \theta + \frac{1}{2}[\theta, \theta]_\nabla.
    \]
    So we conclude that in terms of anchors and brackets a bundle map $(\theta, r)\: TP \to B$ is a realization of $(B, \overline{\nabla},\D)$  if and only if it is fiberwise an isomorphism and satisfies 
    \begin{equation}\label{eq:CoordinateInvariantBryantsEquations}
        \begin{cases}
            \d^\nabla \theta= -\frac{1}{2}[\theta, \theta]_{\nabla},\\
            \Pi \circ \d r = \rho \circ \theta.
        \end{cases}
    \end{equation}
    We conclude that:
    \begin{enumerate}[(i)]
        \item If $\theta$ is anchored (i.e. $\Pi\circ \d r = \rho \circ \theta)$, then the Maurer-Cartan form is independent of $\nabla$.
        \item  If $\theta$ is anchored, it is a realization if and only if its Maurer-Cartan form vanishes.
    \end{enumerate}
    
    In local coordinates (see Examples \ref{ex:relativeAlgebroidsInCoordinates} and \ref{ex:relativeAlgebroidsInCoordinates:2}), writing $\theta=(\theta^i)\:TP\to \RR^n$ and $r=(a^\mu,b^\varrho)\:P\to \RR^s\times\RR^r$, equations \eqref{eq:CoordinateInvariantBryantsEquations} become
    \[
    \begin{cases}
        \d \theta^i = -\frac{1}{2}c^i_{jk}(a,b)\, \theta^j\wedge\theta^k,\\
        \d a^\mu = F^\mu_i(a, b)\, \theta^i,
    \end{cases}
    \]
    and these are exactly Bryant's equations \eqref{eq:BryantsEquations} (see Example \ref{ex:relativeAlgebroidsInCoordinates:2} for why the $c^i_{jk}$ may depend on the free derivatives $b^\varrho$).
\end{remark}

\subsection{Tableaux}\label{sec:TableauxRelativeAlgebroids}

Let $(B, \overline{\nabla},\D)$ be an algebroid relative to a foliation $(M,\FF)$. Recall from Lemma \ref{lem:ParallelConnectionsVectorBundleWithConnection}  that  there is a canonical flat $\FF$-connection on the vector bundle $\DD^1_{(B, \overline{\nabla})}$ also denoted by $\overline{\nabla}$. 

\begin{definition}\label{def:tableauMap}
    The \textbf{tableau map} of $(B, \overline{\nabla},\D)$ is the bundle map
    \[
        \tau\: \FF \to \DD^1_{(B, \overline{\nabla})}, \quad X\mapsto \overline{\nabla}_X \D
    \]
    The relative algebroid is called \textbf{standard} or \textbf{non-degenerate} when its tableau map $\tau$ is fiberwise injective.
\end{definition}

The tableau map measures the dependence of the relative algebroid structure on the directions of $\FF$.

Composing a tableau map with the symbol map
\[
\xymatrix{
\FF \ar[r]^{\tau} \ar[dr]_---{\sigma(\tau)} & \DD^1_{(B, \overline{\nabla})}\ar[d]^{\sigma}\\
& 
\Hom(B, \nu(\FF))}
\]
we see that our tableau map covers a classical tableau map
\[
    \sigma(\tau)\: \FF\to \Hom(B, \nu(\FF)), \quad X\mapsto \overline{\nabla}_X\rho
\]
where $\rho = \sigma(\D)$ is the anchor of the relative algebroid. We call $\sigma(\tau)$ the \textbf{symbol tableau} of the relative algebroid. Explicitly, it is given by
\begin{equation}
    \label{eq:symbol:tableau}
    \sigma(\tau)(X)(b) = \overline{\nabla}^{\mathrm{Bott}}_X \rho(b) - \rho\left( \overline{\nabla}_X b\right), \quad \text{ for $X\in \FF$ and $b\in \Gamma(B)$}.
\end{equation}

On the other hand, using Lemma \ref{lem:derivationsAndBrackets}, one finds that the bracket $[\cdot,\cdot]_{\tau(X)}$ associated to derivation $\tau(X)=\onabla_X \D$ is given by
\begin{equation}
    \label{eq:bracket:tableau}
    [b_1,b_2]_{\tau(X)}=\onabla_X([b_1,b_2]),\quad\text{ for }b_1,b_2\in\Gamma_{(B,\onabla)},
\end{equation}
where $[\cdot,\cdot]$ is the bracket associated to $\D$.

\subsection{Torsion} \label{sec:Torsion}
We will now discuss the first order obstructions to the existence of realizations of a relative algebroid. In the following discussion we fix a relative algebroid $(B, \overline{\nabla},\D)$ over $(M, \FF)$, and we denote by $[\cdot,\cdot]$ and $\rho$ the corresponding bracket and anchor. Recall from Lemma \ref{lem:structureofRelativeDerivations} that there is an exact sequence
\[
\xymatrix{
    0 \ar[r] & \Hom(B, \FF) \ar[r] & \DD^1_{B} \ar[r]^---{\Pi} & \DD^1_{(B,\overline{\nabla})} \ar[r] & 0.}
\]
A \textbf{pointwise lift} or \textbf{completion} of $\D$ at $m\in M$ is an element $\tilde{\D}_m\in (\DD^1_B)_m$ 
such that $\Pi(\tilde{\D}_m) = \D_m$. We let 
\[
    L:=\{ \tilde{\D}_m \in \DD^1_B  \ | \ \Pi(\tilde{\D}_m) = \D_m \text{ for some $m\in M$ } \} = \Pi^{-1}(\D)
\]
be the space of completions of $\D$, with projection $p_1\:L \to M$, $\tilde{\D}_m\mapsto m$. It is an affine bundle modeled on $\Hom(B, \FF)$. The foliation $\FF$ pulls back to a foliation $p_1^*\FF$ on $L$, and the bundle $p_1^*B$ inherits a flat $p_1^*\FF$-connection $p_1^*\overline{\nabla}$ from $(B, \overline{\nabla})$. 

\begin{remark}
\label{rem:pointwise:lift:bracket}
        It follows from  Lemma \ref{lem:structureofRelativeDerivations} that a pointwise lift $\tilde{\D}_m$ of $\D$ is completely determined by a pointwise lift $\tilde{\rho}_m\: B_m\to T_m M$ of the symbol $\rho$. In terms of the bracket $[\cdot,\cdot]:\Gamma_{(B,\onabla)}\times\Gamma_{(B,\onabla)}\to \Gamma(B)$, this means that such a lift determines a unique extension to a bracket $[\cdot,\cdot]_{\tilde{\rho}_m}:\Gamma(B)\times\Gamma(B)\to B_m$, defined on all sections by requiring
        \[ [b_1,fb_2]_{\tilde{\rho}_m}=f(m)[b_1,b_2](m)+\langle\tilde{\rho}(b_1),\d_m f\rangle b_2(m),\]
        for any $b_1,b_2\in \Gamma_{(B,\onabla)}$ and $f\in C^\infty(M)$.  
\end{remark}

For the next definition we recall that, according to Example \ref{ex:PointwiseDerivations}, an element $\tilde{\D}_m\in L$ can be regarded as a derivation $\tilde{\D}\: \Omega^\bullet(B) \to  \wedge^{\bullet + 1} B^*_m$. 

\begin{definition}
    The \textbf{torsion} of $(B, \overline{\nabla},\D)$ is the 2-derivation $T\in \Gamma(\DD^2_{(p_1^*B, p_1^*\overline{\nabla})})$ relative to $p_1^*\FF$ defined by
    \[
        T(p^*_1\alpha)\big\vert_{\tilde{\D}_m} :=  \tilde{\D}_m\left(\D\alpha\right),\quad \text{for $p_1^*\alpha\in \Omega^\bullet_{(p_1^*B, \overline{\nabla})} \cong p_1^*\Omega^\bullet_{(B, \overline{\nabla})}$.}
    \]
    The symbol $\sigma(T)$ is called the \textbf{symbol torsion} of $\D$.
\end{definition}

It follows from Lemma \ref{lem:square} that the torsion $T$ has associated 2-bracket given by
\begin{equation}
    \label{eq:torsion:bracket}
    [p^*_1b_1,p^*_1b_2,p^*_1b_3]|_{\tilde{\D}_m}=[[b_1,b_2],b_3]_{\tilde{\rho}_m}+[[b_2,b_3],b_1]_{\tilde{\rho}_m}+[[b_3,b_1],b_2]_{\tilde{\rho}_m},
\end{equation} 
for $b_1, b_2,b_3\in\Gamma_{(B,\onabla)}$, where $\tilde{\rho}_m$ and $[\cdot,\cdot]_{\tilde{\rho}_m}$ denote the symbol and the bracket of $\tilde{\D}_m$, as in Remark \ref{rem:pointwise:lift:bracket}. The symbol torsion, in turn, is given by
\begin{equation}
    \label{eq:torsion:symbol}
        \sigma(T)(p^*b_1, p^*b_2)|_{\tilde{\D}_m}(f) = \tilde{\rho}_m(b_1)\left(\rho(b_2)(f)\right) - \tilde{\rho}_m(b_2)\left(\rho(b_1)(f)\right) - \rho_m([b_1,b_2])(f),
\end{equation}
for each $b_1, b_2\in \Gamma_{(B, \overline{\nabla})}$ and all $f\in C^\infty_{\bas}$.

Let $(P, r, \theta)$ be a realization of $(B, \overline{\nabla},\D)$. For each $p\in P$, it determines the lift of $\rho$ at $r(p)$ given by
\begin{equation}
    \label{eq:symbol:derivation:realization}
    \tilde{\rho}_{r(p)}:=\d_p r\circ \theta_p\:B_{r(p)}\to T_{r(p)} M.
\end{equation}
Therefore, it determines also a lift $\tilde{\D}_{r(p)}$ of $\D$.

\begin{proposition}[Existence of Realizations: first order necessary condition]\label{prop:NecessaryTorsion}
    Let $(P, r, \theta)$ be a realization of $(B, \overline{\nabla}, \D)$. Then, for each $p\in P$ we have
    \[
       T\big\vert_{\tilde{\D}_{r(p)}} = 0.  
    \]
\end{proposition}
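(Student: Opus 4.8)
The plan is to reduce the statement to the identity $\d^2=0$ on the realization $P$. The key point is that a realization gives, at each $p\in P$, the lift $\tilde\rho_{r(p)} = \d_p r\circ\theta_p$ of $\rho$, hence a completion $\tilde\D_{r(p)}\in L$ of $\D$, and the torsion evaluated at this completion is built precisely from $\tilde\D_{r(p)}(\D\alpha)$. So I would first unwind the definitions: for $\alpha\in\Omega^\bullet_{(B,\onabla)}$ near $r(p)$, the torsion is $T(p_1^*\alpha)|_{\tilde\D_{r(p)}} = \tilde\D_{r(p)}(\D\alpha)\in(\wedge^{\bullet+2}B^*)_{r(p)}$, where $\tilde\D_{r(p)}$ acts as the pointwise derivation relative to the inclusion $\iota_{r(p)}$ associated to a chosen section-extension of the element $\tilde\D_{r(p)}$ (Example \ref{ex:PointwiseDerivations}).

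Next I would observe that $\theta^*$ intertwines things correctly. Since $(P,r,\theta)$ is a realization, $\d\circ\theta^* = \theta^*\circ\D$ on $\Omega^\bullet_{(B,\onabla)}$. I would like to apply this twice, but $\D\alpha$ is only a section of $\wedge^\bullet(p^*B)^*$ over $M$, not a flat $B$-form, so $\theta^*\circ\D$ is not literally defined on $\D\alpha$. The resolution is to work pointwise: the crucial compatibility is that for any completion $\tilde\D_m$ coming from a realization as in \eqref{eq:symbol:derivation:realization}, one has $\theta_p^*\big(\tilde\D_{r(p)}\beta\big) = \big(\d(\theta^*\tilde\beta)\big)_p$ for a suitable local extension, because $\tilde\D_{r(p)}$ is determined by the lift $\tilde\rho_{r(p)}=\d_pr\circ\theta_p$ and the de Rham differential at $p$ is determined by the ordinary bracket of vector fields on $P$ which $\theta$ pulls back to exactly this data. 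Concretely, pulling back the bracket formula \eqref{eq:torsion:bracket}: for $b_i\in\Gamma_{(B,\onabla)}$, the sections $\theta^*b_i$ are vector fields on $P$, the lifted bracket $[b_1,b_2]_{\tilde\rho_{r(p)}}$ pulls back to the Lie bracket $[\theta^*b_1,\theta^*b_2]$ at $p$ (using the realization equation for the anchor and the realization equation $\d\theta^* = \theta^*\D$ applied to $1$-forms), and therefore the Jacobiator \eqref{eq:torsion:bracket} pulls back under $\theta_p^*$ to the Jacobiator of vector fields on $P$, which vanishes by the Jacobi identity. The same computation with \eqref{eq:torsion:symbol} and the symbol part: $\sigma(T)(p^*b_1,p^*b_2)|_{\tilde\D_{r(p)}}(f)$ pulls back to $X_1(X_2(f\circ r)) - X_2(X_1(f\circ r)) - [X_1,X_2](f\circ r)=0$ where $X_i=\theta^*b_i$.

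So the main steps are: (1) record that a realization determines the completion $\tilde\D_{r(p)}$ via \eqref{eq:symbol:derivation:realization}; (2) establish the pullback dictionary $\theta_p^*[b_1,b_2]_{\tilde\rho_{r(p)}} = [\theta^*b_1,\theta^*b_2]_p$ and $\tilde\rho_{r(p)}(b_i)(f) = (\theta^*b_i)(f\circ r)|_p$, using the realization equations \eqref{eq:CoordinateInvariantBryantsEquations}; (3) substitute into the explicit formulas \eqref{eq:torsion:bracket} and \eqref{eq:torsion:symbol} for the torsion's $2$-bracket and symbol, and conclude that both pull back to expressions that vanish by the Jacobi identity for vector fields and by $\d^2=0$ on $P$; (4) since $\theta_p$ is a fiberwise isomorphism, $\theta_p^*$ is injective, so $T|_{\tilde\D_{r(p)}}=0$. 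I expect step (2) to be the main obstacle: one must be careful that $\theta^*$ of a flat section is a genuine vector field on $P$, that the extension property (the realization equation holds on all of $\Omega^\bullet_{(B,\onabla)}$, not just generators) is used to pull the bracket back correctly, and that the pointwise nature of $\tilde\D_{r(p)}$ (it depends only on $1$-jets at $r(p)$, equivalently on $\theta_p$ and $\d_p r$) matches the pointwise nature of $\d$ at $p$. Once this dictionary is in place, the vanishing is the classical fact that the de Rham complex of $P$ has $\d^2=0$, equivalently that vector fields satisfy the Jacobi identity.
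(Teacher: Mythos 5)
Your proposal is correct and follows essentially the same route as the paper: the completion $\tilde{\D}_{r(p)}$ induced by a realization is identified with the $\theta$-pullback of the de Rham differential of $P$, and the vanishing of the torsion reduces to $\d^2=0$ on $P$ (in your dual formulation, the Jacobi identity for vector fields). The paper shortcuts your step (2): since a pointwise lift is determined by its symbol (Remark \ref{rem:pointwise:lift:bracket}), the derivation $(\theta_p^{-1})^*\circ(\d\circ\theta^*)|_p$ — a lift of $\D$ at $r(p)$ by the realization equation, with symbol $\d_p r\circ\theta_p$ — must coincide with $\tilde{\D}_{r(p)}$, which makes your hand-verification of the bracket/anchor dictionary (and its extension to non-flat arguments inside the Jacobiator) unnecessary.
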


\begin{proof}
    By definition, $\d \circ\theta^* = \theta^*\circ \D$ on $\Omega^\bullet _{(B,\overline{\nabla})}$, so the derivation
\[
    \left(\theta^{-1}_p\right)^* \circ\left(\d\circ \theta^*\right)|_p \: \Omega^\bullet(B)\to \wedge^{\bullet + 1} T^*_p P \to \wedge^{\bullet  + 1} B^*_{r(p)}
\]
is a lift of $\D$ at $r(p)$. Since its symbol is \eqref{eq:symbol:derivation:realization}, it must coincide with $\tilde{\D}_{r(p)}$. The proposition now follows from $\d^2=0$.
\end{proof}

Recall that any two pointwise lifts of $\D_m$ differ by an element $\xi\in \Hom(B_m, \FF_m)$, where $\xi$ acts as a derivation as described in the Lemma \ref{lem:structureofRelativeDerivations}.

\begin{proposition}\label{prop:torsionDifferenceSpencerDifferential}
    Let $\tilde{\D}_m$ be a pointwise lift of $\D$ at $m$ and $\xi\in \Hom(B_m, \FF_m)$. Then
    \[
        T\big\vert_{\tilde{\D}_m + \xi} - T\big\vert_{\tilde{\D}_m} = \delta_\tau \xi,\quad \text{ in }\Omega^\bullet_{(p_1^*B, \overline{\nabla})} \cong p_1^* \Omega^\bullet_{(B, \overline{\nabla})}.
    \]
\end{proposition}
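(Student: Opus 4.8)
The plan is to reduce the statement to the behavior of the Spencer differential on a relative $2$-derivation, exactly as in Lemma~\ref{lem:square} and Proposition~\ref{prop:torsionDifferenceSpencerDifferential}'s setup. Fix $m\in M$ and a pointwise lift $\tilde{\D}_m$ of $\D$, and write $\tilde{\D}_m+\xi$ for the lift obtained by adding $\xi\in\Hom(B_m,\FF_m)$, where $\xi$ acts as a derivation through the formula of Lemma~\ref{lem:structureofRelativeDerivations}. By the very definition of the torsion, for $\alpha\in\Omega^\bullet_{(B,\overline{\nabla})}$ we have
\[
    T\big\vert_{\tilde{\D}_m+\xi}(p_1^*\alpha)=(\tilde{\D}_m+\xi)(\D\alpha),\qquad
    T\big\vert_{\tilde{\D}_m}(p_1^*\alpha)=\tilde{\D}_m(\D\alpha),
\]
so that the difference is simply $\xi(\D\alpha)$, and the whole content of the proposition is the identity $\xi(\D\alpha)=(\delta_\tau\xi)(\alpha)$ for all flat $\alpha$. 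It therefore suffices to check this on functions $f\in C^\infty_\bas$ and on a generating set of $\Omega^1_{(B,\overline{\nabla})}$, since both sides are relative $2$-derivations (the left side because $\xi\circ\D$ is a composition as in Lemma~\ref{lem:square}, the right side because $\delta_\tau\xi\in\DD^2$ by Definition~\ref{def:SpencerDifferentialModifiedTableau} and the remark following it).

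First I would treat the symbol part. For $f\in C^\infty_\bas$, $\D f=\rho^*(\d f)$ lives in $\Omega^1_{(B,\overline{\nabla})}$, and by the characterization of $\iota$ in the remark after Lemma~\ref{lem:structureofRelativeDerivations} one has $\xi(\beta)=\iota(\xi')(\beta)$ with $\xi'$ the associated element of $\Hom(B_m,\FF_m)$; applying the explicit formula for $\iota$ to $\beta=\D f$ yields $\xi(\D f)=\overline{\nabla}_{\xi(\,\cdot\,)}(\D f)$ contracted appropriately, which unwinds to $\overline{\nabla}_{\xi(b)}\rho(b')(f)$-type terms. On the other side, by the formula \eqref{eq:SpencerDifferentialFunctions} for the Spencer differential of $\delta_\tau\xi$ acting on functions, together with \eqref{eq:symbol:tableau} identifying the symbol of $\tau(X)=\overline{\nabla}_X\D$ with $X\mapsto\overline{\nabla}_X\rho$, one gets precisely the same expression. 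This is essentially the identification $K=\Hom(\wedge^kW,\FF)$ from the proof of Lemma~\ref{lem:structureofRelativeDerivations} combined with the definition of $\delta_\tau$, so it should be a direct, if slightly bookkeeping-heavy, computation.

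Then I would treat the action on $\alpha\in\Omega^1_{(B,\overline{\nabla})}$. Here I would use the bracket picture: by Remark~\ref{rem:pointwise:lift:bracket} the lifts $\tilde{\D}_m$ and $\tilde{\D}_m+\xi$ correspond to symbol lifts $\tilde{\rho}_m$ and $\tilde{\rho}_m+\xi$ of $\rho$, hence to extended brackets that differ by the Leibniz-type correction term governed by $\xi$. Feeding this into the $2$-bracket formula \eqref{eq:torsion:bracket} and using that the Jacobiator is bilinear in the symbol-dependent pieces, the difference $[\,\cdot\,,\,\cdot\,,\,\cdot\,]|_{\tilde{\D}_m+\xi}-[\,\cdot\,,\,\cdot\,,\,\cdot\,]|_{\tilde{\D}_m}$ collapses to the terms of $\delta_\tau\xi$ as computed from \eqref{eq:SpencerDifferentialOneForms} and the bracket \eqref{eq:bracket:tableau} of $\tau(X)$. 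Dually, one may instead expand $\xi(\D\alpha)$ directly via the $\iota$-formula of Lemma~\ref{lem:structureofRelativeDerivations} applied to the $2$-form $\D\alpha\in\Omega^2(B)$ (note $\D\alpha$ is a genuine $2$-form on $B$, not a flat one, but $\xi$ is defined on all of $\Omega^\bullet(B)$), and match with \eqref{eq:SpencerDifferentialOneForms}; the two routes should agree and either is acceptable.

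The main obstacle I anticipate is purely combinatorial: keeping the skew-symmetrization factors, the signs $(-1)^{k+1}$ in the connection splitting \eqref{eq:splitting:connection:0}, and the $\frac{1}{k!l!}$ normalizations in \eqref{eq:SpencerDifferentialOneForms}–\eqref{eq:SpencerDifferentialFunctions} consistent between the ``$\xi$ acting as a derivation'' side and the ``$\delta_\tau\xi$'' side. A clean way to sidestep sign trouble is to verify the identity abstractly: both $\xi\mapsto\big(\alpha\mapsto\xi(\D\alpha)\big)$ and $\delta_\tau$ are $C^\infty(M_1)$-linear maps $\Hom(B,\FF)\to\DD^2_{(p_1^*B,\overline{\nabla})}$, so it is enough to check equality fiberwise at one point and on decomposable $\xi=\beta\otimes X$ with $\beta\in B_m^*$ and $X\in\FF_m$; for such $\xi$, both sides reduce to $\beta\wedge(\overline{\nabla}_X\D)$ by Definition~\ref{def:SpencerDifferentialModifiedTableau} (for $\delta_\tau$) and by a one-line computation using $\iota(\beta\otimes X)(\gamma)=\beta\wedge(\overline{\nabla}_X\gamma)$ for flat $\gamma$ (for the torsion side, since $\tilde{\D}_m+\xi$ and $\tilde{\D}_m$ differ exactly by this $\iota$-term). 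This decomposable-tensor reduction, together with the defining property $\iota(\rho)(\alpha)=0$ for $\alpha$ flat that forces the correction to come entirely through $\D\alpha$, is what makes the proof short.
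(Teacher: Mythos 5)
Your proposal is correct and takes essentially the same route as the paper: the two lifts differ by $\iota(\xi)$, so the torsion difference acting on a flat form $\alpha$ is $\iota(\xi)(\D\alpha)$, and the identity $\overline{\nabla}_X(\D\alpha)=(\overline{\nabla}_X\D)\alpha=\tau(X)\alpha$ converts this into $\delta_\tau\xi$ applied to $\alpha$ --- the paper carries this out via the cyclic-sum formula for $\iota$ on basic functions and flat $1$-forms, while your $C^\infty$-linearity plus decomposable-tensor reduction is the same computation repackaged. One small wording fix: the identity $\iota(\beta\otimes X)(\gamma)=\beta\wedge\overline{\nabla}_X\gamma$ must be invoked for $\gamma=\D\alpha$, which is \emph{not} flat (for flat $\gamma$ both sides vanish), so the ``for flat $\gamma$'' qualifier should be dropped.
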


\begin{proof}
    Note that $\overline{\nabla}_X (D\alpha) = (\overline{\nabla}_X D)\alpha = \tau(X)\alpha$ for $\alpha\in \Omega^\bullet_{(B, \overline{\nabla})}$. By Lemma \ref{lem:structureofRelativeDerivations}, we find that for $\beta\in \Omega^1_{(B, \overline{\nabla})}$ we have
        \begin{align*}
        \left(T(\beta)\big\vert_{\tilde{\D} + \xi}- T(\beta)\big\vert_{\tilde{\D}}\right)(b_1, b_2, b_3) &= \iota(\xi)(\D\beta)(b_1, b_2, b_3)\\
        &=\overline{\nabla}_{\xi(b_1)}(\D\beta)(b_2, b_3) + \mbox{c.p.}\\
        &= \tau(\xi(b_1))\beta(b_2, b_3) + \mbox{c.p.}\\
        &= \left(\delta_\tau\xi\right)(\beta)(b_1, b_2, b_3).
    \end{align*}
    Similarly, for $f\in C^\infty_{\bas}$, we find
    \begin{align*}
        \left(T(f)\big\vert_{\tilde{\D} + \xi} - T(f)\big\vert_{\tilde{\D}}\right)(b_1, b_2) &= \overline{\nabla}_{\xi(b_1)}(Df)(b_2) - \overline{\nabla}_{\xi(b_2)} (\D f)(b_1)\\
        &=\left(\delta_\tau \xi\right)(f)(b_1, b_2), 
    \end{align*}
    which completes the proof.
\end{proof}

This justifies the following definition.

\begin{definition}
    The \textbf{intrinsic torsion} of a relative algebroid $(B, \overline{\nabla}, \D)$ is the section $\Tor\in \Gamma(H^{-1, 2}(\tau))$ defined by
    \[
        \Tor_m := [T\big\vert_{\tilde{\D}_m}],
    \]
     for any $\tilde{\D}_m\in L_m$.
\end{definition} 

\begin{corollary}
    If a relative algebroid $(B, \overline{\nabla}, \D)$ admits a realization through $m$ then $\Tor_m=0$.
\end{corollary}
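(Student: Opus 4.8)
The plan is to deduce this immediately from Proposition \ref{prop:NecessaryTorsion} together with the well-definedness of the intrinsic torsion class established in Proposition \ref{prop:torsionDifferenceSpencerDifferential}. First I would unpack the hypothesis: to say that $(B,\overline{\nabla},\D)$ admits a realization through $m$ means there is a realization $(P,r,\theta)$ and a point $p\in P$ with $r(p)=m$. As recorded in \eqref{eq:symbol:derivation:realization} and the discussion preceding it, such a realization canonically determines a pointwise lift of the anchor at $m$, namely $\tilde{\rho}_{m}=\d_p r\circ\theta_p\colon B_m\to T_mM$, and hence, by Remark \ref{rem:pointwise:lift:bracket}, a pointwise completion $\tilde{\D}_m\in L_m$ of $\D$ at $m$.

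Next I would invoke Proposition \ref{prop:NecessaryTorsion}, which asserts precisely that for the completion $\tilde{\D}_{r(p)}=\tilde{\D}_m$ attached to the realization one has $T\big|_{\tilde{\D}_m}=0$ as an element of $\Omega^\bullet_{(p_1^*B,\overline{\nabla})}$ over the point $\tilde{\D}_m\in L$. By the definition of the intrinsic torsion, $\Tor_m=\big[\,T\big|_{\tilde{\D}_m}\,\big]\in H^{-1,2}(\tau)$ may be computed with respect to \emph{any} choice of pointwise lift in $L_m$; choosing the one coming from the realization yields $\Tor_m=[0]=0$.

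For completeness I would remark that the choice of lift is immaterial: by Proposition \ref{prop:torsionDifferenceSpencerDifferential}, two lifts $\tilde{\D}_m$ and $\tilde{\D}_m+\xi$ produce torsions differing by $\delta_\tau\xi\in\im\delta$, so the class in $H^{-1,2}(\tau)$ is indeed independent of the lift and the conclusion $\Tor_m=0$ does not depend on which realization, or which point of $P$ over $m$, we picked. There is no real obstacle here — the corollary is just a repackaging of the two preceding propositions; the only point requiring a moment's care is to identify the abstract lift appearing in the definition of $\Tor_m$ with the concrete lift $\d r\circ\theta$ furnished by a realization, which is exactly the content of the proof of Proposition \ref{prop:NecessaryTorsion} (the derivation $(\theta_p^{-1})^*\circ(\d\circ\theta^*)|_p$ has symbol \eqref{eq:symbol:derivation:realization}, hence equals $\tilde{\D}_{r(p)}$, and squares to zero because $\d^2=0$).
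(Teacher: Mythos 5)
Your proposal is correct and is exactly the argument the paper intends (the corollary is stated without proof, as an immediate consequence of Proposition \ref{prop:NecessaryTorsion} and the well-definedness of $\Tor$ via Proposition \ref{prop:torsionDifferenceSpencerDifferential}): the realization furnishes the lift $\tilde{\D}_{r(p)}$ with vanishing torsion, and since $\Tor_m$ may be computed with any lift in $L_m$, it vanishes. Nothing is missing.
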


\subsection{Curvature}

Suppose we are given a section $\tilde{\D}\in \Gamma(L)$. In this case, $\tilde{\D}$  is a completion of the derivation $\D$ which is an actual 1-derivation on $B$, so $\tilde{\D}^2$ defines a 2-derivation on $B$. Recalling again from Lemma \ref{lem:structureofRelativeDerivations} that there is a short exact sequence
\[  
    \xymatrix{0 \ar[r] & \Hom(\wedge^2 B, \FF)\ar[r] & \DD^2_B \ar[r]^---{\Pi} & \DD^2_{(B, \overline{\nabla})} \ar[r] & 0}.
\]
We conclude that:

\begin{proposition}\label{prop:torsionOfSection}
    Given a section $\tilde{\D}\in \Gamma(L)$, the torsion of $\D$ along $\tilde{\D}$ is 
    \[
        T|_{\tilde{\D}} = \Pi(\tilde{\D}^2).
    \]
\end{proposition}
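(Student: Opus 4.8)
The plan is to verify the claimed identity $T|_{\tilde{\D}} = \Pi(\tilde{\D}^2)$ by unwinding both sides on generators of $\Omega^\bullet(B)$ over $C^\infty(M)$, i.e.\ on functions $f\in C^\infty(N)$ (more precisely basic functions) and on flat $1$-forms $\alpha\in \Omega^1_{(B,\overline{\nabla})}$. First I would recall the two key facts that make the comparison possible: (a) by definition of the torsion, $T(\alpha)|_{\tilde{\D}_m} = \tilde{\D}_m(\D\alpha)$ for every $\alpha\in\Omega^\bullet_{(B,\overline{\nabla})}$, where we regard the pointwise completion $\tilde{\D}_m\in L_m$ as a derivation $\Omega^\bullet(B)\to \wedge^{\bullet+1}B^*_m$ relative to the inclusion (Example~\ref{ex:PointwiseDerivations}); and (b) since $\tilde{\D}\in\Gamma(L)$ is an honest $1$-derivation on $B$ that completes $\D$, the restriction of $\tilde{\D}$ to flat forms satisfies the extension property $\tilde{\D}\alpha = \D\alpha$ for all $\alpha\in\Omega^\bullet_{(B,\overline{\nabla})}$ (this is exactly the content of $\Pi(\tilde{\D})=\D$, as in Example~\ref{ex:extension:derivations}(1)).

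The core computation is then a one-liner: for $\alpha\in\Omega^\bullet_{(B,\overline{\nabla})}$,
\[
    \tilde{\D}^2(\alpha) = \tilde{\D}(\tilde{\D}\alpha) = \tilde{\D}(\D\alpha) = T(\alpha)|_{\tilde{\D}},
\]
where the middle equality uses the extension property (b). This shows that the $2$-derivation $\tilde{\D}^2\in\Gamma(\DD^2_B)$ and the relative $2$-derivation $T|_{\tilde{\D}}\in\Gamma(\DD^2_{(B,\overline{\nabla})})$ agree on all flat forms. By the characterization of the map $\Pi\:\DD^2_B\to\DD^2_{(B,\overline{\nabla})}$ (restriction of a derivation on $B$ to the subsheaf $\Omega^\bullet_{(B,\overline{\nabla})}$, as described before Lemma~\ref{lem:structureofRelativeDerivations}), agreeing on flat forms is precisely the statement that $\Pi(\tilde{\D}^2)=T|_{\tilde{\D}}$. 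To be fully careful I would note that $T|_{\tilde{\D}}$ is a genuine section (not just a pointwise family) because $\tilde{\D}$ is a section of $L$, so the fiberwise formula $T(\alpha)|_{\tilde{\D}_m}=\tilde{\D}_m(\D\alpha)$ assembles into $T(\alpha)|_{\tilde{\D}} = \tilde{\D}(\D\alpha)$ as elements of $\Omega^{\bullet+2}_{(B,\overline{\nabla})}$, after identifying $\Omega^\bullet_{(p_1^*B,\overline{\nabla})}$ along the section with $\Omega^\bullet_{(B,\overline{\nabla})}$ itself.

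I do not expect a serious obstacle here; the only point requiring a little care is bookkeeping the identifications between the various spaces of forms (the pullback $p_1^*\Omega^\bullet_{(B,\overline{\nabla})}$ restricted along a section versus $\Omega^\bullet_{(B,\overline{\nabla})}$, and the regarding of $\tilde{\D}$ simultaneously as a section of $L\subset\DD^1_B$ and, pointwise, as a derivation valued in $\wedge^\bullet B^*_m$). Once those identifications are made explicit, the proof is essentially the display above together with the observation that $\Pi$ is exactly ``restrict to flat forms''. Optionally, I would cross-check consistency with the bracket formulas by noting that the associated $2$-bracket of $\tilde{\D}^2$ is the Jacobiator of the bracket of $\tilde{\D}$ (as recalled before Lemma~\ref{lem:square}), which matches formula~\eqref{eq:torsion:bracket} for the torsion bracket under the completion $\tilde{\rho}_m = $ symbol of $\tilde{\D}_m$; but this is a sanity check rather than a necessary step.
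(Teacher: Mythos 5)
Your proposal is correct and follows essentially the same route as the paper, which presents the identity as an immediate consequence of the definition of $T$, the fact that $\tilde{\D}\in\Gamma(L)$ means $\tilde{\D}$ agrees with $\D$ on flat forms, and the characterization of $\Pi$ as restriction to $\Omega^\bullet_{(B,\overline{\nabla})}$. Your one-line computation $\tilde{\D}^2\alpha=\tilde{\D}(\D\alpha)=T(\alpha)|_{\tilde{\D}}$ on flat forms, together with the bookkeeping of pulling back $T$ along the section, is exactly the intended argument.
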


If $T$ vanishes identically on the image of $\tilde{\D}$, we call $\tilde{\D}$ a \textbf{torsionless lift} of $\D$. In that case, according to the previous short exact sequence, the derivation $\tilde{\D}^2$ is a section of $\Hom(\wedge^2B, \FF)$.

\begin{definition}
    Let $\tilde{\D}\in \Gamma(M^{(1)})$ be a torsionless lift of $\D$. The \textbf{curvature of $\tilde{\D}$} is the section
    \[
        \kappa_{\tilde{\D}} = \tilde{\D}^2\in \Gamma(\Hom(\wedge^2B, \FF)) \subset \Gamma(\DD^2_B).
    \]
\end{definition}    

\begin{remark}
    The curvature, as a 2-derivation, is uniquely determined by its symbol. By the discussion proceeding Lemma \ref{lem:square}, if $\tilde{\rho}$ and $[\cdot,\cdot]_{\tilde{\D}}$ are the anchor and bracket associated to $\tilde{\D}$, then the curvature is
    \[
    \kappa_{{\tilde{\D}}}(b_1, b_2) = [\tilde{\rho}(b_1), \tilde{\rho}(b_2)] - \tilde{\rho}([b_1, b_2]_{\tilde{\D}}),
    \]
    for $b_1, b_2\in \Gamma(B)$. 
    Note that the curvature depends pointwise on the first jet of the section $\tilde{\D}$ and that for any local sections $b_1,b_2\in\Gamma_B$ and $f\in C^\infty_\bas$ one has
    \[ \kappa_{{\tilde{\D}}}(b_1, b_2)(f)=0.\]
    Moreover, the fact that $\tilde{\D}$ is a torsionless lift of $\D$ implies that for any local \emph{flat} sections $b_1,b_2,b_3\in\Gamma_{(B,\onabla)}$ one also has
    \begin{equation}
    \label{eq:curvature:flat:sections}[[b_1,b_2]_{\tilde{\D}},b_3]_{\tilde{\D}}+[[b_2,b_3]_{\tilde{\D}},b_1]_{\tilde{\D}}+[[b_3,b_1]_{\tilde{\D}},b_2]_{\tilde{\D}}=0.
    \end{equation}
\end{remark}

\begin{lemma}
    The curvature $\kappa_{\tilde{\D}}$ is closed in the Spencer complex of $\tau$:
    \[
        \delta_\tau \kappa_{\tilde{\D}} = 0.
    \]
\end{lemma}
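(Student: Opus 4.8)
The plan is to compute $\delta_\tau\kappa_{\tilde\D}$ directly from the definition of the Spencer differential and the definition of $\kappa_{\tilde\D}=\tilde\D^2$, exploiting the fact that $\tilde\D$ is an \emph{honest} $1$-derivation on $B$ (so that $\tilde\D^3=\tilde\D\circ\tilde\D^2=\tilde\D^2\circ\tilde\D$ by associativity of composition), together with the bracket formula \eqref{eq:curvature:flat:sections}. First I would recall that, since $\kappa_{\tilde\D}=\tilde\D^2$ lands in $\Hom(\wedge^2B,\FF)\subset\DD^2_B$, the Spencer differential $\delta_\tau$ applied to the associated element of $\Gamma(\Hom(B,\DD^2_{(B,\onabla)}))$ is given on $1$-forms $\alpha\in\Omega^1_{(B,\onabla)}$ and on basic functions $f\in C^\infty_\bas$ by formulas \eqref{eq:SpencerDifferentialOneForms} and \eqref{eq:SpencerDifferentialFunctions}. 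Because $\tau(X)=\onabla_X\D$ (Definition \ref{def:tableauMap}) and $\kappa_{\tilde\D}(b_1,b_2)\in\FF$, the term $\delta_\tau\kappa_{\tilde\D}(\alpha)$ evaluated on flat sections $b_1,\dots,b_4$ is a cyclic-type sum of terms $\onabla_{\kappa_{\tilde\D}(b_i,b_j)}(\D\alpha)(b_k,b_l)=\tau(\kappa_{\tilde\D}(b_i,b_j))(\alpha)(b_k,b_l)$.

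The key step is to recognize this sum as (a piece of) $\tilde\D^3\alpha$ evaluated appropriately, and then to use $\tilde\D^3=0$-type vanishing. More precisely: on flat forms, $\onabla_X(\D\alpha)=(\onabla_X\D)\alpha=\tau(X)\alpha$, and for $X$ of the special form $\kappa_{\tilde\D}(b_i,b_j)$ one has, by the identity $\iota(\kappa_{\tilde\D})$ acting as in Lemma \ref{lem:structureofRelativeDerivations}, that $\tilde\D^2$ acts on $\Omega^1_{(B,\onabla)}$ exactly as $\iota(\kappa_{\tilde\D})$ composed with $\D$ does on flat forms (since $\tilde\D$ extends $\D$, and $\tilde\D^2\alpha=\tilde\D(\tilde\D\alpha)=\tilde\D(\D\alpha)$ modulo terms that vanish because $\kappa_{\tilde\D}$ kills functions — cf.\ the remark preceding this lemma). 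Thus $\delta_\tau\kappa_{\tilde\D}$, evaluated on flat data, equals the skew-symmetrization of $b\mapsto \tilde\D^2(\D\alpha)(\dots)$ in the $B$-arguments, which is precisely $\tilde\D^2\circ\tilde\D=\tilde\D^3$ applied to $\alpha$. Since $\tilde\D$ is an honest derivation, $\tilde\D^3=\tilde\D\circ\tilde\D^2$, and one checks that this also equals, under the same identifications, $\delta_\tau$ applied to the \emph{image} of $T$ along $\tilde\D$; but $T|_{\tilde\D}=\Pi(\tilde\D^2)=0$ by the torsionless hypothesis (Proposition \ref{prop:torsionOfSection}), forcing $\delta_\tau\kappa_{\tilde\D}=0$. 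An alternative, more hands-on route is to use the explicit $2$-bracket $[b_1,b_2,b_3]_{\tilde\D^2}$ of $\tilde\D^2$ — which by Lemma \ref{lem:square} is the Jacobiator $[[b_1,b_2]_{\tilde\D},b_3]_{\tilde\D}+\text{c.p.}$ — together with the fact that on flat sections this Jacobiator lands in $\FF$ and equals $\kappa$-type expressions, and then to expand $\delta_\tau\kappa_{\tilde\D}$ evaluated on four flat sections into a sum of double brackets that cancels by repeated use of the Leibniz rule and \eqref{eq:curvature:flat:sections}.

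The main obstacle I anticipate is bookkeeping: making the identification "$\delta_\tau(\tilde\D^2)$ = appropriate skew-symmetrization of $\tilde\D^3$" precise while keeping track of the distinction between $\D$ (relative), $\tilde\D$ (honest extension), and the flat vs.\ non-flat forms on which each is defined. One must be careful that $\tilde\D$ does not preserve $\Omega^\bullet_{(B,\onabla)}$, so $\tilde\D^3$ is not literally defined as a composition of the relative operator with itself; the argument has to be phrased at the level of the fiber $(\DD^\bullet_B)_m$ using the pointwise-derivation viewpoint of Example \ref{ex:PointwiseDerivations}, or equivalently entirely in terms of brackets. I would therefore carry out the computation in the bracket language: fix local flat sections $b_0,b_1,b_2,b_3\in\Gamma_{(B,\onabla)}$, write out $(\delta_\tau\kappa_{\tilde\D})$ on them using \eqref{eq:SpencerDifferentialOneForms} with the tableau $\tau$, substitute $\tau(\kappa_{\tilde\D}(b_i,b_j))$ acting via \eqref{eq:bracket:tableau} as $\onabla_{\kappa_{\tilde\D}(b_i,b_j)}[\,\cdot\,,\,\cdot\,]$, and then recognize the resulting alternating sum of nested brackets as the $\delta_\tau$-image of the Jacobiator, which vanishes by \eqref{eq:curvature:flat:sections}. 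The symbol part $\delta_\tau\kappa_{\tilde\D}(f)=0$ for $f\in C^\infty_\bas$ is immediate since $\kappa_{\tilde\D}(b_1,b_2)(f)=0$ and hence every term in \eqref{eq:SpencerDifferentialFunctions} vanishes.
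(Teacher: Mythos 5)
Your central identity is right and gives a genuinely different proof from the paper's. The paper reduces the claim to the symbol level and then computes $\delta_{\sigma(\tau)}\sigma(\kappa_{\tilde{\D}})$ on flat sections by hand, using the Bott connection, the expression $\kappa_{\tilde{\D}}(b_1,b_2)=[\tilde{\rho}(b_1),\tilde{\rho}(b_2)]-\tilde{\rho}([b_1,b_2]_{\tilde{\D}})$, the Jacobi identity for vector fields, and \eqref{eq:curvature:flat:sections}. Your route instead compares the Spencer formulas \eqref{eq:SpencerDifferentialOneForms}, \eqref{eq:SpencerDifferentialFunctions} with the formula for $\iota$ in Lemma \ref{lem:structureofRelativeDerivations}: since $\onabla_X(\D\alpha)=(\onabla_X\D)\alpha=\tau(X)\alpha$ on flat forms, one gets $(\delta_\tau\kappa_{\tilde{\D}})(\alpha)=\iota(\kappa_{\tilde{\D}})(\D\alpha)=\tilde{\D}^2(\tilde{\D}\alpha)=\tilde{\D}^3\alpha$ for every $\alpha\in\Omega^\bullet_{(B,\onabla)}$ (the combinatorial factors do match), and then torsionlessness finishes the job: $\tilde{\D}^2\alpha=T|_{\tilde{\D}}(\alpha)=0$ for flat $\alpha$ by Proposition \ref{prop:torsionOfSection}, hence $\tilde{\D}^3\alpha=\tilde{\D}(\tilde{\D}^2\alpha)=0$. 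This is shorter than the paper's bracket computation and treats the degree-$0$ and degree-$1$ generators uniformly; also, your worry that $\tilde{\D}^3$ is not a genuine composition is unfounded, since the lemma concerns a section $\tilde{\D}\in\Gamma(M^{(1)})$, which is an honest derivation of $\Omega^\bullet_B$.

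Two of your stated steps, however, are wrong as written and need repair. First, the closing claim that $\delta_\tau\kappa_{\tilde{\D}}(f)=0$ for $f\in C^\infty_\bas$ is ``immediate since $\kappa_{\tilde{\D}}(b_1,b_2)(f)=0$'' is false: the terms of \eqref{eq:SpencerDifferentialFunctions} are $\tau(\kappa_{\tilde{\D}}(b_i,b_j))(f)(b_k)=\kappa_{\tilde{\D}}(b_i,b_j)\big(\rho(b_k)(f)\big)$ for flat $b_k$, and $\rho(b_k)(f)$ need not be basic (in the surface example $\rho(e_1)(K)=\cos\varphi$), so the individual terms do not vanish; their alternating sum vanishes only for the nontrivial reason, which is exactly what the paper's symbol computation establishes, and which in your scheme is recovered by applying your own identity to basic functions: $\delta_\tau\kappa_{\tilde{\D}}(f)=\tilde{\D}^3f=\tilde{\D}(\tilde{\D}^2f)=0$. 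Second, the phrase ``$\delta_\tau$ applied to the image of $T$ along $\tilde{\D}$'' does not describe a defined operation; what you actually use, and all you need, is the factorization $\tilde{\D}^3\alpha=\tilde{\D}(\tilde{\D}^2\alpha)$ together with $\tilde{\D}^2|_{\Omega^\bullet_{(B,\onabla)}}=T|_{\tilde{\D}}=0$. With these two repairs your argument is complete.
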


\begin{proof}
    Since the curvature is uniquely determined by it's symbol, it is enough to show that the Spencer differential of the symbol tableau vanishes (Section \ref{sec:TableauxRelativeAlgebroids}). Since the result is a $C^\infty(M)$-linear, it is enough to show that it vanishes  on parallel sections. For $b_1, b_2, b_3\in \Gamma_{(B, \overline{\nabla})}$ we find, combining \eqref{eq:SpencerDifferentialFunctions} and \eqref{eq:symbol:tableau},
    \begin{align*}
        \delta_{\sigma(\tau)} \sigma\left(\kappa_{\tilde{\D}}\right) (b_1, b_2, b_3) &= \overline{\nabla}^{\mathrm{Bott}}_{\kappa_{\tilde{\D}}(b_1, b_2)} \rho(b_3) + \mathrm{c.p.}\\
        &= \Pi\left(\left[ \kappa_{\tilde{\D}}(b_1, b_2), \tilde{\rho}(b_3)\right] \right) + \mathrm{c.p.}\\
        &= - \Pi\left(\left[\tilde{\rho}\left([b_1, b_2]_{\tilde{\D}}\right), \tilde{\rho}(b_3) \right] \right) + \mathrm{c.p.}\\
        &= - \Pi\left(\kappa_{\tilde{\D}}\left([b_1, b_2]_{\tilde{\D}},b_3\right)\right)-\Pi\circ \tilde{\rho}\left([[ b_1, b_2]_{\tilde{\D}}, b_3]_{\tilde{\D}} \right) + \mathrm{c.p.} \\
        &=-\rho([[b_1,b_2]_{\tilde{\D}},b_3]_{\tilde{\D}}+[[b_2,b_3]_{\tilde{\D}},b_1]_{\tilde{\D}}+[[b_3,b_1]_{\tilde{\D}},b_2]_{\tilde{\D}})=0
    \end{align*}
    where $\Pi\: TM\to \nu(\FF)$ is the projection and the last identity follows from \eqref{eq:curvature:flat:sections}.
\end{proof}

\begin{definition}
    The \textbf{intrinsic curvature} of the relative algebroid $(B, \overline{\nabla}, \D)$ is the section $\Curv\in \Gamma\left( p_1^* H^{0, 2}(\tau)\right)$ defined along a torsionless lift $\tilde{\D}\Gamma(M^{(1)})$ as
    \[
        \Curv\big\vert_{\tilde{\D}}:= [\kappa_{\tilde{\D}}].
    \]
\end{definition}

\begin{remark}
    The intrinsic curvature $\Curv$ depends affinely on the values of $\tilde{\D}$ in $M^{(1)}$, and not on its first derivatives, in contrast to $\kappa_{\tilde{\D}}$, which  depends on the first jet of $\tilde{\D}$.     
\end{remark}

\section{Prolongation and integrability}\label{sec:prolongation}
We shall now attempt to complete a relative algebroid to a true Lie algebroid.

\subsection{Prolongations}

Let $(B, \overline{\nabla},\D)$ be an algebroid relative to a foliation $\FF$ on $M$. There is no direct way to make sense of ``$\D^{2}=0$", because $\D$ is only defined on flat forms in $\Omega^\bullet_{(B, \overline{\nabla})}$. To remedy this issue we consider extensions of $\D$ to $\Omega^\bullet_B$ as in Section \ref{sec:morphisms:extensions}.

\begin{definition}\label{def:prolongation}
    A \textbf{prolongation} of $(B, \overline{\nabla},\D)$ is an algebroid $(B, p_1, \D_1)$ relative to a submersion $p\: M_1 \to M$ satisfying
    \begin{enumerate}[(i)]
        \item (Extension) $\D_1 \alpha = (p_1)^* \D\alpha$ for $\alpha\in \Omega^\bullet_{(B, \overline{\nabla})}$, and
        \item (Completion) $\D_1 \circ \D = 0$.
    \end{enumerate}
    In this context, we write $B_1 :=p_1^*B$ for the pullback bundle.
\end{definition}    

If $(A, p, D)$ is an algebroid relative to a submersion $p\: M\to N$, we like to graphically depict a prolongation as
\[
\xymatrix@C=40pt{
    B_1 \ar[r] \ar[d] & B \ar[d] \ar[r] \ar@{-->}@/^1pc/[l]^{\D_1} & A \ar[d] \ar@{-->}@/^1pc/[l]^{\D} \\
    M_1 \ar[r]_{p_1} & M  \ar[r]_{p} & N.}
\]

\begin{remark}
    Note that for a relative algebroid the anchor and the bracket do not satisfy integrability conditions. For a prolongation, condition (ii) imposes a set of integrability conditions. By Lemma \ref{lem:square}, in terms of the brackets of $\D$ and $\D_1$, this condition amounts to the Jacobi type identity
    \[ 
    [b_1,[b_2,b_3]]_1+[b_2,[b_3,b_1]]_1+[b_3,[b_1,b_2]]_1=0,
    \]
    together with the fact that the anchor almost preserve brackets:
    \[ 
    \rho_1(p^*[b_1,b_2])(f)=\rho_1(p^*b_1)\big(\rho_0(b_2)(f)\big)-\rho_1(p^*b_2)\big(\rho_0(b_1)(f)\big),\]
    for any sections $b_1,b_2,b_2\in\Gamma_{(B,\onabla)}$ and $f\in C^\infty_\bas$.
\end{remark}

Prolongations may or may not exist, and its existence is contingent on vanishing of the intrinsic torsion. Thus, the space of torsionless lifts
\[
    p_1\: M^{(1)}\to M, \quad \mbox{where } M^{(1)}= \{ \tilde{\D}_m\in L\: T\big\vert_{\tilde{\D}_m} = 0\},
\]
plays a special role. Actually, if this space is smooth then it yields a canonical prolongation!

\begin{proposition}
\label{prop:1st:prolongation}
    Let $(B, \overline{\nabla},\D)$ be an algebroid relative to a foliation $(M,\FF)$ and assume $M^{(1)}$ has a smooth structure such that $p_1\: M^{(1)}\to M$ is a submersion. Then the relative algebroid $(B,p^{(1)},\D^{(1)})$, where $\D^{(1)}$ is defined by
    \[
    (\D^{(1)} \alpha)\big\vert_{\tilde{\D}_m} := \tilde{\D}_m\alpha, \quad \mbox{for $\alpha\in \Omega^\bullet_B$ and $\tilde{\D}_m\in M^{(1)}$.}
    \]
    is a prolongation of $(B, \overline{\nabla},\D)$.
\end{proposition}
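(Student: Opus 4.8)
The strategy is to verify the three defining features of a prolongation in turn: that $\D^{(1)}$ is a well-defined $1$-derivation relative to the submersion $p_1\: M^{(1)}\to M$, that it satisfies the extension property (i), and that it satisfies the completion property (ii). First I would address well-definedness. The tautological recipe $(\D^{(1)}\alpha)|_{\tilde{\D}_m} := \tilde{\D}_m\alpha$ is exactly the construction of Example \ref{ex:PointwiseDerivations} applied fiberwise along the affine bundle $L\to M$, restricted to the submanifold $M^{(1)}\subset L$. Concretely, $\D^{(1)}$ is the composition of the tautological section $L\ni \tilde{\D}_m\mapsto \tilde{\D}_m$ (a section of $p_1^*\DD^1_B$ over $L$, by the affine bundle structure described before the statement) with the evaluation map, pulled back to $M^{(1)}$. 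To see it defines an honest element of $\Gamma(\DD^1_{p_1^*B\to M^{(1)}})$ it suffices to check that $\alpha\mapsto (\D^{(1)}\alpha)|_{\tilde\D_m}$ is $C^\infty(M)$-linear up to the relative Leibniz rule; but this is immediate because each $\tilde{\D}_m$ is itself a $1$-derivation relative to the inclusion $\iota_m\:B_m\hookrightarrow B$ and these assemble smoothly, the smoothness being where the hypothesis that $M^{(1)}$ carries a smooth structure making $p_1$ a submersion enters. In coordinates as in Example \ref{ex:relativeAlgebroidsInCoordinates:2}, $M^{(1)}$ acquires fiber coordinates given by the free components of a pointwise lift $\tilde{\rho}_m$ (see Remark \ref{rem:pointwise:lift:bracket}), and $\D^{(1)}$ is visibly smooth in those coordinates; this also exhibits $\D^{(1)}\theta^i = -\tfrac12 \tilde c^i_{jk}\,\theta^j\wedge\theta^k$ and $\D^{(1)} x^\mu = \tilde F^\mu_i\,\theta^i$ with the coefficients now functions on $M^{(1)}$, confirming it has the shape of a relative algebroid.

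Next, the extension property (i). For $\alpha\in\Omega^\bullet_{(B,\overline\nabla)}$ a flat form, I must show $(\D^{(1)}\alpha)|_{\tilde{\D}_m} = (p_1^*\D\alpha)|_{\tilde{\D}_m}$, i.e.\ $\tilde{\D}_m\alpha = (\D\alpha)_m$. This is precisely the statement that $\tilde{\D}_m$ is a pointwise lift, i.e.\ $\Pi(\tilde{\D}_m) = \D_m$ in the exact sequence of Lemma \ref{lem:structureofRelativeDerivations}: by the Remark following that lemma, $\Pi(\tilde\D_m)$ is characterized by agreeing with $\tilde\D_m$ on flat forms, and $\Pi(\tilde\D_m)=\D_m$ means this common value is $\D_m\alpha = (\D\alpha)_m$. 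Since every point of $M^{(1)}\subset L$ is in particular a pointwise lift, this holds identically on $M^{(1)}$.

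Finally, the completion property (ii), $\D^{(1)}\circ\D = 0$. Here is the crux. Because $M^{(1)}$ is, by definition, the locus $\{\tilde{\D}_m \in L : T|_{\tilde{\D}_m} = 0\}$, and because the torsion is defined by $T(p_1^*\alpha)|_{\tilde{\D}_m} = \tilde{\D}_m(\D\alpha)$ for flat $\alpha$, we get for every $\tilde{\D}_m\in M^{(1)}$ and every flat form $\alpha$ that $\tilde{\D}_m(\D\alpha) = 0$. But $\tilde{\D}_m(\D\alpha)$ is exactly $(\D^{(1)}(\D\alpha))|_{\tilde{\D}_m}$ by the definition of $\D^{(1)}$ — note $\D\alpha \in \Omega^\bullet_B$ is a (not necessarily flat) $B$-form, which is fine since $\D^{(1)}$ is defined on all of $\Omega^\bullet_B$. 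Hence $\D^{(1)}\circ\D$ vanishes on flat forms in $\Omega^\bullet_{(B,\overline\nabla)}$, which is its entire domain. I would present this last step carefully, making explicit that the domain of $\D\circ$ is $\Omega^\bullet_{(B,\overline\nabla)}$ and the intermediate $\D\alpha$ lands in $\Omega^\bullet_B$ where $\D^{(1)}$ is indeed defined, so that the composition makes literal sense and the identity $T|_{\tilde\D_m}=0$ is exactly what is needed. The main obstacle is not any computation but rather bookkeeping the three nested bundles and their forms — ensuring the extension/completion conditions are read off against the correct sheaves of forms ($\Omega^\bullet_{(B,\overline\nabla)}$ versus $\Omega^\bullet_B$) and that "$T|_{\tilde\D_m}=0$ on $M^{(1)}$'' is invoked as the definition of $M^{(1)}$ rather than something requiring proof.
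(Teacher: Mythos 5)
Your proposal is correct and follows essentially the same route as the paper's proof: the derivation property comes from each $\tilde{\D}_m$ being a derivation relative to the inclusion (with smoothness supplied by the hypothesis on $M^{(1)}$ and $p_1$), the extension property is the pointwise-lift condition $\Pi(\tilde{\D}_m)=\D_m$ read on flat forms, and the completion property is exactly $\D^{(1)}\circ\D\big\vert_{\tilde{\D}_m}=T\big\vert_{\tilde{\D}_m}=0$, which is the definition of $M^{(1)}$. Your version simply spells out the bookkeeping (which sheaf of forms each operator acts on) that the paper's proof leaves implicit.
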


\begin{remark}
    We will see later, in in Example \ref{example:UniversalProlongation}, that the prolongation $M^{(1)}$ is \emph{universal}: any other prolongation must factor through $M^{(1)}$. 
\end{remark}

\begin{proof}
    Since $p_1$ is a submersion, one checks immediately that for any $\al\in\Omega^\bullet_B$, $D^{(1)}\al$ is a smooth form. The derivation property then follows from the fact that each $\tilde{\D}_m\in M^{(1)}$ is a derivation relative to the inclusion. From the definition of $\D^{(1)}$ it follows that 
    \[ \D^{(1)} \alpha = (p_1)^* \D\alpha,\quad \text{for $\alpha\in \Omega^\bullet_{(B, \overline{\nabla})}$}. \]
    Finally, since every $\tilde{\D}\in M^{(1)}$ is a torsionless lift of $\D$, it follows that $\D^{(1)}\circ \D\big\vert_{\tilde{\D}}=T_{\tilde{\D}}=0$. Hence, $(B,p^{(1)},\D^{(1)})$ is a prolongation of $(B, \overline{\nabla},\D)$. 
\end{proof}

\begin{example}[1st prolongation in coordinates]\label{ex:prolongationInCoordinates}
Let us assume that we have fixed local coordinates $(U,(x^\mu, y^\varrho))$ and a frame $\{e_i\}$ with dual coframe $\{\theta^i\}$, as in Examples \ref {ex:relativeAlgebroidsInCoordinates} and \ref{ex:relativeAlgebroidsInCoordinates:2}. The the derivation $\D$ is determined by
\begin{equation}
    \label{eq:derivation:D0}
            \begin{cases}
            \D \theta^i = -\frac{1}{2}c^{i}_{jk}(x, y)\, \theta^j\wedge\theta^k,\\
            \D x^\mu = F^\mu_i(x, y)\, \theta^i,
        \end{cases}
\end{equation}
for some functions $c^{i}_{jk}, F^\mu_i\in C^\infty(M)$. The completions of the anchor take the form
\[ \tilde{\rho}(e_i)=F^\mu_i(x, y)\partial_{x^\mu}+u_i^\varrho\partial_{y^\varrho}, \]
where $u_i^\varrho$ should be thought of as coordinates on the fibers of $L\to M$. The corresponding completion of the derivation $\tilde{\D}$ is determined by $\tilde{\D} y^\varrho = u^\varrho_i \theta^i$.

The first prolongation space $M^{(1)}$ consists of points $(x^\mu,y^\varrho,u_j^\varsigma)$ for which $\tilde{\D}\circ \D =0$. This yields a system of equations
\[
    \tilde{\D}\D x^\mu = 0, \qquad \tilde{\D}\D \theta^i =0.
\]
The first set of equations corresponds to the vanishing of the symbol torsion associated with the extension -- see \eqref{eq:torsion:symbol}:
\[ \tilde{\rho}(e_i)(\rho(e_j)(x^\mu)-\tilde{\rho}(e_j)(\rho(e_i)(x^\mu)-\rho([e_i,e_j])(x^\mu)=0,\]
while the second set of equations amounts to the vanishing of the torsion on the frame $\{e_i\}$ -- see \eqref{eq:torsion:bracket}:
\[ [[e_1,e_2],e_3]_{\tilde{\rho}_m}+[[e_2,e_3],e_1]_{\tilde{\rho}_m}+[[e_3,e_1],e_2]_{\tilde{\rho}_m}=0.
\]
Together they express the vanishing of the total torsion, and they give rise to the system of equations
\begin{align*}
    \frac{\partial F^\mu_i}{\partial y^\varrho}u_j^\varrho -\frac{\partial F^\mu_j}{\partial y^\varrho}u_i^\varrho &= F_i^\nu \frac{\partial F^\mu_j}{\partial x^\nu}-F_j^\nu \frac{\partial F^\mu_i}{\partial x^\nu}-
    c^k_{ij}F_k^\mu,\\
    \frac{\del c^i_{jk}}{\del y^\varrho}u^\varrho_l + \frac{\del c^i_{kl}}{\del y^\varrho} u^\varrho_j + \frac{\del c^i_{lj}}{\del y^\varrho} u^\varrho_k &=
    c^i_{mj}c^m_{kl} +c^i_{mk}c^m_{lj} + c^i_{ml}c^m_{jk} \\
    &\qquad - F^\mu_j\frac{\del c^i_{kl}}{\del x^\mu} - F^\mu_k \frac{\del c^i_{lj}}{\del x^\mu} - F^\mu_{l} \frac{\del c^i_{jk}}{\del x^\mu}.
\end{align*}
Assuming that $M^{(1)}$ is smooth, these equations will determine a subset of the variables $u_i^\alpha$ as functions of $x^\mu$, $y^\varrho$ and the remaining variables $u_i^\varrho$, call them $v^\varphi$. Hence, we have a set of local coordinates $(x^\mu,y^\varrho,v^\varphi)$ for $M^{(1)}$ and the derivation $\D^{(1)}$ is then defined by \eqref{eq:derivation:D0} together with
\[ \D^{(1)} y^\varrho=u^\varrho_i(x^\mu,y^\varrho,v^\varphi)\, \theta^i. \]
\end{example}

In what follows, given a relative algebroid it will be convenient to identify the tableau of its 1st prolongation with the 1st prolongation of its tableau. The precise identification is as follows.

\begin{lemma}\label{lem:tableauOfProlongation}
    Let $(B, \overline{\nabla},\D)$ be a relative algebroid. The tableau of its first prolongation is canonically isomorphic to $p_1^*\tau^{(1)}\subset p_1^*\Hom(B, \FF)$ (as a tableau), where $\tau^{(1)}$ is the first prolongation of the tableau $\tau$ of $(B, \overline{\nabla},\D)$. 
\end{lemma}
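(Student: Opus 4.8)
The plan is to unwind both sides explicitly and exhibit a canonical isomorphism. First I would recall the two relevant structures. On one hand, the tableau $\tau$ of $(B,\onabla,\D)$ is the bundle map $\tau\colon\FF\to\DD^1_{(B,\onabla)}$, $X\mapsto\onabla_X\D$; its first prolongation $\tau^{(1)}$ is the subbundle of $\Hom(B,\FF)$ (using $\TT=\im\tau$, or more precisely the tableau map picture of the Remark following Definition \ref{def:SpencerDifferentialModifiedTableau}) cut out by $\ker\delta_\tau\big|_{\Hom(B,\TT)}$, i.e.\ those $\xi\in\Hom(B,\FF)$ with $\delta_\tau\xi=0$ in $\DD^2_{(B,\onabla)}$. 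On the other hand, by Proposition \ref{prop:1st:prolongation} the first prolongation is the relative algebroid $(B,p_1,\D^{(1)})$ over $M^{(1)}\subset L$, and its tableau map is, by Definition \ref{def:tableauMap}, the bundle map $\tau_1\colon p_1^*\FF\to\DD^1_{(p_1^*B,p_1^*\onabla)}$, $X\mapsto p_1^*\onabla_X\D^{(1)}$. Since $p_1\colon M^{(1)}\to M$ is a submersion and $\FF$ pulls back along it, the natural identification $\DD^1_{(p_1^*B,p_1^*\onabla)}\cong p_1^*\DD^1_{(B,\onabla)}$ from the Lemma after \eqref{eq:symbolsesrelativederivations:projection} (extended to the foliated setting as noted in the excerpt) lets us view $\tau_1$ as a map into $p_1^*\DD^1_{(B,\onabla)}$.

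The key computation is to express $\tau_1$ in terms of the torsion. Fix $\tilde\D_m\in M^{(1)}$ and $X\in\FF_m$; extend $X$ to a local section of $\FF$ and choose a local section $\tilde\D$ of $L$ through $\tilde\D_m$. By the defining formula $(\D^{(1)}\alpha)\big|_{\tilde\D_m}=\tilde\D_m\alpha$ for $\alpha\in\Omega^\bullet_B$, differentiating along the fibre directions of $p_1$ shows that the ``vertical part'' of $\tau_1(Y)$ for $Y\in\ker\d p_1\subset p_1^*\FF$... — more to the point, I would compute $\onabla_X\D^{(1)}$ directly. Writing $\D^{(1)}=\Pi^{-1}$-lift of $\D$ pointwise, one has on flat forms $\D^{(1)}\alpha=p_1^*\D\alpha$, hence $\onabla_X(\D^{(1)}\alpha)=p_1^*(\onabla_X\D\alpha)=p_1^*((\tau X)\alpha)$. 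This says precisely that $\Pi(\tau_1(X))=p_1^*(\tau(X))$ under the sequence of Lemma \ref{lem:structureofRelativeDerivations}, i.e.\ $\tau_1(X)$ is a lift of $p_1^*\tau(X)$. Thus the image of $\tau_1$ is contained in the preimage $\Pi^{-1}(p_1^*\im\tau)\subset p_1^*\DD^1_B$, and modding out by the ``honest'' $\DD^1$-part we get a map landing in an affine bundle over $p_1^*\im\tau$ modelled on $p_1^*\Hom(B,\FF)$. The content is then that the fibrewise-linear structure of $\tau_1$, together with the torsionless condition defining $M^{(1)}$, forces the relevant difference terms $\tau_1(X)-\tau_1(X')$ (for $X,X'$ in the same fibre, via moving in $L$) to lie in $\ker\delta_\tau$, so that $\tau_1$ factors through $p_1^*\tau^{(1)}$; and conversely a dimension count via Cartan's bound or a direct check shows this factorization is onto $p_1^*\tau^{(1)}$.

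The cleanest route, which I would actually write up, is to compare symbols and brackets using \eqref{eq:bracket:tableau} and \eqref{eq:torsion:bracket}: for the prolongation, \eqref{eq:bracket:tableau} gives $[b_1,b_2]_{\tau_1(X)}=p_1^*\onabla_X([b_1,b_2]_1)$ where $[\cdot,\cdot]_1$ is the bracket of $\D^{(1)}$, and $[b_1,b_2]_1=[b_1,b_2]_{\tilde\rho}$ is the completed bracket of Remark \ref{rem:pointwise:lift:bracket}; differentiating the relation $\Pi(\tilde\D^2)=T|_{\tilde\D}=0$ along $X$ and using Proposition \ref{prop:torsionDifferenceSpencerDifferential} identifies $\tau_1(X)$, as an element of $\Hom(B,\FF)\subset\DD^1_B$ over $M^{(1)}$, with the image under the natural inclusion of an element $\xi_X\in\Hom(B,\FF)$ satisfying $\delta_\tau\xi_X=0$; that is the definition of a point of $\tau^{(1)}$. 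Running this identification fibrewise and noting everything is $C^\infty$-linear and compatible with restriction yields a bundle map $p_1^*\tau^{(1)}\to(\text{tableau of }\D^{(1)})$; injectivity is immediate from the construction, and surjectivity follows because any torsionless infinitesimal deformation of $\tilde\D_m$ within $M^{(1)}$ is, by Proposition \ref{prop:torsionDifferenceSpencerDifferential} again, exactly a $\delta_\tau$-closed element of $\Hom(B_m,\FF_m)$. The main obstacle I anticipate is purely bookkeeping: keeping straight the three nested bundles $B^{(1)}\to B\to A$ and the two sequences (symbol sequence \eqref{eq:symbolsesrelativederivations:foliation} and the $\Pi$-sequence of Lemma \ref{lem:structureofRelativeDerivations}) so that ``the tableau of the prolongation'' is identified with a subbundle of $p_1^*\Hom(B,\FF)$ and not of $\Hom(B^{(1)},\ker\d p_1)$ — i.e.\ making the canonical identifications canonical and checking they are compatible, rather than any genuinely hard estimate.
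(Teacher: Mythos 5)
Your proposal is correct and takes essentially the same route as the paper: Proposition \ref{prop:torsionDifferenceSpencerDifferential} exhibits $p_1\colon M^{(1)}\to M$ as an affine bundle modeled on $\tau^{(1)}=\ker\big(\delta_\tau\colon\Hom(B,\FF)\to\DD^2_{(B,\onabla)}\big)$, and the tautological/affine dependence of $\D^{(1)}$ on the point of $M^{(1)}$ identifies its tableau map on the vertical bundle with the inclusion of $p_1^*\tau^{(1)}$ into $p_1^*\DD^1_B$. The only blemish is notational: the tableau map of the prolongation is defined on $\ker\d p_1$, not on $p_1^*\FF$ as you first write, but your ``cleanest route'' in fact works with the vertical directions and the affine structure, exactly as the paper's proof does.
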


\begin{proof}
Notice that we have the following:
\begin{enumerate}[(a)]
    \item The 1st prolongation $(B, p_1, D^{(1)})$ has a fiberwise injective tableau map
    \[
        \ker\d p_1\to p^*_1\DD^1_B, \quad X\mapsto \onabla_X D^{(1)};
    \]
    \item The 1st prolongation of $\tau$ is the classical tableau
    \[ \tau^{(1)}:=\ker(\delta\: \Hom(B,\FF)\to \DD^2_{(B,\onabla)});\]
\end{enumerate}
Now, the bundle $p_1\:L\to M$ is an affine bundle modeled on $\Hom(B,\FF)$, and by Proposition \ref{prop:torsionDifferenceSpencerDifferential}, the restriction $p_1\:M^{(1)}\to M$ is an affine bundle modeled on $\tau^{(1)}$. It follows that we have an isomorphism of tableaux
\[ \ker\d p_1\simeq p^*_1\tau^{(1)}. \qedhere \]
\end{proof}

\subsection{Integrability} We now consider the problem of existence of prolongations and integrability. 

\begin{definition}
    Given a relative algebroid $(B, \overline{\nabla},\D)$:
    \begin{itemize}
        \item its \textbf{first prolongation} is the relative algebroid $(B, p_1, D^{(1)})$ given by Proposition \ref{prop:1st:prolongation}, provided it exists;
        \item its \textbf{$k$-th prolongation} is defined iteratively through
        \[
        (B^{(k-1)}, p_k, D^{(k)}):= (B^{(k-2)}, p_{k-1}, D^{(k-1)} )^{(1)},
        \]
        provided it exists.
    \end{itemize}
    The relative algebroid $(B, \overline{\nabla},\D)$ is called \textbf{$k$-integrable} if all the prolongations up to and including $k$ exist, and \textbf{formally integrable} if it is $k$-integrable for all $k\in \NN$. If $M^{(k)}=M^{(k+1)}$ for some $k$, then we say that the relative algebroid is of \textbf{finite type}, otherwise we say that it is of \textbf{infinite type}.
\end{definition}

Our next result shows that the curvature is precisely the second order obstruction to integrability of a relative algebroid. In order to state it, note that -- see also Example \ref{ex:normalTableauAsGeneralizedTableau}:
    \begin{enumerate}[(a)]
        \item If we view $\tau^{(1)}$ as a classical tableau, then the cohomology group $H^{-1, 2}(\tau^{(1)})$ is isomorphic to $\Hom(\wedge^2B, \FF)/ \im \delta$;
        \item If we view $\tau^{(1)}$ as a tableau of derivations, then the cohomology group $H^{-1, 2}(\tau^{(1)})$ is isomorphic to $\DD^2_{B}/ \im \delta$. This is where the torsion class of the first prolongation lives.
        \item The curvature class $\Curv$ of of $(B, \overline{\nabla},\D)$ lives in $H^{0, 2}(\tau) = \ker \delta/ \im \delta$.
    \end{enumerate}
    By Lemma \ref{lem:structureofRelativeDerivations}, we have  an inclusion 
    \[
        \xymatrix{
    \ker \delta \subset\Hom(\wedge^2B, \FF)\, \ar@{^{(}->}[r]& \DD^2_{B}}
    \]
    which sends $\im \delta$ isomorphically onto $\im \delta$, and hence induces an inclusion in Spencer cohomology
        \begin{equation}
        \label{eq:comparing:torsion:curvature}
                \xymatrix{
    H^{0, 2}(\tau)\, \ar@{^{(}->}[r]& H^{-1, 2}(\tau^{(1)})}.
    \end{equation}
We can state the result about 2nd order obstructions as follows.

\begin{theorem}[Fundamental Theorem of Prolongation]\label{thm:fundamentalTheoremOfProlongation}
    Let $(B, \overline{\nabla},\D)$ be a 1-integrable relative algebroid. Under the inclusion map \eqref{eq:comparing:torsion:curvature} the torsion class of the first prolongation coincides with the curvature class of $(B, \overline{\nabla},\D)$. 
\end{theorem}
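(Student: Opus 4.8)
The theorem compares two cohomology classes, both living in Spencer cohomology associated to the tableau $\tau$. The torsion class of the first prolongation $(B, p_1, \D^{(1)})$ lives in $H^{-1,2}(\tau^{(1)})$ viewed as a tableau of derivations, i.e., in $\DD^2_B/\im\delta$; the curvature class of $(B,\overline\nabla,\D)$ lives in $H^{0,2}(\tau) = \ker\delta/\im\delta \subset \DD^2_B/\im\delta$. So the statement is an equality of classes in $\DD^2_B/\im\delta$, which I would prove by exhibiting, for a suitable choice of representatives, an actual equality of cochains in $\DD^2_B$.

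**Key steps.** First, I would work locally (or pointwise) and pick a section $\tilde\D \in \Gamma(M^{(1)})$, i.e., a torsionless lift of $\D$ defined over an open set of $M$. By Proposition~\ref{prop:torsionOfSection}, $T|_{\tilde\D} = \Pi(\tilde\D^2) = 0$, so $\tilde\D$ is torsionless and $\kappa_{\tilde\D} = \tilde\D^2 \in \Gamma(\Hom(\wedge^2 B, \FF))$ is its curvature; by definition $\Curv|_{\tilde\D} = [\kappa_{\tilde\D}] \in H^{0,2}(\tau)$. Second, I would unwind the definition of the torsion of the prolongation $(B, p_1, \D^{(1)})$: its torsion at a point $\tilde\D_m \in M^{(1)}$ is the $2$-derivation $\tilde{\tilde\D}_{\cdot}\big(\D^{(1)}(\,\cdot\,)\big)$ for a pointwise lift $\tilde{\tilde\D}$ of $\D^{(1)}$ to $\DD^1_{B_1}$; its class in $H^{-1,2}(\tau^{(1)})$ is represented by $\tilde{\tilde\D}^2_m$ restricted appropriately. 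The crucial point is that the section $\tilde\D$ itself provides a canonical lift: over the image of the section $s_{\tilde\D}\: M \to M^{(1)}$, the relative algebroid structure $\D^{(1)}$ restricts to the ordinary $1$-derivation $\tilde\D$ on $B$, and its square $\tilde\D^2$ is exactly the torsion cochain of $\D^{(1)}$ pulled back along $s_{\tilde\D}$. Third, I would observe that $\tilde\D^2 = \kappa_{\tilde\D}$ as elements of $\DD^2_B$ — literally the same $2$-derivation — so the torsion cochain of the prolongation and the curvature cochain coincide on the nose along the section. Under the inclusion \eqref{eq:comparing:torsion:curvature}, which on cochains is the inclusion $\ker\delta \subset \Hom(\wedge^2 B,\FF) \hookrightarrow \DD^2_B$ followed by passage to the quotient, this gives equality of classes. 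Finally, I would check independence of the choice of section $\tilde\D$: two such differ by a section $\xi$ of $\tau^{(1)} \subset \Hom(B,\FF)$, and by the computation in the proof of Proposition~\ref{prop:torsionDifferenceSpencerDifferential} (applied one prolongation up) the curvature cochains $\kappa_{\tilde\D+\xi}$ and $\kappa_{\tilde\D}$ differ by $\delta\xi$ — hence represent the same class — so the identification is well-defined and the equality descends to the intrinsic classes.

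**The main obstacle.** The conceptual content is short, but the bookkeeping is delicate: one must be careful that the map \eqref{eq:comparing:torsion:curvature} is the right one — that the torsion class of the prolongation, a priori an element of $\DD^2_{B_1}/\im\delta$ over $M^{(1)}$, actually descends (or pulls back along sections) to something comparable to the curvature class over $M^{(1)}$, and that ``viewing $\tau^{(1)}$ as a tableau of derivations versus a classical tableau'' is handled consistently (this is exactly the dichotomy (a)/(b) set up before the theorem statement). The real work is therefore to match the two quotient constructions, using Lemma~\ref{lem:structureofRelativeDerivations} and Lemma~\ref{lem:tableauOfProlongation}, and to verify that the coboundaries $\im\delta$ on the two sides correspond under the inclusion $\ker\delta \hookrightarrow \DD^2_B$ — a point the paper has already isolated in the paragraph preceding the theorem. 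Once that identification of ambient complexes is in place, the equality of classes is immediate from $\tilde\D^2 = \kappa_{\tilde\D}$.
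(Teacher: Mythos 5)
Your proposal is correct and takes essentially the same route as the paper: the paper packages your ``section-induced pointwise lift'' via a locally chosen flat Ehresmann connection on $p_1\colon M^{(1)}\to M$, whose flat sections $\sigma$ satisfy $\sigma^*\bigl(h_*\D^{(1)}\circ\D^{(1)}\bigr)=\tilde{\D}_\sigma^2$ --- precisely your identity equating the torsion cochain of the prolongation with $\kappa_{\tilde{\D}}$, verified just as you indicate by matching actions on flat forms and symbols. Your final independence-of-choices step is already absorbed in the paper's setup (the torsion class is lift-independent by Proposition \ref{prop:torsionDifferenceSpencerDifferential} applied to the prolongation, and the intrinsic curvature depends only on the value of the torsionless lift, not its first jet), so nothing further is needed.
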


\begin{remark}\label{rk:extensionProblem}
    The \textbf{extension problem} for relative algebroids asks: given a relative algebroid $(B, \overline{\nabla},\D)$, is there an extension $\tilde{\D}$ of $\D$ to $\Omega^\bullet_B$ such that $\tilde{\D}^2 =0$? In other words, can one complete the relative algebroid to a Lie algebroid? The curvature serves as an obstruction to this extension problem.

    On the other hand, we will see later (cf.~Proposition \ref{prop:realizationsOfProlongations}) that realizations of a relative algebroid are in a 1:1 correspondence with realizations of its first prolongation. Thus, by Proposition \ref{prop:NecessaryTorsion}, the torsion of the first prolongation is an obstruction to the existence of realizations. 
    
    Therefore, the Fundamental Theorem of Prolongation implies that, at the formal level, the extension problem is equivalent to the realization problem.
\end{remark}

\begin{proof}[Proof of Theorem \ref{thm:fundamentalTheoremOfProlongation}] 
    The statement depends only pointwise on elements in $M^{(1)}$. So, in a neighborhood of a point in $M^{(1)}$, we can assume that we have a flat Ehresmann connection, which we view as bundle map $h\: p_1^*TM\to TM^{(1)}$ satisfying
    \[ \d p(h(\tilde{\D}_m,v))=v, \quad \text{for all }v\in T_m M.\]
    The connection $h$ induces a splitting of the map $\DD^1_{B^{(1)}}\to \DD^1_{p_1^*}\cong p_1^*\DD^{1}_B$ compatible with the symbol exact sequences
    \[
        \xymatrix{
            0 \ar[r]  & p_1^*\Hom(\wedge^2B, B) \ar[d]_{ \cong} \ar[r]  & p_1^*\DD^1_B \ar[d]^{h_*} \ar[r]  & p^*_1\Hom(B, TM) \ar[d]^{h_*} \ar[r]  & 0 \\
            0 \ar[r]  & \Hom(\wedge^2B^{(1)}, B^{(1)}) \ar[r] & \DD^1_{B^{(1)}} \ar[r]                & \Hom(B^{(1)}, TM^{(1)}) \ar[r] & 0.}
    \]
    For a point in $M^{(1)}$, let $\sigma\: M\to M^{(1)}$ be the unique local flat section through that point, so that
    \begin{equation}
        \label{eq:flat:section}
        h(\tilde{\D}_m,v)=\d_m\sigma(v).
    \end{equation} 
    We denote by $\tilde{\D}_\sigma$ the derivation of $\Omega^\bullet_{B}$ determined by $\sigma$, so
    \[ \tilde{\D}_\sigma|_m:=\sigma(m). \]
    (it is convenient to keep the distinction between $\sigma$ as a section and $\sigma$ as a derivation). The theorem will follow from the identity:
    \begin{equation}\label{eq:fundamentalTheoremOfProlongation}
        \sigma^*\circ (h_*\D^{(1)}\circ \D^{(1)}) = \tilde{\D}_\sigma^2.
    \end{equation}
    Note that $h_*\D^{(1)}$ is a derivation in $\DD^1_{B^{(1)}}$ that extends $\D^{(1)}$, so $h_*\D^{(1)}\circ \D^{(1)}$ is a 2-derivation in $\DD^2_{p^*_1}\cong p_1^*\DD^2_B$ (see Section \ref{sec:morphisms:extensions}). Precomposing with $\sigma^*$ is the same as pulling back this 2-derivation to $\sigma^*p_1^*\DD^2_B \cong \DD^2_B$, so the equation makes sense.

    Assume \eqref{eq:fundamentalTheoremOfProlongation} holds. Passing to  Spencer cohomology, the left hand side of this equation gives
    \[ [\sigma^*\circ(h_*\D^{(1)} \circ \D^{(1)})]=\Tor^{(1)}, \] 
    i.e., the torsion class of $M^{(1)}$. On the other hand, since $\tilde{\D}_\sigma$ is torsionless, the right hand side of \eqref{eq:fundamentalTheoremOfProlongation} gives the curvature class
    \[ [\tilde{\D}^2_\sigma] = \Curv, \] 
    so the theorem follows.
    
    It remains to prove that equation \eqref{eq:fundamentalTheoremOfProlongation} holds. For that, we only need to observe that both sides act in the same way on $\Omega^1_{(B, \overline{\nabla})}$, and that both derivations have the same symbol precisely because $h$ is flat, i.e, because of \eqref{eq:flat:section}. 
\end{proof}

For PDEs, Goldschmidt formulated a criterion for when a PDE is formally integrable (\cite[Thm 8.1]{Goldschmidt1967}). The following result is an analogue (or rather, extension) of that result for relative algebroids.

\begin{theorem}[Goldschmidt's formal integrability criterion]\label{thm:GoldschmidtsFormalIntegrabilityCriterion}
    Let $(B, \overline{\nabla},\D)$ be a relative algebroid. Suppose that 
    \begin{enumerate}[(i)]
        \item $(B, \overline{\nabla},\D)$ is 1-integrable,
        \item $H^{k, 2}(\tau) = 0$ for all $k\geq 0$,
    \end{enumerate}
    then $(B, \overline{\nabla},\D)$ is formally integrable.
\end{theorem}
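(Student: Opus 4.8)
The plan is to prove, by induction on $k$, that $k$-integrability of $(B,\overline{\nabla},\D)$ implies $(k+1)$-integrability; the base case $k=1$ is hypothesis (i). So assume the prolongations up to order $k$ exist, producing relative algebroids $(B^{(j-1)},p_j,\D^{(j)})$ for $1\le j\le k$. First I would record two structural facts. By iterating Lemma \ref{lem:tableauOfProlongation}, the tableau of $\D^{(j)}$ is (the pullback of) the $j$-th prolongation $\tau^{(j)}$ of the original tableau $\tau$; and since $p_j\colon M^{(j)}\to M^{(j-1)}$ is the affine bundle of torsionless lifts, modelled on $\tau^{(j)}$ (Proposition \ref{prop:torsionDifferenceSpencerDifferential}), each of $\tau^{(1)},\dots,\tau^{(k)}$ has locally constant rank. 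The $(k+1)$-st prolongation is by definition the first prolongation of $(B^{(k-1)},p_k,\D^{(k)})$, so by Proposition \ref{prop:1st:prolongation} it suffices to show that the space $M^{(k+1)}$ of torsionless lifts of $\D^{(k)}$ is smooth with $p_{k+1}$ a submersion.

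Next I would analyse $M^{(k+1)}$ fibrewise over $M^{(k)}$. By Proposition \ref{prop:torsionDifferenceSpencerDifferential}, along each fibre of the bundle $L^{(k)}\to M^{(k)}$ of completions the torsion of $\D^{(k)}$ is an affine map into $\DD^2_{B^{(k-1)}}$ with constant linear part $\delta_{\tau^{(k)}}$. Consequently $M^{(k+1)}$ is cut out fibrewise by an inhomogeneous linear equation, and it is enough to check: (a) that the intrinsic torsion class of $\D^{(k)}$ vanishes, so that $M^{(k+1)}$ meets every fibre; and (b) that $\ker\delta_{\tau^{(k)}}=\tau^{(k+1)}$ has locally constant rank, so that $M^{(k+1)}$ is an affine subbundle of $L^{(k)}$ — hence smooth and a submersion onto $M^{(k)}$.

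For (a) I would apply the Fundamental Theorem of Prolongation, Theorem \ref{thm:fundamentalTheoremOfProlongation}, to the relative algebroid $(B^{(k-2)},p_{k-1},\D^{(k-1)})$, which is $1$-integrable because its first prolongation $(B^{(k-1)},p_k,\D^{(k)})$ is part of the inductive data (for $k=1$ read this as the original algebroid, $1$-integrable by (i)). That theorem identifies the intrinsic torsion class of $\D^{(k)}$ with the intrinsic curvature class of $\D^{(k-1)}$, a section of $H^{0,2}(\tau^{(k-1)})$ viewed inside $H^{-1,2}(\tau^{(k)})$ via the inclusion \eqref{eq:comparing:torsion:curvature}. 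Since Spencer cohomology shifts under prolongation, $H^{0,2}(\tau^{(k-1)})\cong H^{k-1,2}(\tau)$, and the right-hand side vanishes by hypothesis (ii) (note $k-1\ge 0$). Hence the torsion class of $\D^{(k)}$ is zero and $M^{(k+1)}\to M^{(k)}$ is surjective.

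For (b), which is where the full force of (ii) is needed, I would use that the vanishing of $H^{j,2}(\tau)$ for all $j\ge 0$ makes $\tau$ $2$-acyclic, a property stable under prolongation (the usual $\delta$-Poincaré/Goldschmidt argument is pointwise in nature and so transfers to tableaux of derivations); thus $H^{j,2}(\tau^{(m)})\cong H^{j+m,2}(\tau)=0$ for all $j,m\ge 0$. Together with the locally constant ranks of $\tau^{(1)},\dots,\tau^{(k)}$ already noted, exactness of the relevant strand of the Spencer complex then expresses $\rk\tau^{(k+1)}$ through a fixed combinatorial formula in these ranks, so it is locally constant. This closes the induction. I expect the main obstacle to be bookkeeping rather than conceptual: reducing the order-$k$ obstruction to a curvature class lying in $H^{k-1,2}(\tau)$ via Theorem \ref{thm:fundamentalTheoremOfProlongation} is the heart of the matter, whereas making precise that the constant-rank conditions propagate along the tower requires carefully importing — and, where necessary, re-deriving for tableaux of derivations — the $2$-acyclicity machinery from the classical Spencer–Goldschmidt theory.
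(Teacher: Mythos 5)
Your proof is correct and follows essentially the same route as the paper: induction up the prolongation tower, using the Fundamental Theorem of Prolongation to identify the torsion class of $\D^{(k)}$ with the curvature class of $\D^{(k-1)}$ in a Spencer group that vanishes by (ii), plus constant-rank/$2$-acyclicity arguments to get smoothness of $M^{(k+1)}$ and the submersion property. The only difference is that you re-derive inline what the paper delegates to Lemma \ref{lem:1-integrableCriterion} (Goldschmidt's Prop.~3.5) and Lemma \ref{lem:prolongationConstantRank}, and your indexing $H^{0,2}(\tau^{(k-1)})\cong H^{k-1,2}(\tau)$ is the careful version of the paper's shift, with no effect on the conclusion.
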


For the proof we need the following two lemmas.

\begin{lemma}\label{lem:1-integrableCriterion}
    Let $(B, \overline{\nabla},\D)$ be a relative algebroid with tableau map $\tau$. Suppose that $p_1\: M^{(1)}\to M$ is surjective and that $\tau^{(1)}\to M$ has constant rank. Then $(B, \overline{\nabla},\D)$ is 1-integrable.
\end{lemma}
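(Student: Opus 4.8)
The statement to prove is that a relative algebroid $(B,\overline{\nabla},\D)$ is $1$-integrable as soon as $p_1\:M^{(1)}\to M$ is surjective and the prolonged tableau $\tau^{(1)}$ has constant rank. By definition, $1$-integrability means that the first prolongation exists, and by Proposition \ref{prop:1st:prolongation} this amounts to showing that $M^{(1)}$ carries a smooth manifold structure for which $p_1\:M^{(1)}\to M$ is a submersion. So the whole content of the proof is: \emph{upgrade the set-theoretic fibered space $M^{(1)}\subset L$ to a smooth affine subbundle of $L\to M$.}

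The plan is as follows. First I would recall from Section \ref{sec:Torsion} that $p_1\:L\to M$ is an affine bundle modeled on the vector bundle $\Hom(B,\FF)$, and that $M^{(1)}=\Pi^{-1}(\D)\cap\{\,\tilde\D_m : T|_{\tilde\D_m}=0\,\}$ is cut out inside $L$ by the torsion equation. Working fiberwise over a point $m\in M$: if $M^{(1)}_m=p_1^{-1}(m)\cap M^{(1)}$ is nonempty (which surjectivity of $p_1$ guarantees), then by Proposition \ref{prop:torsionDifferenceSpencerDifferential} the difference of any two elements $\tilde\D_m,\tilde\D_m'\in M^{(1)}_m$ lies in $\ker\delta_\tau\subset\Hom(B_m,\FF_m)$, which is exactly the fiber $\tau^{(1)}_m$. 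Hence $M^{(1)}_m$ is an affine subspace of $L_m$ with underlying vector space $\tau^{(1)}_m$; in particular all fibers of $M^{(1)}\to M$ are nonempty affine spaces of the \emph{same} dimension $\operatorname{rank}\tau^{(1)}$, by the constant rank hypothesis.

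Next I would promote this fiberwise picture to a bundle statement. Choose local trivializations: over an open $U\subset M$, pick a local section $s\:U\to L$ of the affine bundle (e.g.\ from a connection on $B$ extending $\overline{\nabla}$, as in Lemma \ref{lem:sessymbolconnectionsplitting}) and identify $p_1^{-1}(U)\cong U\times\Hom(B,\FF)$ linearly over each point. Under this identification, the torsion map $\tilde\D_m\mapsto T|_{\tilde\D_m}$ becomes, by Proposition \ref{prop:torsionDifferenceSpencerDifferential}, an affine-linear map in the fiber coordinate whose linear part is precisely $\delta_\tau\:\Hom(B,\FF)\to\DD^2_{(B,\overline{\nabla})}$ (composed with the quotient to $H^{-1,2}$, but the point is it is a fixed bundle map independent of the fiber variable). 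Concretely, writing $\tilde\D=s(m)+\xi$ with $\xi\in\Hom(B_m,\FF_m)$, one has $T|_{s(m)+\xi}=T|_{s(m)}+\delta_\tau\xi$, so $M^{(1)}\cap p_1^{-1}(U)=\{(m,\xi): \delta_\tau\xi=-T|_{s(m)}\}$. Since $\delta_\tau$ is a vector bundle map of constant rank on the target $\im\delta_\tau$ (and $\tau^{(1)}=\ker\delta_\tau$ has constant rank by hypothesis, hence so does $\im\delta_\tau$), and since the affine equation is consistent fiberwise by surjectivity of $p_1$, the solution set is a smooth affine subbundle of $U\times\Hom(B,\FF)$ modeled on the constant-rank bundle $\tau^{(1)}$. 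This is a standard fact about affine equations $\delta_\tau\xi=\eta(m)$ with $\delta_\tau$ of locally constant rank and the right-hand side lying in the image: it defines a smooth affine subbundle. Patching these local descriptions over $M$ (the construction is canonical once one knows smoothness, since $M^{(1)}$ is intrinsically defined as a subset of $L$) yields the global smooth structure, and $p_1\:M^{(1)}\to M$ is then a surjective submersion. Finally, invoking Proposition \ref{prop:1st:prolongation}, the first prolongation $(B,p^{(1)},\D^{(1)})$ exists, i.e.\ $(B,\overline{\nabla},\D)$ is $1$-integrable.

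The one genuinely delicate point — and where I would be careful in writing the details — is the constant-rank argument for the affine equation: one must use that $\operatorname{rank}\tau^{(1)}=\operatorname{rank}\ker\delta_\tau$ is constant to conclude $\operatorname{rank}\im\delta_\tau$ is constant (here it is important that the source $\Hom(B,\FF)$ has constant rank, which it does), and then apply the constant-rank theorem / implicit function theorem fiberwise-with-parameters to get a smooth affine subbundle. Everything else is essentially bookkeeping with the affine bundle $L$ and the identification $\ker\d p_1\cong p_1^*\tau^{(1)}$ from Lemma \ref{lem:tableauOfProlongation}. I do not expect any obstruction beyond making this transversality/constant-rank statement precise.
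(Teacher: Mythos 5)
Your proposal is correct and is essentially the paper's argument: the paper's proof is a one-line citation of Goldschmidt's Proposition 3.5, which is exactly the affine-bundle/constant-rank statement you prove by hand — the torsion is affine along the fibers of $L\to M$ with linear part $\delta_\tau$ (Proposition \ref{prop:torsionDifferenceSpencerDifferential}), so nonempty fibers (surjectivity of $p_1$) together with constant rank of $\tau^{(1)}=\ker\delta_\tau$ make $M^{(1)}$ a smooth affine subbundle modeled on $\tau^{(1)}$, and Proposition \ref{prop:1st:prolongation} then yields the first prolongation. Your closing remark on the constant-rank point is also sound, since $\operatorname{rank}\operatorname{im}\delta_\tau=\operatorname{rank}\Hom(B,\FF)-\operatorname{rank}\tau^{(1)}$ is constant.
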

\begin{proof}
    Follows from Proposition 3.5 in \cite{Goldschmidt1967}.
\end{proof}

\begin{lemma}\label{lem:prolongationConstantRank}
    Let $\tau$ be a tableau bundle such that $\tau^{(1)}$ has constant rank and $H^{k, 2}(\tau) = 0$ for all $k\geq 0$. Then $\tau^{(k)}$ has constant rank for all $k\geq 0$. 
\end{lemma}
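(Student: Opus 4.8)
The plan is to prove by induction on $m$ that the prolongation $\tau^{(m)}$ has locally constant rank. The cases $m=0$ (where $\tau^{(0)}=\tau$ is a bundle by hypothesis) and $m=1$ (by hypothesis) are given, so the inductive step will establish that $\tau^{(m)}$ has locally constant rank assuming the same for $\tau^{(0)},\dots,\tau^{(m-1)}$. The whole point is that once $\tau^{(m-1)},\tau^{(m-2)},\tau^{(m-3)}$ are known to be honest vector bundles, the relevant stretch of the Spencer complex consists of genuine bundle maps, and then the constant rank of $\tau^{(m)}=\ker\delta_1$ can be squeezed out of semicontinuity of fibre dimensions together with the degree-$2$ cohomological hypothesis.

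In detail, for $m\geq 2$ I would consider the segment
\[
\Hom(W,\tau^{(m-1)})\ \xrightarrow{\ \delta_1\ }\ \Hom(\wedge^2 W,\tau^{(m-2)})\ \xrightarrow{\ \delta_2\ }\ E
\]
of the Spencer complex, where $E=\Hom(\wedge^3 W,\tau^{(m-3)})$ when $m\geq 3$ and $E=\DD^{k+2}_\varphi$ (the ambient Spencer target in degree $k+2$) when $m=2$. By the inductive hypothesis all of $\tau^{(m-1)},\tau^{(m-2)},\tau^{(m-3)},\DD^{k+2}_\varphi$ are vector bundles of locally constant rank, so $\delta_1$ and $\delta_2$ are bundle maps between bundles. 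Since $\tau^{(m)}=\ker\delta_1$ by the definition of first prolongation, $x\mapsto\dim\tau^{(m)}_x=\dim\ker(\delta_1)_x$ is upper semicontinuous. Conversely, the hypothesis $H^{m-2,2}(\tau)=0$ says exactly that $\im(\delta_1)_x=\ker(\delta_2)_x$ for every $x$, whence
\[
\dim\tau^{(m)}_x=\rk\Hom(W,\tau^{(m-1)})-\rk(\delta_1)_x=\rk\Hom(W,\tau^{(m-1)})-\dim\ker(\delta_2)_x,
\]
and since $\dim\ker(\delta_2)_x$ is upper semicontinuous while the first term is locally constant, $x\mapsto\dim\tau^{(m)}_x$ is also lower semicontinuous. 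Being simultaneously upper and lower semicontinuous and integer-valued, it is locally constant, so $\tau^{(m)}$ is a subbundle and the induction proceeds.

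I do not expect a serious conceptual obstacle: the argument uses only the degree-$2$ exactness $\im\delta_1=\ker\delta_2$ supplied by the hypothesis together with semicontinuity of fibre dimensions, and in particular it does not need the harder fact that $2$-acyclicity of a tableau forces involutivity. The places where care is needed are: (a) interpreting the hypothesis $H^{k,2}(\tau)=0$ fibrewise, i.e.\ as $H^{k,2}(\tau_x)=0$ for all $x$ (reading it as the vanishing of a bundle would make the statement nearly circular, since the definition of the higher Spencer groups already presupposes constant rank of the intermediate prolongations); and (b) the bookkeeping inside the induction—tracking exactly which spaces are already known to be bundles at stage $m$, and noting that for $m=2$ the ``next'' Spencer differential lands in $\DD^{k+2}_\varphi$ rather than in a $\Hom(\wedge^3 W,-)$ space. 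This is the analogue, in the present setting, of the corresponding constant-rank step in Goldschmidt's proof of his formal integrability theorem \cite[\S 8]{Goldschmidt1967}.
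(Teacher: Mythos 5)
Your argument is correct and is essentially the route the paper takes: the paper gives no details, simply citing Lemma 1.5.6 of Yudilevich's thesis, whose proof is precisely this Goldschmidt-style induction combining fibrewise exactness in Spencer degree $2$ (the hypothesis $H^{m-2,2}(\tau)=0$ read pointwise) with upper/lower semicontinuity of kernel dimensions. Your write-up, including the fibrewise reading of the hypothesis and the special target $\DD^{k+2}_\varphi$ at the $m=2$ step, just makes explicit what that citation leaves implicit.
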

\begin{proof}
    The proof of Lemma 1.5.6 in \cite{Yudilevich2016} holds in this setting.
\end{proof} 

\begin{proof}[Proof of Theorem \ref{thm:GoldschmidtsFormalIntegrabilityCriterion}] 
    We use induction to show that the $k$-th prolongation of $(B, \overline{\nabla},\D)$ exists. By assumption, the 1st prolongation exists. So assume that $k\ge 1$ and that we already know that the $k$-th prolongation exists. We claim that
    \begin{enumerate}[(a)]
        \item $\tau^{(k+1)}=(\tau^{k})^{(1)}$ has constant rank, and
        \item $p_k\:M^{(k+1)}=(M^{(k)})^{(1)}\to M^{(k)}$ is surjective.
    \end{enumerate}
    Then Lemma \ref{lem:1-integrableCriterion} shows that the $k$-th prolongation is 1-integrable, so we are done.

    Item (a) follows immediately from Lemma \ref{lem:prolongationConstantRank} and the assumptions in the statement.  To prove item (b), note that by Theorem \ref{thm:fundamentalTheoremOfProlongation}, under the map
    \[ H^{0,2}(\tau^{(k-1)})\hookrightarrow H^{-1, 2} (\tau^{(k)}), \]
    the torsion class of the $k$-th prolongation takes values in $H^{(0,2)}(\tau^{(k-1)})$. But,
    \[ H^{0,2}(\tau^{(k-1)})\simeq H^{k, 2}(\tau), \]
    which vanishes under our assumptions. The vanishing of this class implies that the projection $p_k$ is surjective.
\end{proof}

\begin{corollary}\label{cor:involutiveIsFormallyIntegrable}
    If $(B, \overline{\nabla},\D)$ has an involutive tableau $\tau$ and vanishing torsion class, then it is formally integrable.
\end{corollary}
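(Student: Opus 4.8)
The plan is to deduce this corollary directly from Goldschmidt's formal integrability criterion (Theorem~\ref{thm:GoldschmidtsFormalIntegrabilityCriterion}). That theorem requires two hypotheses: (i) that $(B,\overline{\nabla},\D)$ is $1$-integrable, and (ii) that $H^{k,2}(\tau)=0$ for all $k\geq 0$. Hypothesis (ii) is immediate from the assumed involutivity of $\tau$: by definition, an involutive tableau has $H^{m,l}(\tau)=0$ for all $m\geq 0$ and $l\geq 1$, and in particular for $l=2$. So the only work is to establish $1$-integrability from the vanishing of the torsion class.

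For $1$-integrability, I would invoke Lemma~\ref{lem:1-integrableCriterion}, which reduces the claim to showing that $p_1\colon M^{(1)}\to M$ is surjective and that $\tau^{(1)}\to M$ has constant rank. The surjectivity of $p_1$ follows from the vanishing of the intrinsic torsion class $\Tor\in\Gamma(H^{-1,2}(\tau))$: recall from Section~\ref{sec:Torsion} that $M^{(1)}=\{\tilde{\D}_m\in L : T|_{\tilde{\D}_m}=0\}$, that $L\to M$ is an affine bundle modeled on $\Hom(B,\FF)$, and that by Proposition~\ref{prop:torsionDifferenceSpencerDifferential} the torsion varies along the fibers of $L$ by the Spencer differential $\delta_\tau$. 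Thus over a point $m$ with $\Tor_m=0$, there exists a pointwise lift $\tilde{\D}_m$ whose torsion is exact, i.e.\ of the form $\delta_\tau\xi$ for some $\xi\in\Hom(B_m,\FF_m)$, and then $\tilde{\D}_m-\xi\in M^{(1)}_m$ is nonempty; moreover $M^{(1)}_m$ is an affine subspace of $L_m$ modeled on $\ker(\delta_\tau)=\tau^{(1)}_m$. Since the torsion class vanishes identically by hypothesis, this holds at every $m$, so $p_1$ is surjective. Because $\tau$ is involutive, all its prolongations $\tau^{(m)}$, $m\geq 1$, have locally constant rank by definition; in particular $\tau^{(1)}$ has constant rank, giving the second condition of Lemma~\ref{lem:1-integrableCriterion}.

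Having verified both conditions, Lemma~\ref{lem:1-integrableCriterion} yields that $(B,\overline{\nabla},\D)$ is $1$-integrable, which is hypothesis (i) of Theorem~\ref{thm:GoldschmidtsFormalIntegrabilityCriterion}. Combined with hypothesis (ii) already checked, the theorem applies and gives that $(B,\overline{\nabla},\D)$ is formally integrable, as desired.

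I do not expect any serious obstacle here, since this is essentially a bookkeeping argument assembling results already established in the paper. The only point requiring a little care is ensuring that "vanishing torsion class" is being used in the right way—i.e.\ that it guarantees not merely that some lift is torsionless pointwise, but that $M^{(1)}$ is a genuine smooth affine subbundle of $L$ with fiber $\tau^{(1)}$, so that $p_1$ is a surjective submersion and the hypotheses of Lemma~\ref{lem:1-integrableCriterion} are literally met; this is exactly what Proposition~\ref{prop:torsionDifferenceSpencerDifferential} together with constant rank of $\tau^{(1)}$ provides. One should also note that once $1$-integrability holds and $\tau$ is involutive, the whole prolongation tower inherits involutive tableaux (by the last assertion in the proof of Theorem~\ref{thm:CartansTest}, regular flags pass to prolongations), so the inductive structure of the proof of Theorem~\ref{thm:GoldschmidtsFormalIntegrabilityCriterion} runs without further input.
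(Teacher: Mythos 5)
Your proposal is correct and is essentially the argument the paper intends (the corollary is stated without proof as an immediate consequence of Theorem \ref{thm:GoldschmidtsFormalIntegrabilityCriterion}): involutivity gives $H^{k,2}(\tau)=0$ for all $k\ge 0$ and the constant rank of $\tau^{(1)}$, while the vanishing of the torsion class gives surjectivity of $p_1\colon M^{(1)}\to M$, so Lemma \ref{lem:1-integrableCriterion} yields $1$-integrability and the theorem applies. Your care about $M^{(1)}$ being an affine subbundle modeled on $\tau^{(1)}$, via Proposition \ref{prop:torsionDifferenceSpencerDifferential}, is exactly the right way to see that the hypotheses of that lemma are met.
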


\begin{definition}
    A \textbf{relative Lie algebroid} is a formally integrable almost relative algebroid.
\end{definition}

The appearance of the term ``Lie" in the terminology is motivated by the full prolongation tower of a formally integrable relative algebroid:
\[
    \xymatrix{
    {\left(B^{(\infty)}, D^{(\infty)}\right)\:\ldots} \ar[r] \ar@<-12pt>[d] & B^{(k)} \ar[d] \ar[r] & B^{(k-1)} \ar[r] \ar[d] \ar@{-->}@/^1pc/[l]^{\D^{(k)}} & \ldots \ar[r] & B^{(1)} \ar[r] \ar[d] & B \ar[d] \ar@{-->}@/^1pc/[l]^{\D^{(1)}}\\
    M^{(\infty)}\:\ldots \ar[r]                                            & M^{(k)} \ar[r]_{p_k}    & M^{(k-1)} \ar[r]                                                               & \ldots \ar[r] & M^{(1)} \ar[r]_{p_1}    & M}               
\]
The derivation $\D^{(\infty)}$, defined on profinite sections of $B^{(\infty)}\to M^{(\infty)}$ (i.e., on those that locally factor through some $M^{(k)}$) does square to zero: 
\[ \left(\D^{(\infty)}\right)^2=0, \] 
so $(B^{(\infty)},\D^{(\infty)})$ is a ``profinite Lie algebroid of finite rank". We leave a deeper study of these objects for future work (see also Section \ref{sec:outlook} for further discussion).

\subsection{Some examples}
In this section, we discuss several simple examples that illustrate the various issues that can arise with prolongations of relative algebroids. We provide examples of relative algebroids that:
(i) are of infinite type,
(ii) are of finite type,
(iii) do not admit a prolongation, and
(iv) have a first prolongation but not a second prolongation.

While such examples already exist in the context of PDEs, the ones presented here are independent of PDE theory. These examples should also help the reader develop intuition for the general framework.

\begin{example}[Relative vector fields and control systems]\label{ex:RelativeVectorFields}
If a relative algebroid is 1-integrable and has an involutive tableau with non-zero Cartan characters, it is always of infinite type. Such examples, with the smallest rank, arise from \emph{relative vector fields}.

A \textbf{vector field relative to a submersion} $p\: M\to N$ is a section $X\in \Gamma(p^*TN)$. It gives rise to a relative algebroid
    \[
        \xymatrix{
            \underline{\RR} \ar[d] \ar[r] & \underline{\RR} \ar[d] \ar@{-->}@/^1pc/[l]^{\D_X}\\
            M \ar[r]^{p} & N
        }
    \]
where the derivation is given by
\[
\D_X\: C^\infty(N) \to  \Omega^1(\underline{\RR}),\quad D_X (f)_m = X_m(f)\d t, \]
where $\d t$ is a basis of $\RR^*$. Conversely, any relative algebroid with vector bundle $\underline{\RR}$ gives rise to a relative vector field. In particular, the prolongation of a relative vector field is again a relative vector field.
    
For a relative vector field $X$, the tableau map of the corresponding algebroid is
    \begin{align*}
        \tau=\onabla X\:& \ker \d p \to \Hom(\RR, p^*TN)\cong p^*TN,\\
        \tau(v)(\lambda):&=\lambda\left.\frac{\d}{\d t}X_{\gamma(t)}\right|_{t=0}
    \end{align*}
    where $\gamma(t)$ is a curve in a fiber of $p$ with $\dot{\gamma}(0)=v$.
    The Spencer differential $\delta_\tau\: \Hom(\RR, \ker \d p)\to \Hom(\wedge^2\RR, p^*TN)$ vanishes identically, and therefore the tableau is involutive, with Cartan character 
    \[ s_1 = \rk (\ker \d p) = \dim M - \dim N.\] 
    A lift of $\D_X$ to $\underline{\RR}\to M$ is a vector field $\tilde{X}$ on $M$ such that $\d p(\tilde{X}) = X$. The corresponding derivation $\tilde{\D}_X$ always squares to zero, so the relative algebroid has vanishing torsion and curvature, and is therefore formally integrable.

    A local realization can be described explicitly as a curve $\gamma^M\: I\to M$ such that $\gamma^N:= p\circ \gamma^M$ satisfies the ODE
    \begin{equation}
    \label{eq:ODERelativeVectorField}
        \dot{\gamma}^N(t) = X_{\gamma^M(t)}.
    \end{equation}
    In local coordinates, if $(x^\mu, y^\varrho)$ are submersion coordinates on $M$, then the vector field looks like
    \[
        X = X^\mu(x, y)\frac{\del}{\del x^\mu},
    \]
    so the ODE \eqref{eq:ODERelativeVectorField} takes the form
    \begin{equation}
    \label{eq:ODERelativeVectorFieldsCoordinates}
        \dot{x}^\mu(t) = X^\mu(x(t), y(t)).
    \end{equation}
    This shows that locally, realizations (integral curves) are completely determined by the choice of the $\dim M-\dim N$ family of functions $y^\varrho(t)$ and the choice of an initial point $(x_0, y_0)$.
    Equation \eqref{eq:ODERelativeVectorFieldsCoordinates} is a control system (with unspecified observables), where the choice of the functions $y^\varrho(t)$ \emph{is} the control input. In particular, according to the results in \cite{HermannKerner1977}, when the vector field is real analytic, then $N$ is partitioned into a singular foliation with the property that for each point $n_0\in N$, the integral curves of $X$ through $n_0$ saturate a neighborhood of $n_0$ inside the leaf through $n_0$. 
    
    As a very particular case, consider $p\:\RR\to *$ with the zero relative vector field. A realization of the corresponding relative algebroid is just an $\RR$-valued function $x^1(t)$ defined on an interval. The prolongation tower consists of a sequence of vector fields $X_k$ relative to the projection $p_k\:\RR^k\to \RR^{k-1}$,
    \[
    \xymatrix{
    {\left(\underline{\RR}, \D_{X_\infty}\right)\:\ldots} \ar[r] \ar@<-12pt>[d] & \underline{\RR} \ar[d] \ar[r] & \underline{\RR} \ar[r] \ar[d] \ar@{-->}@/^1pc/[l]^{\D_{X_{k-1}}} & \ldots \ar[r] & \underline{\RR} \ar[r] \ar[d] & \underline{\RR} \ar[d] \ar@{-->}@/^1pc/[l]^{\D_{X_0}}\\
    \RR^{\infty}\:\ldots \ar[r]                                            & \RR^k \ar[r]_{p_k}    & \RR^{k-1} \ar[r]                                                               & \ldots \ar[r] & \RR \ar[r]_{p}    & {*}}               
    \]
    Denoting by $(x^1, \dots, x^{k})$ the coordinates on $\RR^k$, the relative vector fields $X_k$ are given by
    \[
        X_k = \sum_{i=1}^k x^{i+1}\frac{\del}{\del x^{i}}, \quad X_\infty = \sum_{i=1}^\infty x^{i+1}\frac{\del}{\del x^i}.
    \]
    The profinite vector field $X_\infty$ appears later in Section \ref{sec:postlude}.
\end{example}

\begin{example}[Finite-type relative algebroid]
Consider the submersion $p\: \RR^3\to \RR^2, (x, y, z)\mapsto (x,y)$ and the relative algebroid $(\underline{\RR}^2, p, \D)$ determined by
\[
        \begin{cases}
            \D \theta^1= \theta^1\wedge \theta^2,\\
            \D \theta^2= 0,\\
            \D x= z\theta^1,\\
            \D y= z \theta^2.
        \end{cases}
\]
To compute the prolongation, we find an extension $\tilde{\D}$ of $\D$ satisfying
    \begin{align*}
        0 &= \tilde{\D}(\D x) = \tilde{\D}z \wedge \theta^1 + z \theta^1 \wedge \theta^2\\
        0 &= \tilde{\D}(\D y) = \tilde{\D}z \wedge \theta^2.
    \end{align*}
Writing $\tilde{\D}z = \alpha \theta^1 + \beta \theta^2$, we see that we must have
\[
    \tilde{\D} z = z \theta^2.
\]
This defines a prolongation where $M^{(1)}\cong M = \RR^3$, so $\tilde{\D}$ is a derivation relative to the identity. At this point, we only know that $\tilde{\D} \circ \D = 0$, but one checks easily that $\tilde{\D}^2=0$. Hence, the first prolongation is actually a Lie algebroid and so $M^{(k)}\simeq M^{(1)}$ for all $k\ge 1$. The resulting algebroid $(\underline{\RR^2}, \tilde{\D}) \to \RR^3$ is isomorphic to an action algebroid $\mf{g}\ltimes \RR^3$, where $\mf{g}$ is the non-abelian two-dimensional Lie algebra.
\end{example}

\begin{example}[1-Integrable relative algebroid that is not 2-integrable]
Let us modify the derivation in the previous example by setting
\[
        \begin{cases}
            \D \theta^1= \theta^1\wedge \theta^2,\\
            \D \theta^2= 0,\\
            \D x= z\theta^1,\\
            \D y= z \theta^2-\theta^1.
        \end{cases}
\]
Proceeding as before, the first prolongation $\tilde{\D}$ is now given by
\[
    \tilde{\D}z = \theta^1 + z \theta^2.
\]
However, in this case $\tilde{\D}^2 z = 2\theta^1\wedge \theta^2 \neq 0$, so the second prolongation does not exist.
\end{example}

\begin{example}[Torsion from tableau of derivations]
In the previous example, the equations determining the prolongation and obstructions to integrability arose solely from the symbol tableau of the relative algebroid. We now illustrate how the full tableau of derivations can give rise to torsion. Of course, this can only happen when the rank is at least three, otherwise the symbol torsion completely determines the torsion. 
One way to achieve this  is to consider relative algebroids with zero anchor. So, consider the algebroid relative to $(\underline{\RR}^3, p, \D)$, with $p\: \RR\to *$ and $\D$ determined by
\begin{equation}\label{eq:exampleDerivationTorsion}
        \begin{cases}
        \D \theta^1 =  x \theta^1\wedge \theta^2,\\
        \D \theta^2 = x \theta^2 \wedge \theta^3,\\
        \D \theta^3 = x \theta^3 \wedge \theta^1,
        \end{cases}
\end{equation}
where $x$ in the coordinate on $\RR$. A lift $\tilde{\D}$ of $\D$ is determined by $\tilde{\D}x = x_1\theta^1 + x_2\theta^2 + x_3\theta^3$ and the torsion is the 2-derivation $(\tilde{\D}\circ D)$ relative to $\RR^3\times \RR\to *$ defined by
\[
\begin{cases}
        \tilde{\D}\circ \D\theta^1 = (x_3 + x) \theta^1\wedge\theta^2\wedge\theta^3,\\
        \tilde{\D}\circ \D \theta^2 = (x_1 + x) \theta^1\wedge\theta^2\wedge\theta^3,\\
        \tilde{\D}\circ \D \theta^3 = (x_2 + x)\theta^1\wedge\theta^2\wedge\theta^3.
        \end{cases}
\]
We find that the first prolongation space is
\[ M^{(1)}=\{x_1=x_2=x_3=-x\}, \]
so the first prolongation is given by
\[ \D^{(1)}x=-x(\theta^1 + \theta^2 + \theta^3). \]
Note that the curvature is only zero at $x=0$:
\[
    \left(\D^{(1)}\right)^2x = -x(\theta^1\wedge\theta^2 + \theta^2\wedge\theta^3 + \theta^3\wedge\theta^1).
\]
Hence, to obtain an actual Lie algebroid one has to restrict to the space $x=0$, in which case the relative algebroid is just the abelian Lie algebra $\RR^3$. 
\end{example}

\subsection{Realizations and prolongations} Prolongations arose as a tool for constructing and computing obstructions to the existence of realizations. The realizations of the prolongations are related to realizations of the original relative algebroid in the following manner.

\begin{proposition}\label{prop:realizationsOfProlongations}
    Let $(B, \overline{\nabla},\D)$ be a relative algebroid. Then:
    \begin{enumerate}[(i)] 
        \item If $(B_1, p_1, \D_1)$ is a prolongation, then any realization of $(B_1, p_1, \D_1)$ induces a realization of $(B, \overline{\nabla},\D)$.
        \item If $(B, \overline{\nabla},\D)$ is 1-integrable, then realizations of the canonical prolongation $(B^{(1)}, p_1, \D^{(1)})$ are in 1-1 correspondence with realizations of $(B, \overline{\nabla},\D)$.
    \end{enumerate}
\end{proposition}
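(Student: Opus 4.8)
The plan is to exploit the fact that a realization of a relative algebroid is, by definition, a morphism of relative algebroids from $(TP,\d)$ which is a fiberwise isomorphism, and that the prolongation comes with a canonical morphism $(B_1,p_1,\D_1)\to(B,\onabla,\D)$ covering $p_1\colon M_1\to M$. For part (i), I would start with a realization $(\theta_1,r_1)\colon TP\to B_1$ of the prolongation. Since $B_1=p_1^*B$, we have a canonical bundle map $(p_1)_*\colon B_1\to B$ covering $p_1$, which is a fiberwise isomorphism, and I would define $\theta:=(p_1)_*\circ\theta_1$ and $r:=p_1\circ r_1$. This is manifestly a fiberwise isomorphism $TP\to B$ covering $r\colon P\to M$. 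The key point is that it is a morphism of relative algebroids: the extension property $\D_1\alpha=p_1^*\D\alpha$ for $\alpha\in\Omega^\bullet_{(B,\onabla)}$ together with $\theta_1^*\circ\D_1=\d\circ\theta_1^*$ gives, for such $\alpha$, that $\theta^*(\D\alpha)=\theta_1^*((p_1)_*^*((p_1)_*^{-1})^*p_1^*\D\alpha)=\theta_1^*(\D_1\alpha)=\d(\theta_1^*\alpha)=\d(\theta^*\alpha)$ — here one has to be a little careful that pulling back along $(p_1)_*$ intertwines the flat connections, which follows because $\onabla$ on $B_1=p_1^*B$ is by definition the pullback connection, so $(p_1)_*^*\colon\Omega^\bullet_{(B,\onabla)}\to\Omega^\bullet_{(B_1,\onabla)}$ is exactly the identification $\Omega^\bullet_{(p_1^*B,\onabla)}\cong p_1^*\Omega^\bullet_{(B,\onabla)}$. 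Thus $(\theta,r)$ is a realization of $(B,\onabla,\D)$.

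For part (ii), I would use the universality of the canonical prolongation $M^{(1)}$ (Proposition \ref{prop:1st:prolongation} and the characterization of its fibers). Given a realization $(\theta,r)$ of $(B,\onabla,\D)$, Proposition \ref{prop:NecessaryTorsion} shows that for each $p\in P$ the induced lift $\tilde\D_{r(p)}:=\d_pr\circ\theta_p$ is a \emph{torsionless} lift of $\D$, i.e.\ it lies in $M^{(1)}_{r(p)}$. This defines a map $r_1\colon P\to M^{(1)}$, $p\mapsto\tilde\D_{r(p)}$, which is smooth (the lift depends smoothly on the $1$-jet of $(\theta,r)$) and covers $r$ under $p_1\colon M^{(1)}\to M$. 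Keeping the same bundle map $\theta_1:=\theta\colon TP\to B=B^{(1)}$ (now regarded as covering $r_1$, using $B^{(1)}=p_1^*B$), I would check it is a realization of $(B^{(1)},p_1,\D^{(1)})$: the anchored condition $\Pi\circ\d r_1=\rho^{(1)}\circ\theta_1$ holds because, by construction of $r_1$, the lift $\tilde\D_{r(p)}$ is precisely the derivation with symbol $\d_pr\circ\theta_p$, and the Maurer–Cartan equation $\d\circ\theta_1^*=\theta_1^*\circ\D^{(1)}$ follows because $(\D^{(1)}\alpha)|_{\tilde\D_m}=\tilde\D_m\alpha$ for $\alpha\in\Omega^\bullet_B$ and, by the argument in the proof of Proposition \ref{prop:NecessaryTorsion}, $(\theta_p^{-1})^*\circ(\d\circ\theta^*)|_p$ is exactly the derivation $\tilde\D_{r(p)}$. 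One then observes that the two constructions are mutually inverse: starting from a realization $(\theta_1,r_1)$ of $(B^{(1)},p_1,\D^{(1)})$, applying (i) to get $(\theta,r)=((p_1)_*\theta_1,p_1\circ r_1)$, and then forming the lift $r(p)\mapsto\tilde\D_{r(p)}$ recovers $r_1(p)$, because the anchored condition for $(\theta_1,r_1)$ says exactly that the symbol of $r_1(p)\in M^{(1)}$ equals $\d_pr\circ\theta_p$, and a torsionless lift is determined by its symbol (Remark \ref{rem:pointwise:lift:bracket}).

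The main obstacle I anticipate is purely bookkeeping: keeping straight the several canonical identifications — $B_1=p_1^*B$, $\Omega^\bullet_{(p_1^*B,\onabla)}\cong p_1^*\Omega^\bullet_{(B,\onabla)}$, and the induced maps on $\DD^1$-bundles — and verifying that $\theta$, viewed with respect to the new base map, genuinely intertwines $\d$ with $\D$ (resp.\ $\D^{(1)}$) on \emph{flat} forms, as opposed to all forms. There is no deep difficulty: once one unwinds that the derivation $(\theta_p^{-1})^*\circ\d\circ\theta_p^*$ associated to a realization is literally the canonical torsionless lift at $r(p)$, both statements reduce to the defining property $(\D^{(1)}\alpha)|_{\tilde\D_m}=\tilde\D_m\alpha$ together with $\d^2=0$, exactly as in the proof of Proposition \ref{prop:NecessaryTorsion}. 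I would also remark explicitly that in (ii) ``$1$-integrable'' is used only to guarantee that $M^{(1)}$ is a smooth manifold with $p_1$ a submersion, so that $(B^{(1)},p_1,\D^{(1)})$ is a bona fide relative algebroid and its realizations make sense.
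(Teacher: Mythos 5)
Your proposal is correct and follows essentially the same route as the paper: part (i) is the composition with $(p_1)_*$ using the extension property and the restriction of the realization identity to flat forms, and part (ii) constructs $r^{(1)}(p)$ as the canonical torsionless lift $(\theta_p^{-1})^*\circ(\d\circ\theta^*)|_p$ from Proposition \ref{prop:NecessaryTorsion}, verifies the realization property via the tautological nature of $\D^{(1)}$, and checks that this is inverse to the construction in (i) by comparing symbols. Your closing observations (lifts are determined by their symbols, and $1$-integrability only guarantees smoothness of $M^{(1)}$) match the paper's implicit use of Remark \ref{rem:pointwise:lift:bracket} and Proposition \ref{prop:1st:prolongation}.
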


\begin{proof}
To prove (i), let $(B_1, p_1, \D_1)$ be a prolongation and let $(P, r_1, \theta_1)$ a realization of $(B_1, p_1, \D_1)$. The fact that $(P, p_1 \circ r_1, (p_1)_* \circ \theta_1)$ is a realization of $(B, \overline{\nabla},\D)$ follows by restricting to $\Omega^1_{(B, \overline{\nabla})}$ the identity valid on $\Omega^1_B$:
\[ \d \circ \left(\theta_1\right)^* = \left( \theta_1\right)^*\circ \D_1. \]

To prove (ii), let $(P, r, \theta)$ be a realization of $(B, \overline{\nabla}, \D)$. By Proposition \ref{prop:NecessaryTorsion}, we can define the map
\[
    r^{(1)}\: P\to M^{(1)}, \quad r^{(1)}(p) = \tilde{\D}_{\theta_p} :=\left( \theta^{-1}_p\right)^* \circ \left( \d \theta^*\big\vert_p \right).
\]
Clearly, $p_1\circ r^{(1)} = r$, and since $B^{(1)} = p_1^* B$ there is a natural bundle map 
\[ (\theta^{(1)}, r^{(1)})\: TP \to B^{(1)},\text{ with } (p_1)_* \circ \theta^{(1)} = \theta.\] 
Using that $\theta$ is a realization, we find that
\[
    \d \circ \left(\theta^{(1)}\right)^* \circ p_1^* = \d \circ \theta^* = \theta^* \circ \D = \left(\theta^{(1)}\right)^* \circ \D^{(1)} \circ p_1^*, \mbox{ on $\Omega^\bullet_{(B, \overline{\nabla})}$.}
\]
From our formula for $r^{(1)}$ and the tautological nature of $\D^{(1)}$ it also follows that
\[
    \d \circ \left(r^{(1)})^*\right)(f) = \left(\theta^{(1)}\right)^* \circ \D(f), \mbox{ for $f\in C^\infty(M)$,}
\]
so $(r^{(1)}, \theta^{(1)})$ is a realization of $B^{(1)}$. The construction in (i) when applied to the 1st prolongation is an inverse to this construction, so we have a 1:1 correspondence.
\end{proof}

\begin{theorem}[{Cartan-Bryant \cite[Thm.~3 and Thm.~4]{Bryant2014}}]\label{thm:CartanBryantExistence}
    Let $(B, \overline{\nabla}, \FF)$ be an analytic relative Lie algebroid. Then, for each $k$ and each $x\in M^{(k)}$, there exists a realization through $x$.
\end{theorem}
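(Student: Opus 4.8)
The plan is to reduce the statement to the classical Cartan–Kähler theorem by building, for each $x \in M^{(k)}$, an exterior differential system on a suitable finite-dimensional manifold whose integral manifolds of the correct dimension are exactly the realizations through $x$. Since a relative Lie algebroid is formally integrable, the whole prolongation tower exists; fix some level $k$ and work with $(B^{(k)}, \D^{(k)})$ relative to the submersion $p_{k}\colon M^{(k)} \to M^{(k-1)}$, writing $n = \rk B$. The first step is to set up the candidate EDS: take the manifold $Q := M^{(k)}$ together with the pullback bundle $B^{(k)} = (p_k)^* B$, choose (locally, near $x$) a coframe $\theta = (\theta^i)$ of flat sections of $B^{(k)}$, and encode a realization through $x$ as an $n$-dimensional integral manifold $\iota\colon P \hookrightarrow Q$ of the Pfaffian-type system generated by the $2$-forms obtained from the structure equations \eqref{eq:relativeAlgebroidInCoordinates}, i.e. the forms $\d\theta^i + \tfrac12 c^i_{jk}\,\theta^j\wedge\theta^k$ and $\d x^\mu - F^\mu_i\,\theta^i$ on $Q$ — here I am using exactly the local description in Example \ref{ex:prolongationInCoordinates} and Remark \ref{rem:realizations:MC}, and the point $x$ enters as the prescribed basepoint of the integral manifold. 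The independence condition is $\theta^1\wedge\cdots\wedge\theta^n \neq 0$, which is precisely the requirement that $\theta$ restrict to a fiberwise isomorphism on $P$.

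The second step is to verify that this EDS is closed under $\d$: one must check that the exterior derivatives of the generating $2$-forms lie in the algebraic ideal they generate. This is where the condition $\D^{(k)}\circ \D^{(k-1)} = 0$ — equivalently, the fact that we are sitting on the prolongation tower where $(\D^{(\infty)})^2 = 0$ — does the work. Concretely, differentiating $\d x^\mu - F^\mu_i\theta^i$ and using the prolongation equations of Example \ref{ex:prolongationInCoordinates} reproduces terms proportional to the generators; differentiating $\d\theta^i + \tfrac12 c^i_{jk}\theta^j\wedge\theta^k$ and using the Jacobi-type identity $[b_1,[b_2,b_3]]_1 + \mathrm{c.p.} = 0$ from the Remark after Definition \ref{def:prolongation} (which is \eqref{eq:1-derivation:square} applied to $\D^{(k)}\circ\D^{(k-1)}=0$) shows the $3$-form $\d(\d\theta^i + \cdots)$ lies in the ideal. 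So the system is a genuine closed differential ideal and the Cartan–Kähler machinery applies.

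The third step is the involutivity check, and this is the main obstacle. Cartan–Kähler guarantees an analytic integral manifold through $x$ only if the system is involutive at $x$ (or if one can reach an involutive prolongation). Here I would invoke the tableau theory of Section 1.6: the relevant tableau at $x$ is exactly the tableau of derivations $\tau^{(k)}$ of the $k$-th prolongation — by Lemma \ref{lem:tableauOfProlongation} this is $p_k^*\tau^{(k)}$ — and Cartan's test (Theorem \ref{thm:CartansTest}) together with Cartan's bound translates the EDS-involutivity of the system on $Q$ into the involutivity of this tableau. The key leverage is that passing from level $k$ to level $k+1$ in the relative-algebroid prolongation tower is *precisely* the EDS prolongation: $M^{(k+1)} = (M^{(k)})^{(1)}$ is an affine bundle over $M^{(k)}$ modeled on $\tau^{(k+1)}$, which is the space of integral elements of the EDS. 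Thus the prolongation tower being infinite (formal integrability) means we never "run out" of prolongations, and by the standard fact that a sufficiently high prolongation of an analytic EDS with stabilizing symbol becomes involutive (the Cartan–Kuranishi theorem), there is some level $k' \ge k$ at which the tableau $\tau^{(k')}$ is involutive; then Cartan–Kähler produces an analytic integral manifold through any prescribed point of $M^{(k')}$, which by Proposition \ref{prop:realizationsOfProlongations}(ii) descends to a realization of $(B^{(k)},\D^{(k)})$ through $x$ (after projecting and using that $M^{(k')}\to M^{(k)}$ is surjective on the relevant stratum). The delicate points to handle carefully are: the analyticity hypothesis is what makes Cartan–Kähler available at all; the independence/non-degeneracy condition $\theta^1\wedge\cdots\wedge\theta^n\neq 0$ must be shown open and realizable on integral elements over $x$; and one must make sure the basepoint $x$ can be taken as an initial condition, which is automatic since the generators are all of degree $\ge 2$ in $\theta$ so $x$ itself (a $0$-dimensional integral manifold) always extends. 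I would organize the write-up so that the closedness of the ideal and the translation to Cartan characters are routine appeals to Sections 1.5–1.6, and concentrate the real argument on the Cartan–Kuranishi stabilization, citing Bryant \cite[Thm.~3 and Thm.~4]{Bryant2014} for the precise analytic statement we are extending.
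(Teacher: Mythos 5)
Your overall skeleton is the same as the paper's: reduce along the prolongation tower using Proposition \ref{prop:realizationsOfProlongations}, invoke Cartan--Kuranishi stabilization so that the tableau of a sufficiently high prolongation is involutive, and then appeal to the analytic Cartan--K\"ahler machinery in the form of Bryant's Theorems 3 and 4. The paper does exactly this, only more directly: it reduces to $k=1$, assumes the tableau involutive after finitely many prolongations, and then cites Bryant's Theorem 3 applied to the realization problem written in local analytic coordinates (Remark \ref{rem:realizations:MC}, Example \ref{ex:prolongationInCoordinates}), rather than re-deriving the EDS argument.

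The genuine gap is in your Step 1 (and it propagates into Step 2): you place the exterior differential system on $Q=M^{(k)}$ itself, writing generators $\d\theta^i+\tfrac12 c^i_{jk}\theta^j\wedge\theta^k$ and $\d x^\mu-F^\mu_i\theta^i$ ``on $Q$'' with independence condition $\theta^1\wedge\cdots\wedge\theta^n\neq 0$. But the $\theta^i$ are sections of $(B^{(k)})^*$, not $1$-forms on $M^{(k)}$: $B^{(k)}$ is not a subbundle of $T^*M^{(k)}$, so $\d\theta^i$ and the independence condition have no meaning there, and a realization is a separate manifold $P$ with a fiberwise isomorphism $TP\to B^{(k)}$ covering a map $r\colon P\to M^{(k)}$ --- since the anchor need not be injective, realizations are in general not (immersed) submanifolds of $M^{(k)}$, so they cannot be recovered as integral manifolds of an ideal living on $M^{(k)}$. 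To make your strategy work one must build the EDS on an auxiliary space where candidate coframes are tautologically encoded (e.g.\ a product of $M^{(k)}$ with $\RR^n\times\GL_n$, or a frame-type bundle), which is precisely what Bryant does in the proofs of his Theorems 3 and 4; moreover, on that space the ideal generated by your $2$-forms is not literally differentially closed at level $k$ --- the derivatives of $c^i_{jk}$ and $F^\mu_i$ in the fiber directions of $p_k$ are only controlled after introducing the next prolongation variables, and what the vanishing of torsion buys is absorbability of torsion in the linear Pfaffian sense, not closure of the ideal as you assert in Step 2. Since your final fallback is to cite Bryant for ``the precise analytic statement,'' the clean fix is to do what the paper does: after the reduction to an involutive prolongation, verify that the local coordinate form of the realization problem satisfies the hypotheses of Bryant's Theorem 3 and let that theorem carry the Cartan--K\"ahler construction, rather than attempting the EDS construction on $M^{(k)}$.
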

\begin{proof}
    Since, by assumption, $(B, \overline{\nabla}, \FF)$ is formally integrable, induction and Proposition \ref{prop:realizationsOfProlongations} shows that it is enough to prove the result for $k = 1$. Moreover, since every tableau is involutive after finitely many prolongations, we can assume that the tableau $\tau$ is involutive. But then, the realization problem for $B^{(1)}$ in local analytic coordinates (see Remark \ref{rem:realizations:MC} and Example \ref{ex:prolongationInCoordinates}) satisfies the assumptions of  \cite[Thm.~3]{Bryant2014}). The latter result shows that there exists a realization through every point in $M^{(1)}$.
\end{proof}

\subsection{Naturality}

Let $(\varphi, p)\: (B_1, \onabla^1) \to (B^2, \onabla^2)$ be a morphism of flat foliated vector bundles covering a map $p\:(M_1, \FF_1)\to (M_2, \FF_2)$, such that $\varphi$ is a fiberwise isomorphism. As we saw in Section \ref{sec:morphisms:extensions}, $\varphi$ induces a bundle map
\[
   \varphi_*\: \DD^k_{(B_1, \onabla^1)}\to \DD^k_{(B_2, \onabla^2)}
\]
More generally, in this situation, there is a bundle map
\[
    \varphi_*\: \Hom\left(\wedge^lB_1, \DD^k_{(B_1, \onabla^1)}\right) \to \Hom\left(\wedge^lB_2, \DD^k_{(B_2, \onabla^2)}\right)
\]
that intertwines the Spencer differentials: $\delta\circ \varphi_* = \varphi_* \circ \delta$. All constructions with derivations that we discussed before behave naturally relative to these induced maps.

\begin{proposition}
\label{prop:naturalityOfRelativeAlgebroidConstructions}
    Let $(\varphi, p)\: (B_1, \onabla^1, \D_1)\to (B_2, \onabla^2, \D_2)$ be a morphism of relative algebroids, which is a fiberwise isomorphism. Then
    \begin{enumerate}[(i)]
        \item The tableau maps are $(\varphi_*,p_*)$-related: $ \varphi_*\circ\tau_1=\tau_2 \circ p_*$. In particular, $\varphi$ induces a morphism in Spencer cohomology
        \[
            \varphi_*\: H^{k,l}(\tau_1)\to H^{k,l}(\tau_2),
        \]
        that maps the torsion class of $B_1$ to the one of $B_2$;
        \item If $(P, r, \theta)$ is a realization of $(B_1, \onabla^1, \D_1)$, then $(P, p \circ r, \varphi \circ \theta)$ is a realization of $(B_2, \onabla^2, \D_2)$;
        \item If $(B_1, \onabla^1, \D_1)$ and $(B_2, \onabla^2, \D_2)$ are $k$-integrable, then $\varphi$ induces a morphism of relative algebroids $(\varphi^{(k)}, p^{(k)})\: (B_1^{(k)}, (p_1)_k, \D^{(k)}_1) \to (B_2^{(k)}, (p_2)_k, \D^{(k)}_2)$ that is fiberwise an isomorphism;
        \item If $(B_1, \onabla^1, \D_1)$ and $(B_2, \onabla^2, \D_2)$ are formally integrable, then $\varphi$ induces a morphism $(\varphi^{(\infty)}, p^{(\infty)})\: (B_1^{(\infty)}, \D^{(\infty)}_1)\to (B_2^{(\infty)}, \D^{(\infty)}_2)$ of profinite Lie algebroids. 
    \end{enumerate}
\end{proposition}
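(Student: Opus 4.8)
The plan is to prove (i) by differentiating the morphism identity along the foliation, to deduce (ii) from the fact that morphisms of relative algebroids compose, to bootstrap (iii) from a base case $k=1$ by induction, and to obtain (iv) by passing to the inverse limit of the resulting tower of morphisms.

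For (i), the starting point is the morphism identity $\D_1\circ\varphi^* = \varphi^*\circ\D_2$ on $\Omega^\bullet_{(B_2,\onabla^2)}$ together with the defining property $\onabla^1_X(\varphi^*\beta) = \varphi^*(\onabla^2_{p_*X}\beta)$ of a map of flat foliated bundles, valid for all $X\in\Gamma(\FF_1)$ and all $\beta$. Applying $\onabla^1_X$ to both sides of the morphism identity for a flat $\alpha$, using that $\varphi^*\alpha$ is again flat and that $(\onabla^i_X\D_i)\alpha=\onabla^i_X(\D_i\alpha)$ on flat forms, one obtains
\[ (\onabla^1_X\D_1)(\varphi^*\alpha) = \varphi^*\big((\onabla^2_{p_*X}\D_2)\alpha\big)\qquad\text{for all flat }\alpha, \]
which says precisely that $\tau_1(X)$ is $(\varphi,p)$-related to $\tau_2(p_*X)$, i.e. $\varphi_*\circ\tau_1=\tau_2\circ p_*$. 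Since $\varphi_*$ intertwines the Spencer differentials (as recalled just before the statement), it descends to a map $H^{k,l}(\tau_1)\to H^{k,l}(\tau_2)$. For the torsion class, one checks that $\varphi_*$ sends the completion space $L_1=\Pi^{-1}(\D_1)$ into $L_2$ (because $\Pi\circ\varphi_*=\varphi_*\circ\Pi$ and $\varphi_*\D_1=\D_2\circ p_*$), and then, using $T_i(\alpha)|_{\tilde\D_m}=\tilde\D_m(\D_i\alpha)$ and $\varphi^{-1}$-conjugation defining $\varphi_*$, a one-line computation gives $T_2|_{\varphi_*\tilde\D_m}=\varphi_*\big(T_1|_{\tilde\D_m}\big)$; passing to $H^{-1,2}$ and invoking Proposition \ref{prop:torsionDifferenceSpencerDifferential} for well-definedness, the torsion class of $B_1$ maps to that of $B_2$.

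Part (ii) is immediate once one observes that a realization is exactly a morphism of relative algebroids out of $(TP,\d)$ that is a fiberwise isomorphism, and that morphisms of relative algebroids compose: on flat $B_2$-forms,
\[ \d\circ(\varphi\circ\theta)^* = \d\circ\theta^*\circ\varphi^* = \theta^*\circ\D_1\circ\varphi^* = \theta^*\circ\varphi^*\circ\D_2 = (\varphi\circ\theta)^*\circ\D_2, \]
using the realization identity for $\theta$ and the morphism identity for $\varphi$; since $\varphi\circ\theta$ is fiberwise an isomorphism, $(P,p\circ r,\varphi\circ\theta)$ is a realization of $(B_2,\onabla^2,\D_2)$. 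For (iii) I would argue by induction on $k$, the inductive step being an application of the case $k=1$ to the prolongations (which exist and are relative to submersions, by $k$-integrability of both sides). For $k=1$: by (i), $\varphi_*$ restricts to a fibre-preserving map $L_1\to L_2$ over $p$, and since $T_2|_{\varphi_*\tilde\D_m}=\varphi_*(T_1|_{\tilde\D_m})$ it further restricts to $p^{(1)}\:M_1^{(1)}\to M_2^{(1)}$ between the torsionless loci. With $B_i^{(1)}=p_i^*B_i$ carrying the canonical pullback flat connections, the pullback of $\varphi$ along $p^{(1)}$ is a fiberwise-isomorphism map of flat foliated bundles $\varphi^{(1)}\:B_1^{(1)}\to B_2^{(1)}$ covering $p^{(1)}$, and the remaining identity $\D_1^{(1)}\circ(\varphi^{(1)})^* = (\varphi^{(1)})^*\circ\D_2^{(1)}$ on flat $B_2^{(1)}$-forms unwinds pointwise from the tautological formula $(\D_i^{(1)}\alpha)|_{\tilde\D_m}=\tilde\D_m\alpha$ and the definition $\varphi^{(1)}(\tilde\D_m)=\varphi_*\circ\tilde\D_m\circ\varphi^*$.

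Finally, for (iv), the morphisms $(\varphi^{(k)},p^{(k)})$ produced in (iii) are by construction compatible with the tower projections $B_i^{(k)}\to B_i^{(k-1)}$ and $M_i^{(k)}\to M_i^{(k-1)}$, so they assemble into a fiberwise-isomorphism bundle map $\varphi^{(\infty)}\:B_1^{(\infty)}\to B_2^{(\infty)}$ covering $p^{(\infty)}\:M_1^{(\infty)}\to M_2^{(\infty)}$; it pulls back profinite sections to profinite sections (since each $\varphi^{(k)}$ does) and intertwines $\D_1^{(\infty)}$ with $\D_2^{(\infty)}$ because it intertwines each $\D_i^{(k)}$, which is exactly the data of a morphism of profinite Lie algebroids. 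The main obstacle I anticipate is the base case $k=1$ of (iii): one must verify carefully that the a priori abstract map $\varphi_*$ on derivations — defined only through the fiberwise-isomorphism structure — genuinely preserves each of the three successive refinements (the completions $L_i$, the torsionless locus $M_i^{(1)}$, and the tautological derivations $\D_i^{(1)}$) and is compatible with the pullback flat foliated structures; once this is established, parts (i), (ii) and (iv) are essentially formal.
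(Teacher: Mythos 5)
Your proposal is correct and follows essentially the same route as the paper: the same relatedness computation for the tableau maps (your differentiation of the morphism identity is the paper's computation read in reverse), the same $\varphi$-relatedness of torsions via $T|_{\tilde\D}=\tilde\D\circ\D$, the same composition argument for realizations, the same reduction of (iii) to $k=1$ with $p^{(1)}=\varphi_*$ restricted to torsionless lifts and the tautological-derivation check, and (iv) by assembling the tower. The only cosmetic slip is writing $\varphi^{(1)}(\tilde\D_m)=\varphi_*\circ\tilde\D_m\circ\varphi^*$, which is the formula for the base map $p^{(1)}=\varphi_*$ rather than for the bundle map $\varphi^{(1)}=(\varphi,p^{(1)})$; the intended meaning is clear and the argument is unaffected.
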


Notice that if $(B, p_1, \D_1)$ is a prolongation of a relative algebroid $(B, \onabla, \D)$, then the torsion class of $(B, p_1, \D_1)$ is in the kernel of $(p_1)_*$. Hence, the proposition implies:

\begin{corollary}
    If a relative algebroid $(B, \onabla, \D)$ admits a prolongation $(B, p_1, \D_1)$, then its torsion class must vanish.
\end{corollary}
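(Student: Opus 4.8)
The plan is to realize the prolongation itself as a relative algebroid mapping onto $(B,\overline{\nabla},\D)$, and then to read off the conclusion from the naturality of the torsion class (Proposition~\ref{prop:naturalityOfRelativeAlgebroidConstructions}(i)). A prolongation $(B,p_1,\D_1)$ is a relative algebroid over the foliated manifold $(M_1,\ker\d p_1)$: the bundle is $B_1=p_1^*B$, equipped with the canonical flat connection $\overline{\nabla}_1$ whose flat sections are the pullbacks $(p_1)^*b$ of sections $b$ of $B$. First I would check that the canonical projection $(p_1)_*\colon B_1\to B$, which is a fiberwise isomorphism covering $p_1\colon M_1\to M$, is a morphism of relative algebroids $(B_1,\overline{\nabla}_1,\D_1)\to(B,\overline{\nabla},\D)$; the only condition to verify is that $\D_1$ be $(p_1)_*$-related to $\D$, i.e.\ $\D_1\alpha=(p_1)^*(\D\alpha)$ for $\alpha\in\Omega^\bullet_{(B,\overline{\nabla})}$, and this is exactly the Extension property~(i) of Definition~\ref{def:prolongation}. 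Proposition~\ref{prop:naturalityOfRelativeAlgebroidConstructions}(i) then produces an induced map $(p_1)_*\colon H^{-1,2}(\tau_1)\to H^{-1,2}(\tau)$ on Spencer cohomology, covering $p_1$, which carries the torsion class of the prolongation to the pullback of $\Tor$ along $p_1$.

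It remains to see that the torsion class of the prolongation is annihilated by $(p_1)_*$; the cleanest way is a direct pointwise computation, which in fact already proves the corollary on its own. Fix $m_1\in M_1$ and put $m=p_1(m_1)$. Under the canonical identification $\DD^1_{p_1}\cong p_1^*\DD^1_B$, the value $(\D_1)_{m_1}$ is an element of $(\DD^1_B)_m$, that is, a $1$-derivation relative to the fiber inclusion $B_m\hookrightarrow B$ acting by $\alpha\mapsto(\D_1\alpha)\big|_{m_1}$ (cf.\ Example~\ref{ex:PointwiseDerivations}). By the Extension property, $(\D_1)_{m_1}$ agrees with $\D$ on $\Omega^\bullet_{(B,\overline{\nabla})}$, hence it is a pointwise lift of $\D$ at $m$; and by the Completion property~(ii),
\[
   T\big|_{(\D_1)_{m_1}}(\alpha)=(\D_1)_{m_1}(\D\alpha)=\big((\D_1\circ\D)\alpha\big)\big|_{m_1}=0,\qquad\alpha\in\Omega^\bullet_{(B,\overline{\nabla})}.
\]
Since the intrinsic torsion does not depend on the chosen lift (Proposition~\ref{prop:torsionDifferenceSpencerDifferential}), this yields $\Tor_m=[\,T|_{(\D_1)_{m_1}}\,]=0$. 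As $m_1$ runs over $M_1$, the point $m=p_1(m_1)$ runs over the image of the submersion $p_1$, namely all of $M$, so $\Tor\equiv 0$; equivalently, this says the torsion class of the prolongation lies in $\ker(p_1)_*$, and the statement then follows from the naturality of the first paragraph.

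I do not expect a genuinely hard step; the work is bookkeeping with identifications. The points deserving care are: keeping consistent the canonical flat connection on $B_1=p_1^*B$, the isomorphism $\DD^1_{p_1}\cong p_1^*\DD^1_B$, and the pointwise-derivation description of Example~\ref{ex:PointwiseDerivations}, so that the relevant symbol exact sequences line up and $(p_1)_*$ genuinely descends to Spencer cohomology; and recalling that $T|_{(\D_1)_{m_1}}$ does represent the torsion class at $m$ (this is the content of Proposition~\ref{prop:torsionDifferenceSpencerDifferential}, the independence of the class on the lift). The second paragraph is a self-contained argument, while the passage through Proposition~\ref{prop:naturalityOfRelativeAlgebroidConstructions}(i) is the same reasoning repackaged; either presentation works.
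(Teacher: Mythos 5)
Your proof is correct and takes essentially the paper's route: the paper obtains the corollary from Proposition \ref{prop:naturalityOfRelativeAlgebroidConstructions}(i) together with the observation that the prolongation's torsion class lies in $\ker (p_1)_*$, and your second paragraph is exactly the pointwise computation (the extension property makes $(\D_1)_{m_1}$ a pointwise lift of $\D$ at $p_1(m_1)$, and the completion property $\D_1\circ\D=0$ kills its torsion) that underlies that observation, so the two presentations you give are the paper's argument unpacked. The only implicit assumption, shared with the paper, is that the submersion $p_1$ is surjective, so that vanishing of $\Tor$ at points of the image of $p_1$ gives vanishing on all of $M$.
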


\begin{proof}
    First, we show that $\varphi_*$ and $p_*$ intertwine the tableau maps. For that, notice that if $\alpha\in \Omega_{(B_1,\onabla^1)}$ then, for any $X\in\FF$, we have
    \[ (\onabla^1_X\D_1)\alpha = \onabla^1_X(\D_1\alpha).\]
    Using this and the definition of the tableaux, we find for $\alpha\in \Omega^\bullet_{(B_2, \onabla^2)}$:
    \begin{align*}
        (\varphi_*\tau_1(X))\alpha &= \left(\varphi_* \left(\onabla^1_XD_1\right)\right) \alpha = \varphi_* \left( \onabla^1_X \left( \D_1 \varphi^*\alpha\right)\right)\\
        &=\varphi_*\left(\onabla^1_X \varphi^*(\D_2\alpha)\right) = \varphi_* \left( \varphi^* \onabla^2_{p_*(X)}(\D_2\alpha)\right)\\
        &= \onabla^2_{p_*(X)} (\D_2\alpha) = (\onabla^2_{p_*(X)}\D_2)(\alpha) = \tau_2(p_*(X))(\alpha).
    \end{align*}

    Since $\varphi_*$ intertwines the Spencer differentials, it induces maps on the Spencer complexes of $\tau_1$ and $\tau_2$, commuting with the differentials, and therefore $\varphi_*$ descends to the level of cohomology. To check that it relates the torsion classes, let $\tilde{\D}_1$ be a pointwise lift of $\D_1$ above $x\in M_1$. Its torsion is the 2-derivation $T^1\big\vert_{\tilde{\D}_1} = \tilde{\D}_1\circ \D$. Because $\D_1$ and $\D_2$ are $\varphi$-related, the derivation $\varphi_*(\tilde{\D}_1)$ is a pointwise lift of $\D_2$ above $p(x)\in M_2$. The 2-derivations $T^1\big\vert_{\tilde{\D}_1}$ and $T^2\big\vert_{\tilde{\D}_2}$ are also $\varphi_*$-related since
    \begin{align*}
        \varphi_*\left( T^1\big\vert_{\tilde{\D}_1}\right)(\alpha) &=\varphi_*(\tilde{\D}_1\circ \D_1)(\alpha) = \varphi_* \circ \tilde{\D}_1\circ \D_1\circ \varphi^*(\alpha)\\
        &=\varphi_* \circ \tilde{\D}_1 \circ \varphi^* \circ \D_2 (\alpha) = \varphi_*(\tilde{\D}_1)\circ \D_2 (\alpha)\\
        &= \tilde{\D}_2 \circ \D_2(\alpha) = T^2\big\vert_{\tilde{\D}_2}(\alpha),
    \end{align*}
    for any $\alpha\in \Omega^\bullet_{(B_2, \onabla^2)}$. Passing down to the Spencer cohomology item (i) follows.

    Item (ii) follows by observing that, for $\alpha\in \Omega^\bullet_{(B_2, \onabla^2)}$, one has
    \begin{align*}
        (\varphi\circ \theta)^*\D_2 \alpha &= \theta^*\varphi^*\D_2\alpha = \theta^*\D_1\varphi^*\alpha = \d (\varphi\circ \theta)^*\alpha.
    \end{align*}

    In order to prove item (iii) it is enough to prove the case $k=1$. By the previous calculation, if $\tilde{\D}_1$ is a \emph{torsionless} lift of $\D_1$, then $\varphi_*(\tilde{\D}_1)$ is also a torsionless lift of $\D_2$. So $\varphi_*$ restricts to a map between the base spaces of the first prolongation $p^{(1)}:=\varphi_*\: M_1^{(1)}\to M_2^{(2)}$, satisfying
    \[ (p_2)_1\circ p^{(1)}=(p_2)_1\circ \varphi_* = p \circ (p_1)_1. \] 
    This map and $\varphi$ combine into a map between the pullbacks $B^{(1)}_i= (p_i)_1^*B_i$:
    \[ \varphi^{(1)}:=(\varphi, p^{(1)})\: B^{(1)}_1\to B^{(1)}_2. \] 
    Note that $\varphi^{(1)}$ is again a fiberwise isomorphism and also a map of relative algebroids because of the tautological nature of the derivations $\D^{(1)}_1$ and $\D_2^{(1)}$. Indeed, we find
    \begin{align*}
        \D^{(1)}_1(\varphi^*(\alpha)) \big\vert_{\tilde{\D}_1} &= \tilde{\D}_1 (\varphi^*\alpha) = \varphi^*\big(\varphi_*(\tilde{\D}_1)(\alpha)\big) \\
        &= \varphi^*\left( \D^{(1)}_2(\alpha)\big\vert_{\varphi_*(\tilde{\D}_1)} \right) = (\varphi^{(1)})^*\big(\D^{(1)}_2 (\alpha)\big)
    \end{align*}
    for any $\alpha\in \Omega^\bullet_{(B_2, \onabla^2)}$, where we used that $p^{(1)}= \varphi_*$.

    Finally, item (iv) follows from item (iii).
\end{proof}

\section{Constructions}
\label{sec:constructions}
To better understand relative algebroids, we will now develop several important constructions involving them.

\subsection{The universal relative algebroid}\label{sec:tautologicalAlgebroid}

Let $A\to N$ be any vector bundle, and $p_1\: \DD^k_A\to N$ its bundle of $k$-derivations. There is a   \textbf{tautological $k$-derivation} $\UD$ relative to the projection $p_1\: \DD^k_A\to N$, which at each point $\D_x\in \left(\DD^k_A\right)_x$ is the derivation itself:
\[
    \UD(\alpha)(\D_x) = \D_x\alpha \in \wedge^{k+l} A_x^*, \quad \text{for }\alpha\in \Omega^l(A).
\]

\begin{definition}
    The \textbf{universal relative Lie algebroid} of a vector bundle  $A\to N$ is the triple $(A,p_1,\UD)$, where $p_1\: \DD^1_A \to N$ and $\UD$ is the tautological 1-derivation. 
\end{definition}
The bundle over $\DD^1_A$ of the universal relative algebroid will be denoted $\UB:= p_1^*A$. In order to justify the use of the term ``universal'', let us introduce the following notation.

\begin{definition}\label{def:classifyingMap}
    Let $(A, p, \D)$ be an algebroid relative to a submersion $p\: M\to N$. The \textbf{classifying map} $c_{\D}$ of $(A, p, \D)$ is the composition
    \[
    \xymatrix{M \ar[r]_---{\D} \ar@/^1.5pc/[rr]^{c_{\D}}
    & p^*\DD^1_A \ar[r]_{\pr} & \DD^1_A}.
    \]
\end{definition}

\begin{proposition}
\label{prop:classifying:map}
    Every relative algebroid $(A,p,\D)$ is canonically isomorphic to the pullback of the universal relative Lie algebroid $(A,p_1, \UD)$ along its classifying map.
\end{proposition}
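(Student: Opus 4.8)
The plan is almost entirely a matter of unwinding definitions; the asserted isomorphism will turn out to be the identity map, modulo the standard identification of iterated pullbacks, and the real content is the single identity $c_\D^*\circ\UD=\D$. I would start by recalling that, under the canonical isomorphism $\DD^1_{p_*}\cong p^*\DD^1_A$, a $1$-derivation $\D\colon\Omega^\bullet(A)\to\Omega^{\bullet+1}(p^*A)$ relative to $p$ is the same datum as a section of $p^*\DD^1_A\to M$, hence the same datum as a smooth lift $c_\D:=\pr\circ\D\colon M\to\DD^1_A$ of $p$ along $p_1$. One checks at once that $p_1\circ c_\D=p_1\circ\pr\circ\D=p$, using that $\pr$ covers $p$ and that $\D$ is a section of $p^*\DD^1_A$. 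Thus the classifying map places $(A,p,\D)$ and the universal algebroid $(A,p_1,\UD)$ into a pullback square: the map on the base manifolds of the vector bundles is $\id_N$, the map on the total spaces of the submersions is $c_\D$, and $p=p_1\circ c_\D$.

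Next I would write the pullback $c_\D^*(A,p_1,\UD)$ out explicitly. Its underlying vector bundle is $\id_N^*A=A\to N$; its submersion is $p_1\circ c_\D=p\colon M\to N$; its pullback bundle $c_\D^*\UB=c_\D^*p_1^*A$ is canonically identified with $(p_1\circ c_\D)^*A=p^*A=B$ via the usual isomorphism of iterated pullbacks; and its derivation is $c_\D^*\circ\UD$, transported along that identification (a routine check, which I would include, shows $c_\D^*\circ\UD$ is indeed a $1$-derivation relative to $p_*\colon B\to A$, since pullback along a bundle map is a morphism for the wedge and $c_\D^*\circ(p_1)_*^*=p_*^*$). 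Therefore the proposition reduces to the identity
\[
c_\D^*\circ\UD=\D\qquad\text{on }\Omega^\bullet(A).
\]

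This I would prove pointwise. Fix $\alpha\in\Omega^l(A)$ and $m\in M$. By definition of the pullback of a form and of the tautological derivation,
\[
\big(c_\D^*(\UD\alpha)\big)_m=(\UD\alpha)_{c_\D(m)}=c_\D(m)\cdot\alpha\in\wedge^{l+1}A^*_{p(m)},
\]
where $c_\D(m)\in(\DD^1_A)_{p(m)}$ acts on $\alpha$ as the $1$-derivation relative to the inclusion $A_{p(m)}\hookrightarrow A$ that it represents, in the sense of Example \ref{ex:PointwiseDerivations}. Unwinding the isomorphism $\DD^1_{p_*}\cong p^*\DD^1_A$ once more, this element $c_\D(m)$ is exactly $\D(m)$, i.e.\ the pointwise derivation $\alpha\mapsto(\D\alpha)_m$. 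Hence $c_\D(m)\cdot\alpha=(\D\alpha)_m$ for every $\alpha$ and every $m$, so $c_\D^*(\UD\alpha)=\D\alpha$ as forms in $\Omega^{l+1}(B)$; the evaluation on functions $f\in C^\infty(N)$ is the special case $l=0$ and matches the anchors. The canonical isomorphism $(A,p,\D)\xrightarrow{\ \sim\ }c_\D^*(A,p_1,\UD)$ is then the identity on $M$ and on $N$ together with the canonical identification $B\cong c_\D^*\UB$; its canonicity is manifest since no choices entered, and $c_\D$ is uniquely determined by $\D$.

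The only point requiring care — and the nearest thing to an obstacle — is the bookkeeping of the three identifications in play: $\DD^1_{p_*}\cong p^*\DD^1_A$; the description in Example \ref{ex:PointwiseDerivations} of a point of $\DD^1_A$ over $x$ as a $1$-derivation relative to $A_x\hookrightarrow A$; and $c_\D^*p_1^*A\cong p^*A$. Once these are pinned down, the computation of $c_\D^*\UD$ is forced, and the statement is really just the universal property built into the definitions of $\DD^1_A$ and of the tautological derivation $\UD$. There is no analytic or geometric subtlety; in particular $c_\D$ need not be a submersion — only $p_1\circ c_\D=p$ must be one, which holds by hypothesis.
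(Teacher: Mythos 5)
Your proposal is correct and follows essentially the same route as the paper's (much terser) proof: both reduce the statement to the two facts $p_1\circ c_{\D}=p$ and $c_{\D}^*\UD=\D$ under the canonical identifications $\DD^1_{p_*}\cong p^*\DD^1_A$ and $c_{\D}^*p_1^*A\cong p^*A$. Your pointwise verification via Example \ref{ex:PointwiseDerivations} is just an expanded version of the identification the paper invokes in one line.
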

\begin{proof}
    Note that $p_1\circ c_D = p$, and that the pullback $c_D^*\UD$ coincides with $\D$ under the canonical identification $c^*_Dp_1^*\DD^1_A \cong p^*\DD^1_A$.
\end{proof}

\begin{example}
    When $N=\{*\}$, so $A= V$ is a vector space, we have 
    \[ \DD^1_V\cong \Hom(\wedge^2V, V)\to *. \] 
    A relative algebroid $(V,p,\D)$, for $p\:M\to \{*\}$, is the same thing as a skew-symmetric bilinear map $[\cdot,\cdot]\:V\times V\to C^\infty(M, V)$, viewed a bracket on $V$ relative to $p$. This is determined by a map $c_{\D}\:M\to \Hom(\wedge^2V, V)$, $c_{\D}(x)(v, w) = [v, w](x)$, which is precisely the classifying map. This construction was already considered by Bryant in relation to his Theorem 4 in \cite{Bryant2014}.
\end{example}

\begin{example}
    For almost Lie algebroids, the previous proposition says that any such structure $\D$ on a fixed vector bundle $p\:A\to N$ is obtained by pulling back the tautological derivation $\UD$ along a section $c_{\D}\:M\to \DD^1_A$: $c_{\D}^*\UD=\D$.
\end{example}

\begin{example}\label{example:UniversalProlongation}
    The classifying map of the first prolongation of a 1-integrable relative algebroid $(B,\onabla, \D)$ is the inclusion $M^{(1)}\hookrightarrow \DD^1_{B}$. In fact, given an algebroid $(B, p_1, \D_1)$ relative to a submersion $p_1\: M_1\to M$ with classifying map $c_{\D_1}\:M_1\to \DD^1_B$, one has that:
    \begin{enumerate}[(i)]
        \item $(B, p_1, \D_1)$ is an \emph{extension} of $(B,\onabla, \D)$ if and only if $c_{\D_1}$ takes values in the space of pointwise lifts $L\subset \DD^1_B$;
        \item $(B, p_1, \D_1)$ is a \emph{prolongation}  of $(B,\onabla, \D)$ if and only if the image of $c_{\D_1}$ is contained in $M^{(1)}$.
    \end{enumerate}  
    In particular, among all prolongations of $(B, \onabla, \D)$, the canonical prolongation is the universal one. 
\end{example}

Next, we will study properties of the universal relative Lie algebroid. We start by justifying the use of the term ``Lie".

\begin{proposition}
    The universal relative Lie algebroid of a vector bundle $A\to N$ is formally integrable.
\end{proposition}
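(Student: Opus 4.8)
The plan is to show that the tableau of the universal relative Lie algebroid is the \emph{full} bundle of relative derivations, hence involutive by Proposition~\ref{prop:tautologicalTableauIsInvolutive}, and that its torsion class vanishes; formal integrability will then follow from Corollary~\ref{cor:involutiveIsFormallyIntegrable}.

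First I would identify the tableau map $\tau$ of $(A,p_1,\UD)$. Here the base is $M=\DD^1_A$, the foliation is $\FF=\ker(\d p_1)$ — the fibres of the submersion $p_1\colon\DD^1_A\to N$ — and $\UB=p_1^*A$ carries its canonical flat $\FF$-connection $\onabla$. Since $\FF$ comes from a submersion, $\DD^1_{(\UB,\onabla)}=\DD^1_{p_{1*}}\cong p_1^*\DD^1_A$, and the vertical bundle satisfies $\ker(\d p_1)\cong p_1^*\DD^1_A$ as well, the vertical tangent space of a vector bundle at a point being canonically that fibre. Under these identifications I claim $\tau\colon\ker(\d p_1)\to\DD^1_{(\UB,\onabla)}$ is the identity: if $X\in(\ker\d p_1)_{\D_x}$ corresponds to $\delta\in(\DD^1_A)_x$ and $\beta\in\Omega^1(A)$, then $(\onabla_X\UD)(\beta)=\onabla_X(\UD(\beta))$ is the fibrewise derivative at $\D_x$, in the direction $\delta$, of the section $\D'\mapsto\D'(\beta)$ of $\wedge^2\UB^*$, which equals $\delta(\beta)$ by linearity; likewise on basic functions it gives $\sigma(\delta)(\d g)$. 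Thus $\tau$ is a fibrewise isomorphism onto $\DD^1_{(\UB,\onabla)}$, so the associated tableau bundle is the full bundle $\DD^1_{(\UB,\onabla)}$. By Proposition~\ref{prop:tautologicalTableauIsInvolutive} this tableau is involutive with every flag regular; in particular $H^{k,2}(\tau)=0$ for all $k\ge0$ and every prolongation $\tau^{(k)}$ has constant rank.

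Next I would verify that the torsion class vanishes by exhibiting, at each point, a torsionless completion. The cleanest way is via the coordinate description of Example~\ref{ex:prolongationInCoordinates}: for the universal relative algebroid the structure functions $c^i_{jk}$ and $F^\mu_i$ are precisely the fibre coordinates $y^\varrho$ of $\DD^1_A\to N$, so $\partial c^i_{jk}/\partial y^\varrho$ and $\partial F^\mu_i/\partial y^\varrho$ are Kronecker deltas while $\partial c^i_{jk}/\partial x^\mu=\partial F^\mu_i/\partial x^\mu=0$. The prolongation equations of that example then reduce to
\[
    u^{(F^\mu_i)}_j-u^{(F^\mu_j)}_i=-c^k_{ij}F^\mu_k,\qquad
    u^{(c^i_{jk})}_l+u^{(c^i_{kl})}_j+u^{(c^i_{lj})}_k=c^i_{mj}c^m_{kl}+c^i_{mk}c^m_{lj}+c^i_{ml}c^m_{jk},
\]
that is, affine equations in the fibre variables of $L\to M$ whose (constant) coefficient matrices are surjective onto, respectively, the skew part in $(i,j)$ and the cyclic-symmetric part in $(j,k,l)$. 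Hence $M^{(1)}\subset L$ is a nonempty affine subbundle of $L\to M$ of constant rank; in particular $p_1\colon M^{(1)}\to M$ is a surjective submersion, $\rk\tau^{(1)}$ is constant, and the intrinsic torsion vanishes identically. (Equivalently, since $\tau$ is an isomorphism onto $\DD^1_{(\UB,\onabla)}$ one has $H^{-1,2}(\tau)\cong H^{-1,2}(\DD^1_{(\UB,\onabla)})$, and iterating the surjectivity of the Spencer differential from Lemma~\ref{lemma:symbolAndSpencerDifferential} shows this group vanishes, so the torsion class — which lives there — must be zero.)

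Finally, having an involutive tableau and vanishing torsion class, I would invoke Corollary~\ref{cor:involutiveIsFormallyIntegrable} to conclude that $(A,p_1,\UD)$ is formally integrable. Alternatively, surjectivity of $p_1\colon M^{(1)}\to M$ together with constancy of $\rk\tau^{(1)}$ gives $1$-integrability by Lemma~\ref{lem:1-integrableCriterion}, and then $H^{k,2}(\tau)=0$ for all $k\ge0$ lets Theorem~\ref{thm:GoldschmidtsFormalIntegrabilityCriterion} conclude. I expect the only genuinely delicate point to be the bookkeeping behind the chain of identifications $\ker(\d p_1)\cong p_1^*\DD^1_A\cong\DD^1_{(\UB,\onabla)}$ and the ensuing tautological computation that $\tau$ is the identity; everything afterwards is a direct appeal to the structural results on tableaux of derivations already established.
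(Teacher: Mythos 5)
Your proposal is correct and follows essentially the same route as the paper: identify the tableau of $(A,p_1,\UD)$ with the full bundle $\DD^1_A$ (the identity tableau map), invoke Proposition~\ref{prop:tautologicalTableauIsInvolutive} for involutivity, get vanishing of the torsion class from the surjectivity of the Spencer differential in Lemma~\ref{lemma:symbolAndSpencerDifferential} (so $H^{-1,2}=0$), and conclude via Corollary~\ref{cor:involutiveIsFormallyIntegrable}. The explicit coordinate verification of the prolongation equations is a harmless supplement; your parenthetical remark is exactly the paper's argument.
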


\begin{proof}
    The tableau of the universal algebroid is the identity map $\DD^1_A\to \DD^1_A$, so it is involutive by Proposition \ref{prop:tautologicalTableauIsInvolutive}. The torsion class vanishes automatically because $H^{-1, 2}(\DD^1_A) = 0$ by Lemma \ref{lemma:symbolAndSpencerDifferential}. Then, by Corollary \ref{cor:involutiveIsFormallyIntegrable} of Goldschmidt's Formal Integrability Criterion, the universal algebroid is formally integrable.
\end{proof}

The realizations of the universal relative Lie algebroid have a nice geometric interpretation. For that, 
given a manifold $P$ and a vector bundle $A\to N$, by an \textbf{$A$-coframe} on $P$ we mean a bundle map $\theta_A\: TP\to A$ covering a map $r_N\: P \to N$ which is fiberwise an isomorphism. These objects have appeared in the literature under the name ``generalized coframes" in the context of Dirac spinors coupled to Einstein's equations -- see \cite{PierardDeMaujouy2024}.

\begin{proposition}\label{prop:TautologicalBryantAlgebroidRealizations}
    The realizations of the universal relative Lie algebroid of $A\to N$ are in one-to-one correspondence with manifolds equipped with $A$-coframes. 
\end{proposition}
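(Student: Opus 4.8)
The plan is to exhibit the correspondence explicitly, the guiding idea being that a realization of the universal relative algebroid is an $A$-coframe together with a \emph{lift} of its base map to $\DD^1_A$, and that this lift is forced upon us by the realization equation, hence carries no extra data.

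First I would unwind the definitions. A realization $(P,r,\theta)$ of $(A,p_1,\UD)$ consists of a smooth map $r\:P\to\DD^1_A$, a bundle map $\theta\:TP\to r^*\UB=(p_1\circ r)^*A$ that is fiberwise an isomorphism, and the identity $\d\circ\theta^*=\theta^*\circ\UD$ on $\Omega^\bullet_{(\UB,\onabla)}\cong p_1^*\Omega^\bullet(A)$. Writing $r_N:=p_1\circ r\:P\to N$ and postcomposing $\theta$ with $(p_1)_*\:\UB\to A$, one obtains a bundle map $\theta_A:=(p_1)_*\circ\theta\:TP\to A$ covering $r_N$ which is still a fiberwise isomorphism, i.e. an $A$-coframe; moreover $\theta_A^*=\theta^*\circ p_1^*$ as maps $\Omega^\bullet(A)\to\Omega^\bullet(P)$. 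This gives a forgetful assignment $(P,r,\theta)\mapsto(P,\theta_A)$, and it remains to construct an inverse and check both composites are the identity.

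Given an $A$-coframe $\theta_A\:TP\to A$ over $r_N\:P\to N$, fiberwise invertibility lets us define, for each $p\in P$, the linear operator
\[
 \D_p\:\Omega^\bullet(A)\to\wedge^{\bullet+1}A^*_{r_N(p)},\qquad
 \D_p(\alpha):=\big((\theta_A)_p^{-1}\big)^*\big(\d(\theta_A^*\alpha)\big|_p\big),
\]
with symbol $f\mapsto\big((\theta_A)_p^{-1}\big)^*\big(\d(r_N^*f)\big|_p\big)$. Since $\d$ is a graded derivation of $\Omega^\bullet(P)$ and $\theta_A^*\:\Omega^\bullet(A)\to\Omega^\bullet(P)$ is a homomorphism of graded algebras lying over $r_N$, the operator $\D_p$ satisfies the relative Leibniz rule of Example \ref{ex:PointwiseDerivations}, hence defines an element of $(\DD^1_A)_{r_N(p)}$. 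The assignment $p\mapsto\D_p$ is smooth and covers $r_N$, so it is a smooth map $r\:P\to\DD^1_A$ with $p_1\circ r=r_N$, and $\theta_A$ — regarded now as covering $r$ — is a bundle map $\theta\:TP\to r^*\UB$ which is fiberwise an isomorphism. To see $(P,r,\theta)$ is a realization, note that both $\d\circ\theta_A^*$ and $\theta^*\circ\UD$ are elements of $\Der^1(\theta_A)$ (the Leibniz rule in each case is immediate from the one for $\d$, resp.\ $\UD$, together with $\theta_A^*=\theta^*\circ p_1^*$); two such relative $1$-derivations coincide as soon as they agree on $C^\infty(N)$ and on $\Omega^1(A)$, and there agreement holds by the very definition of $\D_p=r(p)$. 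Thus $\d\circ\theta^*=\theta^*\circ\UD$.

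Finally, the two constructions are mutually inverse: $(p_1)_*\circ\theta=\theta_A$ by construction, while for an arbitrary realization $(P,r,\theta)$ the equation $\d\circ\theta^*=\theta^*\circ\UD$, read pointwise at $p\in P$ and transported by the isomorphism $\theta_p^*$, says exactly that $r(p)$ is the derivation $\D_p$ built from $\theta_A=(p_1)_*\circ\theta$. Hence the forgetful assignment is a bijection between realizations of $(A,p_1,\UD)$ and manifolds equipped with $A$-coframes. The only step that is more than formal bookkeeping is the verification that the pointwise formula for $\D_p$ genuinely lands in $\DD^1_A$ (the relative Leibniz rule of Example \ref{ex:PointwiseDerivations}) and that $p\mapsto\D_p$ is smooth; both follow from the derivation property of $\d$ and the fact that $\theta_A^*$ is fiberwise an algebra map over $r_N$. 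Conceptually, $r$ is the classifying map of the realization in the sense of Example \ref{example:UniversalProlongation}: it is the unique lift of $r_N$ along which $\UD$ pulls back to the derivation determined by the coframe.
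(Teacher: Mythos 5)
Your proposal is correct and follows essentially the same route as the paper: forget to an $A$-coframe via $(p_1)_*\circ\theta$ in one direction, and in the other define $r(p)$ to be the pointwise derivation $(\theta_A)_*\circ\d_p\circ\theta_A^*$, then check the realization identity and that the two constructions are mutually inverse. Your extra verifications (that the pointwise formula lands in $(\DD^1_A)_{r_N(p)}$, and that agreement on $C^\infty(N)$ and $\Omega^1(A)$ suffices) simply make explicit steps the paper leaves implicit.
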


\begin{proof}
    In one direction, it is clear that a realization $(\theta,r)\:TP\to p_1^*A$ of the universal relative Lie algebroid $(A,p,\UD)$, gives rise to the $A$-frame $\theta_A\:TP\to A$ covering $r_N:=p_1\circ r$ given by
    \[ \Phi=\pr_A\circ\, \theta. \]
    In the opposite direction, given an $A$-frame $\theta_A\: TP\to A$ covering a map $r_N\: P \to N$, we can define a bundle map  $(\theta,r)\:TP\to p_1^*A$ which is a fiberwise isomorphism by
    \[ \begin{cases}
        r(p):=\D_{r_N(p)},\\
        \theta(v_p):=(\D_{r_N(p)},\theta_A(v_p)),
    \end{cases}
    \quad (v_p\in T_p P),\]
    where $\D_{r_N(p)}\in (\DD^1_A)_{r_N(p)}$ is given by
    \[ \D_{r_N(p)}\alpha:=(\theta_A)_*\circ \d_p\circ \theta_A^*(\alpha), \quad (\alpha\in\Omega^1(A)). \]
    This ensures that the condition
    \[ \theta^*\circ\UD=\d\circ \theta^*\circ p_1^*\]
    holds, so $(\theta,r)$ is a realization of $(A,p_1,\UD).$
    
    These constructions are inverse to each other, so the proposition follows.
    \end{proof}

Recall that a relative algebroid $(A, p, \D)$ is called standard when its tableau map $\tau\: \ker \d p\to p^*\DD^1_A$ is fiberwise injective.

\begin{proposition}\label{prop:standardClassifyingMapImmersion}
    An algebroid $(A, p, \D)$ relative to a submersion $p\: M\to N$ is standard if and only if its classifying map $c_{\D}\: M\to \DD^1_A$ is an immersion.
\end{proposition}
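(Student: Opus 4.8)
The plan is to show directly that the differential of $c_\D$, restricted to the vertical directions of $p$, \emph{is} the tableau map $\tau$, and then read off the equivalence. First I would record the easy half of the reduction: since $p_1\circ c_\D=p$ and $p$ is a submersion, differentiating gives $\d p_1\circ \d c_\D=\d p$, so $\ker\d c_\D\subseteq \ker\d p=\FF$ at every point of $M$. Hence $c_\D$ is an immersion if and only if $\d c_\D$ is injective on $\FF$, i.e.\ if and only if the bundle map $\d c_\D|_\FF$ is fiberwise injective. Thus the whole statement reduces to identifying $\d c_\D|_\FF$ with $\tau$.

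For that identification, fix $m\in M$, set $x=p(m)$ and $\D_x:=c_\D(m)\in(\DD^1_A)_x$. Because $\DD^1_A\to N$ is a vector bundle, the vertical tangent space $\ker\d_{\D_x}p_1$ is canonically the fibre $(\DD^1_A)_x$, and the inclusion $\ker\d c_\D\subseteq\FF$ forces $\d_m c_\D(\FF_m)\subseteq\ker\d_{\D_x}p_1\cong(\DD^1_A)_x$. Given $X\in\FF_m$, I would pick a curve $\gamma$ inside the fibre $p^{-1}(x)$ with $\gamma(0)=m$, $\dot\gamma(0)=X$. Then $t\mapsto c_\D(\gamma(t))=\D_{\gamma(t)}$ is a path in the fixed vector space $(\DD^1_A)_x$; and for $\alpha\in\Omega^1(A)$, $f\in C^\infty(N)$ the paths $(\D\alpha)_{\gamma(t)}$, $(\D f)_{\gamma(t)}$ lie in the fixed spaces $(\wedge^2A^*)_x$, $A^*_x$, since $\D\alpha\in\Omega^2(p^*A)$, $\D f\in\Omega^1(p^*A)$ and $p\circ\gamma\equiv x$. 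Differentiating at $t=0$ in the trivialization of $p^*A|_{p^{-1}(x)}$ given by the $\onabla$-parallel frame — recall $\onabla$ is the canonical flat connection on $p^*A$ whose parallel sections are the $p^*a$ (Example \ref{ex:relativeAlgebroidsInCoordinates:2}) — yields $\left.\tfrac{\d}{\d t}\right|_{0}(\D\alpha)_{\gamma(t)}=\onabla_X\big(\D(p^*\alpha)\big)=(\onabla_X\D)(p^*\alpha)=\tau(X)(p^*\alpha)$, and likewise $\left.\tfrac{\d}{\d t}\right|_{0}(\D f)_{\gamma(t)}=\tau(X)(p^*f)$. Under the canonical identifications $\DD^1_{(p^*A,\onabla)}\cong p^*\DD^1_A$ and $\Omega^\bullet_{(p^*A,\onabla)}\cong p^*\Omega^\bullet_A$, these two identities say exactly that $\d_m c_\D(X)=\tau_m(X)$. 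Combining with the first paragraph, $c_\D$ is an immersion if and only if $\tau$ is fiberwise injective, i.e.\ if and only if $(A,p,\D)$ is standard.

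An essentially equivalent but slicker route is available via naturality: by Proposition \ref{prop:classifying:map}, $(A,p,\D)$ is the pullback of the universal relative algebroid $(A,p_1,\UD)$ along $c_\D$, with structure map the fiberwise identity $(\id_A,c_\D)$; the tableau map of the universal algebroid is tautological, hence equal to the identity under the identification $\ker\d p_1\cong p_1^*\DD^1_A\cong\DD^1_{(\UB,\onabla)}$; Proposition \ref{prop:naturalityOfRelativeAlgebroidConstructions}(i) then gives $\varphi_*\circ\tau=\id\circ(\d c_\D|_\FF)$ with $\varphi_*$ an isomorphism, giving the same conclusion. I expect the only delicate point to be bookkeeping of the three canonical identifications used in the second paragraph and checking they are compatible; once they are pinned down, the computation is a one-liner, as it already is in local coordinates (Example \ref{ex:relativeAlgebroidsInCoordinates}), where both $\d c_\D|_\FF$ and $\tau$ have components $\bigl(\partial_{y^\varrho}c^i_{jk},\ \partial_{y^\varrho}F^\mu_i\bigr)_\varrho$, so that either fails to be injective exactly when the other does.
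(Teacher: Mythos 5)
Your proposal is correct and takes essentially the same route as the paper: both arguments reduce, via $p=p_1\circ c_{\D}$, to the key identity $\d c_{\D}\big\vert_{\ker\d p}=\tau$, which the paper proves in one line from $\D=c_{\D}^*\UD$ (Proposition \ref{prop:classifying:map}) — precisely the ``slicker route'' you sketch at the end. Your primary curve-based, fiberwise verification of that identity is just a more hands-on rendering of the same computation, so there is no substantive difference in approach.
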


\begin{proof}
    Note that $p = p_{1}\circ c_{\D}$. This means that the classifying map $c_{\D}\: M\to \DD^1_A$ is an immersion if and only if its vertical derivative $\d c_{\D}\big\vert_{\ker \d p}\: \ker \d p \to \ker \d p_1$ is fiberwise injective. We claim that the tableau map $\tau\: \ker \d p\to p^*\DD^1_A$ is given by
    \begin{equation}
        \label{eq:tableau:classifying_map}
        \tau(X_m)=(m,\d c_{\D}(X_m)),\quad (X_m\in\ker \d_m p),
    \end{equation}
    so the result follows. To prove the above formula, just observe that we have, by Proposition \ref{prop:classifying:map},
    \[
    \tau(X)_m=\onabla_{X_m}\D=\onabla_{X_m}(c_{\D}^*\UD)
    =\onabla_{\d c_{\D}(X_m)}\UD=\d c_\D(X_m). \qedhere
    \]

\end{proof}

\subsection{Restriction}

Almost relative algebroids are so flexible that they can easily be restricted to subspaces. From the point of view of Cartan's realization problem, imposing restrictions on a relative algebroid structure is equivalent to adding extra conditions to the realization problem of the original relative algebroid. This will be clear from examples at the end of this section. For now, let us be precise about what we mean by ``restriction". 

In this section we fix an ambient relative algebroid $(B, \overline{\nabla},\D)$. 

\begin{definition}
    A map $q\: Q\to M$ is \textbf{invariant} for $(B, \overline{\nabla},\D)$ if the image of $\d q$ contains the image of the anchor of $\D$, i.e., if for every $x\in Q$,
    \[ \sigma(\D_{q(x)})\in\Hom(B_{q(x)}, \im (\d_x q) / \FF_{q(x)}) \subseteq \Hom(B_{q(x)}, \nu(\FF_{q(x)})).\]
\end{definition}
\begin{remark}
    In the special case of an algebroid $(A, p, \D)$ relative to a submersion $p\: M\to N$, the invariance condition on a map $q\: Q\to M$ says that
    \[
        \im \rho_{p(x)} \subseteq \im T_x(p\circ q), \quad \mbox{ for all $x\in Q$.}
    \]  
    If $(A, D)$ is an almost Lie algebroid $q$ is an immersion, we recover the usual notion of invariant submanifold for such an algebroid.
\end{remark}

\begin{proposition}\label{prop:restriction}
    Suppose that a map $q\: Q\to M$ satisfies the following conditions:
    \begin{enumerate}[(a)]
        \item $q$ is invariant for $D$;
        \item $(d_x q)^{-1}(\FF_{q(x)})$ has constant rank for all $x\in Q$.
    \end{enumerate}
    Then there exists a unique relative algebroid $(q^*B, q^*\overline{\nabla}, \D_Q)$ for which the map $q_*\: q^*B\to B$ is a morphism of relative algebroids. 
\end{proposition}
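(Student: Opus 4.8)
The plan is to construct the data of the restricted relative algebroid directly and then verify it has the required properties. First I would identify the foliation on $Q$: condition (b) guarantees that $\FF_Q := (\d q)^{-1}(\FF)$ is a subbundle of $TQ$, and it is automatically involutive since $\FF$ is (its sections push forward, up to the kernel of $\d q$, to sections of $\FF$, and one checks involutivity by a standard bracket computation, using that $\ker \d q \subseteq \FF_Q$). The pullback bundle $q^*B$ carries the pulled-back flat $\FF_Q$-connection $q^*\onabla$ (as in Example \ref{ex:relating:things} and the discussion preceding Lemma \ref{lem:square}), and pullback gives a canonical identification $\Omega^\bullet_{(q^*B,\,q^*\onabla)} \cong q^*\Omega^\bullet_{(B,\onabla)}$, so that $q^*$ sends flat $B$-forms to flat $q^*B$-forms.

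Next I would define $\D_Q$ on flat forms. Given $\alpha \in \Omega^\bullet_{(B,\onabla)}$, the form $\D\alpha \in \Omega^{\bullet+1}_B$ pulls back to $q^*(\D\alpha) \in \Omega^{\bullet+1}_{q^*B}$; I set $\D_Q(q^*\alpha) := q^*(\D\alpha)$. For this to be well-defined as a map out of $\Omega^\bullet_{(q^*B,q^*\onabla)}$ I need that $q^*\alpha = q^*\alpha'$ (as flat forms) forces $q^*(\D\alpha) = q^*(\D\alpha')$; locally flat forms are generated over $C^\infty_\bas$ by a flat coframe, so $\alpha$ and $\alpha'$ agree on the image of $q$, and since $\D$ is a local operator and the $c^i_{jk}, F^\mu_i$ are functions on $M$, their $\D$-images also agree after pullback — here one uses the Leibniz rule and the fact that basic functions on $M$ pull back to basic functions on $Q$. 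That $\D_Q$ is a genuine $1$-derivation relative to $\FF_Q$ (the Leibniz rule) follows because $q^*$ is an algebra map and $\D$ satisfies the Leibniz rule; the key point is that the "relative" Leibniz rule for $\D_Q$ reads $\D_Q(\alpha\wedge\beta) = (\D_Q\alpha)\wedge\beta + (-1)^{|\alpha|}\alpha\wedge(\D_Q\beta)$ after identifying $q^*$-pullbacks, and this is just $q^*$ applied to the corresponding identity for $\D$.

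Then I would check the morphism property and the anchor/invariance constraint. By construction $\D_Q \circ q^*_* \text{(on forms)}\, \cdot q^* = q^* \circ \D$, which is exactly the condition that $q_* : q^*B \to B$ is $(q_*, q)$-related, i.e. a morphism of relative algebroids in the sense of Definition \ref{def:Phi-related:derivations} — but this is not quite automatic: the symbol $\sigma(\D_Q)$ must land in $\Hom(q^*B, \nu(\FF_Q))$, and a priori $q^*(\D\alpha)$ could have a component along $\FF_Q / (\text{something})$. This is precisely where condition (a) (invariance) is used: the anchor $\rho$ of $\D$ has image inside $\im(\d q)/\FF$, so after pulling back, the symbol torsion has no obstruction to descending, and $\sigma(\D_Q)(b) = (\d q)^{-1}\circ \rho(b) \bmod \FF_Q$ is a well-defined bundle map $q^*B \to \nu(\FF_Q) = TQ/\FF_Q$. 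Concretely, in the coordinates of Example \ref{ex:relativeAlgebroidsInCoordinates}, $\D x^\mu = F^\mu_i \theta^i$ and invariance says the vector field $F^\mu_i \partial_{x^\mu}$ is tangent to $q(Q)$, so it pulls back to a vector field on $Q$, giving the anchor of $\D_Q$. Uniqueness is immediate: any relative algebroid $(q^*B, q^*\onabla, \D')$ making $q_*$ a morphism must satisfy $\D'(q^*\alpha) = q^*(\D\alpha)$ for all flat $\alpha$, which determines $\D'$ completely since flat forms are locally generated over $C^\infty_\bas(Q)$ by $q^*$ of a local flat coframe and $\D'$ is a derivation.

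The main obstacle I expect is the well-definedness of $\D_Q$ together with the verification that its symbol genuinely takes values in $\nu(\FF_Q)$ rather than a larger quotient — i.e. the interplay between conditions (a) and (b). Condition (b) is what makes $\FF_Q$ a smooth foliation so that the whole framework of derivations relative to a foliation applies, and condition (a) is exactly what guarantees the pulled-back anchor is $\d q$-liftable; without either one the construction breaks. Everything else (the Leibniz rule, the morphism identity, uniqueness) is a formal consequence of $q^*$ being a pullback that intertwines the relevant structures.
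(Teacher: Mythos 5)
Your overall strategy -- build $\D_Q$ by hand, declaring $\D_Q(q^*\alpha):=q^*(\D\alpha)$ on pullbacks of flat forms and defining the symbol by lifting the anchor through $\d q$ -- is a more explicit route than the paper's. The paper instead uses naturality of the symbol exact sequence of Lemma \ref{lem:sessymbolconnection}: the induced bundle map $q_*\:\DD^1_{(q^*B,\,q^*\onabla)}\to q^*\DD^1_{(B,\onabla)}$ is fiberwise injective because the two outer vertical maps are, and condition (a) is precisely the statement that the section $q^*\D$ lies in its image (a small diagram chase, since the right-hand vertical map is $\Hom(q^*B,\nu(q^!\FF))\hookrightarrow q^*\Hom(B,\nu(\FF))$), so existence and uniqueness of $\D_Q$ come in one stroke, with no Leibniz or coframe-independence checks. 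Your route can be made to work, but as written it has a gap exactly at the point that corresponds to the paper's ``$q^*\D\in\im q_*$'' claim.

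The gap is in your well-definedness step. You assert that $q^*\alpha=q^*\alpha'$ forces $q^*(\D\alpha)=q^*(\D\alpha')$ ``since $\D$ is a local operator'' plus the Leibniz rule. That justification is not valid: $\D$ is a \emph{first-order} operator along the anchor directions, and agreement of $\alpha,\alpha'$ on the image of $q$ says nothing about derivatives of their coefficients transverse to that image. Concretely, take $M=\RR^2$ with the zero foliation, $B=\underline{\RR}$ with coframe $\theta$, $\D\theta=0$, $\D x=\theta$, $\D y=\theta$, and $q\:Q=\{y=0\}\hookrightarrow M$; condition (b) holds, condition (a) fails, and the basic functions $y$ and $0$ have equal pullbacks while $q^*(\D y)=\theta\neq 0=q^*(\D 0)$. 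So the implication you need is true only because of invariance, and (a) must be invoked already here, not only later for the symbol to land in $\nu(\FF_Q)$: writing $\alpha-\alpha'=f_I\,\theta^I$ with $f_I$ basic and $f_I\circ q=0$, the differential $\d f_I$ annihilates $\im(\d q)+\FF$ along $q(Q)$, hence annihilates the anchor directions by (a), so $\D(\alpha-\alpha')$ vanishes along $q(Q)$. With that repaired, the remainder of your argument (involutivity of $(\d q)^{-1}\FF$ from (b), the lifted anchor well-defined modulo $\FF_Q$ by (a), the Leibniz rule, and uniqueness -- where you should also record that $\nu(\FF_Q)\to\nu(\FF)$ is injective, equivalently that pullbacks of differentials of basic functions on $M$ span $\nu^*(\FF_Q)$, so the morphism condition pins down the symbol of $\D_Q$) does go through and recovers the paper's statement.
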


\begin{remark}    
    The morphism $q_*\:(q^*B,q^*\onabla,\D_Q)\to (B,\onabla,\D)$ is a fiberwise an isomorphism, so it follows from from Proposition \ref{prop:naturalityOfRelativeAlgebroidConstructions} that:
    \begin{enumerate}[(i)]
    \item realizations of $(q^*B, q^*\overline{\nabla}, \D_Q)$ yield realizations of $(B,\onabla,\D)$;
    \item there is an induced morphism at level of the Spencer cohomologies of the tableaux $q_*\:H^{k,l}(\tau_Q)\to H^{k,l}(\tau)$, which relates the torsion classes;
    \item if both algebroids are $k$-integrable, there is an induced algebroid morphism $q_*\:((q^*B)^{(k)}, (p_Q)_k, \D_Q^{(k)})\to (B^{(k)},p_k,\D^{(k)})$
    \item if both algebroids are formally integrable, there is an induced algebroid morphism $q_*\:((q^*B)^{\infty}, \D_Q^{\infty})\to (B^{\infty},\D^{\infty})$.
\end{enumerate}
    Note also that if $q\: Q\to M$ is \emph{transverse} to $\FF$, then both conditions (a) and (b) in the proposition are automatically satisfied.
\end{remark}

\begin{proof}
    The regularity condition (b) means that that the pullback foliation $q^!\FF$ exists. It follows that one has a flat foliated vector bundle $(q^*B, q^*\overline{\nabla})$ over  $(Q, q^!\FF)$, as well as a map 
    of foliated vector bundles
    \[ q_*:=(q,\pr)\:(q^*B, q^*\overline{\nabla})\to (B,\onabla). \]
    Since this is a fiberwise isomorphism, as we saw in Section \ref{sec:morphisms:extensions}, it induces a bundle map
    \[ q_*\:\DD^1_{(q^*B, q^*\overline{\nabla})}\to \DD^1_{(B,\onabla)}. \]
    which covers $q$. 

    It is easy to check that the short exact sequence in Lemma \ref{lem:sessymbolconnection} is natural in $q_*$. This means that there is a commutative diagram of short exact sequences
    \[
        \xymatrix{
            0 \ar[r] & \Hom(\wedge^2q^*B, q^*B) \ar[r] \ar[d]_{q_*} & \DD^1_{(q^*B, q^*\overline{\nabla})} \ar[r] \ar[d]_{q_*} & \Hom(q^*B, \nu(q^!\FF)) \ar[r] \ar[d]_{q_*} & 0 \\ 
            0 \ar[r] & q^*\Hom(\wedge^2B, B)\ar[r] & q^*\DD^1_{(B, \overline{\nabla})} \ar[r] & q^*\Hom(B, \nu(\FF)) \ar[r] & 0
        }
    \]
    Both vertical arrows on the sides are injective, and therefore so is the the middle vertical arrow. Since $q\: Q\to M$ is an invariant map, the section $q^*\D$ is in the image of $q_*$, so there is a unique relative derivation $\D_Q$ on $(Q, q^!\FF, q^*B, q^*\overline{\nabla})$ that satisfies $q_*(\D_Q) = q^*\D$. This completes the proof.
\end{proof}

\begin{definition}
    If $q$ is an injective immersion that satisfies the conditions of Proposition \ref{prop:restriction}, we call $(q^*B, q^*\overline{\nabla}, D_Q)$ the \textbf{restriction} of $(B, \overline{\nabla},\D)$ to $Q$. 
\end{definition}

\begin{example}
    If $(A, p,\D)$ is a 1-integrable relative algebroid, then its first prolongation $(B, p_1, D^{(1)})$ is the restriction of the tautological relative algebroid $(A, p_1, \UD)$ to $M^{(1)}\hookrightarrow \DD^1_A$.
\end{example}

It should be noted that, in general, restriction does not preserve any kind of integrability of the relative algebroid. From the point of view of the realization problem, restriction amounts to add extra equations to the problem. So a problem that originally had realizations may stop having them.


The vanishing locus of the anchor is always invariant.

\begin{proposition}
    Let $(B, \onabla, \D)$ be a relative algebroid and let $Q\subset M$ be a submanifold along which $\rho$ vanishes. Then $Q$ is an invariant submanifold.
\end{proposition}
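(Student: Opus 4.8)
The statement to prove is that if $\rho$ vanishes along a submanifold $Q\subset M$, then the inclusion $\iota\colon Q\hookrightarrow M$ is an invariant map in the sense of the definition preceding Proposition~\ref{prop:restriction}. The plan is simply to unwind the definition of invariance and observe that the vanishing hypothesis makes the containment condition trivial.

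First I would recall that invariance of a map $q\colon Q\to M$ requires, for every $x\in Q$, that
\[
\sigma(\D_{q(x)})\in\Hom\bigl(B_{q(x)},\im(\d_x q)/\FF_{q(x)}\bigr)\subseteq \Hom\bigl(B_{q(x)},\nu(\FF_{q(x)})\bigr),
\]
i.e.\ that the image of the anchor $\rho_{q(x)}=\sigma(\D_{q(x)})\colon B_{q(x)}\to\nu(\FF_{q(x)})$ is contained in the subspace $\im(\d_x q)/\FF_{q(x)}$. In our situation $q=\iota$ is the inclusion, so $\d_x\iota$ is injective and $\im(\d_x\iota)=T_xQ$; the relevant subspace is $(T_xQ+\FF_x)/\FF_x\subseteq\nu(\FF_x)$. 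Since by hypothesis $\rho$ vanishes identically on $Q$, we have $\rho_x=0$ for all $x\in Q$, hence $\im\rho_x=\{0\}$, which is contained in \emph{any} subspace of $\nu(\FF_x)$, in particular in $(T_xQ+\FF_x)/\FF_x$. This establishes invariance.

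There is essentially no obstacle here; the only point requiring a line of care is that the definition of invariance is stated for a general map $q$ (allowing, e.g., submersions onto $M$), so I would note explicitly that for the inclusion of a submanifold the condition reduces to the displayed containment with $\im(\d_x\iota)=T_xQ$. One could optionally remark that, consequently, if in addition $(\d_x\iota)^{-1}(\FF_{\iota(x)})=T_xQ\cap\FF_x$ has constant rank along $Q$ (condition (b) of Proposition~\ref{prop:restriction}), then $(B,\onabla,\D)$ restricts to a relative algebroid on $Q$; this is automatic, for instance, when $Q$ is transverse to $\FF$ or when $Q$ is an integral submanifold adapted to $\FF$. But for the bare statement as given, the one-line argument above suffices.
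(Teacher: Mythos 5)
Your proposal is correct and matches the paper's own argument, which is exactly the one-line observation that $\im\rho_x=\{0\}$ along $Q$ makes the invariance condition trivially satisfied. You simply spell out the definition more explicitly, which is fine but not a different approach.
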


\begin{proof}
    In this case, $\im \rho_x =0$ for all $x\in Q$, so $Q$ is clearly invariant.
\end{proof}

If $(B^{(\infty)}, \D^{(\infty)})$ is the prolongation tower of a relative algebroid $(B, \overline{\nabla}, \D)$, then the kernel of the anchor at each point $x_0\in\ker \rho^{(\infty)}$ is a Lie algebra. This is the Lie algebra of a group of symmetries of a realization whose image contains $p_\infty(x_0)\in M$. So, in general, the larger the group of symmetries of a realization of a geometric problem  is, the larger the kernel of the anchor of the corresponding relative algebroid must be. 

\begin{example}[Submanifolds where anchor has constant rank]
    In general, submanifolds along which the rank of the anchor is constant are not invariant, unless involutivity conditions are imposed. 

    For example, take the algebroid $\underline{\RR}^2\to \RR^2$ (relative to the identity), with derivation defined by
\[
    \begin{cases}
        \D \theta^1 = 
        \D \theta^2 = 0, \\
        \D x = -y \theta^1,\\
        \D y = \theta^2.
    \end{cases}
\]
The corresponding anchor satisfies $\rho(e_1) =-y \del_x$, $\rho(e_2) = \del_y$. Therefore, the submanifold $Q = \{\rk \rho = 1\} = \{ y = 0\}$ is not invariant. In this example, there are no realizations anywhere, as $\D^2x = \theta^1\wedge\theta^2\neq 0$.

Understanding manifolds along which the anchor has constant rank requires studying the full prolongation tower. This is more delicate and will be discussed in future work.
\end{example}

\begin{example}[Universal bundle of Lie algebras]\label{ex:UniversalBundleOfLieAlgebras}
The universal bundle of Lie algebra structures on a vector space $V$ can be constructed, in an \emph{ad hoc} manner, as the bundle of Lie algebras -- a Lie algebroid with zero anchor -- $\underline{V}\to \hat{\mf{g}}$ where
\[
    \hat{\mf{g}} = \{c\in \Hom(\wedge^2V, V) \ | \ \Jac_c = 0\}
\]
with $\Jac_c(v_1, v_2, v_3) = c(v_1, c(v_2, v_3)) + \mathrm{c.p.}$ for $v_1, v_2, v_3\in V$. The bracket on $\underline{V}\to \hat{\mf{g}}$ is the tautological bracket: $[v_1, v_2](c) = c(v_1, v_2)$. A realization of this algebroid above a point $c\in \hat{\mf{g}}$ is a local Lie group integrating the Lie algebra $(V, c)$. 

This Lie algebroid arises by imposing maximal symmetry on the classifying algebroid for all $V$-coframes. Namely, let $p_1\:\DD^1_V\cong \Hom(\wedge^2V, V)\to *$ be the tautological relative algebroid corresponding to a vector space $V$. Consider its prolongation $(p_1^*V, p_2, \D^{(1)})$, relative to the projection $p_2\:(\DD^1_V)^{(1)}\to \DD^1_V$, where
\[
    (\DD^1_V)^{(1)} = \big\{ \xi_c\in p_1^*\Hom(V, \Hom(\wedge^2V, V)) \ | \ \Jac_c = \delta \xi_c \big\},\quad p_2(\xi_c) = c.
\]
The anchor of this prolongation at $\xi_c\in (\DD^1_V)^{(1)}$ vanishes if and only if $\xi_c = 0$. The restriction of $p_2$ to this invariant subvariety gives a canonical identification $\{\rho^{(1)} = 0 \} \cong \hat{\mf{g}}$. The restricted relative algebroid is precisely $\underline{V}\to \hat{\mf{g}}$, the  universal bundle of Lie algebra structures on $V$.

Realizations of the relative algebroid  $\underline{V}\to \hat{\mf{g}}$ correspond to $V$-coframes with maximal symmetry: these are just local Lie groups with their Maurer-Cartan forms!
\end{example}

\begin{example}[Jacobi manifolds]
Fixing a vector space $V$, Bryant in \cite{Bryant2014} defines a \emph{Jacobi manifold} as a submanifold $M\subset \DD^1_V \cong \Hom(\wedge^2V, V)$ such that 
    \[
        \Jac_c \in \delta(\Hom(\underline{V}, T_cM)), \quad \mbox{for all $c\in M$},
    \]
    where $\delta\: \Hom(V, \DD^1_V)\to \DD^2_V$ is the Spencer differential. In our language, this is the same as saying that the restriction of the tautological algebroid to $M$ has vanishing torsion class. Theorem 4 in \cite{Bryant2014} states that, when $M$ is real analytic, if $T_cM\subset \DD^1_V$ is an involutive tableau of derivations, then there exists a realization of the restricted algebroid through every point in $M$. 
\end{example}

\begin{example}[Riemannian metrics]
    As we recalled in the introductory section, the orthonormal frame bundle of a Riemannian manifold has a canonical coframe $(\theta, \omega)$ with values in $\RR^n\oplus \mf{o}(n)$ satisfying the Cartan's structure equations \eqref{eq:IntroductionCartanStructureEquationsRiemannian}. This suggests that the realization problem for (locally orthonormal frame bundles of) Riemannian metrics can be obtained by restricting $\DD^1_{\RR^n\oplus \mf{o(n)}}$ to derivations of the form
    \begin{equation}
    \label{eq:Cartan:Riemannian}
        \begin{cases}
            \D \theta = -\omega \wedge \theta,\\
            \D \omega = R(\theta\wedge\theta) - \omega\wedge\omega
        \end{cases}
    \end{equation}  
    The space of such derivations is an affine subspace of $\DD^1_{\RR^n\oplus \mf{o(n)}}$ parametrized by $R\in\Hom(\wedge^2\RR^n, \mf{o}(n))$.

    The resulting restricted algebroid has a non-zero torsion class, and therefore the algebroid needs to be restricted further to the subspace where the torsion class vanishes. The torsion class can be computed by taking an extension $\tilde{\D}$ of $\D$. Such extension is completely determined by $\tilde{\D}R$, and to find this value one applies $\tilde{\D}$ to \eqref{eq:Cartan:Riemannian}:
    \begin{align*}
        0 &= \D^2\theta = -\D\omega \wedge \theta + \omega \wedge \D \theta = R(\theta\wedge\theta)\wedge\theta\\
        0 & = \tilde{\D}\D\omega = \tilde{\D}R \wedge (\theta\wedge\theta) + R(\D \theta \wedge \theta) - R(\theta \wedge \D \theta) - \D \omega \wedge \omega + \omega \wedge \D \omega\\
         &=\tilde{\D}R\wedge (\theta\wedge\theta)
    \end{align*}
    
    The torsion class can be identified with $R(\theta\wedge\theta)\wedge\theta$, and its vanishing is precisely the first (algebraic) Bianchi identity.
    
    The vanishing of the curvature class, when imposing $\mf{o}(n)$-invariance of the prolongation, gives rise to the second Bianchi identity. A precise formulation requires the notion of a \emph{relative $G$-structure algebroid}, which will be the subject of future work (see Section \ref{sec:outlook}).
\end{example}

\subsection{Systatic foliation and reduction}

Recall that a relative algebroid is \emph{standard} when its tableau map $\tau$ is fiberwise injective. As we saw in the previous section, this happens precisely when the classifying map is an immersion. We will now show that the directions in which the tableau map is zero -- i.e., the directions in which the classifying map is constant -- are essentially ``redundant" from the perspective of the realization problem. The notion of systastic space and inessential invariants goes back to Cartan -- a modern formulation and discussion can be found in \cite{CrainicYudilevich2024}.

Again, in this section we fix a relative algebroid $(B, \overline{\nabla},\D)$ over $(M,\FF)$. We also assume that the kernel of its tableau map $\tau\:\FF\to \DD^1_{(B,\onabla)}$ has constant rank.

\begin{definition}
    The \textbf{systatic foliation} of $(B, \overline{\nabla},\D)$ is the foliation
    \[ \FF_{\text{sys}} := \ker \tau \subset \FF,\] 
    where $\tau$ is the tableau map of $(B, \overline{\nabla},\D)$. 
\end{definition}

Note that $\FF_{\text{sys}}$ is involutive because the $\FF$-connection $\overline{\nabla}$ induced on $\DD^1_{(B, \overline{\nabla})}$ is flat. Next, we present two useful characterizations of the systatic foliation.

\begin{proposition}
    For a a relative algebroid $(B, \overline{\nabla},\D)$ the systatic foliation is given by
    \[
        \FF_{\sys} = \{ X\in \FF : \overline{\nabla}_X [b_1, b_2] = 0, \mbox{ for all $b_1, b_2\in \Gamma_{(B, \overline{\nabla})}$ } \},
    \]
    where $[\cdot, \cdot]$ denotes the bracket associated to $\D$.  
\end{proposition}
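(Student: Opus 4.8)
The plan is to unwind the definition of the tableau map and use the formula \eqref{eq:bracket:tableau} for the bracket associated to $\tau(X)=\onabla_X\D$, together with the symbol formula \eqref{eq:symbol:tableau}, and then argue that $\tau(X)=0$ is equivalent to the vanishing of \emph{all} its components. Recall from Lemma~\ref{lem:sessymbolconnection} that a choice of extension $\nabla$ of $\onabla$ splits the short exact sequence
\[
\xymatrix{0 \ar[r]& \Hom(\wedge^{2}B, B) \ar[r]& \DD^1_{(B, \overline{\nabla})} \ar[r]^---{\sigma} & \Hom(B, \nu(\FF))\ar[r]& 0,}
\]
so that $\tau(X)$ is determined by the pair $(\sigma(\tau)(X),\,[\cdot,\cdot]_{\nabla,\tau(X)})$, where by \eqref{eq:bracket:tableau} the $\Hom(\wedge^2B,B)$-component evaluated on flat sections $b_1,b_2$ is just $\onabla_X([b_1,b_2])$ (corrected by a connection term that itself only involves $\onabla_X\rho$). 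Hence $\tau(X)=0$ if and only if both $\onabla_X\rho=0$ \emph{and} $\onabla_X([b_1,b_2])=0$ for all flat $b_1,b_2\in\Gamma_{(B,\onabla)}$.

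The first step, then, is to show that the second condition already implies the first, i.e.\ that $\onabla_X([b_1,b_2])=0$ for all flat $b_1,b_2$ forces $\onabla_X\rho=0$. This should follow from the Leibniz rule: writing $[b_1,fb_2]=(p^*f)[b_1,b_2]+\rho(b_1)(f)\,b_2$ (locally, in a submersion chart, as in Example~\ref{ex:relativeAlgebroidsInCoordinates:2}) and applying $\onabla_X$ with $X\in\FF$ — so that $\onabla_X(p^*f)=0$ since basic functions are $\onabla$-flat, and $\onabla_X b_2=0$ — one gets $\onabla_X([b_1,fb_2])=\onabla_X\big(\rho(b_1)(f)\big)\,b_2=\big(\onabla^{\mathrm{Bott}}_X\rho(b_1)\big)(f)\,b_2$, after identifying $\rho(b_1)(f)$ with the pairing against $\nu(\FF)$ and using the definition of the Bott connection. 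Comparing with \eqref{eq:symbol:tableau}, the vanishing of $\onabla_X([b_1,fb_2])$ for all flat $b_1$ and all basic $f$ gives exactly $\sigma(\tau)(X)=0$. Thus the set on the right-hand side of the proposition is contained in $\ker\tau=\FF_{\sys}$.

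For the reverse inclusion one runs the argument backwards: if $\tau(X)=0$ then in particular its $\Hom(\wedge^2B,B)$-component vanishes, which by \eqref{eq:bracket:tableau} is precisely $\onabla_X([b_1,b_2])=0$ for all flat $b_1,b_2$ (the connection-correction term vanishes because $\sigma(\tau)(X)=0$ too). This establishes the claimed equality of sets. I would phrase the whole argument locally in a submersion chart, invoking Example~\ref{ex:relativeAlgebroidsInCoordinates:2} to reduce the foliated picture to the pullback picture $B=p^*A$, $\Omega^\bullet_{(B,\onabla)}=p^*\Omega^\bullet_A$, where flat sections are exactly the $p^*a$; by the locality of $\ker\tau$ this suffices.

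The main obstacle — really a bookkeeping subtlety rather than a deep one — is handling the connection-dependent splitting cleanly: the identity \eqref{eq:bracket:tableau} is stated for the \emph{bracket} associated to $\tau(X)$, but to conclude that $\tau(X)=0$ one must control both the bracket part and the symbol part simultaneously, and the $\nabla$-splitting mixes them through the correction term $\sum(-1)^i\nabla_{\rho(\cdots)}v_i$ of Remark~\ref{rem:connection:splits}. The clean way around this is to observe that $\tau(X)=0$ as an element of $\DD^1_{(B,\onabla)}$ is a connection-independent statement, and to check vanishing it is enough to check $\tau(X)(f)=0$ for basic $f$ (giving $\sigma(\tau)(X)=0$) and $\tau(X)(\theta^i)=0$ for a flat coframe (giving, via the Koszul formula of Lemma~\ref{lem:derivationsAndBrackets}, $\onabla_X[b_1,b_2]=0$ on flat sections). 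Once both are in hand, the equivalence with the displayed condition — noting that $\onabla_X[b_1,b_2]=0$ for all flat $b_1,b_2$ \emph{together with} the Leibniz computation above already encodes $\sigma(\tau)(X)=0$, so the single condition in the statement captures all of $\ker\tau$ — follows immediately.
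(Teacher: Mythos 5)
Your proof is correct and follows essentially the same route as the paper: the paper's own proof is a one-line appeal to \eqref{eq:bracket:tableau}, leaving implicit exactly the point you spell out, namely that vanishing of $\onabla_X[b_1,b_2]$ on all flat sections (in particular on $fb_2$ with $f$ basic and $b_2$ flat) forces $\sigma(\tau)(X)=0$ through the Leibniz rule, so that both components of $\tau(X)$ vanish. Your extra care with the connection splitting and the symbol is a faithful elaboration of the paper's argument rather than a different one.
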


\begin{proof}
    By \eqref{eq:bracket:tableau}, for each $X\in\FF$, we have 
    \[ \onabla_X\D=0 \quad \Longleftrightarrow\quad  \overline{\nabla}_X [b_1, b_2] = 0, \text{ for all }b_1, b_2\in \Gamma_{(B, \overline{\nabla})}. \qedhere \]
\end{proof}

\begin{proposition}
    The systatic foliation of an algebroid $(A, p, \D)$ relative to a submersion coincides with the connected components of the fibers of the classifying map $c_{\D}$.
\end{proposition}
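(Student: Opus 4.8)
The plan is to reduce the statement to the identity \eqref{eq:tableau:classifying_map} obtained in the proof of Proposition~\ref{prop:standardClassifyingMapImmersion}. For an algebroid $(A,p,\D)$ relative to a submersion $p\:M\to N$ one has $\FF=\ker\d p$, and that identity reads $\tau(X_m)=(m,\d_m c_{\D}(X_m))$ for $X_m\in\FF_m$. Since $p_1\circ c_{\D}=p$ (Definition~\ref{def:classifyingMap}), we get $\d p_1\circ\d c_{\D}=\d p$, so $\ker\d c_{\D}\subseteq\ker\d p=\FF$. Combining these two facts,
\[
\FF_{\sys}=\ker\tau=\{\,X_m\in\FF_m:\d_m c_{\D}(X_m)=0\,\}=\ker\d c_{\D};
\]
that is, the systatic distribution is exactly the vertical distribution of the classifying map.

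Next I would promote this pointwise equality to a statement about leaves. The standing hypothesis that $\ker\tau$ has constant rank says precisely that $\d c_{\D}$ has constant rank, so $c_{\D}$ is a map of locally constant rank. By the constant rank theorem $c_{\D}$ is locally equivalent to a linear projection; consequently the connected components of its fibers are (immersed) submanifolds whose tangent spaces are $\ker\d c_{\D}=\FF_{\sys}$ — consistent with $\FF_{\sys}$ being involutive, which we already know since $\onabla$ is flat on $\DD^1_{(B,\onabla)}$.

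Finally I would check that these connected components are precisely the leaves of $\FF_{\sys}$. Each connected component of a fiber of $c_{\D}$ is a connected integral manifold of $\FF_{\sys}$; conversely, any connected integral manifold $L$ of $\FF_{\sys}$ through a point $m$ has $\d(c_{\D}|_L)=0$, so $c_{\D}$ is constant on $L$ by connectedness, forcing $L$ into the connected component of $m$ in $c_{\D}^{-1}(c_{\D}(m))$. Hence the partition of $M$ into leaves of $\FF_{\sys}$ and the partition into connected components of fibers of $c_{\D}$ agree. I do not expect a genuine obstacle; the one step needing a little care is passing from the local normal form supplied by the rank theorem to the assertion about \emph{maximal} connected integral manifolds, which is exactly what the connectedness argument in this last paragraph provides.
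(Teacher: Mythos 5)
Your proof is correct and follows essentially the same route as the paper: the key step in both is the identity $\tau(X_m)=\d_m c_{\D}(X_m)$ from the proof of Proposition \ref{prop:standardClassifyingMapImmersion}, giving $\FF_{\sys}=\ker\tau=\ker\d c_{\D}$. The paper stops there ("so the result follows"), while you additionally spell out, via the constant-rank theorem and a connectedness argument, why the leaves of $\FF_{\sys}$ coincide with the connected components of the fibers of $c_{\D}$ — a detail the paper leaves implicit.
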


\begin{proof}
    By \eqref{eq:tableau:classifying_map}, we have $\ker\tau=\ker\d c_{\D}$,
    so the result follows.
\end{proof}

Let us now discuss how to get rid of directions along the systatic foliation. 

\begin{definition}
    Let $(B, \overline{\nabla},\D)$ be a relative algebroid.
    A foliation $\FF_0\subset\FF_\sys$ is called \textbf{inessential} if
    it is a simple foliation and $\overline{\nabla}$ has no holonomy along its leaves.
\end{definition}

Notice that a inessential foliation of $(B, \overline{\nabla},\D)$ is given by the fibers of a submersion $q\:M\to M_\red$. The foliation $\FF$ descends to a foliation $\FF_\red$ of the leaf space $M_\red$ characterized by
\[ \FF=(\d q)^{-1}(\FF_\red). \]
Moreover, by Corollary \ref{cor:BundlesWithFlatPartialConnectionsOfSimpleFoliations}, there is a unique flat foliated bundle $(B_\red,\onabla)$ over $M_\red$ whose pullback under $q\:M\to M_\red$ is $(B,\onabla)$. 

We use these notations in the statement of the following theorem showing that the derivation $\D$ can also be reduced, preserving all its essential properties.

\begin{theorem}[Reduction]
    Let $(B, \overline{\nabla},\D)$ be a relative algebroid and let $\FF_0\subset\FF_\sys$ be an inessential foliation with leaf space $q\: M\to M_{\red}$. Then:
    \begin{enumerate}[(i)]
        \item There exists a unique structure of a relative algebroid algebroid $(B_{\red}, \overline{\nabla},\D_{\red})$ such that the $q_*\:B\to B_{\red}$ is a morphism of relative algebroids;
        \item The tableaux $\tau$ and $\tau_{\red}$ of $(B, \overline{\nabla},\D)$ and $(B_{\red}, \overline{\nabla},\D_{\red})$, have naturally isomorphic
        Spencer cohomologies;
         \item The relative algebroid $(B, \overline{\nabla},\D)$ is $k$-integrable if and only if $(B_{\red}, \overline{\nabla},\D_{\red})$ is $k$-integrable;
        \item There is a one-to-one correspondence
        \[
            \left\{
                \substack{\mbox{realizations $(P, r, \theta)$}\\
                \\
                \mbox{of $(B, \overline{\nabla},\D)$}}
            \right\}
            \overset{1:1}{\longleftrightarrow}
            \left\{ 
            \substack{
                    \mbox{realizations $(P, \tilde{r}, \tilde{\theta})$ of $(B_{\red}, \overline{\nabla},\D_{\red})$}\\
                    \mbox{together with a lift $r$ of $\tilde{r}$:}\\
                    \mbox{  $
                    \xymatrix@R=15 pt{ & M \ar[d]^q \\
                    P \ar[ru]^r\ar[r]_{\tilde{r}} & M_{\red}}$
                    }
                }
            \right\}. 
        \]
    \end{enumerate}
Moreover, if $\FF_0=\FF_\sys$ then the reduced algebroid $(B_{\red}, \overline{\nabla},\D_{\red})$ is standard.
\end{theorem}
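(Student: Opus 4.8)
The plan is to construct the reduced structure using the machinery already set up for restriction and naturality, and then verify each item by transporting properties back and forth along the fiberwise-isomorphism morphism $q_*$.

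First I would establish item (i). By definition of an inessential foliation, $q\:M\to M_{\red}$ is a submersion whose fibers are the leaves of $\FF_0$, and by Corollary \ref{cor:BundlesWithFlatPartialConnectionsOfSimpleFoliations} there is a flat foliated vector bundle $(B_{\red},\onabla)$ over $(M_{\red},\FF_{\red})$ with $q^*(B_{\red},\onabla)\cong(B,\onabla)$ and $\FF=(\d q)^{-1}(\FF_{\red})$. The map $q_*\:B\to B_{\red}$ is then a fiberwise isomorphism of flat foliated bundles, so as in Section \ref{sec:morphisms:extensions} it induces an injective bundle map $(q_*)_*\:\DD^1_{(B,\onabla)}\to q^*\DD^1_{(B_{\red},\onabla)}$ compatible with the symbol exact sequences (exactly as in the proof of Proposition \ref{prop:restriction}, using naturality of Lemma \ref{lem:sessymbolconnection}). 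The key point is that $\D\in\Gamma(\DD^1_{(B,\onabla)})$ lies in the image of this map: since $\FF_0\subset\FF_{\sys}=\ker\tau$, we have $\onabla_X\D=0$ for all $X\in\Gamma(\FF_0)$, i.e. $\D$ is $\onabla$-parallel along the leaves of $\FF_0$, which is precisely the descent condition. Hence there is a unique $\D_{\red}\in\Gamma(\DD^1_{(B_{\red},\onabla)})$ with $q_*(\D_{\red})=\D$ in the appropriate sense, i.e. $q_*\circ\D=\D_{\red}\circ q_*$ on flat forms; this makes $q_*$ a morphism of relative algebroids, and uniqueness is immediate from injectivity.

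Items (ii)--(iv) then follow by applying Proposition \ref{prop:naturalityOfRelativeAlgebroidConstructions} to the fiberwise-isomorphism morphism $q_*\:(B,\onabla,\D)\to(B_{\red},\onabla,\D_{\red})$, but in each case one must check that the induced maps are \emph{isomorphisms}, not merely morphisms. For (ii): the tableau maps are $(q_*)$-related by Proposition \ref{prop:naturalityOfRelativeAlgebroidConstructions}(i), and since $\FF=(\d q)^{-1}(\FF_{\red})$ and $\tau$ vanishes on $\ker\d q=\FF_0$, the map $q^*\tau_{\red}\to\tau$ identifies $\tau$ with the pullback of $\tau_{\red}$; pullback along the submersion $q$ induces isomorphisms on Spencer cohomology because the Spencer complex is built pointwise from $\Hom$-bundles and $\tau=q^*\tau_{\red}$, so $H^{k,l}(\tau)\cong q^*H^{k,l}(\tau_{\red})$, and one checks this is the isomorphism claimed. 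For (iii): the prolongation space $M^{(1)}=\Pi^{-1}(\D)$ consists of pointwise torsionless lifts, and since the torsion is determined by the symbol and bracket data which descend, one gets $M^{(1)}\cong q^*(M_{\red}^{(1)})$ (the prolongation "sees" the $\FF_0$-directions only as a trivial pullback factor), so $M^{(1)}\to M$ is a submersion iff $M_{\red}^{(1)}\to M_{\red}$ is, and this propagates up the tower by induction using Lemma \ref{lem:tableauOfProlongation}. For (iv): given a realization $(P,r,\theta)$ of $(B,\onabla,\D)$, composing with $q$ gives a realization $(P,q\circ r,q_*\circ\theta)$ of $(B_{\red},\onabla,\D_{\red})$ by Proposition \ref{prop:naturalityOfRelativeAlgebroidConstructions}(ii), together with the lift $r$ of $\tilde r:=q\circ r$; conversely, given a realization $(P,\tilde r,\tilde\theta)$ of the reduced algebroid and a lift $r$ of $\tilde r$ through $q$, one forms $\theta:=(q_*)^{-1}\circ\tilde\theta\:TP\to r^*B$ using that $q_*$ is a fiberwise isomorphism and $q\circ r=\tilde r$, and checks $\d\circ\theta^*=\theta^*\circ\D$ on $\Omega^\bullet_{(B,\onabla)}$ by pulling back the corresponding identity for $\tilde\theta$ along $q_*$ and using the extension/descent relation $q_*\circ\D=\D_{\red}\circ q_*$. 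These two assignments are mutually inverse.

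The last sentence---that $\D_{\red}$ is standard when $\FF_0=\FF_{\sys}$---follows because the systatic foliation of $(B_{\red},\onabla,\D_{\red})$ pulls back under $q$ to $\FF_{\sys}(\D)$ (its tableau map is $(q_*)$-related to $\tau$ and $q$ is a submersion with $\ker\d q=\FF_{\sys}$), so $\ker\tau_{\red}$ has rank $\rk\FF_{\sys}-\rk\FF_0=0$, i.e. $\tau_{\red}$ is fiberwise injective. The main obstacle I anticipate is item (iii): carefully showing that the prolongation space and its tableau genuinely split off a trivial $q$-pullback factor, uniformly in $k$, requires matching the iterated prolongation construction of Section \ref{sec:prolongation} against the pullback---this is conceptually clear from the universal property (Example \ref{example:UniversalProlongation}) since both $(B,p,\D)$ and $(B_{\red},p,\D_{\red})$ have classifying maps differing only by the projection $q$, but writing the induction cleanly, including the constant-rank bookkeeping needed to invoke Proposition \ref{prop:1st:prolongation} at each stage, is the delicate part.
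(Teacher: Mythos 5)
Your items (i), (iv) and the final standardness claim are essentially the paper's argument (descent of $\D$ using $\FF_0\subset\ker\tau$ plus triviality of holonomy, and inversion of the fiberwise isomorphism $q_*$ to transfer realizations), but item (iii) contains a genuine error. The claimed identification $M^{(1)}\cong q^*\bigl(M_{\red}^{(1)}\bigr)$ is false whenever $\FF_0\neq 0$: by Lemma \ref{lem:tableauOfProlongation}, $M^{(1)}\to M$ is an affine bundle modeled on $\tau^{(1)}=\ker\bigl(\delta_\tau\:\Hom(B,\FF)\to \DD^2_{(B,\onabla)}\bigr)$, and since $\delta_\tau$ annihilates every $\xi\in\Hom(B,\FF_0)$ (because $\tau\circ\xi=0$), one has $\Hom(B,\FF_0)\subset\tau^{(1)}$; in fact there is a short exact sequence $0\to\Hom(B,\ker\d q)\to\tau^{(1)}\to q^*\tau_{\red}^{(1)}\to 0$. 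So the fibers of $M^{(1)}\to M$ are strictly larger than those of $M^{(1)}_{\red}\to M_{\red}$, and the prolongation does \emph{not} see the $\FF_0$-directions as a trivial pullback factor — indeed the paper's remark after the theorem stresses exactly this: the prolongation of the original algebroid is standard, so it cannot be (a pullback of) the prolongation of the reduction. Consequently your induction for $k$-integrability, and the appeal to classifying maps ``differing by $q$'', do not go through as stated. The statement (iii) is still true, but the correct route (the paper's) is to first establish the isomorphism $q_*\:H^{k,l}(\tau)\xrightarrow{\sim}q^*H^{k,l}(\tau_{\red})$ of item (ii) together with the constant-rank comparison of $\tau^{(k)}$ and $\tau_{\red}^{(k)}$, and then use Proposition \ref{prop:naturalityOfRelativeAlgebroidConstructions}(i), which makes the torsion classes of the successive prolongations $q_*$-related, to propagate integrability both ways by induction.

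Relatedly, in item (ii) your assertion that ``$\tau=q^*\tau_{\red}$'' is not literally correct: the domain of $\tau$ is $\FF=(\d q)^{-1}(\FF_{\red})$, which is larger than $q^*\FF_{\red}$ by $\FF_0$; $\tau$ only \emph{factors} through $\FF/\FF_0\cong q^*\FF_{\red}$. The extra $\FF_0$-directions in the domain do change the prolongations (as above), and one must argue that they contribute only an acyclic, involutive piece (the zero tableau on $\ker\d q$, whose prolongations are the full spaces $\Hom(S^kB,\ker\d q)$); this is precisely what the paper's commutative diagrams with the columns $\Hom(\wedge^lB,\Hom(S^kB,\ker\d q))$ accomplish before concluding $H^{k,l}(\tau)\cong q^*H^{k,l}(\tau_{\red})$. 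With that repair, (ii) and then (iii) follow as in the paper; the rest of your proposal is sound.
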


\begin{remark}
    The reduced algebroid $(B_\red,\onabla,\D_\red)$ can be seen as a \emph{quotient} of the relative algebroid $(B,\onabla,\D)$ by the pseudogroup generated by the flows of vector fields tangent to the inessential foliation $\FF_0$. A detailed discussion of such pseudogroups of symmetries is left for future work (see Section \ref{subsec:PDEsWithSymmetries}).
\end{remark}

\begin{remark}
    The prolongation of the reduction can not be a (systatic) reduction of the prolongation. The reason is that the prolongation is always standard (see Lemma \ref{lem:tableauOfProlongation}). Intuitively, the prolongation before reduction adds extra equations that (locally) encode a map from a realization to the fibers of $q$, while this is lost in the prolongation of the reduction. However, since the morphism $q_*\:(B, \overline{\nabla},\D)\to (B_{\red}, \overline{\nabla},\D_{\red})$ is a fiberwise isomorphism, it follows from from Proposition \ref{prop:naturalityOfRelativeAlgebroidConstructions} that, assuming $k$-integrability, there is an induced morphism between the $k$-prolongations.
\end{remark}

\begin{proof}
    To prove item (i), note that the map of flat foliated vector bundles 
    \[ q_*\: (B, \overline{\nabla}) \to (B_{\red}, \overline{\nabla})\] 
    is a fiberwise isomorphism. It follows from Section \ref{sec:morphisms:extensions} that we have a bundle map
    \[ q_*\: \DD^1_{(B, \overline{\nabla})} \to \DD^1_{(B_{\red}, \overline{\nabla})}. \]
    We claim that there is a unique $\D_\red$ which is $(q,q_*)$-related to $\D$, so (i) holds.

    To prove this claim, note that we have a commutative diagram of short exact sequences
    \[
        \xymatrix{
        0 \ar[r] & \Hom(\wedge^{k+1}B, B) \ar[d]_{q_*} \ar[r] & \DD^k_{(B, \overline{\nabla})} \ar[d]_{q_*} \ar[r] & \Hom(\wedge^k B, \nu(\FF)) \ar[d]_{q_*} \ar[r] & 0 \\
        0 \ar[r] & \Hom(\wedge^{k+1}B_{\red}, B_{\red}) \ar[r] & \DD^k_{(B_{\red}, \overline{\nabla})} \ar[r] & \Hom(\wedge^k B_{\red}, \nu(\FF_{\red})) \ar[r] & 0
        }
    \]
    Since the sides are fiberwise isomorphisms, so is the map in the middle. Now, because $\ker(\d q) \subseteq \ker \tau$, we have that $\onabla_X\D=0$ for any $X\in\ker(\d q)$, and since the holonomy of $\onabla$ vanishes along the directions of $\ker (\d q)$, we conclude that the section $\D\in \Gamma(\DD^1_{(B,\onabla)}$ is the pullback of a section $\D_{\red}\in \Gamma(\DD^1_{(B_\red, \nabla)})$. This also means that $\D$ and $\D_{\red}$ are $(q,q_*)$-related, so the claim follows.
    
    In order to prove item (ii), we look at the relationship between the tableau maps of $B$ and $B_{\red}$. According to Proposition \ref{prop:naturalityOfRelativeAlgebroidConstructions}, the map $q_*$ intertwines the tableau maps and the Spencer differentials. For $k\ge 1$ this amounts to the commutativity of the following diagram where the rows are short exact sequences: \small 
    \[
    \xymatrix@C=13pt{
        {\Hom(\wedge^lB, \Hom(S^kB, \ker (\d q))} \ar[r] \ar[d]_{\delta} & {\Hom(\wedge^lB, \tau^{(k)})} \ar[r]^---{q_*} \ar[d]_{\delta} & {q^*\Hom(\wedge^lB_{\red}, \tau^{(k)}_{\red})} \ar[d]_{\delta}\\
        {\Hom(\wedge^{l+1}B, \Hom(S^{k-1}B, \ker(\d q))} \ar[r]             & {\Hom(\wedge^{l+1}B, \tau^{(k-1)})} \ar[r]^-{q_*}               & {q^*\Hom(\wedge^{l+1}B_{\red}, \tau^{(k-1)}_{\red})}
    }
    \]
    \normalsize
    while for $k=0$ one has the commutative diagram
    \small 
    \[
    \xymatrix{
       {\Hom(\wedge^lB, \ker (\d q))} \ar[r] \ar[d] & {\Hom(\wedge^lB, \FF)} \ar[r]^---{q_*} \ar[d]_{\delta_{\tau}} & {q^*\Hom(\wedge^lB_{\red}, \FF_{\red})} \ar[d]_{\delta_{\tau_{\red}}} \\
       0 \ar[r]                            & {\DD^{l+1}_{(B, \onabla)}} \ar[r]^{q_*}                    & {q^*\DD^{l+1}_{(B_{\red}, \onabla)}}
    }
    \]
    \normalsize
    Setting $l=1$, we conclude that $\tau^{(k)}$ has constant rank if and only if $\tau_{\red}^{(k)}$ has constant rank. Moreover, since the restriction
    $\tau\big\vert_{\ker(\d q)}$
    is the zero tableau map, which is involutive, the map $q_*$ descends to an isomorphism in cohomology:
    \[
        q_*\: H^{k, l}(\tau) \xrightarrow{\sim} q^*H^{k, l}(\tau_{\red}).
    \]

    Now, using this isomorphism, item (iii) also follows: by Proposition \ref{prop:naturalityOfRelativeAlgebroidConstructions} (i), torsion classes of the $k$-th prolongations are $q_*$-related; hence, if $B$ and $B_{\red}$ are $(k-1)$-integrable, then $B$ is $k$-integrable if and only if $B_{\red}$ is $k$-integrable.

    Finally, to prove item (iv), observe that by Proposition \ref{prop:naturalityOfRelativeAlgebroidConstructions} (ii) every realization $(P,r,\theta)$ of $(B,\onabla,\D)$ induces a realization $(P,\tilde{r},\tilde{\theta})$ of $(B_\red,\onabla,\D_\red)$ with $\tilde{r}=q\circ r$. Conversely,
    given a a realization $(P,\tilde{r},\tilde{\theta})$ of $(B_\red,\onabla,\D_\red)$ and a lift $r\:P\to M$ of $\tilde{r}$, using the fact that $q_*\: \DD^1_{(B, \overline{\nabla})}\to q^*\DD^1_{(B_{\red}, \overline{\nabla})}$ is an isomorphism of vector bundles, there is a unique vector bundle map $(\theta, r)\: TP\to B$ such that $\tilde{\theta}=q_* \circ \theta$. Moreover, since $q_*$ is a morphism of relative algebroids, we have
    \begin{align*}
        q_*(\d\circ \theta^* - \theta^*\circ \D)
        &= \d\circ\theta^*\circ q^* - \theta^*\circ \D\circ q^*\\
        &= \d \circ (q_*\circ \theta)^* - (q_*\circ \theta)^* \circ \D_{\red}=\d\circ\tilde{\theta}^*-\tilde{\theta}\circ\D=0.
    \end{align*}
    Since $q_*$ is a fiberwise isomorphism we must have $\d\circ \theta^* = \theta^*\circ \D$,  so $(\theta, r)$ is a morphism of almost relative algebroids. 
    
    These constructions are inverse to each other, so this completes the proof.
\end{proof}

\section{Relative connections and PDEs}\label{sec:PDE}

We will now show that any partial differential equation can be recast as a relative algebroid in such a way that the formal theory of prolongations \cite{Goldschmidt1967} coincides with the prolongation theory for relative algebroids. The relative algebroid of a PDE arises from a relative connection, so we start by discussing this notion.

\subsection{Relative connections and the Cartan distribuition}
Let $N\xrightarrow{q} X$ be a submersion. A connection for $q$ is a splitting $TN\cong \ker (\d q) \oplus q^*TX$. Sometimes, however, the splitting does not arise on the level of $N$, but rather depends on additional coordinates. Let us illustrate this with an important example.

\begin{example}\label{ex:CartanDistributionOnJetSpace}
   Let $q\: N\to X$ be any submersion. The bundle $p_1\:J^1q\to N$ of first jets of local sections can be identified with the bundle of horizontal compliments of $\ker (\d q)$ in $TN$, that is
   \[
        (J^1q)_n \cong\{ C_n \subset T_nN: C_n\oplus \ker (\d_n q) = T_n N\}.
   \]
   There is no canonical splitting of $TN$ into the components $\ker (\d q)$ and $q^*TX$, but there is a tautological splitting of $p_1^*TN$, given by the identifications
   \[
        p_1^*TN \cong p_1^*\ker (\d q)\oplus \mathcal{C}, \quad \left(p_1^*TN\right)_{C_n} = \ker T_nq \oplus C_n
   \]
   The subbundle $\mathcal{C}\subset p_1^*TN$ is called the \textbf{Cartan distribution}. We recall that its relevance arises from the fact that it detects which sections $\tau$ of $q_1\:J^1q\to X$ are holonomic, i.e., of the form $\tau=j^1\sigma$, with $\sigma:X\to N$ section of $q\:N\to X$. In fact, one has:
   \begin{itemize}
       \item A local section $\tau\:X\to J^1q$ is holonomic if and only if it is tangent to the Cartan distribution, i.e., if
       \[
            \im \d_x(p_1\circ\tau) \subset \mathcal{C}_{\tau(x)} \mbox{ for all $x\in \dom \tau$}.
       \]
   \end{itemize}
\end{example}

The notion of a relative connection formalizes this type of behavior found in the previous example. 

\begin{definition}
    Let $M\xrightarrow{p} N \xrightarrow{q} X$ be two submersions. A \textbf{connection on $q$ relative to $p$} is a vector bundle $C\subset p^*TN$ complementary to $p^*\ker (\d q)$:
    \[
        p^*TN \cong C\oplus p^*\ker (\d q).
    \]
    Equivalently, it is a map $c\:M\to J^1q$ such that $p_1 \circ c = p$, where $p_1\: J^1q\to N$ is the projection.
\end{definition}

A relative connection $C$ on $M\xrightarrow{p}N\xrightarrow{q} X$ gives rise to an algebroid $(A,p,\D)$ relative to $p$. For the vector bundle, we take $A= q^*TX$ and, under the identification $p^*A\xrightarrow{\sim} C$, the anchor map corresponds to the inclusion 
\[ \rho\:p^*A \xrightarrow{\sim} C\subset p^*TN.\] 
If $X_1,X_2\in \Gamma(TX)$ are vector fields on $X$, then the relative bracket is determined by
\[
    [q^*X_1, q^*X_2]_{\D} = p^*q^*[X_1,X_2],
\]
and extended to any sections of $A$ through the Leibniz rule using the anchor. The resulting relative algebroid is an example of a relative algebroid with injective anchor. 

To identify the derivation $\D$ of this relative algebroid one proceeds as follows. Recall from Section \ref{sec:morphisms:extensions} that there is a canonical map $q_*\: \DD^1_{q^*TX}\to q^*\DD^1_{TX}$, which pulls back to a map 
\[
        p^*q_*\: p^*\DD^1_{q^*TX}\to p^*q^*\DD^1_{TX}
\]
The derivation $D$ is a lift of the de Rham differential $\d$ in the sense that one has $p^*q_*(D) = p^*q^*\d$, where $\d$ is interpreted as a section $\d \in \Gamma(\DD^1_{TX})$. 
\[
\xymatrix{
p^*\DD^1_{q^*TX} \ar[rr]^{p^*q_*}\ar[dr] && p^*q^*\DD^1_{TX}\ar[dl] \\
 & M \ar@/^8pt/[ul]^{\D} \ar@/_8pt/[ur]_{p^*q^*\d} 
}
\]
This is the defining feature of the relative derivation associated to a relative connection.

\begin{proposition}\label{prop:relativeConnectionAndDerivations}
    Let $M\xrightarrow{p}N \xrightarrow{q} X$ be two submersions. There is a one-to-one correspondence between $p$-relative connections $C$ on $q$ and sections $D\in \Gamma(p^*\DD^1_{q^*TX})$ with $p^*q_* D = p^*q^*\d$. In particular, given $C$ the corresponding derivation $\D_C$ is determined by
    \[
    \begin{cases}
        \D_C f=(p^*\d f)|_C, & \text{if }f\in C^\infty(N),\\
        \D_C (q^*\alpha)=p^*q^*\d \alpha, & \text{if }\alpha\in\Omega^\bullet(X).
    \end{cases}
    \]
\end{proposition}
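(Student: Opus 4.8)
The plan is to establish the claimed bijection by exhibiting explicit constructions in both directions and checking that they are mutually inverse, then to verify the closed formula for $\D_C$. First I would analyze what the data on each side amounts to fiberwise. A section $D\in\Gamma(p^*\DD^1_{q^*TX})$ is, at each point $m\in M$, an element of $(\DD^1_{q^*TX})_{p(m)}$, i.e.\ a $1$-derivation relative to the inclusion $(q^*TX)_{p(m)}\hookrightarrow q^*TX$; by the short exact sequence \eqref{eq:symbolsesrelativederivations} it is determined by its symbol $\sigma(D)_m\:(q^*TX)^*_{p(m)}\to T^*_{p(m)}N$, equivalently a map $\sigma(D)_m\:T_{p(m)}N\to (q^*TX)_{p(m)}$, together with the $\wedge^2$-part in $\Hom(\wedge^2 (q^*TX)_{p(m)},(q^*TX)_{p(m)})$. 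The constraint $p^*q_*D=p^*q^*\d$ says precisely that $D$ projects, under $q_*\:\DD^1_{q^*TX}\to q^*\DD^1_{TX}$, to the de Rham differential of $X$; I would use the description of $q_*$ from Section~\ref{sec:morphisms:extensions} (valid since $q$ is a submersion, so $q_*:q^*TX\to TX$ is a fiberwise isomorphism when restricted appropriately — more precisely one uses the canonical map $q_*\:\DD^1_{q^*TX}\to q^*\DD^1_{TX}$) to see that this condition forces the $\wedge^2$-component of $D$ to be that of $q^*q^*\d$ and pins down the ambiguity in $D$ to the freedom in the composition $\sigma(D)\circ(\text{vertical part})$, i.e.\ to a splitting of $p^*TN\cong p^*\ker(\d q)\oplus(\text{image})$.

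Next I would spell out the two maps. Given a relative connection $C\subset p^*TN$ with $p^*TN\cong C\oplus p^*\ker(\d q)$, identify $p^*A=p^*q^*TX\xrightarrow{\sim}C$ and define $\D_C$ by the two formulas in the statement: $\D_C f=(p^*\d f)|_C$ for $f\in C^\infty(N)$ (this is its symbol: a map $p^*TN\to p^*q^*TX$ obtained by restricting $p^*\d f$ to $C$ and transporting back, equivalently the projection $p^*TN\to C$ followed by the identification) and $\D_C(q^*\alpha)=p^*q^*\d\alpha$ for $\alpha\in\Omega^\bullet(X)$. Since $\Omega^\bullet(q^*TX)$ is generated over $C^\infty(N)$ by pullbacks $q^*\alpha$ of forms on $X$, together with the symbol these formulas determine a unique relative derivation by the remark after the definition of symbol in Section~1.1 (``any derivation is determined by its symbol and its action on a generating set''); one must check the two formulas are compatible, i.e.\ that applying the Leibniz rule to $q^*(f\alpha)$ for $f\in C^\infty(X)$ gives the same answer both ways — this is a short computation using $\d(q^*f)=q^*\d f$. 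Conversely, given $D$ with $p^*q_*D=p^*q^*\d$, set $C:=\im(\rho)$ where $\rho=\sigma(D)^*\:p^*q^*TX\to p^*TN$ is the dual symbol; the constraint guarantees $\rho$ is a right inverse to the vertical projection, hence $C$ is complementary to $p^*\ker(\d q)$.

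Then I would check the two constructions are inverse to each other: starting from $C$, the symbol of $\D_C$ is by construction the projection onto $C$, so its dual image is again $C$; starting from $D$, the relative connection $C=\im\rho_D$ produces $\D_C$ whose symbol is the projection onto $C$, which equals $\sigma(D)$ precisely because the constraint $p^*q_*D=p^*q^*\d$ forces $\sigma(D)$ to be (dual to) a splitting and forces the $\wedge^2$-part to agree with that of $p^*q^*\d$, which is what $\D_C$ also has by the second formula. The identification $c\leftrightarrow C$ with sections of $J^1q$ is immediate from Example~\ref{ex:CartanDistributionOnJetSpace} and needs only be recalled. The main obstacle, and the step deserving the most care, is the bookkeeping around the map $q_*\:\DD^1_{q^*TX}\to q^*\DD^1_{TX}$ and the precise sense in which ``$p^*q_*D=p^*q^*\d$'' constrains both the symbol and the bracket part of $D$; in particular one must be careful that the bracket part is genuinely rigid (not just the symbol), which is exactly the content that makes the de Rham differential on $X$, rather than some twisted bracket, the right object to lift. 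Once that is unwound, the rest is a routine verification that the Leibniz rule and the two defining formulas are mutually consistent and that nothing beyond a splitting of $p^*TN$ is left free.
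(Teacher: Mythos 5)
Your proposal is correct and follows essentially the same route as the paper: both unwind the constraint $p^*q_*D = p^*q^*\d$ on the generators $q^*f$ and $q^*\alpha$ (with $f\in C^\infty(X)$, $\alpha\in\Omega^1(X)$), showing that the degree-zero part is equivalent to the anchor $\rho_D$ being a splitting of $\d q$ (i.e.\ a relative connection $C$) and that the degree-one part forces $D(q^*\alpha)=p^*q^*\d\alpha$ (equivalently $[q^*X_1,q^*X_2]_D=p^*q^*[X_1,X_2]$, which is how the paper phrases it), so that the only free data is the splitting. One caveat on phrasing only: the ``$\wedge^2$-component'' of $D$ is not canonical (it requires a splitting of the symbol sequence and mixes bracket with anchor, and $p^*q^*\d$ lives in a different bundle), and $\rho_D$ is a right inverse of $\d q$ rather than of a vertical projection; neither slip affects your argument, since what you actually use is that $D$ is determined by its symbol together with its action on the generators $q^*\alpha$.
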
  

\begin{proof}
Let $\D\in \Gamma(p^*\DD^1_{q^*TX})$. The equation 
\[ (p^*q_* \D)(q^*f)= p^*q^*\d (q^*f),\quad (f\in C^\infty(X)),\]
holds if and only if for any vector field $X\in\Gamma(TX)$ one has
\[ (p^*q_* \D)(q^*f)(p^*q^*X)=(p^*q^*\d (q^*f))(p^*q^*X)\]
and this is equivalent to:
\[ \rho_{\D}(p^*q^*X)(p^*q^*f)=p^*q^*X(f). \]
This last equation holds if and only if $\rho_{\D}\:p^*A\to p^*TN$ is injective with image a subbundle $C$ complementary to $p^*\ker(\d q)$. 

On the other hand, the equation 
\[ (p^*q_* \D)(q^*\alpha)= p^*q^*\d (q^*\alpha),\quad (\alpha\in \Omega^1(X)),\]
holds if and only if any vector fields $X_1,X_2\in\Gamma(TX)$ one has
\[ (p^*q_* \D)(q^*\alpha)(p^*q^*X_1,p^*q^*X_2)=(p^*q^*\d (q^*\alpha))(p^*q^*X_1,p^*q^*X_2),\]
and in terms of the bracket of $\D$ this is equivalent to
\[ [q^*X_1,q^*X_2]_{\D}=p^*q^*[X_1,X_2], \]
so the result follows.
\end{proof}

The Cartan distribution $\Cc\subset p_1^*TN$ in Example \ref{ex:CartanDistributionOnJetSpace}, being a relative distribution for the submersions $J^1q\xrightarrow{p_1}N \xrightarrow{q} X$, has an associated relative algebroid with derivation
\[ \D_\Cc\:\Omega^\bullet(q^*TX)\to\Omega^{\bullet+1}(q^*_1TX). \]
where $q_1:=q\circ p_1\:J^1q\to X$.  Notice that, by the previous proposition, one has
\begin{equation}
    \label{eq:Cartan:derivation}
    \begin{cases}
        \D_{\Cc} f=(p_1^*\d f)|_\Cc, & \text{if }f\in C^\infty(N),\\
        \D_\Cc (q^*\alpha)=q_1^*\d \alpha, & \text{if }\alpha\in\Omega^\bullet(X).
    \end{cases}
\end{equation}

\begin{definition}
    The derivation $D_{\mathcal{C}}$ is called the \textbf{Cartan derivation}.
\end{definition}

The previous proposition leads to the following description of $\D_\Cc$.

\begin{corollary}\label{cor:CartanDerivation}
    Let $q\: N\to X$ be a submersion. The bundle of first jets is naturally isomorphic to 
    \[
        J^1q\cong q_*^{-1}(\d):=\left\{ \D_n\in \DD^1_{q^*TX} \ | \ q_*(\D_n) = \d_{q(n)}  \right\},
    \]
    where the natural isomorphism is given by restriction of the symbol map $\sigma\: \DD^1_{q^*TX}\to \Hom(q^*TX, TN)$.
    The relative algebroid $(q^*TX,p_1,\D_\Cc)$ has classifying map the resulting inclusion
    \[ \xymatrix{c_{\D_\Cc}\:J^1q\, \ar@<-2pt>@{^{(}->}[r] & \DD^1_{q^*TX}}. \]
\end{corollary}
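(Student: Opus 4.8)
The plan is to prove the two assertions of the corollary separately, in both cases by specializing Proposition \ref{prop:relativeConnectionAndDerivations} and unwinding definitions. Throughout I identify $J^1q$ with the bundle of horizontal complements of $\ker(\d q)$ in $TN$, as in Example \ref{ex:CartanDistributionOnJetSpace}; equivalently, a point of $(J^1q)_n$ is a linear splitting $T_{q(n)}X \to T_nN$ of $\d_n q$, so $J^1q$ is canonically a subbundle of $\Hom(q^*TX, TN)$. I also use the map $q_*\: \DD^1_{q^*TX} \to q^*\DD^1_{TX}$ from Section \ref{sec:morphisms:extensions}, induced by the fiberwise isomorphism $q_*\: q^*TX \to TX$ covering $q$.

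\emph{The isomorphism $J^1q \cong q_*^{-1}(\d)$.} Here I would apply Proposition \ref{prop:relativeConnectionAndDerivations} with $M = N$ and $p = \id_N$. A connection on $q$ relative to $\id_N$ is exactly a subbundle of $TN$ complementary to $\ker(\d q)$, i.e.\ a section of $J^1q \to N$, and the proposition matches these with sections $D \in \Gamma(\DD^1_{q^*TX})$ satisfying $q_*D = q^*\d$. Since that correspondence is visibly pointwise — in the proof it is expressed purely through anchors and brackets evaluated at points of $N$ — it descends to a fiberwise bijection between $(J^1q)_n$ and $\{\,\D_n \in (\DD^1_{q^*TX})_n : q_*(\D_n) = \d_{q(n)}\,\}$, which is $q_*^{-1}(\d)$. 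Reading off the proof, this bijection sends a complement $C_n$ to the unique $\D_n$ whose anchor $\sigma(\D_n)$ is the splitting of $\d_n q$ with image $C_n$ and whose induced bracket on $q^*TX$ is pulled back from the Lie bracket on $TX$; hence the inverse is $\D_n \mapsto \sigma(\D_n)$, which is the restriction of the symbol map, as claimed. Concretely, the mechanism is the commutative ladder relating the symbol exact sequence \eqref{eq:symbolExactSequence} for $q^*TX \to N$ and the $q$-pullback of \eqref{eq:symbolExactSequence} for $TX \to X$: under $q_*$ the subbundle $\Hom(\wedge^2 q^*TX, q^*TX)$ maps isomorphically onto $q^*\Hom(\wedge^2 TX, TX)$, while on symbols $q_*$ is post-composition with $\d q$. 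Therefore $q_*$ is fiberwise surjective with kernel $\Hom(q^*TX, \ker\d q)$, so $q_*^{-1}(\d)$ is an affine subbundle of $\DD^1_{q^*TX}$ modeled on $\Hom(q^*TX, \ker\d q)$, of the same rank as $J^1q$, and the symbol map carries it affinely and bijectively onto the bundle of splittings of $\d q$.

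\emph{The classifying map.} The Cartan distribution $\Cc$ is the connection on $q$ relative to $p_1\: J^1q \to N$ corresponding to the tautological section $\id\: J^1q \to J^1q$, so by Proposition \ref{prop:relativeConnectionAndDerivations} the Cartan derivation $\D_\Cc$ is the section of $p_1^*\DD^1_{q^*TX}$ with $p_1^*q_*\D_\Cc = p_1^*q^*\d$. By Definition \ref{def:classifyingMap}, $c_{\D_\Cc}\: J^1q \to \DD^1_{q^*TX}$ sends a point $j$ to the value $(\D_\Cc)_j$ of this section. From \eqref{eq:Cartan:derivation}, $\D_\Cc(q^*\alpha) = q_1^*\d\alpha$ gives $q_*\big((\D_\Cc)_j\big) = \d_{q_1(j)}$, so $(\D_\Cc)_j$ lies in $q_*^{-1}(\d)$; and $\D_\Cc f = (p_1^*\d f)|_\Cc$ for $f \in C^\infty(N)$ shows that the symbol of $(\D_\Cc)_j$ is the splitting whose image is the complement $j$ itself. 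Under the isomorphism of the previous step — which is precisely the symbol map — the element $(\D_\Cc)_j$ thus corresponds to $j$, so $c_{\D_\Cc}$ coincides with the inclusion $J^1q \hookrightarrow \DD^1_{q^*TX}$.

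The only real subtlety I anticipate is the concrete description of $q_*$ used in the first step: one has to verify that the ladder of symbol exact sequences commutes with $q_*$ and, crucially, that $q_*$ is an isomorphism on the ``curvature'' subbundles $\Hom(\wedge^2 q^*TX, q^*TX) \to q^*\Hom(\wedge^2 TX, TX)$, so that the single equation $q_*(\D_n) = \d_{q(n)}$ imposes nothing on $\D_n$ beyond fixing its symbol to a splitting and its bracket to the pulled-back one. This is exactly where the fiberwise-isomorphism hypothesis on $q_*\: q^*TX \to TX$ is used, via the construction of $\varphi_*$ recalled in Section \ref{sec:morphisms:extensions}. Once that is in hand, the rest is a rank/affine-bundle count together with routine unwinding of \eqref{eq:Cartan:derivation}.
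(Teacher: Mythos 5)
Your proof is correct and follows essentially the same route as the paper: the paper also deduces the fiberwise identification of $q_*^{-1}(\d)$ with $(J^1q)_n$ by applying Proposition \ref{prop:relativeConnectionAndDerivations} with $p=\id$ pointwise, with the symbol map giving the isomorphism. Your extra details (the ladder of symbol exact sequences showing $\ker q_* = \Hom(q^*TX,\ker\d q)$, and the explicit check via \eqref{eq:Cartan:derivation} that $c_{\D_\Cc}$ is the inclusion) only flesh out steps the paper treats as immediate.
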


\begin{proof}
    The previous result applied to $p=\id$ shows that for a fixed $n\in N$, we have
    \[ \left\{ \D_n\in (\DD^1_{q^*TX})_n \ | \ q_*(D_n) = \d_{q(n)}  \right\}=\{ C_n \subset T_nN: C_n\oplus \ker (\d_n q) = T_n N\}.\]
    The left side is canonically isomorphic to $(J^1q)_n$. 
\end{proof}

\begin{example}[Cartan derivation in local coordinates]
    Let us assume that we have fixed local charts $(V,x^i)$ for $X$ and $(U=q^{-1}(V),x^i,u^a)$ for $N$, so that 
    \[ q\:N\to X, \quad q(x^i,u^a)=x^i. \] 
    Then we have an induced chart $(p_1^{-1}(U),x^i,u^a,u^a_i)$ on the total space of the first jet bundle so that
    \[ p_1\:J^1q\to N, \quad p_1(x^i,u^a,u^a_i)=(x^i,u^a). \] 
    Also, let $e_i=\partial_{x^i}$ be the corresponding local frame for $TX$ with dual coframe $\theta^i=\d x^i$. Then a form $\al\in\Omega^k(q^*TV)$ can be expressed as
    \[ \al=\sum_{i_1<\cdots< i_k} \al_{i_1,\dots,i_k}(x,u)\, \d x^{i_1}\wedge\cdots\wedge \d x^{i_k}, \]
    and it follows from \eqref{eq:Cartan:derivation} that the Cartan derivation acts on such a form as the total exterior derivative
    \[ \D_{\Cc}\al=\sum_j \sum_{i_1<\cdots< i_k} (D_j\al_{i_1,\dots,i_k})\, \d x^j\wedge \d x^{i_1}\wedge\cdots\wedge \d x^{i_k}, \]
    where 
    \[ D_jf:=\frac{\partial f}{\partial x^j}+\sum_a \frac{\partial f}{\partial u^a}u^a_j. \]
\end{example}

One obtains higher order Cartan derivations by considering the higher order jet spaces of the submersion $q\: N\to X$. For any integer $k\geq 1$, one constructs a connection on $J^{k-1}q \to X$ relative to $p_k\: J^kq \to J^{k-1}q$ by considering first the inclusion $J^kq\hookrightarrow J^1 (J^{k-1}q)$ and then restricting the Cartan distrbution $\mathcal{C}\subset p_k^*TJ^{k-1}q\to J^1(J^{k-1}q)$ to $J^kq$. The resulting relative connection will also be called the (higher order) \textbf{Cartan distribution}. 

The higher order Cartan distibution being a relative distribution for the submersions $J^kq\xrightarrow{p_k}J^{k-1}q \xrightarrow{q_{k-1}} X$, has an associated relative algebroid with derivation
\[ \D_\Cc\:\Omega^\bullet(q^*_{k-1} TX)\to\Omega^{\bullet+1}(q^*_kTX). \]
We also call $\D_\Cc$ the (higher order) \textbf{Cartan derivation}. Again, applying Proposition \ref{prop:relativeConnectionAndDerivations}, one has
\begin{equation}
    \label{eq:higher:Cartan:derivation}
    \begin{cases}
        \D_{\Cc} f=(p_k^*\d f)|_\Cc, & \text{if }f\in C^\infty(J^{k-1}q),\\
        \D_\Cc (q_{k-1}^*\alpha)=q_k^*\d \alpha, & \text{if }\alpha\in\Omega^\bullet(X).
    \end{cases}
\end{equation}

\begin{example}[Higher order Cartan derivation in local coordinates]\label{ex:HigherCartanDerivationsInLocalCoordinates}
     Similar to the case of first order, we can describe the higher order Cartan derivation in local coordinates as follows. We let $d=\dim X$ (number of dependent variables) and $n=\dim N-\dim X$ (number of independent variables) and we fix local charts $(V,x^i)$ for $X$ and $(U=q_{k-1}^{-1}(V),x^i,u^a,u^a_J)$ for $J^{k-1}q$, with $J=(j_1,\dots,j_r)$ all unordered $r$-tuples of integers with $1\le j_l\le n$, and $\#J=r\le k-1$. Then a form $\al\in\Omega^k(q_{k-1}^*TV)$ can be expressed as
    \[ \al=\sum_{i_1<\cdots< i_k} \al_{i_1,\dots,i_k}(x,u,u_J)\, \d x^{i_1}\wedge\cdots\wedge \d x^{i_k}, \]
    and the Cartan derivation \eqref{eq:higher:Cartan:derivation} acts on such a form as the total exterior derivative
    \[ \D_{\Cc}\al=\sum_j \sum_{i_1<\cdots< i_k} (D_j\al_{i_1,\dots,i_k})\, \d x^j\wedge \d x^{i_1}\wedge\cdots\wedge \d x^{i_k}, \]
    where now
    \[ D_jf:=\frac{\partial f}{\partial x^j}+\sum_{a=1}^n \sum_{\#J<k}  \frac{\partial^{\#J} f}{\partial u^J}u^a_{j,J}. \]
\end{example}

\subsection{The relative algebroid of a PDE}

We now wish to associate a relative algebroid to a PDE. By the latter we mean:

\begin{definition}
        A \textbf{partial differential equation (PDE)} of order $k$ on a submersion $q\: N\to X$ is a submanifold $E\subseteq J^kq$. A \textbf{solution} to $E$ is a (local) section $\sigma$ of $q\: N\to X$ such that $\im j^k\sigma \subset E$.
\end{definition}

If we will assume that the image $p_k(E)\subseteq J^{k-1}q$ is a manifold and that the map $q_k = q_{k-1}\circ p_k\: E\to X$ is a submersion, then the Cartan distribution restricts to a connection of $q_{k-1}\: p_k(E)\to X$ relative to the submersions $E\xrightarrow{p_k}p_k(E)\xrightarrow{q_{k-1}} X$. The corresponding relative algebroid of the PDE is an algebroid $(q_{k-1}^*TX,p_k,\D_E)$ relative to the submersion $p_k\:E\to p_k(E)$, and the derivation $\D_E$ is determined by
\begin{equation}
    \label{eq:PDE:derivation}
    \begin{cases}
        \D_E f=(p_k^*\d f)|_\Cc, & \text{if }f\in C^\infty(p_k(E)),\\
        \D_E (q_{k-1}^*\alpha)=i^*_Eq_k^*\d \alpha, & \text{if }\alpha\in\Omega^\bullet(X).
    \end{cases}
\end{equation}
where $i_E\:E\hookrightarrow J^kq$ is the inclusion.

\begin{definition}
    We call $(q_{k-1}^*TX, p_k, D_E)$ the \textbf{relative algebroid of the PDE} $E\subset J^kq$.
\end{definition}

\begin{remark}
If we make the weaker assumption that the PDE $E\subseteq J^k q$ intersects the fibers of $p_k\: J^kq \to J^{k-1}q$ in submanifolds of a fixed dimension. Then, by Proposition \ref{prop:restriction}, the relative algebroid $(q_{k-1}^*TX,p_k,\D_\Cc)$ associated to the Cartan distribution can be restricted to $E$. Hence, we still have a relative algebroid $(q_{k-1}^*TX, \overline{\nabla}, D_E)$ associated to $E$. The results that follow are valid in this more general setting, replacing  $(q_{k-1}^*TX, p_k, D_E)$ by this algebroid relative to a foliation. To simplify the exposition, we choose to stay within the framework of algebroids relative to a submersion.
\end{remark}

Our next result shows that the relative algebroid of the PDE encodes its solutions.

\begin{theorem}
    Let $E\subset J^kq$ be a PDE with relative algebroid $(q_{k-1}^*TX, p_k, D_E)$. Then germs of solutions to $E$ are in 1-1 correspondence with germs of realizations of $(q_{k-1}^*TX, p_k, D_E)$ modulo diffeomorphisms.
\end{theorem}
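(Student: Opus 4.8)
The strategy is to construct, from a germ of a solution $\sigma$ of $E$, a germ of a realization of $(q_{k-1}^*TX, p_k, \D_E)$, and conversely, and then to check that the two constructions are mutually inverse up to the action of local diffeomorphisms of the realizing manifold.

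First I would produce the realization from a solution. Given a solution $\sigma\colon U\subseteq X\to N$ of $E$, its $k$-jet prolongation $j^k\sigma\colon U\to E$ is a section landing in $E$, and its truncation $j^{k-1}\sigma\colon U\to p_k(E)$ is a section of $q_{k-1}$. Set $P:=U$, let $r:=j^k\sigma\colon P\to E$ be the base map into the base $E$ of the relative algebroid, and let $\theta\colon TP=TU\to q_{k-1}^*TX$ be the canonical identification $\theta=(j^k\sigma)^*\colon T_xU\xrightarrow{\ \sim\ }T_xX=(q_{k-1}^*TX)_{j^{k-1}\sigma(x)}$ (pullback of the identity on $TX$ under $q_{k-1}\circ r=\id_U$). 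This is tautologically a fiberwise isomorphism, and the anchored condition $\Pi\circ\d r=\rho\circ\theta$ is exactly the statement that $j^{k-1}\sigma$ is tangent to the Cartan distribution $\Cc$ restricted to $p_k(E)$ — i.e. that $j^{k-1}\sigma$ is holonomic, which it is, being the truncation of the genuine jet $j^k\sigma$. That the Maurer--Cartan form vanishes, equivalently $\d\circ\theta^*=\theta^*\circ\D_E$ on $\Omega^\bullet_{(q_{k-1}^*TX,\onabla)}$, reduces via \eqref{eq:PDE:derivation} to two checks: on functions $f\in C^\infty(p_k(E))$ one uses $\D_E f=(p_k^*\d f)|_\Cc$ together with holonomicity of $j^{k-1}\sigma$, and on forms $q_{k-1}^*\alpha$ with $\alpha\in\Omega^\bullet(X)$ one uses $\D_E(q_{k-1}^*\alpha)=i_E^*q_k^*\d\alpha$ and the naturality of $\d$ under the pullback $(j^k\sigma)^*q_k^*=\sigma^*q^*=(q_{k-1}\circ r)^*=\id^*$. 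So the defining equation is automatic.

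Conversely, given a germ of a realization $(P,r,\theta)$ of $(q_{k-1}^*TX,p_k,\D_E)$, I would recover a solution. Because $\theta$ is anchored, $q_{k-1}\circ r\colon P\to X$ has differential $\d(q_{k-1}\circ r)$ identified with $\theta$ composed with the anchor, which is an isomorphism onto $TX$ fiberwise; hence $\varphi:=q_{k-1}\circ r\colon P\to X$ is a local diffeomorphism. Shrinking $P$, we may treat $\varphi$ as a coordinate identification $P\cong U\subseteq X$, and the realization condition $\d\circ\theta^*=\theta^*\circ\D_E$ forces, in the local-coordinate description of $\D_E$ (Example~\ref{ex:HigherCartanDerivationsInLocalCoordinates} and \eqref{eq:PDE:derivation}), that the map $r\colon P\cong U\to E\subseteq J^kq$ is holonomic of the form $j^k\sigma$ for the section $\sigma:=(\text{the }N\text{-component of }p_k(E)\text{-part of }r)\circ\varphi^{-1}$; landing in $E$ is immediate because $r$ maps into $E$. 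The key point making $r$ holonomic is precisely the equation $\D_E f=(p_k^*\d f)|_\Cc$ on the fiber coordinates $f$ of $p_k(E)\to N$ and of $J^{k-1}q\to J^{k-2}q$, etc.: it pins down the first derivatives of each jet coordinate of $r$ to be the next jet coordinate, i.e. it is the holonomy constraint, and iterating over the tower $J^k\to J^{k-1}\to\cdots$ recovers all of $j^k\sigma$.

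Finally I would observe that two realizations $(P,r,\theta)$ and $(P',r',\theta')$ give the same germ of solution if and only if they differ by a germ of diffeomorphism $\psi\colon P\to P'$ with $r'\circ\psi=r$, $\theta'\circ\d\psi=\theta$: indeed the construction solution~$\mapsto$~realization always produces $P=$ (an open set in) $X$ with $\varphi=\id$, and an arbitrary realization is carried to this normal form by the local diffeomorphism $\varphi=q_{k-1}\circ r$ above, which is the unique such diffeomorphism once we demand $\theta$ be the tautological identification. So the two constructions descend to mutually inverse bijections between germs of solutions and germs of realizations modulo diffeomorphism. I expect the main obstacle to be the bookkeeping in the reverse direction: carefully extracting, from the abstract realization equation, the fact that the $E$-valued map $r$ is genuinely a $k$-jet prolongation — this is where one must unwind the iterated Cartan-distribution description of $\D_E$ across all the jet levels, and it is the only place where the specific structure of $J^kq$ (as opposed to a general relative connection) really enters.
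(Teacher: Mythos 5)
Your proposal is correct and follows essentially the same route as the paper: build the realization from $P=U$, $r=j^k\sigma$ with the tautological fiberwise identification $\theta$, and conversely use that the composition to $X$ is a local diffeomorphism so that the realization factors through a section which the anchor condition forces to be tangent to the Cartan distribution, hence holonomic. Only two cosmetic remarks: the map you call $\varphi$ should be $q_k\circ r=q_{k-1}\circ p_k\circ r$ (since $r$ lands in $E\subset J^kq$), and where you do coordinate bookkeeping across the jet tower the paper argues invariantly, concluding holonomicity of the induced section $\tau\:X\to E$ directly from $\im\d_x(p_k\circ\tau)\subset\Cc_{\tau(x)}$.
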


\begin{proof}
Let $\sigma\:U\to N$ be a local solution of $E$. The we construct a realization $(\theta,r)\:TP\to q_k^*TX$ of $(q_{k-1}^*TX, p_k, D_E)$ by setting:
\[ P:=U,\quad r(x):=j^q_x\sigma,\quad \theta(v_x)=(r(x),v_x). \]
That $\theta$ preserves anchors is clear. Using \eqref{eq:PDE:derivation} one finds that for any $\al\in\Omega^1(X)$
\[ \theta^*\D_E(q^*_{k-1}\alpha)=\theta^*i^*_E q_k^*\d \alpha=\d\alpha=\d \theta^*\al. \]

Conversely, assume that $(P, r, \theta)$ is a realization around $p\in P$ of $(q_{k-1}^*TX, p_k, D_E)$ such that $r(p) = e\in E$. Then the map $q_k\circ r$ is a local diffeomorphism, so in a neighborhood of a $p\in P$ it factors through a (local) section $\tau\: X\to E$. 
\[
    \xymatrix{
            P \ar[rd] \ar[r]^{r} & E \ar[d]_{q_k}\\
                            & X \ar@{-->}@/_5pt/[u]_{\sigma}
    }
\]
The compatibility of $(r,\theta)$ with the anchor, gives
\[
    \im \d_x(p_k\circ\tau) \subset \mathcal{C}_{\tau(x)} \mbox{ for all $x\in \dom \tau$}.
\]
So $\tau$ is tangent to the Cartan distribuition, and we can conclude that it is holonomic. Hence, $\tau=j^k\sigma$ for a local section $\sigma\:X\to N$, which is the desired local solution of $E$.

\end{proof}

\subsection{Prolongation and integrability of PDEs}
We will now show that the formal theory of prolongations \cite{Goldschmidt1967} for PDEs coincides with the prolongation theory for the associated relative algebroids.

\begin{theorem}\label{thm:PDEsAndRelativeAlgebroids}
Let $E\subset J^kq$ be a PDE with relative algebroid $(q_{k-1}^*TX, p_k, D_E)$. Then:
    \begin{enumerate}[(i)]
        \item $E$ is a 1-integrable PDE if and only if the relative algebroid $(q_{k-1}^*TX, p_k, D_E)$ is 1-integrable;
        \item If $E$ is a 1-integrable PDE, then the relative algebroid corresponding to the prolongation $E^{(1)}\subset J^{k+1}q$ is the prolongation of $(q_{k-1}^*TX, p_k, D_E)$. 
    \end{enumerate}
In particular, a PDE is formally integrable if and only if its associated relative algebroid is.
\end{theorem}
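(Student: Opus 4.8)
The plan is to prove Theorem \ref{thm:PDEsAndRelativeAlgebroids} by translating the two sides of each statement into the common language of relative derivations, using the identification of the jet bundle $J^kq$ with a subset of $\DD^1_{q_{k-1}^*TX}$ furnished by Corollary \ref{cor:CartanDerivation}. The key structural observation is that the first prolongation $E^{(1)}\subset J^{k+1}q$ is, by definition of the classical prolongation of a PDE, the set of $(k+1)$-jets $j^{k+1}_x\sigma$ such that $j^k_x\sigma$ factors tangentially through $E$; and on the relative algebroid side, $M^{(1)}$ is the set of torsionless lifts of $\D_E$. Both descriptions are ``the locus where the naive symbol/torsion equations hold'', and the heart of the proof is to match these equations.

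First I would set up notation: write $M = E$, $M_0 = p_k(E)\subset J^{k-1}q$, so $\D_E\:\Omega^\bullet(q_{k-1}^*TX)\to \Omega^{\bullet+1}(p_k^*q_{k-1}^*TX)$ is the relative derivation of \eqref{eq:PDE:derivation}. I would then unwind what a pointwise lift $\tilde\D_e$ of $\D_E$ at $e\in E$ is: by Remark \ref{rem:pointwise:lift:bracket}, such a lift is determined by a lift $\tilde\rho_e\:(q_{k-1}^*TX)_e\to T_{p_k(e)}M_0$ of the anchor, i.e., a horizontal complement of $\ker(\d q_{k-1})$ in $T_{p_k(e)}J^{k-1}q$ adapted to $e$ — which is exactly an element of the fiber $(J^kq)_{p_k(e)}$ lying over... no, more precisely, since $\D_E$ already ``is'' $e\in E\subset J^kq\subset \DD^1_{q_{k-1}^*TX}$, a lift of $\D_E$ over $e$ to a genuine derivation on $p_k^*q_{k-1}^*TX\to$ (the space of lifts) corresponds to a point of $J^1(p_k(E))$, or equivalently an element of $J^{k+1}q$ lying over $e$. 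The torsion $T|_{\tilde\D_e}$, computed via \eqref{eq:torsion:bracket} and \eqref{eq:torsion:symbol}, must then be identified with the obstruction for that $(k+1)$-jet to be holonomic, which is precisely the condition cutting out $E^{(1)}$ inside $J^{k+1}q$ (this uses $\D_\Cc\circ\D_\Cc = 0$ for the Cartan derivation, which follows from \eqref{eq:Cartan:derivation} since $\d^2 = 0$ on $X$-forms and $\D_\Cc$ of a function is a restriction of $\d$). This matching gives item (i): $E$ is $1$-integrable (i.e., $p_{k+1}\:E^{(1)}\to E$ is a surjective submersion with $E^{(1)}$ smooth) if and only if $M^{(1)} = E^{(1)}$ is smooth with $p_1\:M^{(1)}\to M$ a submersion, which by Proposition \ref{prop:1st:prolongation} is exactly $1$-integrability of the relative algebroid.

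For item (ii), granted $1$-integrability, I would show that the canonical prolongation $(q_{k-1}^*TX, p^{(1)}, \D^{(1)})$ of Proposition \ref{prop:1st:prolongation}, whose derivation is the tautological one $\D^{(1)}\alpha|_{\tilde\D_e} := \tilde\D_e\alpha$, coincides with the relative algebroid of the PDE $E^{(1)}\subset J^{k+1}q$, whose derivation is $\D_{E^{(1)}}$ given by \eqref{eq:PDE:derivation} applied one level up. Under the identification $M^{(1)} = E^{(1)}$ from item (i), both derivations act on $\Omega^\bullet(q_{k-1}^*TX)$... actually here one must be careful: $\D^{(1)}$ has the same bundle $B = p^*A$ as $\D_E$, namely $q_{k-1}^*TX$ pulled back, whereas $\D_{E^{(1)}}$ a priori lives on $q_k^*TX$; the reconciliation is that the prolongation of a relative algebroid $(A,p,\D)$ has the \emph{same} bundle $A$ (it is $B = p^*A$ that appears, living over the new base), and similarly $E^{(1)}$ as a PDE on $q$ is order $k+1$, so its relative algebroid is on $q_k^*TX$ — but these agree after pulling back along $p_k\:E\to p_k(E)$ because $q_k^*TX = p_k^*q_{k-1}^*TX$. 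Having squared that away, one checks the two derivations agree on a generating set: on pulled-back functions $f\in C^\infty(M_0)$ they both restrict the de Rham differential along the Cartan distribution, and on $q_{k-1}^*TX$-forms of the form $q^*\alpha$ they both give the pullback of $\d\alpha$ — in both cases this is a direct consequence of \eqref{eq:PDE:derivation}, \eqref{eq:higher:Cartan:derivation}, and the tautological definition of $\D^{(1)}$. The final sentence (``formally integrable iff associated relative algebroid is'') then follows by iterating (i) and (ii).

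The main obstacle I anticipate is the careful bookkeeping in the identification $M^{(1)} = E^{(1)}$: one must verify that the \emph{affine} structure on $M^{(1)}\to M$ modeled on $\tau^{(1)}$ (from Proposition \ref{prop:torsionDifferenceSpencerDifferential} and Lemma \ref{lem:tableauOfProlongation}) matches the affine structure on $E^{(1)}\to E$ modeled on the classical symbol prolongation $g^{(1)}_k$ of the PDE, and that the torsion equations \eqref{eq:torsion:bracket}–\eqref{eq:torsion:symbol} literally reproduce the classical $1$-integrability equations $p_{k+1}(E^{(1)}) = E$ together with the compatibility of the symbol. This is where the correspondence is most likely to require a genuine argument rather than a formal manipulation, since it is the precise point at which ``the Cartan derivation squares to zero'' meets ``the curvature/torsion of the relative algebroid''. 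I would isolate this as a lemma, possibly phrased as: \emph{under $J^{k+1}q\cong q_{k*}^{-1}(\D_\Cc)\subset \DD^1_{q_k^*TX}$, the subset $E^{(1)}$ corresponds to the torsionless lifts of $\D_E$}, and prove it in local jet coordinates using Example \ref{ex:HigherCartanDerivationsInLocalCoordinates} and Example \ref{ex:prolongationInCoordinates}, where the two systems of equations become visibly identical.
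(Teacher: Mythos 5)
Your proposal is correct and takes essentially the same route as the paper: both rest on the identification of jets with derivations from Corollary \ref{cor:CartanDerivation} and on matching $E^{(1)}$ with the torsionless lifts of $\D_E$, after which the two derivations agree tautologically. The only real difference is organizational: the paper first treats the universal case $E=J^kq$, where $J^{k+1}q$ is identified outright with $\{\D_y\in\DD^1_{q_k^*TX}\ :\ (q_k)_*\D_y=\d,\ \D_y\circ\D_\Cc=0\}$, and then obtains general $E$ from $E^{(1)}=J^1E\cap J^{k+1}q$, which dispenses with the local-coordinate lemma you defer to and with the affine-structure bookkeeping you worry about (note also the small slip, partly self-corrected, that a pointwise lift of the anchor lands in $T_eE$, not in $T_{p_k(e)}p_k(E)$, cf.\ Remark \ref{rem:pointwise:lift:bracket}).
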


\begin{remark}[Variational bicomplex]\label{rk:variationalBicomplex}
    When $E= J^1\pi$ Theorem \ref{thm:PDEsAndRelativeAlgebroids} shows that the prolongations of $E$ correspond to the higher order Cartan derivations. From their expression in local coordinates (see Example \ref{ex:HigherCartanDerivationsInLocalCoordinates}), one sees that they assemble together into the horizontal differential of the first row the variational bicomplex (see, e.g., \cite{Anderson1992}). In other words, the profinite Lie algebroid corresponding to the prolongation tower has as derivation the horizontal differential of the variational bicomplex:
    \[
    \xymatrix{
    {\left(\pi_\infty^*TX, \d_H\right)\:\ldots} \ar[r] \ar@<-12pt>[d] & \pi_k^*TX \ar[d] \ar[r] & \pi_{k-1}TX \ar[r] \ar[d] \ar@{-->}@/^1pc/[l]^{\D_{\Cc}} & \ldots \ar[r] & \pi_1^*TX \ar[r] \ar[d] & \pi^*TX \ar[d] \ar@{-->}@/^1pc/[l]^{\D_{\Cc}}\\
    J^{\infty}\pi\:\ldots \ar[r]                                            & J^k\pi \ar[r]_{p_k}    & J^{k-1}\pi \ar[r]                                                               & \ldots \ar[r] & J^1\pi \ar[r]_{p}    & N.}               
    \]
\end{remark}

\begin{proof}[Proof of Theorem \ref{thm:PDEsAndRelativeAlgebroids}]
    Let us consider first the case $E=J^kq$, so $\D_E=\D_\Cc$. On the one hand, $E$ as a PDE, has first prolongation  
    \[ E^{(1)} = J^{k+1}q\cong \left\{j^1_y\tau\: \tau\in\Gamma(J^kq)\text{ holonomic}\right\}\subset J^1(J^kq). \]
    On the other hand, by Corollary \ref{cor:CartanDerivation}, we have the following description of the first jet bundle of $J^kq$:
    \[ J^1(J^kq)=\left\{ \D_y\in\DD^1_{q_k^*TX}\: (q_k)_*\D_y=\d_{q_k(y)}\right\}. \]
    This gives a description of $E^{(1)}$ in terms of derivations as
    \begin{align*}
        E^{(1)}\cong & \left\{ \D_y\in\DD^1_{q_k^*TX}\: (q_k)_*\D_y=\d_{q_k(y)},\ \D_y\circ \D_{\Cc}=0\right\}\\
        &=\left\{ \D_y\in\DD^1_{q_k^*TX}\: (q_k)_*\D_y=(\D_\Cc)_{p_k(y)},\ \D_y\circ \D_{\Cc}=0\right\}
    \end{align*}
    which is precisely the first prolongation space of the relative algebroid $(q_{k-1}^*TX, p_k, D_\Cc)$. It follows that the theorem holds in this case.
    
    For general $E\subset J^kq$, the result follows because the first prolongation of $E$ can be described as $E^{(1)} = J^1E\cap J^{k+1}q$ in $J^1(J^kq)$, which corresponds to the first prolongation of the relative algebroid using the description for $J^{k+1}q$ in terms of derivations. This proves both (i) and (ii).
\end{proof} 

\begin{example}
    We illustrate the theorem with the simple PDE $u_x = y$, where $u=u(x,y)$. As a manifold, this PDE has coordinates $(u_y, u, x, y)$ and sits inside 
    \[ \{(x, u_y, u, x, y)\}\subset J^1\pi= \{(u_x, u_y, u, x, y)\}, \]
    where $\pi\: \RR^3\to \RR^2$ is the projection $\pi(u, x, y) = (x, y)$.
    The corresponding relative algebroid is obtained by restricting the Cartan derivation to $E$, so it can be described by the trivial vector bundle $\underline{\RR^2}\to\RR^3$, with derivation $\D$ determined by
    \[
    \begin{cases}
        \D \theta^1 =\D \theta^2= 0,\\
        \D x = \theta^1,\\
        \D y = \theta^2,\\
        \D u = y \theta^1 + u_y \theta^2.
    \end{cases}
    \]
    The free variable is $u_y$, so to compute the prolongation we start with an extension of $\D$, which is determined by $\tilde{\D}u_y = u_{xy} \theta^1 + u_{yy} \theta^2$. We find that $\tilde{\D}$ must satisfy
    \[
        0 = \tilde{\D}\D u = \theta^2\wedge\theta^1 + (u_{xy}\theta^1 + u_yy\theta^2)\wedge\theta^1 = (u_{xy} - 1) \theta^1\wedge \theta^2.
    \]
    We find that the 1st prolongation of the relative algebroid is characterized by $u_{xy}=1$ and that $u_{yy}$ is the new variable. This is corresponds exactly to the relative algebroid underlying the prolongation $E^{(1)}\subset J^2\pi$ of the PDE: the latter is given by the equations $\{u_x =y, u_{xx} = 0, u_{xy} = 1\}$, so $E^{(1)}$ is parametrized by $\{(u_{yy}, u_y, u, x, y)\}$.
\end{example}

\begin{remark}[PDEs with symmetries]
    Symmetries of PDEs are given by pseudogroups of diffeomorphism (see, e.g., \cite{Olver93}). In future work, we will show that the symmetries of a PDE also preserve the underling relative algebroid, so that the structure of the relative algebroid descends to the quotient. 
\end{remark}

\begin{remark}[Pfaffian fibrations]
    A different framework for PDEs with symmetries comes from Pfaffian fibrations and Pfaffian actions of Pfaffian groupoids \cite{Accornero2021, AccorneroCattafiCrainicSalazarBook, Cattafi2020, CattafiCrainicSalazar2020, Salazar2013}. We will explain in future work how relative algebroids underlie Pfaffian fibration, and how Pfaffian actions give rise to symmetries of the underlying relative algebroids.
\end{remark}

\section{Postlude: an example}\label{sec:postlude}

In this final section, we will revisit Example \ref{ex:introductionSurfaces|nablaK|=1} from the Introduction and we discuss it using the framework developed in the paper. This example, considered by Bryant in \cite[\S 5.1]{Bryant2014}, is simple enough that can be solved directly, but it is extremely insightful to study it from the perspective of relative derivations.

As discussed in the Introduction, the existence and classification problem of surfaces with a metric whose Gauss curvature satisfies $|\nabla K| =1$ is govern by the equations
\begin{equation}
\label{eq:SurfacesStructureEquations}
        \begin{cases}
            \d \theta^1 = - \theta^3 \wedge \theta^1,\\
            \d \theta^2 = \theta^3 \wedge\theta^1, \\
            \d \theta^3 = K \theta^1 \wedge\theta^2,\\
            \d K = \cos (\varphi) \theta^1 + \sin (\varphi) \theta^2.
        \end{cases}
    \end{equation}
These equations define a derivation $\D_A$ on the trivial vector bundle $A:= \underline{\RR^3}\to \RR$, relative to the projection $p\: \SS^1\times \RR\to \RR$, where $\RR$ has coordinate $K$ and $\SS^1$ has coordinate $\varphi$. Here $\{\theta^1,\theta^2,\theta^3\}$ is a basis of sections of $A^*$ and if we let $\{e_1,e_2,e_3\}$ be the dual basis of sections of $A$, the anchor of this relative algebroid is given by
\begin{align*}
    \rho(e_1) &= \cos (\varphi) \del_K, &
    \rho(e_2) &= \sin (\varphi) \del_K, & 
    \rho(e_3) &= 0,
\end{align*}
while the bracket takes the form
\begin{align*}
    [e_1, e_2] &= - K e_3, & [e_2, e_3] &= - e_1, & [e_3, e_1] &= - e_2.
\end{align*}
Using this expression for the anchor, a straightforward computation shows that this relative algebroid has tableau map $\tau\: \ker(\d p) \to p^*\Hom(\underline{\RR^3}, T\RR)$  given by
\[
    \tau(\del_\varphi) = \left( -\sin (\varphi) \theta^1  + \cos(\varphi)\theta^2\right) \otimes \del_K.
\]
In the sequel, it will be convenient to use the following notation for the expression appearing in the tableau:
\[ \del_\varphi( \d K):= -\sin (\varphi) \theta^1  + \cos(\varphi)\theta^2. \]

All higher prolongations can be explicitly computed, giving the full prolongation tower
\[
    \xymatrix{
        \underline{\RR^3} \ar[r] \ar[d]  & \cdots \ar[r] & \underline{\RR^3} \ar[d] \ar[r]      & \underline{\RR^3} \ar[r] \ar[d]   & \cdots \ar[r]     & \underline{\RR^3} \ar[d] \\
        \RR^\infty\times \SS^1\times \RR \ar[r] & \cdots \ar[r]         & \RR^k\times \SS^1\times \RR \ar[r]_{p_k} & \RR^{k-1}\times \SS^1\times \RR \ar[r] & \cdots \ar[r]_{p_1}               & \RR, 
    }
\]
where $\RR^k\times \SS^1\times \RR$ has coordinates $(c_k, \ldots, c_1, \varphi, K)$, and the relative derivation is determined by \eqref{eq:SurfacesStructureEquations} together with
\begin{equation}\label{eq:postludeEquationdck}
    \D c_k = -f_k[c_1, \dots, c_k] \d K + c_{k+1} \del_\varphi(\d K)
\end{equation}
where $f_k[c_1, \dots, c_k]$ are polynomials given by
\begin{align*}
    f_1[c_1] &= - c_1^2 - K\\
    f_{2m}[c_1, \dots, c_{2m}] &= -\sum_{i=1}^m \binom{2m+1}{i} c_i c_{2m+1-i}\\
    f_{2m+1}[c_1, \dots, c_{2m+1}] &= -\binom{2m+1}{m} c_{m+1}^2 - \sum_{i=1}^m \binom{2m+2}{i} c_i c_{2m+2-i}
\end{align*}
for $m\geq 1$. Note that we can also interpret \eqref{eq:postludeEquationdck} as defining a profinite derivation on $\underline{\RR^3}\to\RR^\infty\times\SS^1\times\RR$

Equation (\ref{eq:postludeEquationdck}) suggests using the new global coframe $\{ \d K, \del_\varphi(\d K), \theta^3\}$, for which the anchor is decoupled such that the profinite part is concentrated in one basis vector only. The frame $\{b_1, b_2, b_3\}$ dual to this coframe is given by
\begin{align}
    \begin{split}\label{eq:exdKAlternativeBasis}
        b_1 &= (\cos\varphi)e_1 + (\sin \varphi) e_2\\
        b_2 &= - (\sin \varphi) e_1 + (\cos\varphi)e_2 - c_1 e_3\\
        b_3 &= e_3.
    \end{split}
\end{align}
The profinite Lie algebroid $\underline{\RR^3}\to \RR^\infty \times \SS^1\times \RR$, with respect to the new frame, has bracket
\begin{equation}\label{eq:postludeNewBasisCommutators}
    [b_1, b_2] = -c_1 b_2,\qquad [b_1, b_3] =[b_2, b_3] = 0. 
\end{equation}
and anchor
\begin{align}\label{eq:postludeAnchorAlpha}
        \rho_\infty(b_1) &= \del_K + \sum_{k=1}^\infty f_k[c_1, \dots, c_k] \del_{c_k},    &
        \rho_\infty(b_2) &= \sum_{k=1}^\infty c_{k+1}\del_{c_k},   &
        \rho_\infty(b_3) &= \del_{\varphi}.
\end{align}
The algebroid decouples as the product of $T\SS^1\to \SS^1$ and $\underline{\RR^2} \to \RR^\infty\times \RR$, where the latter has global frame $\{b_1, b_2\}$. It will be convenient to  set $\Sigma^k:= \RR^k \times \RR$, and denote by 
\[ A:= \underline{\RR^2} \to \Sigma^\infty \] 
the algebroid with global frame $\{b_1, b_2\}$.

The vector field $\rho_\infty(b_1)$ is levelwise profinite in nature, and its flow can be explicitly computed in terms of the solution of a Riccati equation. However, by \cite[Lemma 3.3]{Chetverikov1991}, the flow of $\rho_{\infty}(b_2)$ has no flow defined on profinite open subsets. For this reason, there can not be a smooth groupoid, whose source fibers are 2-dimensional manifolds; if there was one, the right-invariant vector field corresponding to $b_2$ would have a flow restricted to each source fiber, which would descend to a flow of $\rho_\infty(b_2)$ on the base $\Sigma^\infty$.

The vector field $\rho_\infty(b_2)$ does have a flow on a different space. For this, note that this vector field lives entirely on $\RR^\infty$, with coordinates $c_\infty = (c_k)_{k\in \NN}$, and is completely decoupled from the coordinate $K$ on $\Sigma^\infty$. So let $C^\omega_0(\RR)$ and $C^\infty_0(\RR)$ be the space of germs of analytic, respectively smooth, functions around 0. The jet map $j^\infty_0$ relates the vector field $\rho_\infty(b_2)$ to the vector field $\del_t$ on the spaces of germs.
\[
    \xymatrix{
        & {(C^\infty_0(\RR), \partial_t)} \ar@{->>}[rd]^{j_0^\infty} &            \\
        {(C^\omega_0(\RR), \partial_t)}  \ar@{^{(}->}[rr]_{j_0^\infty}  \ar@{^{(}->}[ru] &     & {\left( \RR^\infty, \sum_{k} c_{k+1} \del_{c_k} \right)}
    }
\]
Now observe that:
\begin{itemize}
    \item On the space of analytic germs $C^\omega_0(\RR)$, the vector field $\del_t$ has a flow $\Phi_t$, given by
\[
    \Phi_t(\germ_0(f(\cdot))) = \germ_0(f(\cdot + t))
\]
whenever $t$ is in the maximal domain to which the germ of $f$ can be extended;
\item On $C^\infty_0(\RR)$, the vector field $\del_t$ has no flow, since there are many distinct integral curves through a point, due to the existence of flat functions; 
\item On $\RR^\infty$, as we already mentioned, the vector field $\sum_k c_{k+1} \del_{c_k}$ also has no flow. 
\end{itemize}
It is therefore natural to restrict $\rho_\infty(f_2)$ to the space of convergent power series:
\[
    \RR^\omega := \left\{ c_\infty \in \RR^\infty\: \limsup_k \left( \frac{|c_k|}{k!}\right)^{1/k} <\infty \right\}.
\]
This space is in bijection with $C^\infty_0(\RR)$ and it is more natural to equip $\RR^\omega$ with the smooth structure of $C^\infty_0(\RR)$ rather than the profinite smooth structure of $\RR^\infty$. In a similar vein, we let 
\[ \Sigma^\omega := \RR^\omega \times \RR \subset \Sigma^\infty,\]  
and consider the restricted algebroid 
\[ A^\omega:=A|_{\Sigma^\omega}\to \Sigma^\omega, \]
which makes sense since $\rho_\infty(f_2)$ is tangent to $\Sigma^\omega$. The integral manifolds of $A^\omega$ partition $\Sigma^\omega$ into well-defined leaves, and the algebroid can (at least) leafwise be integrated to a smooth groupoid $\GG^\omega\toto \Sigma^\omega$ whose source-fibers are simply connected manifolds.

The original classification problem can be solved on this space: it is governed by the $\SS^1$-structure algebroid $T\SS^1\times A^\omega\to \SS^1\times \Sigma^\omega$ whose canonical $\SS^1$-integration is the $\SS^1$-structure groupoid $(\SS^1\times \SS^1)\times \GG^\omega\toto \SS^1 \times \Sigma^\omega$, whose source fibers are of the form $\SS^1\times s_{\GG^\omega}^{-1}(c_\infty)$. These are the coframe bundles of non-extendable simply connected solutions to the realization problem! The leaves and the groupoid $\GG^\omega$ can be explicitly described, and their isometry groups can be listed. The appropriate smooth structure on the total space of this groupoid has yet to be studied, but we believe its a type of diffeological groupoid that differentiates to the given algebroid, in the sense of Aintablian and Blohmann \cite{AintablianBlohmann2024}.

\subsubsection*{Solutions with additional symmetry} Even without delving into the complicated theory of infinite-dimensional geometry, to make sense of the algebroid governing the full realization problem, a glance at the profinite algebroid $T\SS^1\times A\to \SS^1\times \Sigma^\infty$ can already lead to interesting insights and solutions. For example, it is possible to find solutions by looking at the locus where the anchor drops rank, which amounts to imposing extra symmetry on realizations.

The expression for the anchor shows that the algebroid $A\to \Sigma^\infty$ drops rank on the subspace
\[
    \Sigma^\infty_{0} = \{ (c_\infty, K)\in \Sigma^\infty : c_k = 0 \mbox{ for $k\geq 2$} \}.
\]
This locus is \emph{finite-dimensional}! One obtains a restricted algebroid with anchor and bracket given by
\[
    \rho(b_1)= \del_K - (K + c_1^2)\del_{c_1},\qquad \rho(b_2)= 0,\qquad [b_1, b_2] = - c_1b_2. 
\]
This algebroid can be explicitly integrated (in terms of solutions to a Riccati equation) to a finite dimensional Lie groupoid. By the work of Fernandes and Struchiner  \cite{FernandesStruchiner2019}, it represents the stack of complete, simply connected, solutions of the realization problem with translational symmetry. Geometrically, the extra symmetry is translation in the direction orthogonal to $\nabla K$. 

\section{Outlook}\label{sec:outlook}

In this paper, we have established the foundational framework of the theory. There are numerous directions to further explore. Below, we outline a few key directions that we are currently investigating and its relationships with existing literature.

\subsection{PDEs with symmetries}\label{subsec:PDEsWithSymmetries}
In the context of the formal theory of PDEs, one of the key advantages of the framework of relative algebroids is its stability under quotients by symmetries. In Section \ref{sec:PDE}, we saw that every PDE has an associated canonical relative algebroid whose realizations, up to diffeomorphism, correspond to solutions of the PDE. Classification problems in geometry are often governed by PDEs with large symmetry groups. In future work, we will precisely define symmetries of relative algebroids and their quotients by symmetries. Moreover, one can show that the symmetries of a PDE correspond to the symmetries of its associated relative algebroid. Our ultimate goal is to provide a rigorous explanation of how Bryant's equations arise from a geometric problem formulated as a PDE.

We expect that the quotient algebroid associated with a PDE with symmetries will be related to various existing approaches to symmetries in the literature. For example, given a Lie pseudogroup  $\Gamma$ acting on a differential equation $E\subset J^kq$, it is known that, under relatively mild assumptions, the space of differential invariants of the PDE is finitely generated and can be computed through established algorithms (see, e.g., \cite{KruglikovLychagin2016, OlverPohjanpelto2009}). We conjecture that the differential algebra of these invariants coincides with the exterior algebra of the prolongation tower of the quotient relative algebroid. This would provide a Lie-theoretic interpretation of the results in \cite{KruglikovLychagin2016, OlverPohjanpelto2009}. Establishing this precise connection should not only broaden the theory developed in \emph{loc.~cit.}~but also lead to powerful new tools.

On a related note, we saw in Remark \ref{rk:variationalBicomplex} that the profinite differential of the prolongation tower of a formally integrable PDE $E\subset J^kq$ corresponds to the horizontal differential (at the bottom row) of the variational bicomplex of $E$. The prolongation tower of the symmetry quotient of the relative algebroid then should correspond to the horizontal differential (at the bottom row) of the \textit{invariant variational bicomplex} of the PDE \cite{KoganOlver2001}.

The tableau of the quotient algebroid associated with a PDE with symmetries is much smaller than the tableau of the PDE itself. Moreover, PDEs with very large symmetry groups (such as the group of diffeomorphisms) can never possess desirable properties like ellipticity or finite type. In certain cases, however, PDEs with symmetries may exhibit these properties within a fixed gauge. This suggests that the tableau of the corresponding quotient may also retain such properties. One of our goals is to investigate whether properties of the tableau of the relative algebroid can lead to existence results.

Another connection to the existing literature arises through Pfaffian fibrations and Pfaffian actions \cite{Accornero2021, AccorneroCattafiCrainicSalazarBook, Cattafi2020, CattafiCrainicSalazar2020, Salazar2013}. Just as PDEs give rise to relative algebroids, Pfaffian fibrations induce relative algebroids, and Pfaffian actions give rise to symmetries of these algebroids. We hope to explore this connection further in future work.

\subsection{Relative $G$-structure algebroids} The Bryant-Cartan existence result for realizations in the analytic setting (Theorem \ref{thm:CartanBryantExistence}) provides local manifolds with coframes that solve the realization problem. However, in many realization problems, such as those arising from Riemannian manifolds or more general $G$-structures,  the realization problem is naturally formulated in terms of coframes on the orthonormal frame bundle or a principal $G$-bundle. In general, the local solutions obtained from the Bryant-Cartan theorem do not yield such principal bundles, as it does not account for the presence of a structure group. Therefore, it is desirable to incorporate structure groups into the theory of relative algebroids.

In the case of finite-dimensional Lie algebroids, this has been achieved in \cite{FernandesStruchiner2019}, where a theory of $G$-structure algebroids and $G$-structure groupoids is developed. That work also establishes necessary and sufficient conditions for the existence of $G$-structure realizations. We aim to extend this theory to relative algebroids to better understand the role of the structure group $G$ in solving the realization problem for $G$-structure relative algebroids. In some preliminary work we have established the existence of a universal (profinite) $G$-structure algebroid through which every $G$-structure algebroid factors. 

\subsection{Profinite Lie algebroids} Profinite-dimensional manifolds and bundles appear extensively in the theory of formal PDEs (see, e.g., \cite{Accornero2021, AccorneroCrainic2024, GuneysuPflaum2017, OlverPohjanpelto2005}). Hence, their emergence in the theory of relative algebroids is not surprising: the base space of the prolongation tower of a relative algebroid is the space of formal realizations modulo symmetries. 

There are several challenges regarding the existence of smooth groupoids integrating the prolongation tower of a relative algebroid. In fact, we suspect that such groupoids may not exist for any prolongation tower. Intuitively, this stems from the fact that there is no ``continuous" way to assign a smooth function to each jet.

However, the prolongation tower of a relative algebroid exhibits much richer geometry when restricted to smaller spaces, such as the space of convergent power series. The algebroid, when restricted to this space, has leaves and can be leafwise integrated into a groupoid. Moreover, in this case, tools from finite-dimensional Lie theory become available to study the space of global solutions to the realization problem. One main goal is to understand the moduli stack of complete solutions, which requires a precise understanding of the smooth structure on the integrating object. One possible approach is through diffeologies; in this context, the recent work of Aintablian and Blohmann \cite{AintablianBlohmann2024} on diffeological Lie groupoids and algebroids should be particularly relevant. Another possible approach, suggested to us by Ivan Contreras, is to consider \emph{formal Lie groupoids} (see, e.g., \cite{CDF05,Contreras13}) integrating a profinite Lie groupoid. Since formal Lie groupoids arise from power series, it seems plausible that integrating objects of such nature for profinite Lie algebroids may exist. 

\subsection{Applications to and interactions with control theory}  
Example \ref{ex:RelativeVectorFields} illustrates how a control system arises from a relative algebroid defined by a relative vector field. The interaction between control theory and relative algebroids should also work in the other direction.  

As a particular example, the notion of controllability (the equivalence relation of points connected by realizations -- see \cite{HermannKerner1977}) should also be present for relative algebroids. This notion should induce a partition of the base of a relative algebroid into invariant submanifolds. This bidirectional interaction suggests deeper connections between the two fields. Here are two other relations worth exploring:

In control theory, one typically works with a relative distribution $H \subset p^*TN$, where $p\: M \to N$ is a submersion. There seems to be no canonical way to define a derivation relative to $p$ on $H$ itself, without assuming some additional structure. However, there is a well-defined notion of an integral manifold, namely a submanifold $L\subset M$ such that $\d p(TL)=H|_{L}$. The precise relationship between relative distributions and relative algebroids remains to be fully understood.  

Furthermore, since relative algebroids are particularly well-suited for studying realization problems with symmetry, they could also provide a useful framework for the study of control systems with symmetries.

\bibliographystyle{abbrv}
\bibliography{bib.bib}

\end{document}